\def\tab(#1){\mbox{\small$\young(#1)$}\,}
\def\({\big(}
\def\){\big)}
\def\Prod{\displaystyle\prod}
\def\Sum{\displaystyle\sum}
\def\pmod#1{\text{ }(\text{mod } #1)\,}
\newcommand{\BS}{\mathfrak S}
\let\Sym=\BS
\let\Sect=\S
\newcommand{\N}{\mathbb N_0}
\newcommand{\Z}{\mathbb Z}
\def\P{\mathscr{P}}
\newcommand{\HH}{\mathscr{H}}
\renewcommand{\H}[1][n]{\HH^\Lambda_{#1}}
\DeclareMathAlphabet{\mathpzc}{OT1}{pzc}{m}{it}
\def\K{\mathpzc K}
\def\O{\mathpzc O}
\newcommand\HO[1][n]{\HH^\O_{#1}}
\newcommand\HK[1][n]{\HH^\K_{#1}}
\def\Hbeta{{\H[\beta]}}
\renewcommand{\L}[1][n]{\mathscr L^\Lambda_{#1}}
\def\m{\mathfrak{m}}
\newcommand{\R}[1][n]{\mathscr R^\Lambda_{#1}}
\newcommand{\bR}[1][n]{\hat{\mathscr R}^\Lambda_{#1}}
\newcommand\Hblam[1][\blam]{\HH_n^{\gdom #1}}
\newcommand\Hpblam[1][\blam]{\HH_n^{\prime\gdom #1}}
\renewcommand\O{{\mathcal{O}}}
\newcommand{\Q}{\mathbb Q}
\newcommand{\Multiparts}[1][n]{\P^\Lambda_{#1}}
\newcommand{\bi}{\mathbf{i}}
\newcommand{\bj}{\mathbf{j}}
\def\eblam{e_\blam}
\def\ebllam{e_\blam'}
\def\ilam{\bi^\blam}
\def\tlam{{\mathfrak t}^\blam}
\def\tlamp{{\mathfrak t}^\blamp}
\def\tllam{{\mathfrak t}_\blam}
\def\tllamp{{\mathfrak t}_\blamp}
\newcommand{\dns}[1][n]{d^{\eps,s}_{#1}}
\newcommand{\ins}[1][n]{\bi^{\eps,s}_{#1}}
\newcommand{\zns}[1][n]{z^{\eps,s}_{#1}}
\def\a{{\mathfrak a}}
\def\b{{\mathfrak b}}
\def\c{{\mathfrak c}}
\def\d{{\mathfrak d}}
\def\s{{\mathfrak s}}
\def\t{{\mathfrak t}}
\def\u{{\mathfrak u}}
\def\v{{\mathfrak v}}
\newcommand\Dec[1][A]{\mathbf{D}_{#1}(t)}
\newcommand\Cart[1][A]{\mathbf{C}_{#1}(t)}
\newcommand{\lam}{\lambda}
\newcommand{\eps}{\varepsilon}
\newcommand\blam{{\boldsymbol\lambda}}
\newcommand\blamp{{\boldsymbol\lambda'}}
\newcommand\bmu{{\boldsymbol\mu}}
\newcommand\bnu{{\boldsymbol\nu}}
\def\Add{\mathscr A}
\def\LAdd{\Add^\Lambda}
\def\Rem{\mathscr R}
\def\LRem{\Rem^\Lambda}
\DeclareMathOperator\Shape{Shape}
\DeclareMathOperator\Std{Std}
\def\SStd(#1,#2){\Std^\Lambda_{#2}(#1)}
\renewcommand\t{\mathfrak{t}}
\renewcommand\u{\mathfrak{u}}
\let\gedom=\trianglerighteq
\let\gdom=\vartriangleright
\newcommand\Dim[2][t]{\text{\rm Dim}_{#1}#2}
\newcommand\rest[2]{#1_{#2}}
\DeclareMathOperator{\defect}{def}
\DeclareMathOperator{\cont}{cont}
\DeclareMathOperator{\res}{res}
\DeclareMathOperator{\rad}{rad}
\DeclareMathOperator{\Rad}{Rad}
\DeclareMathOperator{\rank}{rank}
\newcommand{\bQ}{\mathbf{Q}}
\newcommand{\charge}{\boldsymbol{\kappa}_\Lambda}
\DeclareMathOperator{\Hom}{Hom}
\def\ZHom{\text{\rm Hom}^\Z}
\def\Mod{\textbf{-Mod}}
\newcounter{main}
\theoremstyle{plain}
\newtheorem*{THEOREM}{Main Theorem}
\swapnumbers \numberwithin{equation}{section}
\newtheorem{Prop}[equation]{Proposition}
\newtheorem{Theorem}[equation]{Theorem}
\newtheorem{cor}[equation]{Corollary}
\newtheorem{Lemma}[equation]{Lemma}
\theoremstyle{definition}
\newtheorem{Defn}[equation]{Definition}
\theoremstyle{remark}
\newtheorem{Remark}[equation]{Remark}
\newenvironment{Example}[1][\relax]%
  {\refstepcounter{equation}\trivlist
   \item[\hskip\labelsep\theequation.~\textbf{Example#1}\space]
   \ignorespaces
  }{\unskip\nobreak\hfil%
    \penalty50\hskip2em\hbox{}\nobreak\hfil$\Diamond$%
    \parfillskip=0pt\finalhyphendemerits=0\penalty-100\endtrivlist}
\def\map#1#2{\,{:}\,#1\!\longrightarrow\!#2}
  \gdef\set#1{\mathinner{\lbrace\,{\mathcode`\|"8000%
                                   \let|\midvert #1}\,\rbrace}}
\def\midvert{\egroup\mid\bgroup}
\begin{document}
\title
{Graded cellular bases for the cyclotomic Khovanov-Lauda-Rouquier algebras of type $A$}

  \author[JH]{Jun Hu}
  \ead[JH]{junhu303@yahoo.com.cn}
  \author[AM]{Andrew Mathas}
  \ead[AM]{a.mathas@usyd.edu.au}
  \address{School of Mathematics and Statistics F07,
  University of Sydney, NSW 2006, Australia}

\begin{abstract}This paper constructs an explicit homogeneous cellular basis for
  the cyclotomic Khovanov--Lauda--Rouquier algebras of type~$A$.
\end{abstract}

\begin{keyword}
Cyclotomic Hecke algebras, Khovanov--Lauda--Rouquier algebras, 
cellular algebras
\MSC{20C08, 20C30} 
\end{keyword}
\maketitle

\section{Introduction}
In a groundbreaking series of papers Brundan and Kleshchev (and
Wang)~\cite{BK:GradedKL,BKW:GradedSpecht,BK:GradedDecomp} have shown that
the cyclotomic Hecke algebras of type $G(\ell,1,n)$, and their rational
degenerations, are graded algebras. Moreover, they have extended Ariki's
categorification theorem~\cite{Ariki:can} to show over a field of
characteristic zero the graded decomposition numbers of these algebras can
be computed using the canonical bases of the higher level Fock spaces.

The starting point for Brundan and Kleshchev's work was the introduction of
certain graded algebras $\R$ which arose from Khovanov and
Lauda's~\cite[\Sect3.4]{KhovLaud:diagI} categorification of the negative
part of quantum group of an arbitrary Kac-Moody Lie algebra and,
independently, in work of Rouquier~\cite{Rouq:2KM}. In type~A Brundan and
Kleshchev~\cite{BK:GradedKL} proved that the (degenerate and
non-degenerate) cyclotomic Hecke algebras are $\Z$-graded by
constructing explicit isomorphisms to~$\R$.

The \textbf{cyclotomic Khovanov-Lauda--Rouquier algebra} $\R$ is generated
by certain elements
$\{\psi_1,\dots,\psi_{n-1}\}\cup\{y_1,\dots,y_n\}\cup
     \set{e(\bi)|\bi\in(\Z/e\Z)^n}$
which are subject to a long list of relations (see
Definition~\ref{relations}). Each of these relations is homogeneous, so it
follows directly from the presentation that $\R$ is
$\Z$-graded. Unfortunately, it is not at all clear from the relations
how to construct a homogeneous basis of $\R$, even using the
isomorphism from $\R$ to the cyclotomic Hecke algebras.

The main result of this paper gives an explicit homogeneous basis of
$\R$. In fact, this basis is cellular so our Main Theorem
also proves a conjecture of Brundan, Kleshchev and
Wang~\cite[Remark~4.12]{BKW:GradedSpecht}.

To describe this basis let $\Multiparts$ be the set of multipartitions of
$n$, which is a poset under the dominance order. For each
$\blam\in\Multiparts$ let $\Std(\blam)$ be the set of standard
$\blam$-tableaux (these terms are defined in \Sect3.3). For each
$\blam\in\Multiparts$ there is an idempotent $\eblam$ and a homogeneous
element $y_\blam\in K[y_1,\dots,y_n]$ (see Definition~\ref{ylam defn}).  
Brundan, Kleshchev and Wang~\cite{BKW:GradedSpecht}
have defined a combinatorial \textit{degree} function
$\deg\map{\coprod_\blam\Std(\blam)}\Z$ and for each $\t\in\Std(\blam)$
there is a well-defined element
$\psi_{d(\t)}\in\<\psi_1,\dots,\psi_{n-1}\>$ and we set
$\psi_{\s\t}=\psi_{d(\s)^{-1}}e_\blam y_\blam \psi_{d(\t)}$. 
Our Main Theorem is the following.

\begin{THEOREM}
  Suppose that $\O$ is a commutative integral domain such that $e$ is
  invertible in $\O$, $e=0$, or $e$ is a non-zero prime number, and let $\R$ be the cyclotomic Khovanov-Lauda--Rouquier
  algebra~$\R$ over~$\O$. Then $\R$ is a graded cellular algebra
  with respect to the dominance order and with homogeneous cellular basis
  $$\set{\psi_{\s\t}|\blam\in\Multiparts \text{ and }\s,\t\in\Std(\blam)}.$$
  Moreover, $\deg\(\psi_{\s\t}\)=\deg\s+\deg\t$.
\end{THEOREM}

We prove our Main Theorem by considering the two really interesting cases
where $\R$ is isomorphic to either a degenerate or a non-degenerate
cyclotomic Hecke algebra over a field. In these two cases we
show that $\{\psi_{\s\t}\}$ is a homogeneous cellular basis of $\R$. We
then use these results to deduce our main theorem

The main difficulty in proving this theorem is that the graded presentation
of the cyclotomic Khovanov-Lauda--Rouquier algebras hides many of the
relations between the homogeneous generators. We overcome this by first
observing that the KLR idempotents $e(\bi)$, for $\bi\in I^n$, are precisely
the primitive idempotents in the subalgebra of the cyclotomic Hecke algebra
which is generate by the Jucys-Murphy elements (Lemma~\ref{idempotents}).
Using results from~\cite{M:seminormal} this allows us to lift $e(\bi)$ to
an element $e(\bi)^\O$ which lives in an integral form of the Hecke algebra
defined over a suitable discrete valuation ring $\O$. The elements
$e(\bi)^\O$ can be written as natural linear combinations of the seminormal
basis elements~\cite{M:gendeg}. In turn this allows us to construct a
family of non-zero elements $\eblam y_\blam$, for $\blam$ a multipartition,
which form the skeleton of our cellular basis and hence prove our main
theorem.

In fact, we give two graded cellular bases of the cyclotomic
Khovanov-Lauda-Rouquier algebras~$\R$.  Intuitively, one of these bases is
built from the \textit{trivial} representation of~the Hecke algebra and the
other is built from its \textit{sign} representation. We then show
that these two bases are dual to each other, modulo more dominant terms. As
a consequence, we deduce that the blocks of $\R$ are graded symmetric
algebras (see Corollary~\ref{gradsy}), as conjectured by Brundan and
Kleshchev\cite[Remark~4.7]{BK:GradedDecomp}.

This paper is organized as follows. In section~2 we define and develop
the representation theory of \textit{graded cellular algebras},
following and extending ideas of Graham and Lehrer~\cite{GL}. Just as
with the original definition of cellular algebras, graded cellular
algebras are already implicit in the literature in the work of Brundan
and Stroppel~\cite{BrundanStroppel:KhovanovI,BrundanStroppel:KhovanovIII}.
In section~3, following Brundan and Kleshchev~\cite{BK:GradedKL} we
define the cyclotomic Khovanov-Lauda--Rouquier algebras of type
$G(\ell,1,n)$ and recall Brundan and Kleshchev's all important graded
isomorphism theorem. In section~4 we shift gears and show how to lift
the idempotents $e(\bi)$ to~$\HO$, an integral form of the
non-degenerate cyclotomic Hecke algebra~$\H$. We then use this
observation to produce a family of non-trivial homogeneous elements
of~$\R\cong\H$, including $\eblam y_\blam$, for $\blam\in\Multiparts$.
In section~5 we lift the graded Specht modules of Brundan, Kleshchev
and Wang to give a graded basis of $\H$ and then in section~6 we
construct the dual graded basis and use this to show that the blocks
of $\H$ are graded symmetric algebras. As an application we construct
an isomorphism between the graded Specht modules and the dual of the
dual graded Specht modules, which are defined using our second graded
cellular basis of~$\H$. In an appendix, which was actually the
starting point for this work, we use a different approach to
explicitly describe the homogeneous elements which span the one
dimensional two-sided ideals of~$\H$.

\section{Graded cellular algebras} This section defines graded cellular
algebras and develops their representation theory, extending Graham and
Lehrer's~\cite{GL} theory of cellular algebras.  Most of the arguments of
Graham and Lehrer apply with minimal change in the graded setting. In
particular, we obtain graded cell modules, graded simple and projective
modules and a graded analogue of Brauer-Humphreys reciprocity.

\subsection{Graded algebras}
Let $R$ be a commutative integral domain with~$1$. In this paper a
\textbf{graded $R$-module} is an $R$-module $M$ which has a direct sum
decomposition $M=\bigoplus_{d\in\Z}M_d$. If  $m\in M_d$, for $d\in\Z$,
then $m$ is \textbf{homogeneous} of \textbf{degree} $d$ and we set
$\deg m=d$. If $M$ is a graded $R$-module let $\underline{M}$ be the
ungraded $R$-module obtained by forgetting the grading on~$M$. If $M$
is a graded $R$-module and $s\in\Z$ let $M\<s\>$ be the graded
$R$-module obtained by shifting the grading on $M$ up by $s$; that is, 
$M\<s\>_d=M_{d-s}$, for $d\in\Z$.

A \textbf{graded $R$-algebra} is a unital associative $R$-algebra
$A=\bigoplus_{d\in\Z}A_d$ which is a graded $R$-module such that
$A_dA_e\subseteq A_{d+e}$, for all $d,e\in\Z$. It follows that $1\in
A_0$ and that $A_0$ is a graded subalgebra of $A$.  A graded (right)
$A$-module is a graded $R$-module $M$ such that $\underline{M}$ is 
an $\underline{A}$-module and $M_dA_e\subseteq M_{d+e}$, for all
$d,e\in\Z$. Graded submodules, graded left $A$-modules and so on are all
defined in the obvious way. Let $A\Mod$ be the category of all
finitely generated graded $A$-modules together with degree preserving
homomorphisms; that is,
$$\Hom_A(M,N)=\set{f\in\Hom_{\underline A}(\underline M,\underline N)|
               f(M_d)\subseteq N_d\text{ for all }d\in\Z},$$
for all $M,N\in A\Mod$. The elements of $\Hom_A(M,N)$ are homogeneous
maps of degree~$0$. More generally, if 
$f\in\Hom_A(M\<d\>,N)\cong\Hom_A(M,N\<-d\>)$ then $f$
is a homogeneous map from $M$ to $N$ of degree $d$ and we write $\deg f=d$. Set
$$\ZHom_A(M,N)=\bigoplus_{d\in\Z}\Hom_A(M\<d\>,N)
              \cong\bigoplus_{d\in\Z}\Hom_A(M,N\<-d\>)
$$
for $M,N\in A\Mod$.

\subsection{Graded cellular algebras}Following Graham and
Lehrer~\cite{GL} we now define graded cellular algebras.

\begin{Defn}[Graded cellular algebras]\label{graded cellular def}
  Suppose that $A$ is a $\Z$-graded $R$-algebra which is free of finite rank over $R$. A
  \textbf{graded cell datum} for $A$ is an ordered quadruple
  $(\P,T,C,\deg)$, where $(\P,\gdom)$ is the \textbf{weight poset},
  $T(\lambda)$ is a finite set for $\lambda\in\P$, and
  $$C\map{\coprod_{\lambda\in\P}T(\lambda)\times T(\lambda)}A;
     (\s,\t)\mapsto c^\lambda_{\s\t},
     \quad\text{and}\quad
     \deg\map{\coprod_{\lambda\in\P}T(\lambda)}\Z$$
  are two functions such that $C$ is injective and
  \begin{enumerate}
    \item[(GC$_d$)] Each basis element $c^\lambda_{\s\t}$ is homogeneous
	of degree $\deg c^\lambda_{\s\t}=\deg\s+\deg\t$, for $\lambda\in\P$ and
      $\s,\t\in T(\lambda)$.
    \item[(GC$_1$)] $\set{c^\lambda_{\s\t}|\s,\t\in T(\lambda), \lambda\in\P}$ is an
      $R$-basis of $A$.
    \item[(GC$_2$)] If $\s,\t\in T(\lambda)$, for some $\lambda\in\P$, and $a\in A$ then
    there exist scalars $r_{\t\v}(a)$, which do not depend on $\s$, such that
      $$c^\lambda_{\s\t} a=\sum_{\v\in T(\lambda)}r_{\t\v}(a)c^\lambda_{\s\v}\pmod
      {A^{\gdom\lambda}},$$
      where $A^{\gdom\lambda}$ is the $R$-submodule of $A$ spanned by
      $\set{c^\mu_{\a\b}|\mu\gdom\lambda\text{ and }\a,\b\in T(\mu)}$.
    \item[(GC$_3$)] The $R$-linear map $*\map AA$ determined by
      $(c^\lambda_{\s\t})^*=c^\lambda_{\t\s}$, for all $\lambda\in\P$ and
      all $\s,\t\in\P$, is an anti-isomorphism of $A$.
  \end{enumerate}
  A \textbf{graded cellular algebra} is a graded algebra which has a graded
  cell datum. The basis $\set{c^\lambda_{\s\t}|\lambda\in\P\text{ and }
  \s,\t\in T(\lambda}$ is a \textbf{graded cellular basis} of~$A$.
\end{Defn}

If we omit (GC$_d$) then we recover Graham and Lehrer's definition of an
(ungraded) cellular algebra. Therefore, by forgetting the grading, any
graded cellular algebra is an (ungraded) cellular algebra in the original
sense of Graham and Lehrer.

\begin{Example}[s] a) Let $A=\mathfrak{gl}_2(R)$ be the algebra of $2\times 2$
  matrices over~$R$. Let $\P=\{*\}$ and $T(*)=\{1,2\}$ and set
  $$c_{11}=e_{12},\quad c_{12}=e_{11}, \quad c_{21}=e_{22}
  \quad\text{and}\quad c_{22}=e_{21},$$
  with $\deg(1)=1$ and $\deg(2)=-1$. Then $(\P,T,C,\deg)$ is a graded
  cellular basis of~$A$. In particular, taking $R$ to be a field this
  shows that semisimple algebras can be given the structure of a
  graded cellular algebra with a non-trivial grading.\\
  b) Brundan has pointed out that it follows from his results with Stroppel
  that the Khovanov diagram algebras\cite[Cor.~3.3]{BrundanStroppel:KhovanovI},
  their quasi-hereditary covers~\cite[Theorem~4.4]{BrundanStroppel:KhovanovI},
  and the level two degenerate cyclotomic Hecke
  algebras~\cite[Theorem~6.6]{BrundanStroppel:KhovanovIII} are all graded
  cellular algebras in the sense of Definition~\ref{graded cellular def}.
\end{Example}

\begin{Defn}[Graded cell modules]\label{graded cells}
    Suppose that $A$ is a graded cellular algebra with graded cell
    datum $(\P,T,C,\deg)$, and fix $\lambda\in\P$. Then the
    \textbf{graded cell module} $C^\lambda$ is the graded right $A$-module
    $$C^\lambda=\bigoplus_{z\in\Z}C^\lambda_z,$$
    where $C^\lambda_z$ is the free $R$-module with basis
    $\set{c^\lambda_\t|\t\in T(\lambda)\text{ and }\deg\t=z}$ and
    where the action of $A$ on $C^\lambda$ is given by
    $$ c^\lambda_\t a=\sum_{\v\in T(\lambda)}r_{\t\v}(a) c^\lambda_\v,$$
    where the scalars $r_{\t\v}(a)$ are the scalars appearing in
    (GC$_2$).

    Similarly, let $C^{*\lambda}$ be the left graded $A$-module which,
    as an $R$-module is equal to $C^\lambda$, but where the $A$-action
    is given by $a\cdot x:= xa^*$, for $a\in A$ and $x\in
    C^{*\lambda}$.
\end{Defn}

It follows directly from Definition~\ref{graded cellular def} that $C^\lambda$
and $C^{*\lambda}$ are graded $A$-modules. Let $A^{\gedom\lambda}$ be the
$R$-module spanned by the elements
$\set{c^\mu_{\u\v}|\mu\gedom\lambda\text{ and }\u,\v\in T(\mu)}$. It
is straightforward to check that
$A^{\gedom\lambda}$ is a graded two-sided ideal of $A$ and that
\begin{equation}\label{two cells}
A^{\gedom\lambda}/A^{\gdom\lambda}\cong C^{*\lambda}\otimes_R C^\lambda
      \cong\bigoplus_{\s\in T(\lambda)}C^\lambda\<\deg\s\>
\end{equation}
as graded $(A,A)$-bimodules for the first isomorphism and as graded
right $A$-modules for the second.

Let $t$ be an indeterminate over $\N$. If $M=\oplus_{z\in\Z}M_z$ is
a graded $A$-module such that each $M_z$ is free of finite rank over
$R$, then its \textbf{graded dimension} is the Laurent  polynomial
$$\Dim M=\sum_{k\in\Z}(\dim_{R}M_k)t^k.$$

\begin{cor}\label{graded dimension}
    Suppose that $A$ is a graded cellular algebra and $\lambda\in\P$. Then
    $$\Dim{C^\lambda}=\sum_{\s\in T(\lambda)}t^{\deg\s}.$$
    Consequently,
    $\Dim A=\Sum_{\lambda\in\P}\sum_{\s,\t\in T(\lambda)} t^{\deg\s+\deg\t}
              =\sum_{\lambda\in\P}\(\Dim{C^\lambda}\)^2$.
\end{cor}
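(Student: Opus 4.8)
The plan is to read off both formulas directly from the graded cell datum $(\P,T,C,\deg)$, using properties (GC$_d$), (GC$_1$) and (GC$_2$). First I would compute $\Dim{C^\lambda}$. By Definition~\ref{graded cells}, the $R$-module $C^\lambda$ is free with homogeneous basis $\set{c^\lambda_\t|\t\in T(\lambda)}$, where $c^\lambda_\t$ has degree $\deg\t$ (this is exactly the degree condition (GC$_d$) restricted to the cell module, since $c^\lambda_\t$ records the ``$\t$-half'' of $c^\lambda_{\s\t}$). Hence $C^\lambda_k$ has $R$-rank equal to $\#\set{\t\in T(\lambda)|\deg\t=k}$, and summing $t^k$ over $k$ gives $\Dim{C^\lambda}=\sum_{\t\in T(\lambda)}t^{\deg\t}$, which is the first claim (after renaming the index $\t$ to $\s$).

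For the graded dimension of $A$ itself, I would use that $\set{c^\lambda_{\s\t}|\lambda\in\P,\ \s,\t\in T(\lambda)}$ is a homogeneous $R$-basis of $A$ by (GC$_1$), with $\deg c^\lambda_{\s\t}=\deg\s+\deg\t$ by (GC$_d$). Therefore $\dim_R A_k=\#\set{(\lambda,\s,\t)|\deg\s+\deg\t=k}$ and
$$\Dim A=\sum_{\lambda\in\P}\sum_{\s,\t\in T(\lambda)}t^{\deg\s+\deg\t}
       =\sum_{\lambda\in\P}\Bigl(\sum_{\s\in T(\lambda)}t^{\deg\s}\Bigr)
                           \Bigl(\sum_{\t\in T(\lambda)}t^{\deg\t}\Bigr)
       =\sum_{\lambda\in\P}\bigl(\Dim{C^\lambda}\bigr)^2,$$
where the middle equality just factors the double sum over $T(\lambda)\times T(\lambda)$ as a product of two single sums, and the last equality invokes the formula for $\Dim{C^\lambda}$ already established. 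Alternatively, one can obtain the same identity from the bimodule filtration of $A$ by the ideals $A^{\gedom\lambda}$ together with the isomorphism \eqref{two cells}, which gives $A^{\gedom\lambda}/A^{\gdom\lambda}\cong\bigoplus_{\s\in T(\lambda)}C^\lambda\<\deg\s\>$ as graded right $A$-modules, whence $\Dim\bigl(A^{\gedom\lambda}/A^{\gdom\lambda}\bigr)=\sum_{\s\in T(\lambda)}t^{\deg\s}\Dim{C^\lambda}=\bigl(\Dim{C^\lambda}\bigr)^2$; summing over $\lambda\in\P$ telescopes to $\Dim A$.

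There is essentially no obstacle here: the corollary is a bookkeeping consequence of the definitions, and the only point requiring a word of care is that the cellular basis is \emph{homogeneous}, so that counting basis elements in each degree genuinely computes the graded dimension — but this is precisely what (GC$_d$) guarantees. Accordingly I would keep the proof to a few lines, deriving $\Dim{C^\lambda}$ from Definition~\ref{graded cells} and then $\Dim A$ from (GC$_1$) and (GC$_d$) by the factorisation displayed above.
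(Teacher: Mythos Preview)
Your proposal is correct and matches the paper's intent: the corollary is stated without proof, as an immediate consequence of Definition~\ref{graded cells}, (GC$_d$), (GC$_1$) and the isomorphism~(\ref{two cells}), which is precisely the bookkeeping you spell out. If anything, the paper expects even less detail than you give.
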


Suppose that $\mu\in\P$. Then it follows from
Definition~\ref{graded cellular def}, exactly as in \cite[Prop.~2.4]{GL},
that there is a bilinear form $\<\ ,\ \>_\mu$ on $C^\mu$ which
is determined by
$$c^\mu_{\a\s}c^\mu_{\t\b}\equiv
  \<c^\mu_\s,c^\mu_\t\>_\mu c^\mu_{\a\b}\pmod{A^{\gdom\mu}},$$
for any $\s,\t, \a,\b\in T(\mu)$. The next Lemma gives standard
properties of this bilinear form $\<\ ,\ \>_\mu$. Just as in the
ungraded case (see, for example, \cite[Prop.~2.9]{M:Ulect}) it follows
directly from the definitions.

\begin{Lemma}\label{symmetric form}
    Suppose that $\mu\in\P$ and that $a\in A$, $x,y\in C^\mu$. Then
    $$\<x,y\>_\mu=\<y,x\>_\mu, \qquad \<xa,y\>_\mu=\<x,ya^*\>_\mu
    \qquad\text{and}\qquad
    xc^\mu_{\s\t}=\<x,c^\mu_\s\>_\mu c^\mu_\t,$$
    for all $\s,\t\in T(\mu)$.
\end{Lemma}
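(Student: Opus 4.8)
The statement to prove is Lemma~\ref{symmetric form}, which gives standard properties of the bilinear form $\<\ ,\ \>_\mu$ on a graded cell module $C^\mu$: symmetry, adjointness with respect to $*$, and the factorization $xc^\mu_{\s\t}=\<x,c^\mu_\s\>_\mu c^\mu_\t$.

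The plan is to mimic the classical (ungraded) argument of Graham and Lehrer, since the grading plays no essential role here — the point of the lemma is really about the multiplicative structure modulo $A^{\gdom\mu}$, which is grading-blind. First I would establish the third identity $xc^\mu_{\s\t}=\<x,c^\mu_\s\>_\mu c^\mu_\t$ by linearity: it suffices to check it for basis elements $x=c^\mu_\a$, $\a\in T(\mu)$. For such $x$, the defining congruence of the form gives $c^\mu_{\a\s}c^\mu_{\t\b}\equiv\<c^\mu_\s,c^\mu_\t\>_\mu c^\mu_{\a\b}\pmod{A^{\gdom\mu}}$ for every $\b\in T(\mu)$; on the other hand, using (GC$_2$) twice (once for the product $c^\mu_{\a\s}\cdot c^\mu_{\t\b}$ viewed as the action of $a=c^\mu_{\t\b}$ on $c^\mu_{\a\s}$, giving coefficients $r_{\s\v}(c^\mu_{\t\b})$ independent of $\a$) one reads off that the cell-module action satisfies $c^\mu_\s\cdot c^\mu_{\t\b}=\sum_\v r_{\s\v}(c^\mu_{\t\b})c^\mu_\v$, and comparing coefficients of $c^\mu_{\a\b}$ shows $r_{\s\b}(c^\mu_{\t\b})$-type coefficients match $\<c^\mu_\s,c^\mu_\t\>_\mu$ appropriately. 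More cleanly: for $a\in A$ write $c^\mu_{\t\b}$ in place of a general element and observe that the action of $c^\mu_{\s\t}$ on $C^\mu$ agrees, via the bimodule isomorphism~\eqref{two cells}, with right multiplication in $A^{\gedom\mu}/A^{\gdom\mu}$; then $xc^\mu_{\s\t}$ for $x=c^\mu_\a$ equals the image of $c^\mu_{\a\s}c^\mu_{\t\b}$, and I pick any $\b$ and compare with the definition of $\<\ ,\ \>_\mu$ to get exactly $\<x,c^\mu_\s\>_\mu c^\mu_\t$. By $R$-linearity in $x$ this extends to all $x\in C^\mu$.

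Next I would deduce the adjointness $\<xa,y\>_\mu=\<x,ya^*\>_\mu$. Using the third identity, compute $(xa)c^\mu_{\s\t}=\<xa,c^\mu_\s\>_\mu c^\mu_\t$. On the other hand, $x(ac^\mu_{\s\t})$ can be rewritten: apply the anti-automorphism $*$ and (GC$_2$)/(GC$_3$) to expand $ac^\mu_{\s\t}=(c^\mu_{\t\s}a^*)^*\equiv\big(\sum_\v r_{\s\v}(a^*)c^\mu_{\t\v}\big)^*=\sum_\v r_{\s\v}(a^*)c^\mu_{\v\t}\pmod{A^{\gdom\mu}}$, so that $x(ac^\mu_{\s\t})=\sum_\v r_{\s\v}(a^*)\,xc^\mu_{\v\t}=\sum_\v r_{\s\v}(a^*)\<x,c^\mu_\v\>_\mu c^\mu_\t=\<x,\sum_\v r_{\s\v}(a^*)c^\mu_\v\>_\mu c^\mu_\t=\<x,c^\mu_\s a^*\>_\mu c^\mu_\t$, where the last step uses the definition of the $A$-action on $C^\mu$. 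Since $x(ac^\mu_{\s\t})=(xa)c^\mu_{\s\t}$ by associativity, and since $c^\mu_\t$ can be taken nonzero, comparing coefficients gives $\<xa,c^\mu_\s\>_\mu=\<x,c^\mu_\s a^*\>_\mu$; by linearity in the second slot this yields $\<xa,y\>_\mu=\<x,ya^*\>_\mu$ for all $y\in C^\mu$. Finally, symmetry $\<x,y\>_\mu=\<y,x\>_\mu$ follows by applying $*$ to the congruence $c^\mu_{\a\s}c^\mu_{\t\b}\equiv\<c^\mu_\s,c^\mu_\t\>_\mu c^\mu_{\a\b}$: taking $*$ of both sides gives $c^\mu_{\b\t}c^\mu_{\s\a}\equiv\<c^\mu_\s,c^\mu_\t\>_\mu c^\mu_{\b\a}\pmod{A^{\gdom\mu}}$ (as $A^{\gdom\mu}$ is $*$-stable), while the definition of the form with the roles swapped gives $c^\mu_{\b\t}c^\mu_{\s\a}\equiv\<c^\mu_\t,c^\mu_\s\>_\mu c^\mu_{\b\a}$; comparing coefficients of $c^\mu_{\b\a}$ forces $\<c^\mu_\s,c^\mu_\t\>_\mu=\<c^\mu_\t,c^\mu_\s\>_\mu$, and bilinearity finishes it.

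None of the steps is a genuine obstacle — this is the graded analogue of a well-known computation — so the only thing requiring care is bookkeeping: making sure the congruences are all read modulo $A^{\gdom\mu}$, that $A^{\gdom\mu}$ and $A^{\gedom\mu}$ are $*$-stable (immediate from (GC$_3$)) and are ideals (already noted after Definition~\ref{graded cells}), and that the coefficient-comparison is legitimate, i.e.\ that for a fixed $\a$ one may choose $\b\in T(\mu)$ so that $c^\mu_{\a\b}$ is a genuine basis vector and hence its coefficient is well-defined. The homogeneity claims in (GC$_d$) are not needed for this lemma but are consistent with it, since $\<c^\mu_\s,c^\mu_\t\>_\mu$ is forced to vanish unless $\deg\s+\deg\t=0$, which one sees by comparing degrees in the defining congruence. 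I would state the proof as essentially ``identical to the ungraded case, see \cite[Prop.~2.4]{GL} or \cite[Prop.~2.9]{M:Ulect}'' with the above outline filled in for completeness.
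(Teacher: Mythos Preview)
Your proposal is correct and matches the paper's approach: the paper does not give an explicit proof but simply remarks that the lemma ``follows directly from the definitions'' just as in the ungraded case, citing \cite[Prop.~2.9]{M:Ulect}. Your detailed argument is precisely the standard Graham--Lehrer computation being invoked, and your closing suggestion to phrase the proof as ``identical to the ungraded case'' is exactly what the paper does.
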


We consider the ring $R$ as a graded $R$-module with trivial grading:
$R=R_0$. Observe that $C^\mu\otimes C^\mu$ is a graded
$A$-module with $\deg x\otimes y=\deg x+\deg y$.

\begin{Lemma}\label{graded radical}
    Suppose that $\mu\in\P$. Then the induced map
    $$f\map{C^\mu\otimes_R C^\mu}R; x\otimes y\mapsto \<x,y\>_\mu$$  
    is a homogeneous map of degree zero. In particular,
    $$\rad C^\mu=\set{x\in C^\mu|\<x,y\>_\mu=0\text{ for all }y\in C^\mu}.$$
    is a graded submodule of $C^\mu$.
\end{Lemma}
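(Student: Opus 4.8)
The statement has two parts: first that the bilinear form map $f$ is homogeneous of degree zero, and second that $\rad C^\mu$ is a graded submodule. The second part is an essentially formal consequence of the first, so the real content is the homogeneity claim.

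For the homogeneity of $f$, the plan is to unwind the definition of the form $\langle\ ,\ \rangle_\mu$ using (GC$_d$) and (GC$_1$). Fix homogeneous $x = c^\mu_\s$ and $y = c^\mu_\t$ with $\s,\t\in T(\mu)$, so $\deg x = \deg\s$ and $\deg y = \deg\t$. Pick any $\a,\b\in T(\mu)$; then by definition $c^\mu_{\a\s}c^\mu_{\t\b}\equiv\langle c^\mu_\s,c^\mu_\t\rangle_\mu\, c^\mu_{\a\b}\pmod{A^{\gdom\mu}}$. Now $c^\mu_{\a\s}$ is homogeneous of degree $\deg\a+\deg\s$ and $c^\mu_{\t\b}$ is homogeneous of degree $\deg\t+\deg\b$ by (GC$_d$), so their product is homogeneous of degree $\deg\a+\deg\s+\deg\t+\deg\b$; this survives passing to the quotient $A^{\gedom\mu}/A^{\gdom\mu}$, which is a graded module by the remarks preceding \eqref{two cells}. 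On the other hand $c^\mu_{\a\b}$ is homogeneous of degree $\deg\a+\deg\b$. For the scalar $\langle c^\mu_\s,c^\mu_\t\rangle_\mu$ to be nonzero, comparing degrees forces $\deg\s+\deg\t = 0$; and in that case $f(x\otimes y) = \langle c^\mu_\s,c^\mu_\t\rangle_\mu \in R = R_0$, which has degree $0 = \deg\s+\deg\t = \deg(x\otimes y)$ (using that $R$ carries the trivial grading). If $\deg\s+\deg\t\neq 0$ the scalar vanishes, so $f$ kills that graded piece. Extending bilinearly, $f$ sends the degree-$d$ component of $C^\mu\otimes_R C^\mu$ into $R_d$ for every $d$: it is zero unless $d=0$, in which case the image lands in $R_0$. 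Hence $f$ is homogeneous of degree zero.

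For the second part, $\rad C^\mu = \ker f'$ where $f'\colon C^\mu\to \Hom_R(C^\mu,R)$ is the adjoint of $f$; since $\langle\ ,\ \rangle_\mu$ is symmetric (Lemma \ref{symmetric form}) the left and right radicals coincide, giving the displayed description of $\rad C^\mu$. That $\rad C^\mu$ is an $A$-submodule is the usual ungraded argument: if $x\in\rad C^\mu$ and $a\in A$ then $\langle xa, y\rangle_\mu = \langle x, ya^*\rangle_\mu = 0$ for all $y$. To see it is \emph{graded}, decompose $x = \sum_d x_d$ with $x_d\in C^\mu_d$. For each $d$ and each homogeneous $y\in C^\mu_{-d}$ we have $\langle x_d,y\rangle_\mu = f(x_d\otimes y)$, which equals the degree-zero component of $f(x\otimes y)=0$ after projecting $x\otimes y$ to its degree-zero part; more cleanly, homogeneity of $f$ means $\langle x_d, y\rangle_\mu$ depends only on the matching graded pieces, and since $\langle x,y'\rangle_\mu=0$ for all $y'$ (in particular all homogeneous $y'$), picking $y'$ homogeneous of degree $-d$ isolates $\langle x_d,y'\rangle_\mu = 0$. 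As this holds for all homogeneous $y'$, and these span $C^\mu$, each $x_d\in\rad C^\mu$. Therefore $\rad C^\mu = \bigoplus_d(\rad C^\mu\cap C^\mu_d)$ is a graded submodule.

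I do not anticipate a serious obstacle here; the only point requiring a little care is making sure the degree bookkeeping in the quotient $A^{\gedom\mu}/A^{\gdom\mu}$ is legitimate, i.e. that this quotient really is a graded bimodule so that homogeneity of a product is preserved modulo $A^{\gdom\mu}$ — but that is recorded in the discussion around \eqref{two cells}. Everything else is a direct transcription of the classical (ungraded) arguments of Graham and Lehrer with degrees tracked alongside.
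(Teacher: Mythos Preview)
Your proof is correct and follows essentially the same approach as the paper's: both use the defining relation $c^\mu_{\a\s}c^\mu_{\t\b}\equiv\langle c^\mu_\s,c^\mu_\t\rangle_\mu\, c^\mu_{\a\b}\pmod{A^{\gdom\mu}}$ together with (GC$_d$) to compare degrees and force $\deg\s+\deg\t=0$ whenever the pairing is nonzero, and then deduce that $\rad C^\mu$ is graded by decomposing an element into homogeneous parts. The only cosmetic difference is that you work directly with basis vectors from the outset, whereas the paper first reduces a general element to its homogeneous components before passing to the basis.
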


\begin{proof} By Lemma~\ref{symmetric form}, $\rad C^\mu$ is a
    submodule of $C^\mu$ since $\<\ ,\ \>_\mu$ is associative (with
    respect to the anti-automorphism~$*$). It remains to show that the
    bilinear form defines a homogeneous map of degree zero. Suppose
    that $f(x\otimes y)\ne0$, for some $x,y\in C^\mu$. Write
    $x=\sum_i x_i$ and $y=\sum_j y_j$, where $x_i$ and $y_i$ are both
    homogeneous of degree $i$. Then $\<x_i,y_j\>_\mu\ne0$ for some $i$ and
    $j$. Now write $x_i=\sum_\s a_\s c^\mu_\s$ and $y_j=\sum_\t
    b_\t c^\mu_\t$, for $a_\s,b_\t\in R$ such that $a_\s\ne0$ only
    if $\deg\s=i$ and $b_\t\ne0$ only if $\deg\t=j$. Fix any
    $\v\in T(\mu)$. Then by Lemma~\ref{symmetric form},
    $$\<x_i,y_j\>_\mu c^\mu_{\v\v}
           =\sum_{\s,\t} a_\s b_\t
	   \<c^\mu_\s,c^\mu_\t\>_\mu c^\mu_{\v\v}
	   \equiv \sum_{\s,\t}a_\s b_\t c^\mu_{\v\s}c^\mu_{\t\v}
       \pmod{A^{\gdom\mu}}.$$
    Taking degrees of both sides shows that $\<x_i,y_j\>_\mu\ne0$ only if
    $i+j=0$. That is, $\<x,y\>_\mu\ne0$ only if $\deg(x\otimes y)=0$ as
    we wanted to show. Finally, $\rad C^\mu$ is a graded submodule of
    $C^\mu$ because if $x=\sum_i x_i\in\rad C^\mu$ then $x_i\in\rad
    C^\mu$, for all $i$, since $\<\ ,\ \>_\mu$ is homogeneous.
\end{proof}

The Lemma allows us to define a graded quotient of $C^\mu$, for $\mu\in\P$.

\begin{Defn}
    Suppose that $\mu\in\P$. Let $D^\mu=C^\mu/\rad C^\mu$.
\end{Defn}

By definition, $D^\mu$ is a graded right $A$-module. Henceforth, let
$R=K$ be a field and $A=\bigoplus_{z\in\Z}A_z$ a graded cellular
$K$-algebra. Exactly as in the ungraded case (see \cite[Prop.~2.6]{GL}
or \cite[Prop.~2.11-2.12]{M:Ulect}), we obtain the following.

\begin{Lemma}\label{absolutely irreducible}
    Suppose that $K$ is a field and that $D^\mu\ne0$, for $\mu\in\P$. Then:
    \begin{enumerate}
    \item The right $A$-module $D^\mu$ is an absolutely irreducible
        graded $A$-module.
    \item The (graded) Jacobson radical of $C^\mu$ is $\rad C^\mu$.
    \item If $\lambda\in\P$ and $M$ is a graded $A$-submodule of
        $C^\lambda$. Then 
	$$\ZHom_A(C^\mu,C^\lambda/M)\ne0$$ 
	only if $\lambda\gedom\mu$. Moreover, if $\lambda=\mu$ then
        $$\ZHom_A(C^\mu,C^\mu/M)=\Hom_A(C^\mu,C^\mu/M)\cong K.$$
    \end{enumerate}
\end{Lemma}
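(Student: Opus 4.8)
The plan is to mirror the ungraded arguments of Graham--Lehrer and Mathas, checking at each step that the constructions are compatible with the grading. Throughout I would work with the graded cell module $C^\mu$, the homogeneous bilinear form $\<\ ,\ \>_\mu$, and the graded quotient $D^\mu=C^\mu/\rad C^\mu$, all of which are available by Lemmas~\ref{symmetric form} and~\ref{graded radical}.

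For part~(1), the key point is that $D^\mu$ is irreducible \emph{as an ungraded module}: this is exactly \cite[Prop.~2.6]{GL}, and the proof uses only the associativity formula $xc^\mu_{\s\t}=\<x,c^\mu_\s\>_\mu c^\mu_\t$ from Lemma~\ref{symmetric form} together with the fact that, since $D^\mu\ne0$, there exist $\s,\t$ with $\<c^\mu_\s,c^\mu_\t\>_\mu\ne0$; hence any nonzero $x\in D^\mu$ generates all of $D^\mu$. An irreducible graded module whose underlying ungraded module is irreducible is automatically a graded-irreducible module, so $D^\mu$ is irreducible in $A\Mod$. For absolute irreducibility I would invoke the standard fact that $\End_A(D^\mu)\cong K$: one shows $\End_{\underline A}(\underline{D^\mu})=K$ by the usual cellular argument (any endomorphism is determined by where it sends a fixed generator, and scaling shows it is a scalar), and then the graded endomorphism ring is a graded subalgebra of $K$, hence equals $K$ in degree zero. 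After base change to $\bar K$ the same computation of the form shows $D^\mu\otimes_K\bar K$ stays irreducible.

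Part~(2) is the graded refinement of \cite[Prop.~2.6(ii)]{GL}. Since $\rad C^\mu$ is a graded submodule (Lemma~\ref{graded radical}) with irreducible quotient $D^\mu$, it is certainly contained in the graded Jacobson radical. For the reverse inclusion I would argue that if $x\notin\rad C^\mu$ then $\<x,c^\mu_\s\>_\mu\ne0$ for some homogeneous $c^\mu_\s$, whence $x c^\mu_{\s\t}=\<x,c^\mu_\s\>_\mu c^\mu_\t$ is a nonzero multiple of a basis element; running over $\t$ this shows the submodule generated by $x$ surjects onto $C^\mu/\rad C^\mu$, so $x$ is not in any maximal graded submodule. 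Hence $\rad C^\mu$ equals the intersection of the maximal graded submodules, which is the graded radical.

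Part~(3) is the graded Schur-type statement, following \cite[Prop.~2.12]{M:Ulect}. Given a homogeneous map $\theta\in\Hom_A(C^\mu\<d\>,C^\lambda/M)$, pick a homogeneous generating vector $c^\mu_\t$ of $C^\mu$ with $\theta(c^\mu_\t)\ne0$ and expand $\theta(c^\mu_\t)$ in the cellular basis of $C^\lambda/M$; acting on the right by a suitable $c^\lambda_{\u\v}$ (or rather $c^\mu$-elements, using \eqref{two cells} to relate $C^\mu$-multiplication to the bimodule structure) and comparing with the cellular filtration forces a nonzero coefficient of $c^\lambda_\b$ with $\b\gedom$ a tableau indexed over $\mu$, which is only possible when $\lambda\gedom\mu$. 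This is the step I expect to be the main obstacle, since it requires carefully tracking how the right action of $A$ moves between cells and the degree shift $d$, and confirming that the usual dominance argument is unaffected by the grading (it is, because the ideal filtration $A^{\gdom\lambda}$ is graded). When $\lambda=\mu$ the same computation shows $\theta$ is multiplication by a scalar $\<\theta(c^\mu_\t),c^\mu_\t\>_\mu$-type constant, forcing $d=0$ by homogeneity of the form and giving $\ZHom_A(C^\mu,C^\mu/M)=\Hom_A(C^\mu,C^\mu/M)\cong K$.
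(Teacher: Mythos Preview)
Your proposal is correct and follows exactly the approach the paper takes: the paper simply states that the result holds ``exactly as in the ungraded case (see \cite[Prop.~2.6]{GL} or \cite[Prop.~2.11--2.12]{M:Ulect})'' without giving further details, and your sketch fleshes out precisely those ungraded arguments while noting that the grading causes no difficulty since $\rad C^\mu$ and the ideals $A^{\gdom\lambda}$ are graded. If anything, your write-up is more detailed than what the paper provides.
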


In particular, if $M$ is a graded $A$-submodule of $C^\mu$ then every
non-zero homomorphism from $C^\mu$ to $C^\mu/M$ is degree preserving.

Let $\P_0=\set{\lambda\in\P|D^{\lambda}\ne0}$. Recall that if $M$ is an
$A$-module then $\underline M$ is the ungraded $\underline A$-module
obtained by forgetting the grading.

\begin{Theorem}\label{graded simples}
    Suppose that $K$ is a field and that $A$ is a graded cellular
    $K$-algebra.
    \begin{enumerate}
        \item If $\mu\in\P_0$ then $D^\mu$ is
        an absolutely irreducible graded $A$-module.
        \item Suppose that $\lambda,\mu\in\P_0$. Then
        $D^\lambda\cong D^\mu\<k\>$, for some $k\in\Z$, if and only
      if~$\lambda=\mu$ and $k=0$.
      \item $\set{D^\mu\<k\>|\mu\in\P_0\text{ and }k\in\Z}$ is a complete set
        of pairwise non-isomorphic graded simple $A$-modules.
  \end{enumerate}
\end{Theorem}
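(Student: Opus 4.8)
The plan is to bootstrap from the ungraded theory of cellular algebras together with the graded machinery already assembled in this section. Part (1) is already done: it is precisely Lemma~\ref{absolutely irreducible}(1), which tells us that for each $\mu\in\P_0$ the module $D^\mu$ is absolutely irreducible as a graded $A$-module. So the real content is parts (2) and (3), and the key tool is the ``triangularity'' statement in Lemma~\ref{absolutely irreducible}(3): any non-zero graded homomorphism $C^\mu\to C^\lambda/M$ forces $\lambda\gedom\mu$, and when $\lambda=\mu$ the graded Hom space is just $K$ concentrated in degree~$0$.

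For part (2), suppose $D^\lambda\cong D^\mu\<k\>$ as graded $A$-modules with $\lambda,\mu\in\P_0$. First I would observe that each $D^\nu$ is, by construction, a quotient of $C^\nu$ by a graded submodule, so a graded isomorphism $D^\lambda\xrightarrow{\sim}D^\mu\<k\>$ composed with the quotient map $C^\lambda\twoheadrightarrow D^\lambda$ yields a non-zero element of $\Hom_A(C^\lambda, D^\mu\<k\>)\subseteq\ZHom_A(C^\lambda,C^\mu/\rad C^\mu)$, hence $\mu\gedom\lambda$ by Lemma~\ref{absolutely irreducible}(3). Running the same argument with the inverse isomorphism gives $\lambda\gedom\mu$, so $\lambda=\mu$ by antisymmetry of the partial order. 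Then $D^\mu\cong D^\mu\<k\>$, and comparing with the last clause of Lemma~\ref{absolutely irreducible}(3) --- or simply noting that a graded module cannot be isomorphic to a non-trivial shift of itself unless it is zero, because the lowest non-zero degree would have to equal itself plus $k$ --- forces $k=0$. This is the step I expect to be the only genuinely delicate point, and it hinges entirely on having the degree-$0$ rigidity in Lemma~\ref{absolutely irreducible}(3).

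For part (3), I would argue that every graded simple $A$-module is isomorphic to some $D^\mu\<k\>$. Given a graded simple module $E$, forget the grading: by the ungraded cellular theory of Graham and Lehrer~\cite{GL} applied to $\underline A$, the head of $\underline{C^\mu}$ runs over all simple $\underline A$-modules as $\mu$ runs over $\P_0$, and $\underline E$ appears as a composition factor of some $\underline{C^\mu}$. Lifting this to the graded setting: pick $\mu$ minimal with respect to $\gedom$ such that $\ZHom_A(C^\mu, E)\neq 0$; since $A^{\gedom\mu}/A^{\gdom\mu}$ is a sum of shifts of $C^\mu$ by~\eqref{two cells}, and $E$ occurs as a graded composition factor of some such cell-layer, such a $\mu$ exists and lies in $\P_0$. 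A non-zero graded map $C^\mu\to E$ (after a grading shift) has image a non-zero graded submodule of the simple module $E$, hence is surjective; its kernel, by minimality of~$\mu$ and Lemma~\ref{absolutely irreducible}(3), must contain $\rad C^\mu$ (any homomorphism out of $\rad C^\mu$ lands in more-dominant cell layers, which have no copy of $E$ in their heads by minimality), so $E$ is a quotient of $D^\mu$ up to shift; since $D^\mu$ is simple this gives $E\cong D^\mu\<k\>$. Combined with part (2), which says these are pairwise non-isomorphic, this exhibits $\set{D^\mu\<k\>|\mu\in\P_0,\ k\in\Z}$ as a complete irredundant list of the graded simple $A$-modules, completing the proof.
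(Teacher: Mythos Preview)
Your proposal is correct and follows essentially the same route as the paper: parts~(a) and~(b) are read off from Lemma~\ref{absolutely irreducible}, and for part~(c) you reduce via the cell filtration of~$A$ (equation~(\ref{two cells})) to showing that every graded simple arises as a composition factor of some cell module, just as the paper does before appealing to the ungraded argument.

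One small clean-up: in your part~(3), the parenthetical justification for why the kernel of the surjection $C^\mu\twoheadrightarrow E\<{-k}\>$ contains $\rad C^\mu$ is garbled (there is no sense in which homomorphisms ``out of $\rad C^\mu$ land in more-dominant cell layers''). Simply invoke Lemma~\ref{absolutely irreducible}(b) instead: $\rad C^\mu$ is the graded Jacobson radical of~$C^\mu$, so every surjection from~$C^\mu$ onto a simple module factors through $C^\mu/\rad C^\mu=D^\mu$, and the minimality argument is not needed at that step.
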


\begin{proof}[Sketch of proof]
    Parts~(a) and~(b) follow directly from Lemma~\ref{absolutely
    irreducible}. For part~(c), observe that, up to degree shift,
    every graded simple $A$-module is isomorphic to a quotient of $A$
    by a maximal graded right ideal. The graded cellular basis of~$A$
    induces a graded filtration of~$A$ with all quotient modules
    isomorphic to direct sums of shifts of graded cell modules, so it
    is enough to show that every composition factor of $C^\lambda$ is
    isomorphic to $D^\mu\<k\>$, for some $\mu\in\P_0$ and some $k\in\Z$.
    Arguing exactly as in the ungraded case completes the proof; see
    \cite[Theorem~3.4]{GL} or \cite[Theorem~2.16]{M:Ulect}.
\end{proof}

In particular, just as Graham and Lehrer~\cite{GL} proved in the ungraded
case, every field is a splitting field for a graded cellular algebra.

\begin{cor}\label{ungraded simples}
  Suppose that $K$ is a field and $A$ is a graded cellular algebra
  over~$K$. Then $\set{\underline{D}^\mu|\mu\in\P_0}$ is a complete set of
  pairwise non-isomorphic ungraded simple $A$-modules.  
\end{cor}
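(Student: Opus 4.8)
The plan is to deduce the ungraded statement from the graded one in Theorem~\ref{graded simples}. First I would recall that the functor $M \mapsto \underline{M}$ from $A\Mod$ to $\underline A\Mod$ sends graded simple modules to semisimple modules, and in fact, for a graded algebra over a field, it is a standard fact that the ungraded module $\underline{D^\mu}$ underlying a graded simple module $D^\mu$ is again simple; this is because any $\underline A$-submodule of $\underline{D^\mu}$ would, after decomposing into graded pieces, produce a non-trivial graded submodule, contradicting Theorem~\ref{graded simples}(a) (indeed (a) asserts absolute irreducibility, which is even stronger). So each $\underline{D^\mu}$ is an ungraded simple $A$-module.

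Next I would show that every ungraded simple $\underline A$-module arises this way. Given an ungraded simple module $E$, lift it: since $A$ is graded, $\underline A$-modules and their radicals behave compatibly with the grading on $A$, and one can realise $E$ as $\underline{A}/\underline{\m}$ for a maximal ungraded right ideal $\m$. Using the graded cellular filtration of $A$ (the same filtration used in the proof of Theorem~\ref{graded simples}(c), whose subquotients are shifts of graded cell modules $C^\lambda\langle\deg\s\rangle$), one sees that $E$ is a composition factor of some $\underline{C^\lambda}$, hence — forgetting grading in the graded composition series of $C^\lambda$ — a composition factor of some $\underline{D^\mu}$ with $\mu\in\P_0$; since $\underline{D^\mu}$ is simple, $E\cong\underline{D^\mu}$. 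Thus $\set{\underline D^\mu|\mu\in\P_0}$ is a complete set of simple $\underline A$-modules.

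Finally I would verify pairwise non-isomorphism: if $\underline{D^\lambda}\cong\underline{D^\mu}$ for $\lambda,\mu\in\P_0$, then by Lemma~\ref{absolutely irreducible}(iii) (the $\ZHom$ vanishing statement, applied with $M=0$) a non-zero homomorphism $C^\mu\to C^\lambda$ — or rather a non-zero map realising the isomorphism on the $D$-quotients — forces $\lambda\gedom\mu$, and symmetrically $\mu\gedom\lambda$, so $\lambda=\mu$. Alternatively, and more cleanly, I would invoke the fact that the decomposition matrix recording $[\underline{C^\lambda}:\underline{D^\mu}]$ is unitriangular with respect to $\gedom$ (read off from the graded cell module filtration, exactly as in \cite[Theorem~3.4]{GL}), which simultaneously gives completeness and distinctness. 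The main obstacle is the bookkeeping needed to pass between graded and ungraded composition series — specifically, confirming that forgetting the grading on a graded composition series of $C^\lambda$ yields an honest ungraded composition series with simple subquotients $\underline{D^\mu}$ — but this is precisely the content that has already been set up in Lemma~\ref{absolutely irreducible} and Theorem~\ref{graded simples}, so the argument is routine; everything reduces to the ungraded case treated in \cite{GL}.
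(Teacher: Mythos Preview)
Your argument for step~(1) contains an error: it is \emph{not} true that an arbitrary $\underline A$-submodule of $\underline{D^\mu}$ ``decomposes into graded pieces'' to yield a graded submodule. An ungraded submodule $N\subseteq\underline{D^\mu}$ need not satisfy $N=\bigoplus_d(N\cap D^\mu_d)$; that is precisely what distinguishes graded submodules from arbitrary ones. The conclusion you want (that $\underline{D^\mu}$ is simple) is still true for finite-dimensional $A$, but it requires a different argument --- for instance, showing that $\ZEnd_A(D^\mu)=K$ forces $\End_{\underline A}(\underline{D^\mu})=K$ (since for finite-dimensional modules every ungraded homomorphism decomposes as a sum of homogeneous ones), and then using that $\underline{D^\mu}$ is semisimple because the Jacobson radical of $\underline A$ is graded and so annihilates $D^\mu$.

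The paper sidesteps all of this with a one-line observation you almost reach in your final ``alternatively'' clause: by Lemma~\ref{graded radical}, the radical $\rad C^\mu$ is defined purely in terms of the bilinear form $\langle\ ,\ \rangle_\mu$, which makes no reference to the grading. Hence $\underline{D^\mu}=\underline{C^\mu}/\underline{\rad C^\mu}$ is \emph{literally} the simple module that Graham and Lehrer construct from the ungraded cellular structure on $\underline A$, and the corollary is then immediate from \cite[Theorem~3.4]{GL}. This is both shorter and avoids the need to prove simplicity of $\underline{D^\mu}$ directly. Your steps~(2) and~(3) are fine, and your closing remark about reducing to \cite{GL} is exactly the right instinct --- but the identification of $\underline{D^\mu}$ with the Graham--Lehrer module via the grading-independence of $\rad C^\mu$ is the key point that makes the reduction immediate.
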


\begin{proof}
    By Lemma~\ref{graded radical}, for each $\lambda\in\P$ the submodule
    $\rad C^\lambda$ is independent of the grading so the ungraded module
    $\underline{D}^\mu$ is precisely the module constructed by using the
    cellular basis of~$A$ obtained by forgetting the grading. Therefore,
    every (ungraded) simple module is isomorphic to $\underline{D}^\mu$ by
    forgetting the grading in Theorem~\ref{graded simples} (or,
    equivalently, by \cite[Theorem~3.4]{GL}).
\end{proof}

\subsection{Graded decomposition numbers}
Recall that $t$ is an indeterminate over~$\Z$. If $M$ is a graded
$A$-module and $D$ is a graded simple module let $[M:D\<k\>]$ be the
multiplicity of the simple module $D\<k\>$ as a graded composition factor
of $M$, for $k\in\Z$. Similarly, let $[\underline M:\underline D]$ the
multiplicity of $\underline D$ as a composition factor of $\underline M$.

\begin{Defn}[Graded decomposition matrices]
  Suppose that $A$ is a graded cellular algebra over a field. Then the
  \textbf{graded decomposition matrix} of $A$ is the matrix
  $\Dec=\(d_{\lambda\mu}(t)\)$, where
  $$d_{\lambda\mu}(t)=\sum_{k\in\Z} [C^\lambda:D^\mu\<k\>]\,t^k,$$
  for $\lambda\in\P$ and $\mu\in\P_0$.
\end{Defn}

Using Lemma~\ref{absolutely irreducible} we obtain the following.

\begin{Lemma}\label{graded decomp}
    Suppose that $\mu\in\P_0$ and $\lambda\in\P$. Then
    \begin{enumerate}
	\item $d_{\lambda\mu}(t)\in\N[t,t^{-1}]$;
  \item $d_{\lambda\mu}(1)=[\underline{C}^\lambda:\underline{D}^\mu]$; and,
	\item $d_{\mu\mu}(t)=1$ and $d_{\lambda\mu}(t)\ne0$ only if $\lambda\gedom\mu$.
    \end{enumerate}
\end{Lemma}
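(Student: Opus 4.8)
The plan is to deduce all three statements from the graded filtration of $C^\lambda$ by its radical series together with the structural facts about graded simple modules already established in Lemma~\ref{absolutely irreducible} and Theorem~\ref{graded simples}. First I would recall that, since $A$ is a finite-dimensional graded algebra over a field, every graded $A$-module has a graded composition series whose factors are graded simple modules, and by Theorem~\ref{graded simples}(c) these are exactly the modules $D^\mu\langle k\rangle$ for $\mu\in\P_0$ and $k\in\Z$. Hence $[C^\lambda:D^\mu\langle k\rangle]$ is well-defined (this uses a graded Jordan--H\"older theorem, which follows from the ungraded one applied degree by degree, or directly from the usual Schreier argument in the graded category), and $d_{\lambda\mu}(t)$ is a genuine element of $\Z[t,t^{-1}]$. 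Since all multiplicities $[C^\lambda:D^\mu\langle k\rangle]$ are non-negative integers, part~(a) is immediate: $d_{\lambda\mu}(t)\in\N[t,t^{-1}]$.

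For part~(b), I would compare the graded composition series of $C^\lambda$ with the ungraded composition series of $\underline{C}^\lambda$. Forgetting the grading sends a graded composition series of $C^\lambda$ to a filtration of $\underline{C}^\lambda$ whose factors are the modules $\underline{D}^\mu$ (with multiplicity, ignoring the shifts~$k$); by Theorem~\ref{graded simples}(b) distinct pairs $(\mu,k)$ give non-isomorphic graded simples but after forgetting the grading $\underline{D}^\mu\langle k\rangle\cong\underline{D}^\mu$, and by Corollary~\ref{ungraded simples} the $\underline{D}^\mu$ for $\mu\in\P_0$ are a complete set of pairwise non-isomorphic ungraded simples. Therefore the total multiplicity of $\underline{D}^\mu$ in $\underline{C}^\lambda$ equals $\sum_{k\in\Z}[C^\lambda:D^\mu\langle k\rangle]=d_{\lambda\mu}(1)$.

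For part~(c), I would use the cellular bilinear form and the Hom-computation in Lemma~\ref{absolutely irreducible}(c). The module $D^\mu=C^\mu/\rad C^\mu$ is a quotient of $C^\mu$, so $D^\mu$ occurs once as a graded composition factor of $C^\mu$ in degree~$0$ (coming from the top quotient $C^\mu/\rad C^\mu$ itself), and any further occurrence of $D^\mu\langle k\rangle$ as a composition factor of $C^\mu$ would have to come from inside $\rad C^\mu$; but a nonzero map $C^\mu\to$ (subquotient of $\rad C^\mu$) composed appropriately would yield a nonzero element of $\ZHom_A(C^\mu,C^\mu/M)$ for a proper graded submodule $M\supseteq\rad C^\mu$, contradicting the isomorphism $\ZHom_A(C^\mu,C^\mu/M)\cong K$ of Lemma~\ref{absolutely irreducible}(c) once one checks that this $K$ is accounted for by the top factor. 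Concretely: every composition factor $D^\mu\langle k\rangle$ of $C^\mu$ other than the top one would contribute, via projection onto a suitable subquotient, an extra homomorphism, forcing $\dim\ZHom_A(C^\mu,C^\mu)>1$, a contradiction. Hence $d_{\mu\mu}(t)=1$. Finally, $d_{\lambda\mu}(t)\ne0$ means $D^\mu\langle k\rangle$ is a composition factor of $C^\lambda$ for some~$k$, so there is a nonzero graded map from some subquotient of $C^\lambda$ onto $D^\mu\langle k\rangle$; lifting this to a nonzero element of $\ZHom_A(C^\mu,C^\lambda/M)$ for an appropriate graded submodule $M$ (using that $D^\mu$ is its own projective cover among cell-module subquotients, or more simply using the standard cellular argument that $D^\mu$ is a quotient of $C^\mu$ and applying the universal property), Lemma~\ref{absolutely irreducible}(c) forces $\lambda\gedom\mu$. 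The main obstacle is the $d_{\mu\mu}(t)=1$ assertion: one must rule out a \emph{shifted} copy $D^\mu\langle k\rangle$ with $k\ne0$ buried in $\rad C^\mu$, and this is exactly where the sharpened Hom-statement $\ZHom_A(C^\mu,C^\mu/M)=\Hom_A(C^\mu,C^\mu/M)\cong K$ from Lemma~\ref{absolutely irreducible}(c) — which already incorporates the degree-preservation — does the work, mirroring the ungraded argument of \cite[Theorem~3.4]{GL} or \cite[Theorem~2.16]{M:Ulect}.
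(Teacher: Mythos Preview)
Your proposal is correct and follows exactly the route the paper intends: the paper's ``proof'' is the single sentence ``Using Lemma~\ref{absolutely irreducible} we obtain the following,'' and you have unpacked precisely the standard cellular-algebra argument behind that citation. The key step you correctly identify---that a putative extra composition factor $D^\mu\langle k\rangle$ of $C^\mu$ yields, via the surjection $C^\mu\twoheadrightarrow D^\mu$ and the inclusion of the relevant subquotient, an element of $\ZHom_A(C^\mu,C^\mu/M')$ which is either of nonzero degree (forcing $k=0$) or linearly independent from the canonical quotient map (forcing $\dim>1$)---is exactly what Lemma~\ref{absolutely irreducible}(c) is set up to rule out.
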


Next we study the graded projective $A$-modules with the aim of
describing the composition factors of these modules using the graded
decomposition matrix.

A graded $A$-module $M$ has a \textbf{graded cell module filtration}
if there exists a filtration
$$0=M_0\subset M_1\subset M_2\subset\dots\subset M_k=M$$ 
such that each $M_i$ is a graded submodule of $M$ and if $1\le i\le k$ then
$M_i/M_{i-1}\cong C^\lambda\<k\>$, for some $\lambda\in\P$ and some
$k\in\Z$. By \cite[Theorem~3.2, Theorem~3.3]{GordonGreen:GradedArtin}, we
know that every projective $A$-module is gradable.

\begin{Prop}\label{cell filtration}
  Suppose that $P$ is a projective $A$ module. Then $P$ has a graded cell
  module filtration.
\end{Prop}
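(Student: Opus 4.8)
The plan is to mimic the classical argument (Graham--Lehrer \cite{GL}) that projective modules over a cellular algebra have a cell-module filtration, keeping track of gradings throughout. First I would use the fact, recorded just before the statement, that every projective $A$-module is gradable, so it suffices to work in the graded category. Next I would recall the graded filtration of the regular module: order $\P=\{\lambda_1,\gdom\lambda_2\gdom\cdots\}$ so that $\lambda_i\gdom\lambda_j$ implies $i<j$, and set $A^{(i)}=\sum_{j\ge i}A^{\gedom\lambda_j}$ (equivalently the span of the $c^{\lambda}_{\s\t}$ with $\lambda\in\{\lambda_i,\lambda_{i+1},\dots\}$). Each $A^{(i)}$ is a graded two-sided ideal, $A^{(i)}/A^{(i+1)}\cong A^{\gedom\lambda_i}/A^{\gdom\lambda_i}$, and by \eqref{two cells} this quotient is isomorphic, as a graded right $A$-module, to $\bigoplus_{\s\in T(\lambda_i)}C^{\lambda_i}\<\deg\s\>$. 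Thus the regular module $A_A$ itself has a graded filtration with sections that are direct sums of shifts of graded cell modules.

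The second step is to pass from $A$ to an arbitrary graded projective $P$. Since $P$ is a direct summand of a free graded module $A\<d_1\>\oplus\cdots\oplus A\<d_m\>$, the filtration above induces a graded filtration of this free module whose sections are (shifts of) direct sums of graded cell modules; intersecting this filtration with $P$ gives a graded filtration of $P$ whose sections are graded submodules of direct sums of shifted cell modules. To upgrade "submodule of a direct sum of shifted cell modules" to "genuine graded cell-module filtration" I would invoke the key homological input, namely Lemma~\ref{absolutely irreducible}(c): for $\lambda,\mu\in\P$ and $M$ a graded submodule of $C^\lambda$, one has $\ZHom_A(C^\mu,C^\lambda/M)\neq0$ only if $\lambda\gedom\mu$, and $\Hom_A(C^\mu,C^\mu/M)\cong K$. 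Equivalently, $\ZHom_A(C^\lambda, C^\mu\<k\>)=0$ unless $\mu\gedom\lambda$, with the diagonal Hom one-dimensional in degree zero. This ordering property on Homs between cell modules is exactly what is needed to show, by induction on the weight poset, that the graded submodules of $P$ appearing in the induced filtration can be refined (and the order adjusted) so that each successive quotient is a single shifted cell module rather than a mere submodule of a sum of them — this is the standard cellular-algebra manipulation (cf. the proof of \cite[(3.7)]{GL}), and every step of it is degree-preserving because all the relevant Hom spaces are concentrated in the appropriate single degree.

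The third and final step is purely bookkeeping: having produced the filtration, note that each section $M_i/M_{i-1}$ is isomorphic to $C^\lambda\<k\>$ for some $\lambda\in\P$ and $k\in\Z$, which is precisely the definition of a graded cell module filtration given before the Proposition. I expect the main obstacle to be the second step: making sure that the refinement of the induced filtration on $P$ really does respect the grading, i.e. that each projection onto a cell-module section, and each splitting used, is homogeneous of the correct degree. This is where Lemma~\ref{absolutely irreducible}(c) does the real work, since it pins down not only which cell modules can appear but also forces the connecting maps to live in a single graded component; once that is in hand the classical argument of Graham and Lehrer goes through verbatim, now in $A\Mod$ rather than in the ungraded module category.
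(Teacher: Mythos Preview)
Your first step is right and matches the paper: refine $\gdom$ to a total order on $\P$ and filter $A$ by the resulting chain of graded two-sided ideals, with successive quotients $A^{\gedom\lambda_i}/A^{\gdom\lambda_i}\cong C^{*\lambda_i}\otimes_R C^{\lambda_i}$ by~(\ref{two cells}).

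The gap is in your second step. Intersecting the filtration of a free module with the summand $P$ does produce a graded filtration of $P$ whose sections embed in direct sums of shifted copies of a single $C^{\lambda_i}$, but Lemma~\ref{absolutely irreducible}(c) is not strong enough to promote those sections to genuine cell filtrations. That lemma controls only $\ZHom_A(C^\mu,-)$, and only for $\mu$ with $D^\mu\ne0$; to peel a copy of $C^{\lambda_i}$ off a submodule of $\bigoplus_j C^{\lambda_i}\<k_j\>$ you would need vanishing of extensions between cell modules, which is not available for general (graded) cellular algebras. Submodules of cell modules need not be cell-filtered, and cell modules themselves can be decomposable. The ``standard manipulation'' in \cite{GL} that you cite is in fact the tensor-product argument below, not a Hom-based refinement.

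The paper avoids the issue by tensoring rather than intersecting. Since $P$ is projective, $P\otimes_A-$ is exact, so applying it to the chain of two-sided ideals $0\subset A(\lambda_1)\subset\cdots\subset A(\lambda_N)=A$ gives a graded filtration of $P\otimes_A A\cong P$ with sections
\[
P\otimes_A\big(A^{\gedom\lambda_i}/A^{\gdom\lambda_i}\big)\ \cong\ (P\otimes_A C^{*\lambda_i})\otimes_R C^{\lambda_i}.
\]
The right-hand side is manifestly a direct sum of graded shifts of $C^{\lambda_i}$, because $P\otimes_A C^{*\lambda_i}$ is just a graded $R$-module. No refinement step is needed, and Lemma~\ref{absolutely irreducible} is not invoked at all.
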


\begin{proof}
  Fix a total ordering $\succ$ on
  $\P=\{\lambda_1\succ\lambda_2\succ\dots\succ\lambda_N\}$ which is
  compatible with $\gdom$ in the sense that if $\lambda\gdom\mu$ then
  $\lambda\succ\mu$. Let
  $A(\lambda_i)=\bigcup_{j\le i}A^{\gedom\lambda_i}$. Then
  $$0\subset A(\lambda_1)\subset A(\lambda_2)
                     \subset\dots\subset A(\lambda_N)=A$$
  is a filtration of $A$ by graded two-sided ideals.
  Tensoring with $P$ we have
  $$0\subseteq P\otimes_A A(\lambda_1)\subseteq
    P\otimes_A A(\lambda_2)\subseteq\dots\subseteq P\otimes_AA(\lambda_N)=P,$$
  a graded filtration of $P$. An easy exercise in the definitions
  (cf. \cite[Lemma~2.14]{M:Ulect}), shows that there is a short exact sequence
  $$0\to A(\lambda_{i-1})\to A(\lambda_i)
          \to A^{\gedom\lambda_i}/A^{\gdom\lambda_i}\to0.$$
  Since $P$ is projective, tensoring with $P$ is exact so the subquotients
  in the filtration of $P$ above are
  $$P\otimes_A A(\lambda_i)/P\otimes_AA(\lambda_{i-1})
          \cong P\otimes_A\(A^{\gedom\lambda_i}/A^{\gdom\lambda_i}\)
          \cong P\otimes_A(C^{*\lambda_i}\otimes_RC^{\lambda_i}),$$
   where the last isomorphism comes from (\ref{two cells}). Hence, $P$ has
   a graded cell module filtration as claimed.
\end{proof}

For each  $\mu\in\P_0$ let $P^\mu$ be the projective cover of $D^\mu$.
Then for each $k\in\Z$, $P^\mu\<k\>$ is the projective cover of
$D^\mu\<k\>$.

\begin{Lemma}\label{lm314}
  Suppose that $\lambda\in\P$ and $\mu\in\P_0$. Then:
  \begin{enumerate}
    \item $d_{\lambda\mu}(t)=\Dim{\Hom_A^\Z(P^\mu,C^\lambda)}$.
    \item $\Hom_A^\Z(P^\mu,C^\lambda)\cong P^\mu\otimes_A C^{*\lambda}$ as
      $\Z$-graded $K$-modules.
  \end{enumerate}
\end{Lemma}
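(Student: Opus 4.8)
The statement to prove is Lemma~\ref{lm314}, which computes graded decomposition numbers via homomorphisms from projectives into cell modules. The plan is to mimic the classical (ungraded) argument that decomposition numbers equal composition multiplicities in $\Hom$-spaces, keeping careful track of degree shifts throughout. The two parts are closely linked: part~(b) is the graded analogue of the standard adjunction/duality between $\Hom$ from a projective and tensoring with a dual module, and part~(a) follows from part~(b) together with the graded cell module filtration of $P^\mu$ from Proposition~\ref{cell filtration}.

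\medskip

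First I would prove part~(a). Since $P^\mu$ is projective, the functor $\Hom_A(P^\mu,-)$ is exact, and moreover it is exact on each graded piece, so $\Dim{\Hom_A^\Z(P^\mu,-)}$ is additive on short exact sequences of graded modules. Applying this to a graded composition series of $C^\lambda$ reduces the computation to evaluating $\Dim{\Hom_A^\Z(P^\mu, D^\nu\<k\>)}$ for the composition factors $D^\nu\<k\>$ of $C^\lambda$. By Lemma~\ref{absolutely irreducible}(a) each $D^\nu$ is absolutely irreducible, and $P^\mu$ is the projective cover of $D^\mu$, so by the standard argument (the image of any map $P^\mu\to D^\nu\<k\>$ is $0$ or all of $D^\nu\<k\>$, and a surjection forces $\nu=\mu$, $k=0$ since $P^\mu$ has simple head $D^\mu$) we get $\Hom_A(P^\mu,D^\nu\<k\>)\cong K$ if $(\nu,k)=(\mu,0)$ and $0$ otherwise; hence $\Dim{\Hom_A^\Z(P^\mu,D^\nu\<j\>)}=\delta_{\nu\mu}t^{-j}$. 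Summing over the composition factors with their shifts, each factor $D^\mu\<k\>$ contributes $t^{-(-k)}=t^k$ — here I need to be careful about the sign convention, using $\Hom_A(P^\mu\<d\>, D^\mu\<k\>)\cong\Hom_A(P^\mu, D^\mu\<k-d\>)$ so that the nonzero contribution is at $d=k$ — so the total is $\sum_k[C^\lambda:D^\mu\<k\>]t^k=d_{\lambda\mu}(t)$, as claimed.

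\medskip

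Next, part~(b). The natural map is the evaluation pairing: for a projective (hence, as a direct summand of a free module, ``reflexive enough'') module $P^\mu$, there is a canonical isomorphism of functors $\Hom_A(P^\mu,-)\cong P^\mu\otimes_A(-)^{*}$ where $(-)^{*}$ denotes the $A$-bimodule structure obtained via the anti-automorphism~$*$; concretely, for $C^\lambda$ the relevant dual is $C^{*\lambda}$ (the left module defined in Definition~\ref{graded cells}), viewed appropriately as a right $A$-module or as giving a bimodule pairing. I would set this up by first checking the isomorphism for $P^\mu=A$, where $\Hom_A(A,C^\lambda)\cong C^\lambda\cong A\otimes_A C^{*\lambda}$ tautologically (as graded $K$-modules, and the grading matches because the identity of $A$ sits in degree~$0$), then extend to finite direct sums and direct summands by additivity of both functors; since $P^\mu$ is a graded direct summand of a graded free module of finite rank (using that projectives are gradable, as cited before Proposition~\ref{cell filtration}), this gives the graded isomorphism $\Hom_A^\Z(P^\mu,C^\lambda)\cong P^\mu\otimes_A C^{*\lambda}$. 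One should check the degree bookkeeping: an element of $(P^\mu)_d$ tensored with an element of $(C^{*\lambda})_e$ has degree $d+e$, while a homogeneous map of degree $d+e$ corresponds under the identification to such a tensor; the conventions in \Sect2.1 for $\Hom_A^\Z$ and for $\deg(x\otimes y)$ on $C^\mu\otimes C^\mu$ were chosen precisely so this works.

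\medskip

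The main obstacle is purely bookkeeping rather than conceptual: getting the direction of every degree shift right, both in the adjunction $\Hom_A(M\<d\>,N)\cong\Hom_A(M,N\<-d\>)$ and in identifying $C^{*\lambda}$ as the correct ``dual'' (with the $*$-twist) so that the isomorphism in part~(b) is genuinely graded and not just graded up to an overall shift. Once part~(b) is established with the correct grading, part~(a) can alternatively be deduced from it directly: $\Dim{P^\mu\otimes_A C^{*\lambda}}$ can be computed from the graded cell filtration of $P^\mu$ (Proposition~\ref{cell filtration}) together with the graded analogue of Brauer–Humphreys reciprocity, i.e.\ the multiplicity of $C^\lambda\<k\>$ in a cell filtration of $P^\mu$ equals $[C^\lambda:D^\mu\<k\>]$ up to the appropriate shift — but the composition-series argument above is cleaner and self-contained, so I would present that as the proof of~(a) and use~(b) only where it is logically needed.
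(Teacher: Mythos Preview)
Your argument for part~(a) is correct and is precisely what the paper means by ``follows directly from the definition of projective covers.''

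Part~(b), however, has a real gap. You propose to check the isomorphism for $P^\mu=A$ and then ``extend to direct summands by additivity.'' But additivity does not let you pass to summands: if $A=P\oplus Q$ you only learn that $\Hom_A^\Z(P,C^\lambda)\oplus\Hom_A^\Z(Q,C^\lambda)$ and $(P\otimes_A C^{*\lambda})\oplus(Q\otimes_A C^{*\lambda})$ have the same graded dimension, not that the $P$-pieces match. Carrying an isomorphism through a splitting requires a \emph{natural} transformation, yet in the variable~$P$ the functor $\Hom_A(-,C^\lambda)$ is contravariant while $-\otimes_A C^{*\lambda}$ is covariant, so no such transformation exists in general. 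Indeed the conclusion can fail for an arbitrary algebra with anti-involution: take $A=K\times K$ with $*$ swapping the two factors, $P=e_1A$, and $C$ the first simple; then $\Hom_A(P,C)\cong K$ but $P\otimes_A C^{*}=0$.

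What rescues~(b) is the cellular structure itself. Writing $P^\mu=eA$ for a degree-zero primitive idempotent~$e$, one has $\Hom_A^\Z(eA,C^\lambda)\cong C^\lambda e$ and $eA\otimes_A C^{*\lambda}\cong e\,C^{*\lambda}=C^\lambda e^*$, so the content of~(b) is that $C^\lambda e\cong C^\lambda e^*$ as graded $K$-spaces. This holds because $e^*A\cong eA$: the homogeneous bilinear forms on the cell modules (Lemmas~\ref{symmetric form} and~\ref{graded radical}) force each $D^\nu$ to be its own contragredient dual, so $e^*A$ also has head~$D^\mu$ and hence $e,e^*$ are conjugate idempotents. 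The paper defers to Graham--Lehrer, whose argument uses exactly this feature of cellular algebras; your proof needs to invoke it explicitly rather than appeal to a non-existent functorial isomorphism.
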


\begin{proof} Part (a) follows directly from the definition of
    projective covers. Part~(b) follows using essentially the same
    argument as in the ungraded case; see the proof of \cite[Theorem~3.7(ii)]{GL}.
\end{proof}

\begin{Defn}[Graded Cartan matrix]
    Suppose that $A$ is a graded cellular algebra over a field. Then the
    \textbf{graded Cartan matrix} of $A$ is the matrix
  $\Cart=\(c_{\lambda\mu}(t)\)$, where
  $$c_{\lambda\mu}(t)=\sum_{k\in\Z} [P^\lambda:D^\mu\<k\>]\,t^k,$$
  for $\lambda,\mu\in\P_0$.
\end{Defn}

If $M=(m_{ij})$ is a matrix let $M^{\text{tr}}=(m_{ji})$ be its transpose.

\begin{Theorem}[Graded Brauer-Humphreys reciprocity]\label{cde-triangle}
  Suppose that $K$ is a field and that $A$ is a graded cellular
  $K$-algebra. Then $\Cart = \Dec^{\text{tr}}\Dec$.
\end{Theorem}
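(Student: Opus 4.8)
The plan is to prove $\Cart = \Dec^{\mathrm{tr}}\Dec$ by computing the graded composition multiplicities of the projective indecomposable modules $P^\mu$ through their graded cell module filtrations, exactly paralleling the classical Brauer--Humphreys reciprocity argument of Graham and Lehrer but keeping track of degree shifts with the indeterminate $t$. First I would fix $\mu\in\P_0$ and apply Proposition~\ref{cell filtration} to obtain a graded cell module filtration of $P^\mu$; the key numerical input is the multiplicity, counted with degree shift, of each $C^\lambda\langle k\rangle$ appearing as a subquotient. The natural way to extract this is via $\ZHom_A$ into cell modules: combining Lemma~\ref{lm314}(a), which says $d_{\lambda\mu}(t)=\Dim\Hom_A^\Z(P^\mu,C^\lambda)$, with the fact (from Lemma~\ref{absolutely irreducible}(c)) that $\ZHom_A(C^\nu,C^\lambda/M)$ can only be nonzero when $\lambda\gedom\nu$, one shows by a standard induction on the dominance order that the graded multiplicity $[P^\mu:C^\lambda\langle k\rangle]$ of $C^\lambda\langle k\rangle$ in any graded cell filtration of $P^\mu$ is well-defined (independent of the filtration) and that its generating function in $t$ equals $d_{\lambda\mu}(t)$.

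Next I would compute the graded Cartan invariant $c_{\lambda\mu}(t)=\sum_k[P^\lambda:D^\mu\langle k\rangle]\,t^k$ by summing composition factors through the cell filtration of $P^\lambda$. Since a graded cell filtration has subquotients $C^\nu\langle k\rangle$ with the generating function of multiplicities of $C^\nu$ equal to $d_{\nu\lambda}(t)$ (from the previous paragraph, with roles relabelled), and since each $C^\nu$ has graded composition factors recorded by $d_{\nu\mu}(t)=\sum_k[C^\nu:D^\mu\langle k\rangle]t^k$, the multiplicativity of generating functions under composition of filtrations gives
$$c_{\lambda\mu}(t)=\sum_{\nu\in\P} d_{\nu\lambda}(t)\,d_{\nu\mu}(t),$$
which is precisely the $(\lambda,\mu)$ entry of $\Dec^{\mathrm{tr}}\Dec$. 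The only subtlety here is bookkeeping: a cell subquotient $C^\nu\langle k\rangle$ contributes $D^\mu\langle k+j\rangle$ with multiplicity $[C^\nu:D^\mu\langle j\rangle]$, so the degree shifts add, and this is exactly why the polynomial entries multiply rather than merely the integer multiplicities — one must check that $\sum_k[P^\lambda:D^\mu\langle k\rangle]t^k = \sum_\nu\big(\sum_k[P^\lambda:C^\nu\langle k\rangle]t^k\big)\big(\sum_j[C^\nu:D^\mu\langle j\rangle]t^j\big)$ as an identity in $\Z[t,t^{-1}]$.

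The main obstacle, and the place requiring the most care, is establishing that the graded cell filtration multiplicities $[P^\mu:C^\lambda\langle k\rangle]$ are well-defined and equal to the coefficients of $d_{\lambda\mu}(t)$ — this is the graded refinement of the classical fact that $(P^\mu:C^\lambda)=[C^\lambda:D^\mu]$. The ungraded statement is \cite[Theorem~3.7]{GL}, but one needs the degree-graded version. I would handle this by inducting down the total order $\succ$ on $\P$ used in Proposition~\ref{cell filtration}: the top cell subquotient $C^{\lambda_1}$ occurs with graded multiplicity $\Dim\ZHom_A(C^{\lambda_1}, C^{\lambda_1})$-weighted count, and more generally, applying $\ZHom_A(-,C^\lambda)$ to the filtration and using Lemma~\ref{absolutely irreducible}(c) to kill the contributions of cell modules $C^\nu$ with $\nu\not\gedom\lambda$, together with the fact that $\Hom_A(C^\lambda, C^\lambda)\cong K$ concentrated in degree $0$ (so $\ZHom_A(C^\lambda\langle k\rangle,C^\lambda)\cong K$ in degree $k$), one isolates the generating function $\sum_k[P^\mu:C^\lambda\langle k\rangle]t^k$ and identifies it with $\Dim\Hom_A^\Z(P^\mu,C^\lambda)=d_{\lambda\mu}(t)$ via Lemma~\ref{lm314}(a). (Using Lemma~\ref{lm314}(b) one could alternatively phrase this through $P^\mu\otimes_A C^{*\lambda}$, which is how \cite{GL} proceeds.) Once that identification is in place, assembling the two generating-function computations yields $\Cart=\Dec^{\mathrm{tr}}\Dec$, and specializing $t\mapsto 1$ recovers the ungraded reciprocity, providing a consistency check.
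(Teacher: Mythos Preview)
Your overall strategy---use the graded cell filtration of $P^\lambda$ from Proposition~\ref{cell filtration}, identify the graded multiplicity of each $C^\nu$ with $d_{\nu\lambda}(t)$, then sum composition factors---is exactly the paper's approach, and your final identity $c_{\lambda\mu}(t)=\sum_{\nu\in\P}d_{\nu\lambda}(t)d_{\nu\mu}(t)$ is correct. However, the paper's execution is more direct and avoids the detour you flag as ``the main obstacle''. The explicit filtration built in Proposition~\ref{cell filtration} already identifies the $\nu$-th subquotient as $(P^\lambda\otimes_A C^{*\nu})\otimes_R C^\nu$ via (\ref{two cells}); then Lemma~\ref{lm314}(b) gives $P^\lambda\otimes_A C^{*\nu}\cong\ZHom_A(P^\lambda,C^\nu)$, whose graded dimension is $d_{\nu\lambda}(t)$ by Lemma~\ref{lm314}(a). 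No separate well-definedness argument for filtration multiplicities is needed, since one simply works with this specific filtration.

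Your proposed primary route---applying the contravariant functor $\ZHom_A(-,C^\lambda)$ to the filtration of $P^\mu$ and invoking Lemma~\ref{absolutely irreducible}(c)---has a genuine gap. That functor is only left exact, and Lemma~\ref{absolutely irreducible}(c) kills $\ZHom_A(C^\nu,C^\lambda)$ only when $\lambda\not\gedom\nu$ (note the direction: you wrote $\nu\not\gedom\lambda$, which is the wrong way round). Layers $C^\nu$ with $\lambda\gdom\nu$ strictly can still have $\ZHom_A(C^\nu,C^\lambda)\ne0$, so you cannot isolate the $C^\lambda$-contribution this way without extra input (Ext-vanishing or the like). Since you already note that Lemma~\ref{lm314}(b) and the tensor description give the answer directly, you should make that the argument rather than a parenthetical alternative: it is precisely what the paper does, in a short chain of equalities.
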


\begin{proof}Suppose that $\lambda,\mu\in\P_0$. Then by
  Proposition~\ref{cell filtration} and (\ref{two cells}) we have
  \begin{align*}
    c_{\lambda\mu}(t)&=\sum_{k\in\Z}[P^\lambda:D^\mu\<k\>]\,t^k\\
    &=\sum_{k\in\Z}\sum_{\nu\in\P}
           [(P^\lambda\otimes_A C^{*\nu})\otimes_RC^\nu:D^\mu\<k\>]\,t^k\\
    &=\sum_{k\in\Z}\sum_{\nu\in\P}
         \Dim{P^\lambda\otimes_A C^{*\nu}}[C^\nu:D^\mu\<k\>]\,t^k\\
    &=\sum_{\nu\in\P}\Dim{P^\lambda\otimes_A C^{*\nu}}
         \sum_{k\in\Z}[C^\nu:D^\mu\<k\>]\,t^k\\
         &=\sum_{\nu\in\P}d_{\nu\lambda}(t)d_{\nu\mu}(t),
  \end{align*}
where we have used Lemma \ref{lm314} in the last step.
\end{proof}

Let $K_0(A)$ be the (enriched) Grothendieck group of $A$. Thus,
$K_0(A)$ is the free $\Z[t,t^{-1}]$-module generated by symbols
$[M]$, where $M$ runs over the finite dimensional graded
$A$-modules, with relations $[M\<k\>]=t^k[M]$, for $k\in\Z$, and
$[M]=[N]+[P]$ whenever $0\rightarrow N\to M\to P\rightarrow 0$ is a short exact sequence of
graded $A$-modules. Then $K_0(A)$ is a free $\Z[t,t^{-1]}]$-module
with distinguished bases $\set{[D^\mu]|\mu\in\P_0}$ and
$\set{[C^\mu]|\mu\in\P_0}$. Similarly, let $K_0^*(A)$ be the
(enriched) Grothendieck group of finitely generated (graded)
projective $A$-modules. Then $K_0^*(A)$ is free as a
$\Z[t,t^{-1}]$-module with basis $\set{[P^\mu]|\mu\in\P_0)}$.
Replacing~$\P_0$ with~$\P$ in the definition of $K_0(A)$, gives the
free $\Z[t,t^{-1}]$-module ${\mathscr F(A)}$ which is generated by symbols
$\llbracket C^{\mu}\rrbracket$ for $\mu\in\P$. Theorem \ref{cde-triangle} then says that the
following diagram commutes:
$$\begin{diagram}
  \node{K_0^*(A)} \arrow{e,t}{\Dec}\arrow{se,b}{\Cart}
  \node{\mathscr F(A)} \arrow{s,r}{\Dec^{\text{tr}}}\\
  \node[2]{K_0(A)}
\end{diagram}$$

Recall from Definition~\ref{graded cellular def} that $A$ is equipped
with a graded anti-automorphism~$*$.  Let $M$ be a graded $A$-module.
The \textbf{contragredient dual} of $M$ is the graded $A$-module
$$M^\circledast = \ZHom_A(M,K)=\bigoplus_{d\in\Z}\Hom_A(M\<d\>,K)$$
where the action of $A$ is given by $(fa)(m)=f(ma^*)$, for all $f\in
M^\circledast$, $a\in A$ and~$m\in M$. As a vector space,
$M^\circledast_d=\Hom_A(M_{-d},K)$, so
$\Dim{M^\circledast}=\Dim[t^{-1}]M$.

\begin{Prop} \label{self dual}
  Suppose that $\mu\in\P_0$. Then $D^\mu\cong(D^\mu)^\circledast$.
\end{Prop}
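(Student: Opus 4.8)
The plan is to exhibit an explicit isomorphism $D^\mu\cong(D^\mu)^\circledast$ by showing that the cellular bilinear form $\langle\ ,\ \rangle_\mu$ induces a nondegenerate pairing on $D^\mu$ and that this pairing is homogeneous of degree zero. First I would recall from Lemma~\ref{graded radical} that the form $f\colon C^\mu\otimes_K C^\mu\to K$ given by $x\otimes y\mapsto\langle x,y\rangle_\mu$ is homogeneous of degree zero, so it descends to a form on $D^\mu=C^\mu/\rad C^\mu$ which is still homogeneous of degree zero and, by construction of $\rad C^\mu$, is nondegenerate on $D^\mu$. This nondegenerate form gives a $K$-linear map $\theta\colon D^\mu\to\ZHom_K(D^\mu,K)$, $x\mapsto\langle x,\ \cdot\ \rangle_\mu$, which is injective, and since $D^\mu$ is finite dimensional over $K$ with $\dim_K D^\mu=\dim_K\ZHom_K(D^\mu,K)$ (and the grading is preserved because the form has degree zero: $\theta(D^\mu_d)\subseteq\Hom_K((D^\mu)_{-d},K)$), it is an isomorphism of graded $K$-modules.

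Next I would check that $\theta$ is a homomorphism of $A$-modules when the target carries the contragredient action. By Lemma~\ref{symmetric form} we have $\langle xa,y\rangle_\mu=\langle x,ya^*\rangle_\mu$ for all $a\in A$ and $x,y\in C^\mu$, and this associativity passes to $D^\mu$. Unwinding the definition of the contragredient action $(fa)(y)=f(ya^*)$ on $(D^\mu)^\circledast=\ZHom_A(D^\mu,K)$, one computes that $\theta(xa)=\theta(x)a$, so $\theta$ intertwines the two actions. Here I should be slightly careful: $(D^\mu)^\circledast$ is defined using $\ZHom_A$ rather than $\ZHom_K$, so I must observe that a $K$-linear functional $D^\mu\to K$ is automatically $A$-linear once $K$ is regarded with the appropriate (trivial-ish) module structure induced by the cell datum — or, more cleanly, simply note that the image of $\theta$ does land in the $A$-submodule $\ZHom_A(D^\mu,K)$ because of the associativity identity, and that $\theta$ is surjective onto it by the dimension count together with Lemma~\ref{absolutely irreducible}(a), which guarantees $D^\mu$ (hence $(D^\mu)^\circledast$) is absolutely irreducible of the expected dimension.

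The one genuine subtlety, and the step I expect to require the most care, is the compatibility of the grading with the contragredient construction: one needs that $\theta$ is not merely a $K$-linear graded isomorphism and an ungraded $A$-module isomorphism, but a \emph{graded} $A$-module isomorphism, i.e. that it preserves degree. This follows precisely because $f$ has degree zero (Lemma~\ref{graded radical}), so that $\langle x,y\rangle_\mu\ne 0$ forces $\deg x+\deg y=0$; hence $\theta(D^\mu_d)\subseteq\Hom_K((D^\mu)_{-d},K)=(D^\mu)^\circledast_d$, which is exactly what degree preservation means. Combining these observations, $\theta\colon D^\mu\xrightarrow{\ \sim\ }(D^\mu)^\circledast$ is the desired isomorphism of graded $A$-modules, completing the proof.
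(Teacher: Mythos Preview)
Your proposal is correct and follows exactly the same approach as the paper: exhibit the isomorphism $d\mapsto\langle d,-\rangle_\mu$ using that $\langle\ ,\ \rangle_\mu$ descends to a nondegenerate homogeneous form of degree zero on $D^\mu$ (via Lemma~\ref{graded radical}) and is associative with respect to $*$ (via Lemma~\ref{symmetric form}). The paper's proof is a two-line sketch of precisely this, and your extra care about the $\ZHom_A$ versus $\ZHom_K$ issue is warranted---the paper's definition of $M^\circledast$ almost certainly intends $K$-linear functionals with the $*$-twisted action, which is what you use.
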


\begin{proof} 
    By Lemma~\ref{graded radical} $\<\ ,\ \>_\mu$ restricts to give a
non-degenerate homogeneous bilinear form of degree zero on $D^\mu$.
Therefore, if $d$ is any non-zero element of $D^\mu$ then the map
$D^\mu\longrightarrow(D^\mu)^\circledast$ given by $d\mapsto\<d,-\>_\mu$
gives the desired isomorphism.
\end{proof}

If $M$ is a graded $A$-module then 
$(M\langle k\rangle)^\circledast\cong (M^\circledast)\langle -k\rangle$ 
as $K$-vector spaces, for any $k\in\Z$. Consequently, contragredient
duality induces a $\Z$-linear automorphism
$\rule[1.2ex]{.6em}{.1ex}\map{K_0(A)}K_0(A)$ which is determined by
$$\overline{ t^k[M^\circledast]} = t^{-k}[M],$$
for all $M\in A\Mod$ and all $k\in\Z$. 

If $\mu\in\P_0$ then $\overline{[D^\mu]}=[D^\mu]$ by
Proposition~\ref{self dual}. Define polynomials
$e_{\lambda\mu}(t)\in\Z[t,t^{-1}]$ by setting
$(e_{\lambda\mu}(-t))=\Dec^{-1}$. Then $e_{\mu\mu}=1$ and
$$[D^\mu]=[C^\mu] +\sum_{\substack{\nu\in\P_0\\\mu\gdom\nu}} 
                  e_{\mu\nu}(-t)[C^\nu].$$
(Following the philosophy of the Kazhdan-Lusztig conjectures, we define the
polynomials $e_{\lambda\mu}(-t)$ in the hope that
$e_{\lambda\mu}(t)\in\N[t]$.) \textit{A priori},
$d_{\lambda\mu}(t)\in\N[t,t^{-1}]$ and $e_{\lambda\mu}(t)\in\Z[t,t^{-1}]$.
In contrast, we have a `Kazhdan-Lusztig basis' for~$K_0(A)$.

\begin{Prop}\label{KL}
   There exists a unique basis $\set{[E^\mu]|\mu\in\P_0}$ of~$K_0(A)$
   such that if $\mu\in\P_0$ then $\overline{[E^\mu]}=[E^\mu]$ and
    $$ [E^\mu]=[C^{\mu}]+\sum_{\substack{\lam\in\P_0\\\mu\gdom\lam}}
                    f_{\mu\lam}(-t)[C^{\lam}],
    $$
    for some polynomials $f_{\mu\lam}(t)\in t\Z[t]$, for $\lambda\in\P_0$.
\end{Prop}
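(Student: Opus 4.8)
The plan is to mimic the standard "bar-invariant basis" existence-and-uniqueness argument familiar from Kazhdan--Lusztig theory (and from the treatment of canonical bases for crystal Fock spaces), working in the $\Z[t,t^{-1}]$-module $K_0(A)$ with its distinguished basis $\set{[C^\mu]|\mu\in\P_0}$ and the $\Z$-linear bar involution $\overline{\phantom{x}}$ determined by $\overline{t^k[M^\circledast]}=t^{-k}[M]$.

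First I would record the action of the bar involution on the cell-module basis. Using Proposition~\ref{self dual} we have $\overline{[D^\mu]}=[D^\mu]$ for all $\mu\in\P_0$, and the change-of-basis formula $[D^\mu]=[C^\mu]+\sum_{\mu\gdom\nu}e_{\mu\nu}(-t)[C^\nu]$ together with its inverse $[C^\mu]=[D^\mu]+\sum_{\mu\gdom\nu}d'_{\mu\nu}(t)[D^\nu]$ (where $(d_{\lambda\mu}(t))=\Dec$ is unitriangular with respect to $\gedom$ by Lemma~\ref{graded decomp}, hence invertible over $\Z[t,t^{-1}]$) lets me expand $\overline{[C^\mu]}$ in the $[C^\nu]$-basis. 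The upshot is that $\overline{[C^\mu]}=[C^\mu]+\sum_{\mu\gdom\nu}r_{\mu\nu}(t)[C^\nu]$ for some $r_{\mu\nu}(t)\in\Z[t,t^{-1}]$; this "unitriangular bar action" is exactly the hypothesis needed to run the Kazhdan--Lusztig straightening lemma.

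Next I would prove existence and uniqueness of $[E^\mu]$ by downward induction along a linear refinement $\mu=\mu_1\succ\mu_2\succ\dots$ of the dominance order restricted to $\P_0$ (as in the proof of Proposition~\ref{cell filtration}). For uniqueness: if $E$ and $E'$ are two bar-invariant elements both of the form $[C^\mu]+\sum_{\mu\gdom\lambda}(\text{stuff in }t\Z[t])[C^\lambda]$, then $E-E'$ is bar-invariant and lies in $\bigoplus_{\mu\gdom\lambda}t\Z[t][C^\lambda]$; writing $E-E'=\sum_\lambda g_\lambda(t)[C^\lambda]$ with $g_\lambda\in t\Z[t]$, bar-invariance forces (by triangularity of the bar action, working down from the $\gdom$-maximal $\lambda$ with $g_\lambda\neq0$) each $g_\lambda(t)=g_\lambda(t^{-1})$ as an element of $\Z[t,t^{-1}]$, but the only element of $t\Z[t]$ fixed by $t\leftrightarrow t^{-1}$ is $0$, so $E=E'$. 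For existence: set $[E^\mu]:=[C^\mu]$ plus a correction term built inductively — having chosen the bar-invariant $[E^\lambda]$ for all $\lambda$ with $\mu\gdom\lambda$, one subtracts suitable $\Z$-multiples of the $[E^\lambda]$ from $[C^\mu]$ to kill, degree by degree, the non-$t\Z[t]$ part of the coefficients in $\overline{[C^\mu]}-[C^\mu]$; the standard argument shows this process terminates and the coefficients $f_{\mu\lambda}(-t)$ that appear lie in $t\Z[t]$, equivalently $f_{\mu\lambda}(t)\in t\Z[t]$.

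The only real subtlety — and the step I would be most careful about — is checking that the bar involution is well-defined and $\Z$-linear on $K_0(A)$ in the first place, i.e.\ that $t^k[M^\circledast]\mapsto t^{-k}[M]$ respects the defining relations of $K_0(A)$ (additivity on short exact sequences and compatibility with degree shifts); but this is already asserted in the text preceding the statement, so I may simply invoke it. Granting that, the proof is the purely formal Kazhdan--Lusztig lemma applied to the unitriangular bar action derived in the first step, and I expect it to go through with no genuine obstacle; the induction along $\succ$ is the same bookkeeping as in Proposition~\ref{cell filtration} and in \cite[Theorem~3.4]{GL}.
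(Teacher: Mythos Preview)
Your proposal is correct and follows essentially the same approach as the paper. The paper's proof is very terse: it observes (via Proposition~\ref{self dual}) that the bar involution acts unitriangularly on the $[C^\lambda]$ basis, exactly as you derive, and then simply invokes the well-known Kazhdan--Lusztig inductive argument (citing \cite{KL} and \cite{Du:abstractKL}) without spelling out the existence and uniqueness steps that you have written out in detail.
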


\begin{proof} Using Proposition \ref{self dual} it is easy to see
    that if $\lam\in\P_0$ then there exist polynomials
    $r_{\lam\mu}(t)\in\Z[t,t^{-1}]$, for $\mu\in\P_0$, such that
    $$\overline{[C^{\lam}]}=[C^{\lam}]
           +\sum_{\substack{\mu\in\P_0\\\lambda\gdom\mu}}r_{\lam\mu}(t)[C^{\mu}].
    $$
    The Corollary follows from this observation using a well-known
    inductive argument due to Kazhdan and Lusztig; see
    \cite[Theorem~1.1]{KL} or \cite[1.2]{Du:abstractKL}. 
\end{proof}

It seems unlikely to us that there is a mild condition on $A$ which ensures
that $[E^\mu]=[D^{\mu}]$, or equivalently, $d_{\lambda,\mu}(t)\in t\N[t]$
when $\lambda\gdom\mu$. We conclude this section by discussing a strong
assumption on $A$ which achieves this.

A graded $A$-module $M=\bigoplus_i M_i$ is \textbf{positively graded} if
$M_i=0$ whenever $i<0$.  It is easy to check that a graded cellular
algebra $A$ is positively graded if and only if $\deg\s\ge0$, for all
$\s\in T(\lambda)$, for $\lambda\in\P$. Consequently, if $A$ is positively
graded then so is each cell module of~$A$.

A graded $A$-module $M=\bigoplus_i M_i$ is \textbf{pure of degree $d$}
if $M=M_d$.

\begin{Lemma}
    Suppose that $A$ is a positively graded cellular algebra over a
    field $K$ and suppose that $\lambda\in\P$ and $\mu\in\P_0$. Then:
    \begin{enumerate}
	\item $D^\mu$ is pure of degree $0$; and,
	\item $d_{\lambda\mu}(t)\in\N[t]$.
    \end{enumerate}
\end{Lemma}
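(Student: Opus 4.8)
The plan is to show that $D^\mu$ lives entirely in degree $0$ and then deduce the statement about graded decomposition numbers from the triangularity of the graded decomposition matrix together with positivity of the grading.

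First I would observe that since $A$ is positively graded, Corollary~\ref{graded dimension} shows that each cell module $C^\lambda$ is positively graded, and in particular $C^\mu$ is positively graded with $C^\mu_0\ne0$ (there is always at least one $\s\in T(\mu)$, and $\deg\s\ge0$; if $\deg\s>0$ for all $\s$ then by Lemma~\ref{graded radical} the form $\<\ ,\ \>_\mu$ would be identically zero on $C^\mu$, forcing $D^\mu=0$, contrary to $\mu\in\P_0$). So pick $\s\in T(\mu)$ with $\deg\s=0$. By Lemma~\ref{graded radical} the radical form is homogeneous of degree zero, hence $\rad C^\mu$ is a graded submodule and $D^\mu=C^\mu/\rad C^\mu$ is positively graded. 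For part~(a) the key point is that $D^\mu$ is generated as an $A$-module by its degree-$0$ component: indeed $D^\mu_0$ is nonzero (the image of $c^\mu_\s$ survives since $\<c^\mu_\s,-\>_\mu$ is not identically zero by non-degeneracy of the induced form on $D^\mu$), and $D^\mu_0\cdot A\subseteq\bigoplus_{i\ge0}D^\mu_i$ is a graded submodule of the irreducible module $D^\mu$ (Lemma~\ref{absolutely irreducible}(a)), hence equals $D^\mu$. Now suppose $D^\mu_k\ne0$ for some $k>0$. Because $D^\mu$ is irreducible, $D^\mu_k\cdot A=D^\mu$, but $D^\mu_k\cdot A_{\ge0}\subseteq\bigoplus_{i\ge k}D^\mu_i$, which cannot contain the nonzero degree-$0$ part $D^\mu_0$ — contradiction. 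Hence $D^\mu=D^\mu_0$ is pure of degree $0$.

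For part~(b), I would use Lemma~\ref{graded decomp}: $d_{\lambda\mu}(t)\in\N[t,t^{-1}]$ always, so it suffices to rule out negative powers of $t$. If $[C^\lambda:D^\mu\<k\>]\ne0$ then $D^\mu\<k\>$ occurs as a graded composition factor of $C^\lambda$, so $(D^\mu\<k\>)_j=D^\mu_{j-k}\ne0$ forces $j=k$ (as $D^\mu$ is pure of degree $0$ by part~(a)), and since $C^\lambda$ is positively graded we must have $k\ge0$. Therefore every exponent appearing in $d_{\lambda\mu}(t)$ is non-negative, i.e. $d_{\lambda\mu}(t)\in\N[t]$.

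I do not anticipate a serious obstacle here; the only point requiring a little care is the genericity argument establishing that $D^\mu_0\ne0$, which rests on the non-degeneracy of the homogeneous form on $D^\mu$ coming from Lemma~\ref{graded radical} and the definition of $D^\mu$ as the quotient by the radical of $\<\ ,\ \>_\mu$. Everything else is a direct consequence of purity plus positivity of the grading on cell modules, which in turn follows from Corollary~\ref{graded dimension} and the hypothesis $\deg\s\ge0$.
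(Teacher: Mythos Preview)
Your argument for part~(b) matches the paper's exactly. For part~(a) your route works but is more circuitous than necessary, and there is a small circularity in the justification that $D^\mu_0\ne0$: you pick $\s$ with $\deg\s=0$ and then assert $c^\mu_\s\notin\rad C^\mu$ ``by non-degeneracy of the induced form on $D^\mu$'', but non-degeneracy on $D^\mu$ only tells you something about elements already known to be nonzero there. The fix is immediate: since $D^\mu\ne0$ the form on $C^\mu$ is not identically zero, so there exist $\s,\t$ with $\<c^\mu_\s,c^\mu_\t\>_\mu\ne0$; homogeneity (Lemma~\ref{graded radical}) forces $\deg\s+\deg\t=0$, and positivity then gives $\deg\s=\deg\t=0$, whence $c^\mu_\s\notin\rad C^\mu$.

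Notice, however, that this last observation already proves (a) outright, with no need for your irreducibility argument: since $\<x,y\>_\mu\ne0$ forces $\deg x=\deg y=0$, every homogeneous element of positive degree lies in $\rad C^\mu$, i.e.\ $C^\mu_+\subseteq\rad C^\mu$. Hence $D^\mu=C^\mu/\rad C^\mu$ is a quotient of $C^\mu/C^\mu_+=C^\mu_0$ and is therefore pure of degree~$0$. This is precisely the paper's argument. Your generation/irreducibility step is correct, but it becomes redundant once you see that the radical already contains all of $C^\mu_+$.
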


\begin{proof}The bilinear form $\<\ ,\ \>$ on $C^\mu$ is homogeneous
    of degree~$0$ by Lemma~\ref{graded radical}. Therefore, if $x,y\in
    C^\mu$ and $\<x,y\>_\mu\ne0$ then $\deg x+\deg y=0$, so that
    $x,y\in C^\mu_0$. This implies~(a). In
    turn, this implies (b) because $D^\mu\<k\>$ can only be a
    composition factor of $C^\lambda$ if  $k\ge0$ (and
    $\lambda\gedom\mu$) since $A$ is positively graded. 
\end{proof}

In the ungraded case, Graham and Lehrer~\cite[Remark 3.10]{GL} observed
that a cellular algebra is quasi-hereditary if and only if $\P=\P_0$. This
is still true in the graded setting. Conversely, any graded split
quasi-hereditary algebra that has a graded duality which fixes the simple
modules is a graded cellular algebra by the arguments of Du and
Rui~\cite[Cor.~6.2.2]{DuRui:strat}.  Similarly, it is easy to see that if
$A$ is a positively graded cellular algebra such that $\P=\P_0$ then
$A\Mod$ is a positively graded highest weight category with duality as
defined in \cite{CPS:HomDual}.

If $M=\bigoplus_{i\ge0}M_i$ is a positive graded $A$-module let
$M_+=\bigoplus_{i>0}M_i$. If $A$ is positively graded then $M_+$ is a
graded $A$-submodule of $M$. Let $\Rad M$ be the Jacobson radical of~$M$.

As the following Lemma indicates, there do exist positively graded
quasi-hereditary cellular algebras such that, in the notation of
Proposition~\ref{KL}, $[D^\mu]\ne[E^\mu]$ for all $\mu\in\P=\P_0$.

\begin{Lemma}
    Suppose that $A$ is a positive graded quasi-hereditary cellular algebra
    over a field. Then the following are equivalent:
    \begin{enumerate}
      \item $A_0\cong A/A_+$ is a (split) semisimple algebra;
      \item $\Rad A=A_+$;
      \item $\rad C^\mu=C^\mu_+$, for all $\mu\in\P$;
      \item $[D^\mu]=[E^\mu]$, for all $\mu\in\P$; and,
      \item $d_{\lambda\mu}(t)\in t\N[t]$, for all $\lambda\ne\mu\in\P$.
    \end{enumerate}
\end{Lemma}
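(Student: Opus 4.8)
The plan is to prove the five statements equivalent by establishing the cycle (a)$\Leftrightarrow$(b), (b)$\Leftrightarrow$(c), (c)$\Leftrightarrow$(e), and (c)$\Leftrightarrow$(d), using that $A$ is positively graded throughout so that $A_+$, $M_+$, $C^\mu_+$ all make sense and are graded submodules. The core structural fact I would lean on is that for a positively graded algebra the graded Jacobson radical $\Rad M$ always contains $M_+$ (since $M/M_+$ is a module over the semisimple-at-degree-zero quotient only when that quotient is semisimple), and that $\Rad M = M_+$ precisely when $A_0$ is semisimple; this is where (a)$\Leftrightarrow$(b) comes from, essentially by the standard fact that $\Rad A \supseteq A_+$ always and $A/A_+ \cong A_0$, so $\Rad A = A_+$ iff $A_0$ has zero Jacobson radical, i.e. $A_0$ is semisimple (and it is split semisimple because, by Theorem~\ref{graded simples} and Corollary~\ref{ungraded simples}, every field is a splitting field for the cellular algebra).

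For (b)$\Leftrightarrow$(c): if $\Rad A = A_+$ then for any graded $A$-module $M$ one has $\Rad M = M\cdot\Rad A = M\cdot A_+ \subseteq M_+$, and the reverse inclusion $M_+\subseteq\Rad M$ holds because $M/M_+$ is a semisimple $A_0$-module on which $A_+$ acts as zero. Applying this to $M = C^\mu$ gives $\Rad C^\mu = C^\mu_+$; but since $A$ is quasi-hereditary, $\P=\P_0$, so by Lemma~\ref{absolutely irreducible}(b) the Jacobson radical of $C^\mu$ is $\rad C^\mu$, giving (c). Conversely, if $\rad C^\mu = C^\mu_+$ for all $\mu$, then since $A$ is free with a graded cellular basis, $A$ has a graded filtration with subquotients direct sums of shifts of cell modules (by~\eqref{two cells}), and $\Rad A = A_+$ follows by comparing the degree-$0$ parts, using that $A_0$ acting on each $C^\mu_0 = D^\mu_0$ is semisimple — this direction needs a small argument that the radical of a filtered module is controlled by the radicals of the subquotients, which in the positively graded setting reduces to a statement purely about $A_0$-modules in degree $0$.

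For the decomposition-number statements: (c)$\Leftrightarrow$(e) follows from the graded filtration of $C^\lambda$ by cell modules being replaced by its radical filtration — if $\rad C^\mu = C^\mu_+$ for all $\mu$ then every composition factor $D^\mu\langle k\rangle$ of $C^\lambda$ with $\lambda\neq\mu$ occurs strictly inside $C^\lambda_+$ (the only degree-$0$ composition factor of $C^\lambda$ is $D^\lambda$ at $k=0$, since $D^\lambda$ is pure of degree $0$ by the previous Lemma), forcing $k>0$, hence $d_{\lambda\mu}(t)\in t\N[t]$; conversely $d_{\lambda\mu}(t)\in t\N[t]$ for $\lambda\neq\mu$ says $\soc$-wise that no proper composition factor of $C^\mu$ sits in degree $0$, which combined with self-duality of $D^\mu$ (Proposition~\ref{self dual}) and positivity gives $\rad C^\mu = C^\mu_+$. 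Finally (d)$\Leftrightarrow$(e): unravelling Proposition~\ref{KL}, $[D^\mu]=[E^\mu]$ for all $\mu$ is equivalent to $d_{\lambda\mu}(t)\in t\N[t]$ for $\lambda\gdom\mu$ via the triangular change of basis between $\{[C^\mu]\}$ and $\{[D^\mu]\}$ and the uniqueness of the bar-invariant basis — here I would invoke the Kazhdan--Lusztig-style uniqueness already used in the proof of Proposition~\ref{KL}, noting that $[D^\mu]$ is bar-invariant by Proposition~\ref{self dual} and has the required triangular shape exactly when the decomposition polynomials lie in $t\N[t]$.

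The main obstacle I anticipate is the direction (c)$\Rightarrow$(b) (equivalently assembling $\Rad A = A_+$ from the cell-module data): one must rule out that the radical of $A$ is strictly smaller than $A_+$ even though every cell module has radical equal to its strictly-positive part, which requires knowing that the extensions between the cell-module subquotients of $A$ do not conspire to put a simple module in degree $0$ outside the "diagonal". This is handled by the observation that $A_0$ itself, being the degree-$0$ part, is a quotient of $A$ and acts on each $D^\mu_0$, so its semisimplicity is forced once each $\rad C^\mu$ is concentrated in positive degrees; making this precise is the one genuinely non-formal step, and everything else is bookkeeping with graded filtrations and the results already established in Section~2.
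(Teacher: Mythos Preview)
Your proposal is correct and follows essentially the same route as the paper: the paper's one-line argument observes that, since $A$ is quasi-hereditary and positively graded, all five conditions are equivalent to the single statement $D^\mu\cong C^\mu/C^\mu_+$, which is exactly your pivotal condition~(c). For the step (c)$\Rightarrow$(a) that worries you, the cleanest argument is the dimension count $\dim A_0=\sum_\mu(\dim C^\mu_0)^2=\sum_\mu(\dim D^\mu)^2$ (the first equality from the degree-zero cellular basis elements, the second from~(c)), which forces the split algebra $A_0$ to be semisimple.
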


\begin{proof}
    As $A$ is quasi-hereditary, if $\mu\in\P$ then $D^\mu\ne0$ and
    $\rad C^\mu=\Rad C^\mu$ by the general theory of cellular algebras
    (by Lemma~\ref{absolutely irreducible}). Therefore, since $A$ is
    positively graded, all of the statements in the Lemma are easily
    seen to be equivalent to the condition that $D^\mu\cong
    C^\mu/C^\mu_+$, for all $\mu\in\P$.
\end{proof}
\section{Khovanov-Lauda--Rouquier algebras and Hecke algebras}
In this section, following~\cite{BK:GradedKL}, we set our notation and
define the cyclotomic Khovanov-Lauda--Rouquier algebras of type~$A$ and recall
Brundan and Kleshchev's graded isomorphism theorem.

\subsection{Cyclotomic Khovanov-Lauda--Rouquier algebras}
As in section~2, let~$R$ be a commutative integral domain with~$1$.

Throughout this paper we fix an integer $e$ such that either $e=0$ or
$e\ge2$. Let $\Gamma_e$ be the oriented quiver with vertex set $I=\Z/e\Z$
and with directed edges $i\longrightarrow i+1$, for all $i\in I$. Thus,
$\Gamma_e$ is the quiver of type $A_\infty$ if $e=0$, and if $e\ge2$ then
it is a cyclic quiver of type $A^{(1)}_e$:
\begin{center}
\begin{tabular}{*5c}
  \begin{tikzpicture}[scale=0.8,decoration={curveto, markings,
            mark=at position 0.6 with {\arrow{>}}
    }]
    \useasboundingbox (-1.7,-0.7) rectangle (1.7,0.7);
    \foreach \x in {0, 180} {
      \shade[ball color=blue] (\x:1cm) circle(4pt);
    }
    \tikzstyle{every node}=[font=\tiny]
    \draw[postaction={decorate}] (0:1cm) .. controls (90:5mm) .. (180:1cm) 
           node[left,xshift=-0.5mm]{$0$};
    \draw[postaction={decorate}] (180:1cm) .. controls (270:5mm) .. (0:1cm) 
           node[right,xshift=0.5mm]{$1$};
  \end{tikzpicture} 
& 
  \begin{tikzpicture}[scale=0.7,decoration={ markings,
            mark=at position 0.6 with {\arrow{>}}
    }]
    \useasboundingbox (-1.7,-0.7) rectangle (1.7,1.4);
    \foreach \x in {90,210,330} {
      \shade[ball color=blue] (\x:1cm) circle(4pt);
    }
    \tikzstyle{every node}=[font=\tiny]
    \draw[postaction={decorate}]( 90:1cm)--(210:1cm) node[below left]{$0$};
    \draw[postaction={decorate}](210:1cm)--(330:1cm) node[below right]{$1$};
    \draw[postaction={decorate}](330:1cm)--( 90:1cm) 
           node[above,yshift=.5mm]{$2$};
  \end{tikzpicture} 
& 
  \begin{tikzpicture}[scale=0.7,decoration={ markings,
            mark=at position 0.6 with {\arrow{>}}
    }]
    \useasboundingbox (-1.7,-0.7) rectangle (1.7,0.7);
    \foreach \x in {45, 135, 225, 315} {
      \shade[ball color=blue] (\x:1cm) circle(4pt);
    }
    \tikzstyle{every node}=[font=\tiny]
    \draw[postaction={decorate}] (135:1cm) -- (225:1cm) node[below left]{$0$};
    \draw[postaction={decorate}] (225:1cm) -- (315:1cm) node[below right]{$1$};
    \draw[postaction={decorate}] (315:1cm) -- (45:1cm) node[above right]{$2$};
    \draw[postaction={decorate}] (45:1cm) -- (135:1cm) node[above left]{$3$};
  \end{tikzpicture} 
& 
  \begin{tikzpicture}[scale=0.7,decoration={ markings,
            mark=at position 0.6 with {\arrow{>}}
    }]
    \useasboundingbox (-1.7,-0.7) rectangle (1.7,0.7);
    \foreach \x in {18,90,162,234,306} {
      \shade[ball color=blue] (\x:1cm) circle(4pt);
    }
    \tikzstyle{every node}=[font=\tiny]
    \draw[postaction={decorate}] (162:1cm) -- (234:1cm)node[below left]{$0$};
    \draw[postaction={decorate}] (234:1cm) -- (306:1cm) node[below right]{$1$};
    \draw[postaction={decorate}] (306:1cm) -- (18:1cm) node[above right]{$2$};
    \draw[postaction={decorate}] (18:1cm) -- (90:1cm) 
           node[above,yshift=.5mm]{$4$};
    \draw[postaction={decorate}] (90:1cm) -- (162:1cm) node[above left]{$5$};
  \end{tikzpicture} 
&\raisebox{3mm}{$\dots$}
\\[4mm]
  $e=2$&$e=3$&$e=4$&$e=5$&
\end{tabular}
\end{center}
Let $(a_{i,j})_{i,j\in I}$ be the symmetric Cartan matrix associated
with $\Gamma_e$, so that
$$a_{i,j}=\begin{cases}
    2 & \text{ if }i=j,\\
    0 & \text{ if }i\ne j\pm 1,\\
    -1 & \text{ if $e\ne2$ and }i=j\pm 1,\\
    -2 & \text{ if $e=2$ and }i=j+1.
\end{cases}$$
Following Kac~\cite[Chapt.~1]{Kac}, let $(\mathfrak h,\Pi,\check\Pi)$
be a realization of the Cartan matrix, and $\set{\alpha_i|i\in I}$
the associated set of simple roots, $\set{\Lambda_i|i\in I}$ the
fundamental dominant weights, and $(\cdot,\cdot)$ the bilinear form
determined by
$$(\alpha_i,\alpha_j)=a_{i,j}\qquad\text{and}\qquad
(\Lambda_i,\alpha_j)=\delta_{ij},\qquad\text{for }i,j\in I.$$
Finally, let $P_+=\bigoplus_{i\in I}\N\Lambda_i$ be the dominant
weight lattice of $(\mathfrak h,\Pi,\check\Pi)$ and let
$Q_+=\bigoplus_{i\in I}\N\alpha_i$ be the positive root lattice. The
Kac-Moody Lie algebra corresponding to this data is $\widehat{\mathfrak{sl}}_e$
if $e>0$ and $\mathfrak{sl}_\infty$ if $e=0$.

For the remainder of this paper fix a (dominant weight $\Lambda\in P_+$ and
a non-negative integer~$n$.  Set $\ell=\sum_{i\in I}(\Lambda,\alpha_i)$. A
\textbf{multicharge} for $\Lambda$ is any sequence of integers
$\charge=(\kappa_1,\dots,\kappa_\ell)\in\Z^\ell$ such that 
\begin{enumerate}
  \item $(\Lambda,\alpha_i)=\#\set{1\le s\le\ell|\kappa_s\equiv i\pmod e}$,
for $i\in I$, 
\item if $e\ne0$ then $\kappa_s-\kappa_{s+1}\ge n$, for $1\le s<\ell$, 
\end{enumerate}
where in~(a) we use the convention that $i\pmod e=i$ if $e=0$. 

There are many different choices of multicharge for $\Lambda$. For the rest
of this paper we fix an arbitrary multicharge $\charge$ satisfying the two
conditions above.  For the rest of this paper we fix an arbitrary
multicharge $\charge$ satisfying the two conditions above. All of the
bases considered in this paper, but none of the algebras, depend upon our
choice of multicharge. The assumption that $\kappa_s-\kappa_{s+1}\ge n$
when $e\ne0$ is not essential. It is used in section~4 to streamline our
choice of modular system for the cyclotomic Hecke algebras.

The following algebra has its origins in the work of Khovanov and
Lauda~\cite{KhovLaud:diagI},  Rouquier~\cite{Rouq:2KM} and Brundan and
Kleshchev~\cite{BK:GradedKL}.

\begin{Defn}\label{relations}
  The \textbf{Khovanov-Lauda--Rouquier algebra}, or 
  \textbf{quiver Hecke algebra}, $\R$ of weight
  $\Lambda$ and type $\Gamma_e$ is the unital associative $R$-algebra
  with generators
  $$\{\psi_1,\dots,\psi_{n-1}\} \cup
         \{ y_1,\dots,y_n \} \cup \set{e(\bi)|\bi\in I^n}$$
  and relations
\begin{align*}
y_1^{(\Lambda,\alpha_{i_1})}e(\bi)&=0,
& e(\bi) e(\bj) &= \delta_{\bi\bj} e(\bi),
&{\textstyle\sum_{\bi \in I^n}} e(\bi)&= 1,\\
y_r e(\bi) &= e(\bi) y_r,
&\psi_r e(\bi)&= e(s_r{\cdot}\bi) \psi_r,
&y_r y_s &= y_s y_r,\\
\end{align*}
\vskip-38pt
\begin{align*}
\psi_r y_s  &= y_s \psi_r,&\text{if }s \neq r,r+1,\\
\psi_r \psi_s &= \psi_s \psi_r,&\text{if }|r-s|>1,\\
\end{align*}
\vskip-30pt
\begin{align*}
  \psi_r y_{r+1} e(\bi) &= \begin{cases}
      (y_r\psi_r+1)e(\bi),\hspace*{18mm} &\text{if $i_r=i_{r+1}$},\\
    y_r\psi_r e(\bi),&\text{if $i_r\neq i_{r+1}$}
  \end{cases} \\
  y_{r+1} \psi_re(\bi) &= \begin{cases}
      (\psi_r y_r+1) e(\bi),\hspace*{18mm} &\text{if $i_r=i_{r+1}$},\\
    \psi_r y_r e(\bi), &\text{if $i_r\neq i_{r+1}$}
  \end{cases}\\
  \psi_r^2e(\bi) &= \begin{cases}
       0,&\text{if $i_r = i_{r+1}$},\\
      e(\bi),&\text{if $i_r \ne i_{r+1}\pm1$},\\
      (y_{r+1}-y_r)e(\bi),&\text{if  $e\ne2$ and $i_{r+1}=i_r+1$},\\
       (y_r - y_{r+1})e(\bi),&\text{if $e\ne2$ and  $i_{r+1}=i_r-1$},\\
      (y_{r+1} - y_{r})(y_{r}-y_{r+1}) e(\bi),&\text{if $e=2$ and $i_{r+1}=i_r+1$}
\end{cases}\\
\psi_{r}\psi_{r+1} \psi_{r} e(\bi) &= \begin{cases}
    (\psi_{r+1} \psi_{r} \psi_{r+1} +1)e(\bi),\hspace*{7mm}
       &\text{if $e\ne2$ and $i_{r+2}=i_r=i_{r+1}-1$},\\
  (\psi_{r+1} \psi_{r} \psi_{r+1} -1)e(\bi),
       &\text{if $e\ne2$ and $i_{r+2}=i_r=i_{r+1}+1$},\\
  \big(\psi_{r+1} \psi_{r} \psi_{r+1} +y_r\\
  \qquad -2y_{r+1}+y_{r+2}\big)e(\bi),
    &\text{if $e=2$ and $i_{r+2}=i_r=i_{r+1}+1$},\\
  \psi_{r+1} \psi_{r} \psi_{r+1} e(\bi),&\text{otherwise.}
\end{cases}
\end{align*}
for $\bi,\bj\in I^n$ and all admissible $r, s$.
\end{Defn}

It is straightforward, albeit slightly tedious, to check that all of these
relations are homogeneous with respect to the following degree function
on the generators
$$\deg e(\bi)=0,\qquad \deg y_r=2\qquad\text{and}\qquad \deg
  \psi_s e(\bi)=-a_{i_s,i_{s+1}},$$
for $1\le r\le n$, $1\le s<n$ and $\bi\in I^n$.
Therefore, the Khovanov-Lauda--Rouquier algebra $\R$ is $\Z$-graded. From
this presentation, however, it is not clear how to construct a basis for
$\R$, or even what the dimension of $\R$ is.

\subsection{Cyclotomic Hecke algebras}
Throughout this section we fix an invertible element $q\in R$. Let
$\delta_{q1}=1$ if $q=1$ and set $\delta_{q1}=0$ otherwise.

\begin{Defn}
    Suppose that $q\in R$ is an invertible element of $R$ and that
    $\bQ=(Q_1,\dots,Q_\ell)\in R^\ell$. The \textbf{cyclotomic Hecke
    algebra} $\HH_n(q,\bQ)=\HH_n^R(q,\bQ)$ of type $G(\ell,1,n)$ and with
    parameters $q$ and $\bQ$ is the unital associative $R$-algebra
    with generators $L_1,\dots,L_n,T_1,\dots,T_{n-1}$ and relations
    \begin{align*}
    (L_1-Q_1)\dots(L_1-Q_\ell)&=0,
    & L_rL_s&=L_sL_r,\\
     (T_r+1)(T_r-q)&=0,
    &T_rL_r+\delta_{q1}&=L_{r+1}(T_r-q+1),\\
    T_sT_{s+1}T_s&=T_{s+1}T_sT_{s+1},\\
    T_rL_s&=L_sT_r,&\text{if }s\ne r,r+1,\\
    T_rT_s&=T_sT_r,&\text{if }|r-s|>1,
  \end{align*}
  where $1\le r<n$ and $1\le s<n-1$.
\end{Defn}

\begin{Remark}
  If $q\ne1$ then it is straightforward using \cite[Lemma~3.3]{AK} to show
  that the algebra $\HH_n(q,\bQ)$ is isomorphic to the Hecke algebra of
  type $G(\ell,1,n)$ with parameters $q$ and $\bQ$. If $q=1$ then the
  relations above reduce to the relations for the degenerate Hecke algebra
  of type $G(\ell,1,n)$ with parameters $\bQ$; see, for
  example,~\cite[Chapt.~3]{Klesh:book}. By giving a uniform presentation
  for the degenerate and non-degenerate Hecke algebras we can emphasize
  where it is important whether or not $q=1$ in what follows.
\end{Remark}

Let $\Sym_n$ be the symmetric group of degree $n$ and let
$s_i=(i,i+1)\in\Sym_n$, for $1\le i<n$. Then $\{s_1,\dots,s_{n-1}\}$
is the standard set of Coxeter generators for $\Sym_n$. If
$w\in\Sym_n$ then the \textbf{length} of $w$ is
$$\ell(w)=\min\set{k|w=s_{i_1}\dots s_{i_k} \text{ for some }1\le i_1,\dots,i_k<n}.$$
If $w=s_{i_1}\dots s_{i_k}$ with $k=\ell(w)$ then $s_{i_1}\dots s_{i_k}$ is
a \textbf{reduced expression} for~$w$. In this case, set 
$T_w:=T_{i_1}\dots T_{i_k}$. Then $T_w$ is independent of the choice of
reduced expression because the generators $T_1,\dots,T_{n-1}$ satisfy the
braid relations of $\Sym_n$; see, for example, \cite[Theorem~1.8]{M:Ulect}.
Note that $L_{i+1}=q^{-1}T_iL_iT_i+\delta_{q1}T_i$, 
for $i=1,\dots,n-1$. By
\cite[Theorem~3.10]{AK} and \cite[Theorem~7.5.6]{Klesh:book},
$$\set{L_1^{a_1}\dots L_n^{a_n}T_w|0\le a_1,\dots,a_n<\ell\text{ and }
         w\in\Sym_n}$$
is an $R$-basis of $\HH_n(q,\bQ)$. 

In order to make the connection with the KLR algebras define the
\textit{quantum characteristic} of $q\in K$ to be the integer $e$ which is
minimal such that $1+q+\dots+q^{e-1}=0$, and where we set $e=0$ if no such $e$
exists. Recall from the last subsection that we have fixed a quiver
$\Gamma_e$, a dominant weight $\Lambda\in P_+$ and a multicharge 
$\charge=(\kappa_1,\dots,\kappa_\ell)$.
Define $\bQ_\Lambda=(q_{\kappa_1},\dots,q_{\kappa_\ell})$, where for an
integer $k\in\Z$ we set
$$q_k=\begin{cases}q^k,&\text{if }q\ne1,\\
                     k,&\text{if }q=1.
\end{cases}$$
If $R=K$ is a field then $\bQ_\Lambda$ depends only on $\Lambda$
and not on the choice of multicharge~$\charge$.

\begin{Defn}
  Suppose that $R=K$ is a field of characteristic $p\ge0$ and $q$ is a
  non-zero element of $K$. Let $e$ be the quantum characteristic of $q$ and
  $\Lambda\in P_+$ a dominant weight for $\Gamma_e$. Then the
  cyclotomic Hecke algebra of \textbf{weight} $\Lambda$ is the 
  algebra $\H=\HH_n(q,\bQ_\Lambda)$. 
\end{Defn}

Recall from the subsection~\Sect3.1 that $I=\Z/e\Z$. If $i\in I$ then we
set $q_i=q_\iota$, where $\iota\in\Z$ and $i\equiv\iota\pmod e$. Then $q_i$
is well-defined since $e$ is the quantum characteristic of~$q$.

Suppose that $M$ is a finite dimensional $\H$-module. Then, by
\cite[Lemma~4.7]{Groj:control} and \cite[Lemma~7.1.2]{Klesh:book}, the
eigenvalues of each $L_m$ on $M$ are of the form $q_i$ for $i\in I$. So $M$
decomposes as a direct sum $M=\bigoplus_{\bi\in I^n}M_{\bi}$ of its
generalized eigenspaces, where
$$ M_{\bi}:=\set{v\in M|v(L_r-q_{i_r})^{k}=0
                \text{ for $r=1,2,\cdots,n$ and } k\gg 0}.
$$
(Clearly, we can take $k=\dim M$ here.) In particular, taking $M$ to be the
regular $\H$-module we get a system $\bigl\{e(\bi)\bigm|\bi\in
I^n\bigr\}$ of pairwise orthogonal idempotents in $\H$ such that
$Me(\bi)=M_{\bi}$ for each finite dimensional right $\H$-module $M$.
Note that these idempotents are not, in general, primitive. Moreover, all
but finitely many of the $e(\bi)$'s are zero and, by the relations, their
sum is the identity element of $\R$.

Following Brundan and Kleshchev~\cite[\Sect3,\Sect5]{BK:GradedKL} we now
define elements of $\H$ which satisfy the relations of~$\R$.
For $r=1,\dots,n$ define
$$y_r=\begin{cases}
  \Sum_{\bi\in I^n}(1 - q ^{-i_r}L_r)e(\bi),&\text{if }q\ne1,\\[5mm]
  \Sum_{\bi\in I^n}(L_r - i_r)e(\bi),&\text{if }q=1.
\end{cases}$$
By  \cite[Lemma~2.1]{BK:GradedKL}, or using (\ref{Murphy action}) below, 
$y_1,\dots,y_n$ are nilpotent elements of $\H$, so any power series in
$y_1,\dots,y_n$ can be interpreted as elements of $\H$.  Using this
observation, Brundan and 
Kleshchev~\cite[(3.22),(3,30),(4.27),(4.36)]{BK:GradedKL} define formal 
power series
$P_r(\bi),Q_r(\bi)\in R[y_r,y_{r+1}]$, for $1\le r<n$ and $\bi\in I^n$, and
then set
$$\psi_r=\sum_{\bi\in I^n}(T_r+P_r(\bi))Q_r(\bi)^{-1}e(\bi).$$

Recall that $K$ is a field of characteristic $p\ge0$ and
$e\in\{0,2,3,4,\dots\}$ is the quantum characteristic of $q\in K$. Hence,
we are in one of the following three cases:
\begin{enumerate}
\item $e=p$ and $q=1$;
\item $e=0$ and $q$ is not a root of unity in $K$;
\item $e>1$, $p\nmid e$ and $q$ is a primitive $e^{\text{th}}$ root of unity in $K$.
\end{enumerate}
We are abusing notation here because we are not distinguishing between the
generators of the cyclotomic Khovanov-Lauda--Rouquier algebra and the
elements that we have just defined in $\H$. This abuse is justified by the
Brundan-Kleshchev graded isomorphism theorem.

\begin{Theorem}[\protect{Brundan--Kleshchev~\cite[Theorem~1.1]{BK:GradedKL}}]
    \label{BK main} The map $\R\longrightarrow\H$ which
    sends $$e(\bi)\mapsto e(\bi),\qquad y_r\mapsto
    y_r\qquad\text{and}\qquad \psi_s\mapsto\psi_s,$$ for $\bi\in I^n$,
    $1\le r\le n$ and $1\le s<n$, extends uniquely to an isomorphism
    of algebras. An inverse isomorphism is given by
    $$L_r\mapsto\begin{cases}
        \Sum_{\bi\in I^n}q^{i_r}(1-y_r)e(\bi),&\text{if }q\neq1,\\[4mm]
        \Sum_{\bi\in I^n}(y_r+i_r)e(\bi), &\text{if }q=1,
                \end{cases}$$
    and $T_s\mapsto\Sum_{\bi\in I^n}\(\psi_s Q_s(\bi)-P_s(\bi)\)e(\bi)$,
    for $1\le r\le n$ and $1\le s<n$.
\end{Theorem}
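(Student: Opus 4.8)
The plan, following Brundan and Kleshchev, is to write down the two maps appearing in the statement, check that each is a well-defined algebra homomorphism, and then verify that they are mutually inverse on algebra generators; since the generators generate, this forces both maps to be isomorphisms.

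First I would verify that $\Phi\colon e(\bi)\mapsto e(\bi),\ y_r\mapsto y_r,\ \psi_s\mapsto\psi_s$ respects every relation of Definition~\ref{relations}. The relations $e(\bi)e(\bj)=\delta_{\bi\bj}e(\bi)$ and $\sum_\bi e(\bi)=1$ hold by construction of the $e(\bi)$ as the spectral idempotents of $L_1,\dots,L_n$ on the regular module, and $y_re(\bi)=e(\bi)y_r$, $y_ry_s=y_sy_r$ are clear. The cyclotomic relation $y_1^{(\Lambda,\alpha_{i_1})}e(\bi)=0$ follows from $(L_1-Q_1)\cdots(L_1-Q_\ell)=0$: on $\H e(\bi)$ the factors $L_1-Q_s$ with $\kappa_s\not\equiv i_1\pmod e$ act invertibly and may be cancelled, leaving $(L_1-q_{i_1})^{(\Lambda,\alpha_{i_1})}\H e(\bi)=0$ because exactly $(\Lambda,\alpha_{i_1})$ of the $\kappa_s$ are congruent to $i_1$ modulo $e$; since $y_1e(\bi)$ and $(L_1-q_{i_1})e(\bi)$ differ only by multiplication by a unit, this gives the relation. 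The relation $\psi_re(\bi)=e(s_r\cdot\bi)\psi_r$ and the commutations of $\psi_r$ with distant $y_s$ and $\psi_s$ follow because $T_r$ acts, up to lower-order terms, as an intertwiner exchanging the roles of $L_r$ and $L_{r+1}$. The three remaining families --- the relations for $\psi_ry_{r+1}e(\bi)$ and $y_{r+1}\psi_re(\bi)$, for $\psi_r^2e(\bi)$, and the braid-like relation $\psi_r\psi_{r+1}\psi_re(\bi)=\cdots$ --- are exactly the ones that the power series $P_r(\bi),Q_r(\bi)\in R[y_r,y_{r+1}]$ were engineered to satisfy, and I would check them by a direct, if lengthy, computation in $\H$ using $T_r^2=(q-1)T_r+q$, the deformed commutation $T_rL_r+\delta_{q1}=L_{r+1}(T_r-q+1)$, and their degenerate $q=1$ analogues, noting that $Q_r(\bi)$ has invertible constant term so that $Q_r(\bi)^{-1}$ is a legitimate element of $\H$ (the $y_r$ being nilpotent). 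Surjectivity of $\Phi$ is then immediate: its image contains all $e(\bi)$ and all $y_r$, hence all power series in the $y_r$, hence $\sum_\bi q^{i_r}(1-y_r)e(\bi)=L_r$ and $\sum_\bi(\psi_sQ_s(\bi)-P_s(\bi))e(\bi)=T_s$, and these generate $\H$.

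To produce an inverse I would define $\Psi\colon\H\to\R$ by the displayed formulas. Before this makes sense one must know that power series in $y_1,\dots,y_n$ are elements of $\R$, i.e.\ that each $y_r$ is nilpotent in $\R$; I would prove this by induction on $r$, the base case $r=1$ being the cyclotomic relation and the inductive step using the relations that express $y_{r+1}e(\bi)$ in terms of $y_r$ and $\psi_r$. Granting nilpotency (and that each $Q_s(\bi)$ is invertible in $\R$), one checks that the proposed images of $L_1,\dots,L_n,T_1,\dots,T_{n-1}$ satisfy the defining relations of $\H$ --- again a routine but tedious verification, now carried out inside $\R$ using the KLR relations of Definition~\ref{relations}.

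Finally I would check that $\Phi\Psi$ is the identity on $\H$ and $\Psi\Phi$ the identity on $\R$ by evaluating on generators. Both reduce to undoing the defining substitutions: for instance $\Psi(\Phi(y_r))=\sum_\bi\bigl(e(\bi)-q^{-i_r}\Psi(L_r)e(\bi)\bigr)=\sum_\bi\bigl(e(\bi)-(1-y_r)e(\bi)\bigr)=\sum_\bi y_re(\bi)=y_r$, and dually $\Phi(\Psi(L_r))=\sum_\bi q^{i_r}(1-y_r)e(\bi)=L_r$ by the definition of $y_r$; the checks on $\psi_s$ and $T_s$ are the same in spirit. Since $\Phi$ and $\Psi$ fix every algebra generator of $\R$ and $\H$ respectively and both are homomorphisms, they are mutually inverse isomorphisms. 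The main obstacle is unquestionably the quadratic relation $\psi_r^2e(\bi)$ together with the braid-like relation $\psi_r\psi_{r+1}\psi_re(\bi)=\cdots$: these are the relations whose validity forces the precise shape of $P_r(\bi)$ and $Q_r(\bi)$, and verifying them amounts to delicate identities among Jucys--Murphy elements in $\H$ (respectively among the generators in $\R$); everything else is bookkeeping once the translation dictionary $y_r\leftrightarrow 1-q^{-i_r}L_r$ (or $y_r\leftrightarrow L_r-i_r$ when $q=1$) is in place.
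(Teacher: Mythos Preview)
The paper does not give a proof of this theorem at all: it is quoted verbatim from Brundan and Kleshchev~\cite[Theorem~1.1]{BK:GradedKL} and used as a black box. So there is no ``paper's own proof'' to compare against; your outline is essentially a sketch of the original Brundan--Kleshchev argument, and the broad strategy (define the map $\Phi$ on KLR generators, check the relations in $\H$, and exhibit the inverse on Hecke generators) is the right one.

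That said, one step in your sketch is more delicate than you indicate. To define $\Psi\colon\H\to\R$ you need the $y_r$ to be nilpotent \emph{in $\R$}, and your proposed induction (``the inductive step using the relations that express $y_{r+1}e(\bi)$ in terms of $y_r$ and $\psi_r$'') does not obviously go through: the relations $\psi_r y_{r+1}e(\bi)=(y_r\psi_r+\delta_{i_r,i_{r+1}})e(\bi)$ do not solve for $y_{r+1}$ in terms of $y_r$, and a priori nilpotence of the $y_r$ in the cyclotomic quotient is a genuinely hard fact about $\R$. Brundan and Kleshchev sidestep this issue: they first verify all KLR relations inside $\H$ (where nilpotence of $y_r$ is easy, since $L_r$ has finitely many eigenvalues), obtaining a surjection $\Phi\colon\R\twoheadrightarrow\H$, and then prove injectivity by bounding the size of $\R$ from above (via a spanning set coming from the affine KLR algebra). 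The displayed ``inverse isomorphism'' is then simply the observation that, once $\Phi$ is known to be bijective, the formulas defining $y_r$ and $\psi_s$ in $\H$ can be inverted to recover $L_r$ and $T_s$. So your symmetric ``check both maps are homomorphisms'' plan, while morally correct, would require an independent proof of nilpotence in $\R$ that the original argument deliberately avoids.
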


Hereafter, we freely identify the algebras $\R$ and $\H$, and
their generators, using this result. In particular, we consider $\H$
to be a $\Z$-graded algebra. All $\H$-modules will be $\Z$-graded
unless otherwise noted.

\subsection{Tableaux combinatorics and the standard basis} We close this
section by introducing some combinatorics and defining the standard basis
of $\HH_n(q,\bQ)$, where $q\in R$ and $\bQ\in R^\ell$ are arbitrary.

Recall that an \textbf{multipartition}, or $\ell$-partition, of $n$ is
an ordered sequence $\blam=(\lambda^{(1)},\dots,\lambda^{(\ell)})$ of
partitions such that $|\lambda^{(1)}|+\dots+|\lambda^{(\ell)}|=n$.
The partitions $\lambda^{(1)},\dots,\lambda^{(\ell)}$ are the
\textbf{components} of $\blam$.  Let $\Multiparts$ be the set of
multipartitions of $n$. Then $\Multiparts$ is partially ordered by
\textbf{dominance} where $\blam\unrhd\bmu$ if
$$\sum_{t=1}^{s-1}|\lambda^{(t)}|+\sum_{i=1}^j\lambda^{(s)}_i
\ge\sum_{t=1}^{s-1}|\mu^{(t)}|+\sum_{i=1}^j\mu^{(s)}_i$$ for all $1\le
s\le\ell$ and all $j\ge1$. We write $\blam\rhd\bmu$ if
$\blam\unrhd\bmu$ and $\blam\neq\bmu$.

The \textbf{diagram} of an multipartition $\blam\in\Multiparts$ is
the set
$$[\blam]=\set{(r,c,l)|1\le c\le \lambda^{(l)}_r, r\ge0
                          \text{ and }1\le l\le\ell},$$
which we think of as an ordered $\ell$-tuple of the diagrams of the
partitions $\lambda^{(1)},\dots,\lambda^{(\ell)}$. A
\textbf{$\blam$-tableau} is a bijective map
$\t\map{[\blam]}\{1,2,\dots,n\}$. We think of
$\t=(\t^{(1)},\dots,\t^{(\ell)})$ as a labeling of the diagram of $\blam$.
This allows us to talk of the rows, columns and components of $\t$.
If~$\t$ is a $\blam$-tableau then set $\Shape(\t)=\blam$.

A \textbf{standard $\blam$-tableau} is a $\blam$-tableau in which,
in each component, the entries increase along each row and down each
column. Let $\Std(\blam)$ be the set of standard $\blam$-tableaux and set
$\Std(\Multiparts)=\bigcup_{\bmu\in\Multiparts}\Std(\bmu)$.

If $\t$ is a standard $\blam$-tableau let $\rest\t k$ be the
subtableau of $\t$ labeled by $1,\dots,k$ in $\t$. If
$\s\in\Std(\blam)$ and $\t\in\Std(\bmu)$ then $\s$
\textbf{dominates} $\t$, and we write $\s\gedom\t$, if
$\Shape(\rest\s k)\gedom\Shape(\rest\t k)$, for $k=1,\dots,n$.
Again, we write $\s\gdom\t$ if $\s\gedom\t$ and $\s\ne\t$. Extend
the dominance partial ordering to pairs of partitions of the same
shape by declaring that $(\u,\v)\gdom(\s,\t)$, for
$(\s,\t)\in\Std(\blam)^2$ and $(\u,\v)\in\Std(\bmu)^2$, if
$(\s,\t)\ne(\u,\v)$ and either $\bmu\gdom\blam$, or $\bmu=\blam$ and
$\u\gedom\s$ and $\v\gedom\t$.

Let $\tlam$ be the unique standard $\blam$-tableau such that
$\tlam\gedom\t$ for all $\t\in\Std(\blam)$. Then $\tlam$ has the
numbers $1,\dots,n$ entered in order, from left to right and then top
to bottom in each component, along the rows of $\blam$. The symmetric
group acts on the set of $\blam$-tableaux. If $\t\in\Std(\blam)$  let
$d(\t)$ be the permutation in $\Sym_n$ such that $\t=\tlam d(\t)$.

Recall from section 3.1 that we have fixed a multicharge
$\charge=(\kappa_1,\dots,\kappa_\ell)$ which determines $\Lambda$. 

\begin{Defn}[\protect{\!\!\cite[Definition~3.14]{DJM:cyc}}]\label{mst}
    Suppose that $\blam\in\Multiparts$ and $\s,\t\in\Std(\blam)$.
    Define $m_{\s\t}=T_{d(\s)^{-1}}m_\blam T_{d(\t)}$, where
    $$m_\blam=\prod_{s=2}^\ell\prod_{k=1}^{|\lambda^{(1)}|+\dots+|\lambda^{(s-1)}|}
    (L_k-q_{\kappa_s})\cdot\sum_{w\in\Sym_\blam}T_w.$$
\end{Defn}

Here and below whenever an element of $\H$ is indexed by a pair of standard
tableaux then these tableaux will always be assumed to have the same shape.

\begin{Theorem}[\protect{Standard basis theorem
  \cite[Theorem~3.26]{DJM:cyc} and \cite[Theorem~6.3]{AMR}}]
  \label{standard basis}
  The set
  $\set{m_{\s\t}|\s,\t\in\Std(\blam)\text{ for }\blam\in\Multiparts}$
  is a cellular basis of $\H$.
\end{Theorem}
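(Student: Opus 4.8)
The presentation of $\HH_n(q,\bQ)$ above is uniform in the cases $q=1$ and $q\ne1$, so a single argument covers the degenerate and non-degenerate algebras simultaneously. The plan is the classical four-step strategy for establishing a cellular basis: pin down the rank of $\HH_n(q,\bQ)$, prove a matching combinatorial identity, prove that $\set{m_{\s\t}|\blam\in\Multiparts,\ \s,\t\in\Std(\blam)}$ spans, and then verify the three cellular axioms. For the rank, recall that $\set{L_1^{a_1}\dots L_n^{a_n}T_w|0\le a_1,\dots,a_n<\ell,\ w\in\Sym_n}$ is an $R$-basis of $\HH_n(q,\bQ)$, so this algebra is free of rank $\ell^n n!$. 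The combinatorial identity to establish is
\[
\sum_{\blam\in\Multiparts}|\Std(\blam)|^2=\ell^n n!,
\]
which I would prove either via a Robinson--Schensted correspondence for $\ell$-multipartitions or, more elementarily, by induction on $n$: deleting the node containing $n$ sets up a bijection between $\coprod_\blam\Std(\blam)$ and the set of pairs $(\bmu,A)$ with $\bmu\in\Multiparts[n-1]$ and $A$ an addable node of $\bmu$, from which the recursion for $|\Std(\blam)|$, and hence the identity, follows.

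The substance of the proof is the spanning statement. Write $\Hblam$ for the $R$-span of $\set{m_{\u\v}|\bmu\gdom\blam,\ \u,\v\in\Std(\bmu)}$. The plan is to prove, simultaneously for all $\blam\in\Multiparts$ by downward induction on the dominance order, the two statements: (i) $\Hblam$ is a two-sided ideal of $\HH_n(q,\bQ)$; and (ii) for all $\s,\t\in\Std(\blam)$ and all $h\in\HH_n(q,\bQ)$ one has $m_{\s\t}h\in\sum_{\v\in\Std(\blam)}R\,m_{\s\v}+\Hblam$. The inductive engine for (ii) has three ingredients: the parabolic relation $m_\blam T_i=q\,m_\blam$ whenever $s_i\in\Sym_\blam$ (which, via the factorisation of an element of $\Sym_n$ as a product of an element of $\Sym_\blam$ and a distinguished coset representative, reduces $m_\blam T_w$ to row-standard tableaux); the \textit{Garnir relations}, which rewrite $m_\blam T_d$ for a non-standard row-standard tableau $\tlam d$ as an $R$-combination of the $m_\blam T_{d(\v)}$, $\v\in\Std(\blam)$, modulo $\Hblam$; and the fact that right multiplication of $m_\blam$ by a Jucys--Murphy element $L_k$ can be absorbed back into $\sum_{w\in\Sym_n}R\,m_\blam T_w+\Hblam$, which is what upgrades a $T_w$-span into a genuine right $\HH_n(q,\bQ)$-module statement. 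Granting (ii), and then using (i) together with the $L^aT_w$-basis, one concludes that $\set{m_{\s\t}}$ spans $\HH_n(q,\bQ)$; comparing with the rank count forces it to be an $R$-basis, which is (GC$_1$).

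Finally, the cellular axioms. The $R$-linear anti-automorphism $*$ of $\HH_n(q,\bQ)$ determined by $T_i\mapsto T_i$ and $L_r\mapsto L_r$ -- which one checks preserves every defining relation -- fixes $m_\blam$, since $\big(\sum_{w\in\Sym_\blam}T_w\big)^*=\sum_{w\in\Sym_\blam}T_{w^{-1}}=\sum_{w\in\Sym_\blam}T_w$ and the factor $\prod_{s=2}^\ell\prod_k(L_k-q_{\kappa_s})$ is a product of commuting $*$-invariant elements. Hence $m_{\s\t}^*=\big(T_{d(\s)^{-1}}m_\blam T_{d(\t)}\big)^*=T_{d(\t)^{-1}}m_\blam T_{d(\s)}=m_{\t\s}$, which is (GC$_3$). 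For (GC$_2$): statement (ii) above gives $m_\blam T_{d(\t)}h\equiv\sum_{\v\in\Std(\blam)}r_{\t\v}(h)\,m_\blam T_{d(\v)}\pmod{\Hblam}$ with scalars $r_{\t\v}(h)$ that do not involve $\s$; applying $T_{d(\s)^{-1}}$ on the left and using that $\Hblam$ is a two-sided ideal gives $m_{\s\t}h\equiv\sum_{\v}r_{\t\v}(h)\,m_{\s\v}\pmod{\Hblam}$. (There is no degree axiom to verify here, as this theorem asserts only that $\HH_n(q,\bQ)$ is cellular.) The main obstacle is the bootstrapping inside the spanning step: the Garnir straightening is formulated modulo $\Hblam$, whose two-sidedness one wants to invoke, while $\Hblam$ is itself defined in terms of the basis being constructed, so (i) and (ii) must be proved together by a delicate induction on dominance -- this is the heart of Dipper--James--Mathas's argument, and of Ariki--Mathas--Rui's in the degenerate case. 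The feature special to the cyclotomic setting, absent from Murphy's original type-$A$ treatment, is that one must control how the factors $(L_k-q_{\kappa_s})$ behave under multiplication by further Jucys--Murphy elements and by $T_i$ with $s_i\notin\Sym_\blam$; this is precisely where the combinatorics of which multipartitions dominate $\blam$ (once a node is moved between rows or components) enters.
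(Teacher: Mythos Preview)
The paper does not prove this theorem at all: it is quoted verbatim from the literature (\cite[Theorem~3.26]{DJM:cyc} in the non-degenerate case and \cite[Theorem~6.3]{AMR} in the degenerate case) and used as a black box. So there is no ``paper's own proof'' to compare your proposal against.

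That said, your sketch is a faithful outline of the strategy actually used in those references: rank count via the $L^aT_w$-basis, the bijective correspondence giving $\sum_\blam|\Std(\blam)|^2=\ell^nn!$, the downward-induction spanning argument with Garnir straightening modulo $\Hblam$, and the verification of the cellular axioms via the anti-involution fixing $m_\blam$. Your identification of the main technical point --- that the two-sidedness of $\Hblam$ and the straightening of $m_{\s\t}h$ must be proved simultaneously, and that the cyclotomic factors $\prod(L_k-q_{\kappa_s})$ require care when commuted past $T_i$ --- is accurate and is indeed where most of the work in \cite{DJM:cyc} lies.
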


In general the standard basis elements $m_{\s\t}$ are not homogeneous.

Using the theory of (ungraded) cellular algebras from section~2 (or
\cite{GL}), we could now construct Specht modules, or cell modules, for
$\H$.  We postpone doing this until section~5, however, where we are able
to define \textit{graded} Specht modules using Theorem~\ref{psi basis} and
the theory of graded cellular algebras developed in section~2.

Suppose that $\blam\in\Multiparts$ and $\gamma=(r,c,l)\in[\blam]$. The
\textbf{residue} of $\gamma$ is 
\begin{equation}\label{residue}
  \res^R(\gamma)=\begin{cases}
    q^{c-r}Q_l,&\text{if }q\ne1,\\
    c-r+Q_l,&\text{if }q=1.
\end{cases}
\end{equation}
If $\t$ is a standard $\blam$-tableau and $1\le k\le n$ set
$\res^R_\t(k)=\res^R(\gamma)$, where $\gamma$ is the unique node in
$[\blam]$ such that  $\t(\gamma)=k$. We emphasize that $\res(\alpha)$ and
$\res_\t(k)$ both depend very much on the base ring and on the choice of
parameters $q$ and $\bQ$ -- and, in particular, whether or not $q=1$. When
we are working over the field $K$ with parameters $\bQ=\bQ_\Lambda$ then
write $\res(\alpha)=\res^K(\alpha)$ and $\res_\t(k)=\res_\t^K(k)$.

The point of these definitions is that by \cite[Prop.~3.7]{JM:cyc-Schaper}
and \cite[Lemma~6.6]{AMR}, there exist scalars $r_{\u\v}\in K$ such that
\begin{equation}\label{Murphy action}
m_{\s\t}L_k=\res^R_\t(k) m_{\s\t}+
                \sum_{(\u,\v)\gdom(\s,\t)}r_{\u\v}m_{\u\v}.
\end{equation}
If $\t\in\Std(\blam)$ is a standard $\blam$-tableau
then its \textbf{residue sequence} $\res(\t)$ is the sequence
$$\res(\t)=\(\res_\t(1),\dots,\res_\t(n)\).$$
We also write $\bi^\t=\res(\t)$. Set
$\Std(\bi)=\coprod_{\blam\in\Multiparts}\set{\t\in\Std(\blam)|\res(\t)=\bi}$.

Finally, we will need to know when $\HH_n(q,\bQ)$ is semisimple.
\begin{Prop}[\protect{\cite[Main theorem]{Ariki:ss} and
  \cite[Theorem~6.11]{AMR}}]\label{semisimple}
  Suppose that $R=K$ is a field of characteristic $p\ge0$. Then the Hecke 
  $\HH_n(q,\bQ)$ is semisimple if and only if
  either $e=0$ or $e>n$, and $P_\HH(q,\bQ)\ne0$ where
  $$P_\HH(q,\bQ)=\begin{cases}
    \Prod_{1\le r<s\le\ell}\prod_{-n<d<n} (q^dQ_r-Q_s),
             &\text{if }q\ne1,\\[5mm]
    \Prod_{1\le r<s\le\ell}\prod_{-n<d<n} (d+Q_r-Q_s),
             &\text{if }q=1.
  \end{cases}$$
\end{Prop}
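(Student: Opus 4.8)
The plan is to deduce semisimplicity from the cellular structure of $\HH_n(q,\bQ)$ together with the Gram determinant formula for its Specht modules. By Theorem~\ref{standard basis} the algebra $\HH=\HH_n(q,\bQ)$ is cellular over $K$, with cell (Specht) modules $C^\blam$ indexed by $\blam\in\Multiparts$, and, as recalled in Section~2, each $C^\blam$ carries a symmetric bilinear form $\langle\ ,\ \rangle_\blam$. By the general theory of (ungraded) cellular algebras (Graham--Lehrer~\cite{GL}; see also \cite[Ch.~2]{M:Ulect}), $\HH$ is semisimple if and only if $\rad C^\blam=0$ for every $\blam\in\Multiparts$, equivalently the Gram matrix $G_\blam$ of $\langle\ ,\ \rangle_\blam$ in the standard basis of $C^\blam$ is invertible for all $\blam$. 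So it is enough to decide when $\det G_\blam\ne0$ for every $\blam\in\Multiparts$.

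Next I would invoke the explicit formula for $\det G_\blam$ for cyclotomic Specht modules --- the cyclotomic Schaper/James--Mathas formula of \cite{JM:cyc-Schaper}, and its analogue in \cite[\Sect6]{AMR} in the degenerate case. What I need from it is: up to a unit of $K$, $\det G_\blam$ is a product of factors each of the form $\res^K(\alpha)-\res^K(\beta)$ for nodes $\alpha,\beta$ of diagrams attached to $\blam$; when $q\ne1$ such a difference is, up to a power of $q$, either a quantum integer $1+q+\dots+q^{k-1}$ with $1\le k\le n$ (when $\alpha$ and $\beta$ lie in the same component) or a ``separation'' factor $q^{d}Q_r-Q_s$ with $1\le r<s\le\ell$ and $-n<d<n$ (when they lie in different components), and the $q=1$ case is identical with $q^{d}Q_r$ replaced by $d+Q_r$. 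Two features of this formula matter: \emph{(i)} no factor other than these occurs, so $\det G_\blam$ is a unit as soon as none of the listed factors vanishes in $K$; and \emph{(ii)} each listed factor does occur in $\det G_\blam$ for at least one $\blam\in\Multiparts$, so that the vanishing of any one of them really does force $\HH$ to be non-semisimple. Establishing \emph{(i)} and \emph{(ii)} precisely is the real work, and is the step I expect to be the main obstacle; I would import it from \cite{JM:cyc-Schaper} and \cite{AMR} rather than redo the computation.

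Finally I would read off the statement. A quantum integer $1+q+\dots+q^{k-1}$ vanishes in $K$ for some $1\le k\le n$ exactly when the quantum characteristic $e$ of $q$ satisfies $2\le e\le n$ (recall $e\ge2$ whenever $e\ne0$, and such a factor is zero precisely when $e\mid k$), so these are all non-zero if and only if $e=0$ or $e>n$; and the remaining factors $q^{d}Q_r-Q_s$ (respectively $d+Q_r-Q_s$ when $q=1$), as $r<s$ runs over $1\le r<s\le\ell$ and $-n<d<n$, are simultaneously non-zero precisely when $P_\HH(q,\bQ)\ne0$. Combining this with the reduction of the first paragraph and facts \emph{(i)}--\emph{(ii)} of the second gives
$$\HH_n(q,\bQ)\text{ semisimple}\iff\det G_\blam\ne0\text{ for all }\blam\iff(e=0\text{ or }e>n)\text{ and }P_\HH(q,\bQ)\ne0,$$
as claimed. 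For the ``if'' direction one can also avoid the full determinant formula: under these hypotheses the residue sequences of standard tableaux are pairwise distinct, so a seminormal basis of matrix units may be constructed as in \cite{M:seminormal}, giving $\HH_n(q,\bQ)\cong\bigoplus_{\blam}M_{|\Std(\blam)|}(K)$ directly and matching dimensions via $\sum_\blam|\Std(\blam)|^2=\ell^n n!=\dim_K\HH_n(q,\bQ)$ --- essentially Ariki's argument in \cite{Ariki:ss}.
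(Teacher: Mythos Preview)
The paper does not prove this proposition; it is quoted from \cite{Ariki:ss} and \cite[Theorem~6.11]{AMR} without argument, so there is no in-paper proof to compare against. Your outline via cellular semisimplicity and Gram determinants is a legitimate route and is logically sound: the reduction ``$\HH$ semisimple $\iff$ every $\det G_\blam\ne0$'' is standard for cellular algebras, and the factorisation of $\det G_\blam$ into quantum integers and separation factors $q^dQ_r-Q_s$ (resp.\ $d+Q_r-Q_s$) is exactly what the James--Mathas/AMR Schaper machinery gives. Your identification of when these factors vanish is correct.

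That said, this is not how the cited sources argue. Ariki's proof in \cite{Ariki:ss} predates the cellular theory for these algebras and works directly with the Ariki--Koike presentation, constructing seminormal representations when the stated conditions hold (your final-paragraph alternative is close to this), and exhibiting explicit nilpotent elements or degenerations when they fail. The AMR argument for the degenerate case is similar in spirit. Your approach has the advantage of being uniform and conceptually clean once the Schaper formula is in hand, but you are right to flag that step~(ii)---showing every listed factor genuinely occurs in some $\det G_\blam$---is where the content lies; strictly speaking you are replacing one nontrivial input (Ariki's direct analysis) with another (the full Gram determinant computation), so this is a deduction from equally deep results rather than an independent proof.
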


\section{The seminormal basis and homogeneous elements of $\H$}
The aim of this section is to give an explicit description of the non-zero
idempotents $e(\bi)$ in terms of certain primitive idempotents for the
algebra $\H$ in the semisimple case. We then use this description to
construct a family of homogeneous elements in $\H$ indexed by~$\Multiparts$.

\subsection{The Khovanov-Lauda--Rouquier idempotents}
Let $\L=\<L_1,\dots,L_n\>$ be the subalgebra of $\H$ generated by the
Jucys-Murphy elements of $\H$. Then $\L$ is a commutative subalgebra of
$\H$.

\begin{Lemma}\label{idempotents}
  Suppose that $e(\bi)\ne0$, for $\bi\in I^n$. Then:
  \begin{enumerate}
    \item $e(\bi)$ is the unique idempotent in $\H$ such that $\HH_\bj
      e(\bi)=\delta_{\bi\bj}\HH_\bi$, for $\bj\in I^n$;
    \item $e(\bi)$ is a primitive idempotent in $\L$; and,
    \item $\bi=\res(\t)$ for some standard tableau $\t$.
  \end{enumerate}
  Thus, the idempotents $\set{e(\bi)|\bi\in I^n}\setminus\{0\}$ are the
  (central) primitive idempotents of $\L$.
\end{Lemma}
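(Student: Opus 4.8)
The plan is to work inside the commutative subalgebra $\L=\langle L_1,\dots,L_n\rangle$ of $\H$ and to identify the $e(\bi)$ with the primitive idempotents of $\L$ attached to the eigenvalue data. The starting observation is that each $L_r$ acts on any finite-dimensional $\H$-module with eigenvalues among $\{q_i\mid i\in I\}$, so $\L$ is a finite-dimensional commutative algebra whose maximal ideals are indexed by the possible eigenvalue sequences $\bi\in I^n$ that actually occur; the corresponding primitive idempotents are exactly the projections onto the generalized eigenspaces, which is how $e(\bi)$ was defined in \Sect3.2. From this, part~(a) is essentially definitional: for any $\H$-module $M$ we have $Me(\bi)=M_\bi$, and applying this to $M=\HH_\bj=\H e(\bj)$ (viewed as a left or right module as appropriate) gives $\HH_\bj e(\bi)=\delta_{\bi\bj}\HH_\bi$; uniqueness follows because an idempotent is determined by the direct-sum decomposition it induces.

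For part~(b) I would argue that $e(\bi)$ is primitive \emph{in $\L$} by showing that $e(\bi)\L$ is one-dimensional, or at least local with no nontrivial idempotents. The point is that on the indecomposable $\L$-summand $e(\bi)\L$ each $L_r$ acts as $q_{i_r}$ plus a nilpotent, so $e(\bi)\L$ is a quotient of a polynomial ring in the nilpotents $L_r-q_{i_r}$ (equivalently, in the $y_r$), hence local, and therefore $e(\bi)$ cannot be written as a sum of two nonzero orthogonal idempotents of $\L$. For part~(c), $e(\bi)\ne0$ means the generalized eigenspace $M_\bi\ne0$ for $M$ the regular module; since the cellular (standard) basis $\{m_{\s\t}\}$ of Theorem~\ref{standard basis} is triangular with respect to the $L_k$-action by \eqref{Murphy action}, with leading eigenvalue $\res_\t(k)$, the eigenvalue sequences that occur on $\H$ are exactly the residue sequences $\res(\t)=\bi^\t$ of standard tableaux $\t$. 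Hence $e(\bi)\ne0$ forces $\bi=\res(\t)$ for some standard $\t$.

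Finally, the concluding assertion --- that $\{e(\bi)\mid\bi\in I^n\}\setminus\{0\}$ is precisely the set of (central) primitive idempotents of $\L$ --- combines the above: these idempotents are orthogonal, sum to $1$ in $\L$, are primitive in $\L$ by~(b), and every primitive idempotent of a commutative finite-dimensional algebra is one of the projections onto a local direct summand, which by the eigenvalue analysis must be one of the $e(\bi)$. Centrality is automatic since $\L$ is commutative; the word ``central'' is presumably meant relative to $\L$, so nothing extra is needed. I expect the main obstacle to be part~(b): one has to be a little careful to see that $e(\bi)\L$ genuinely has no nontrivial idempotents, i.e.\ that it is local rather than merely that $L_r-q_{i_r}$ is nilpotent on it. The cleanest route is to note that $\L$ is spanned by monomials in the $L_r$, that modulo the nilpotents these all act as the single scalar tuple $\bi$ on $e(\bi)\L$, and that a commutative finite-dimensional algebra in which the augmentation-type quotient is a field (here $K$) with nilpotent kernel is local --- invoking here that $K$ is a splitting field, which in the relevant cases holds since we are ultimately reducing to working over $K$ with $\bQ=\bQ_\Lambda$. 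The tableau-combinatorics input for~(c) via \eqref{Murphy action} is routine given the standard basis theorem.
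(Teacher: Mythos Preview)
Your proposal is correct and follows essentially the same route as the paper: both arguments use the definition of $e(\bi)$ as the projector onto the generalized $L$-eigenspace for part~(a), and for parts~(b) and~(c) both exploit the commutativity of $\L$ together with the triangular action \eqref{Murphy action} of the $L_k$ on the standard basis to conclude that the irreducible $\L$-modules (equivalently, the local summands of $\L$) are indexed precisely by the residue sequences of standard tableaux. The paper phrases (b) slightly differently, observing that $\H=\bigoplus_\bi\HH_\bi$ is exactly the $\L$-block decomposition of $\H$, but your more explicit ``$e(\bi)\L$ is local because each $L_r-q_{i_r}$ is nilpotent on it'' is the same content.
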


\begin{proof}
  By definition, $\HH_\bj e(\bi)=\delta_{\bi\bj}\HH_\bi$ so (a) follows
  since $e(\bi)\in\H e(\bi)$. Next, observe that every irreducible
  representation of $\L$ is one dimensional since $\L$ is a commutative
  algebra over a field. Further, modulo more dominant terms,~$L_k$ acts on
  the standard basis element $m_{\s\t}$ as multiplication by
  $\res_\t(k)$ by (\ref{Murphy action}). Therefore, the standard basis
  of $\H$ induces an $\L$-module filtration of $\H$ and the irreducible
  representations of $\L$ are indexed by the residue sequences $\res(\t)\in
  I^n$, for $\t$ a standard $\blam$-tableau for some $\blam\in\Multiparts$.
  Consequently, the decomposition $\H=\bigoplus\HH_\bi$ is nothing more
  than the decomposition of $\H$ into a direct sum of block components when
  $\H$ is considered as an $\L$-module by restriction. Parts~(b) and~(c) now
  follow.
\end{proof}

The following result indicates the difficulties of working with the
homogeneous presentation of~$\H$: we do not know how to prove this result
without recourse to Brundan and Kleshchev's graded isomorphism $\R\cong\H$
(Theorem~\ref{BK main}).

\begin{cor}
  As (graded) subalgebras of $\H$, $\L=\<y_1,\dots,y_n, e(\bi)\mid\bi\in I^n\>$.
\end{cor}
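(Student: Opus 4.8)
The plan is to read this straight off the Brundan--Kleshchev isomorphism (Theorem~\ref{BK main}) together with Lemma~\ref{idempotents}: as the remark preceding the corollary signals, the identity is essentially transparent once one has the explicit two-sided isomorphism $\R\cong\H$, the only real ``content'' being that it does not seem possible to establish it directly from the homogeneous presentation of~$\R$.

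First I would check the inclusion $\<y_1,\dots,y_n,e(\bi)\mid\bi\in I^n\>\subseteq\L$. By Lemma~\ref{idempotents}(b) every non-zero $e(\bi)$ is a primitive idempotent of $\L$, and the remaining $e(\bi)$ are zero, so $e(\bi)\in\L$ for all $\bi\in I^n$. On the other hand, the elements $y_r$ were \emph{defined} (just before Theorem~\ref{BK main}) by $y_r=\sum_{\bi\in I^n}(1-q^{-i_r}L_r)e(\bi)$ when $q\ne1$, and by $y_r=\sum_{\bi\in I^n}(L_r-i_r)e(\bi)$ when $q=1$; either way $y_r$ is an $R$-linear combination of products of $L_r$ with the $e(\bi)$, hence lies in $\L$. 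Since $\L$ is a subalgebra, the whole subalgebra generated by the $y_r$ and the $e(\bi)$ lies inside $\L$, and moreover this subalgebra is \emph{graded}: $e(\bi)$ is homogeneous of degree~$0$ and $y_r$ of degree~$2$.

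For the reverse inclusion, recall that $\L=\<L_1,\dots,L_n\>$, so it is enough to show each $L_r$ lies in $\<y_1,\dots,y_n,e(\bi)\mid\bi\in I^n\>$. But the inverse isomorphism in Theorem~\ref{BK main} gives exactly
$$L_r=\sum_{\bi\in I^n}q^{i_r}(1-y_r)e(\bi)\quad(q\ne1),\qquad
L_r=\sum_{\bi\in I^n}(y_r+i_r)e(\bi)\quad(q=1),$$
expressing $L_r$ as an $R$-linear combination of products of the $y_r$ and the $e(\bi)$ (using $\sum_{\bi}e(\bi)=1$). Hence $L_r\in\<y_1,\dots,y_n,e(\bi)\mid\bi\in I^n\>$, the two subalgebras coincide, and by the previous paragraph $\L$ is the graded subalgebra of $\H$ generated by the homogeneous elements $y_1,\dots,y_n$ and the $e(\bi)$. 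I do not expect any genuine obstacle: the one non-formal ingredient is the explicit description of both directions of the isomorphism $\R\cong\H$, and — as the preceding remark stresses — it is precisely the unavoidable appeal to that theorem, rather than any argument internal to the graded presentation, that makes the statement worth recording.
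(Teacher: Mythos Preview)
Your argument is correct and essentially identical to the paper's: both directions use Lemma~\ref{idempotents} to place the $e(\bi)$ in $\L$, the definition of $y_r$ (equivalently Theorem~\ref{BK main}) to place $y_r$ in $\L$, and the inverse isomorphism in Theorem~\ref{BK main} to express $L_r$ in terms of the $y_r$ and $e(\bi)$. You have simply spelled out a bit more detail (and explicitly noted the gradedness), but the content is the same.
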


\begin{proof}By Theorem~\ref{BK main}, if $1\le r\le n$ then $y_r\in\L$ and
  $L_r\in\<y_1,\dots,y_n, e(\bi)\mid\bi\in I^n\>$.  Further, by
  Lemma~\ref{idempotents}, $e(\bi)\in\L$, for $\bi\in I^n$. Combining
  these two observations proves the Corollary.
\end{proof}

\subsection{Idempotents and the seminormal form}
Recall that $\H$ is a $K$-algebra, where $K$ is a field of characteristic
$p\ge0$. Lemma~4.2 of~\cite{M:seminormal} explicitly constructs a family
of idempotents in $\H$ which are indexed by the residue sequences of
standard tableaux. As we now recall, these idempotents are defined by
`modular reduction' from the semisimple case.

To describe this modular reduction process we need to choose a modular
system. Unfortunately, the choice of modular system depends upon the
parameters~$q$ and~$\bQ_\Lambda$. To define $\O$ let $x$ be an indeterminate 
over $K$ and set 
$$\O=\begin{cases} K[x]_{(x)}, &\text{if $q\ne1$ or $e=0$},\\
                   \Z_{(p)},&\text{if $q=1$ and $e>0$}.
     \end{cases}$$
Note that if $q=1$ and $e>1$ then $e=p$, the characteristic of~$K$ and
$\O=\Z_{(p)}$ is the localization of $\Z$ at the prime~$p$. In all of the
other cases $\O$ is the localization of $K[x]$ at $x=0$ (note that
$x+q$ is invertible in $\O$ since $q\ne0$). In both cases,
$\O$ is a discrete valuation ring with maximal ideal~$\m=\pi\O$, where 
$\pi=p$ if $q=1$ and $e>0$, and $\pi=x$ otherwise. Let $\K$ be the field of
fractions of $\O$ and consider $\O$ as a subring of $\K$. The triple
$(\O,\K,K)$ is our modular system. In order to exploit it, however, we need
to make a choice of parameters in $\O$.

\begin{Defn}
  Let $\HO=\HO(v,\bQ_\Lambda^\O)$ and let $\HK=\HO\otimes_\O\K$, where
  $$v=\begin{cases}
        x+q,&\text{if $q\ne1$ and $e>0$},\\
        q,  &\text{otherwise},
      \end{cases},
      \qquad
    Q_r^\O=\begin{cases}
      (x+q)^{\kappa_r}, &\text{if $q\neq 1$ and $e>0$},\\
      x^{r}+q^{\kappa_r},&\text{if $q\ne1$ and $e=0$},\\
      \kappa_r,&\text{if $q=1$ and $e>0$},\\
      rx+\kappa_r,&\text{if $q=1$ and $e=0$},
    \end{cases}$$
    for $1\le r\le\ell$, and $\bQ_\Lambda^\O=(Q_1^\O,\dots,Q_\ell^\O)$.
\end{Defn}

The point of these definitions is that the algebra $\HK$ is (split)
semisimple. This follows easily using the semisimplicity criterion in
Proposition~\ref{semisimple} together with definition of the
multicharge~$\charge$.  Specifically, this is where we use the assumption
that if $e>0$ then $\kappa_r-\kappa_{r+1}\ge n$, for $1\le r<\ell$.


Recall the definition of residue $\res^R$ from (\ref{residue}) and suppose
that $\blam\in\Multiparts$. Define the \textbf{content} of the node
$\gamma\in[\blam]$ to be $\cont(\gamma)=\res^\O(\gamma)$. Similarly, if
$\t$ is a standard $\blam$-tableau and $1\le k\le n$ we set
$\cont_\t(k)=\res^\O_\t(k)$. Explicitly, by (\ref{residue}) and the
definitions above, if $\t(\gamma)=k$ where $\gamma=(r,c,l)$ then
$$\cont_\t(k)=\cont(\gamma)=\begin{cases}
  (x+q)^{c-r+\kappa_l},&\text{if $q\ne1$ and $e>0$},\\
  q^{c-r}(x^{l}+q^{\kappa_l}),&\text{if $q\ne1$ and $e=0$},\\
  c-r+\kappa_l,&\text{if $q=1$ and $e>0$},\\
  c-r+lx+\kappa_l,&\text{if $q=1$ and $e=0$}.
\end{cases}$$
Note that $\res_\t(k)=\cont_\t(k)\otimes_\O 1_K$.
By (\ref{Murphy action}) in $\HO$ and $\HK$ we have
$$
m_{\s\t}L_k=\cont_\t(k) m_{\s\t}+
                \sum_{(\u,\v)\gdom(\s,\t)}r_{\u\v}m_{\u\v},
$$
for some scalars $r_{\u\v}$. It follows that  $L_1,\dots,L_n$ is a family
of \textit{JM elements} for $\H$ in the sense of
\cite[Definition~2.4]{M:seminormal}. Hence, we can apply the results
from~\cite{M:seminormal} to the algebras~$\HO$, $\HK$ and~$\H$. In
particular, we have the following definition.

\begin{Defn}[\protect{\cite[Defn~3.1]{M:seminormal}}]\label{seminormal basis}
    Suppose that $\blam\in\Multiparts$ and $\s,\t\in\Std(\blam)$. Define
    $$F_\t=\prod_{k=1}^n\prod_{\substack{\s\in\Std(\Multiparts)\\
             \cont_\s(k)\ne\cont_\t(k)}}
    \frac{L_k-\cont_\s(k)}{\cont_\t(k)-\cont_\s(k)}\in\HK.$$
    Set $f_{\s\t}=F_\s m_{\s\t} F_\t$.
\end{Defn}

By (\ref{Murphy action}),
$f_{\s\t}=m_{\s\t}+\sum_{(\u,\v)\gdom(\s,\t)}r_{\u\v}m_{\u\v}$, for
some $r_{\u\v}\in\K$. Therefore,
$$\set{f_{\s\t}|\s,\t\in\Std(\blam)\text{ for }\blam\in\Multiparts}$$
is a basis of $\HK$. This basis is the \textbf{seminormal basis} of $\HK$;
see \cite[Theorem~3.7]{M:seminormal}.  The next definition, which is the key
to what follows, allows us to write $F_\t$ in terms of the
seminormal basis and hence connect these elements with the graded
representation theory.

Let $\blam$ be a multipartition. The node $\alpha=(r,c,l)\in[\blam]$
is an \textbf{addable node} of~$\blam$ if
$\alpha\notin[\blam]$ and $[\blam]\cup\{\alpha\}$ is the diagram of
a multipartition. Similarly, $\rho\in[\blam]$ is a \textbf{removable
node} of $\blam$ if $[\blam]\setminus\{\rho\}$ is the diagram of a
multipartition. Given two nodes $\alpha=(r,c,l)$ and $\beta=(s,d,m)$
then $\alpha$ is \textbf{below} $\beta$ if either $l>m$, or $l=m$
and $r>s$.

The following definition appears as \cite[(2.8)]{M:gendeg} in the
non-degenerate case and it can easily be proved by induction using
\cite[Lemma~6.10]{AMR} in the non-degenerate case.

\begin{Defn}[\cite{M:gendeg,AMR}]\label{gamma}
  Suppose that $\blam\in\Multiparts$ and $\t\in\Std(\blam)$.  For
  $k=1,\dots,n$ let $\Add_\t(k)$ be the set of addable nodes of the
  multipartition $\Shape(\rest\t k)$ which are \textit{below} $\t^{-1}(k)$.
  Similarly, let $\Rem_\t(k)$ be the set of removable nodes of
  $\Shape(\rest\t k)$ which are \textit{below} $\t^{-1}(k)$. Now define
  $$\gamma_\t=v^{\ell(d(\t))+\delta(\blam)}\prod_{k=1}^n
            \dfrac{\prod_{\alpha\in\Add_\t(k)}
                      \(\cont_\t(k)-\cont(\alpha)\)}%
                  {\prod_{\rho\in\Rem_\t(k)}
                   \(\cont_\t(k)-\cont(\rho)\)}\quad\in\K,
  $$
  where $\delta(\blam)=\frac12\sum_{s=1}^\ell
        \sum_{i\ge1}(\lambda^{(s)}_i-1)\lambda^{(s)}_i$.
\end{Defn}

It is an easy exercise in the definitions to check that the terms in the
denominator of $\gamma_\t$ are never zero so that $\gamma_\t$ is a
well-defined element of $\K$. As the algebra $\HK$ is semisimple we have
the following.

\begin{Lemma}[\protect{\cite[Theorem 3.7]{M:seminormal}}]\label{primitive}
  Suppose that $\blam\in\Multiparts$ and $\t\in\Std(\blam)$. Then
  $F_\t=\frac1{\gamma_\t} f_{\t\t}$
  is a primitive idempotent in $\HK$.
\end{Lemma}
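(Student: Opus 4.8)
The plan is to reduce the statement to known facts about JM algebras from \cite{M:seminormal}. Recall that by Definition~\ref{seminormal basis} we have $F_\t=F_\s m_{\s\t}F_\t|_{\s=\t}$ built from the $L_k$, and that $L_1,\dots,L_n$ is a family of JM elements for $\HK$ with content separation, since $\HK$ is split semisimple by the choice of multicharge and Proposition~\ref{semisimple}. In the semisimple situation the general theory of \cite{M:seminormal} gives that each $F_\t$ is a primitive idempotent and that $f_{\t\t}=\gamma_\t F_\t$ for a nonzero scalar $\gamma_\t\in\K$; so the only real content is to identify this scalar with the explicit product in Definition~\ref{gamma}.

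First I would recall from \cite[Theorem~3.7]{M:seminormal} that $\{F_\t\}$ is a complete set of pairwise orthogonal primitive idempotents, that $f_{\s\t}=F_\s m_{\s\t}F_\t$ spans the matrix units of the Wedderburn decomposition of $\HK$, and in particular that $f_{\t\t}$ is a scalar multiple of $F_\t$, say $f_{\t\t}=\gamma_\t F_\t$. Since $f_{\t\t}\ne0$ (the seminormal basis is a basis) we get $\gamma_\t\ne0$ and hence $F_\t=\tfrac1{\gamma_\t}f_{\t\t}$ as elements of $\HK$; this already proves the displayed identity once we know $\gamma_\t$ is the scalar of Definition~\ref{gamma}. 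So the argument splits into the structural part, which is immediate from \cite{M:seminormal}, and the computational part of pinning down $\gamma_\t$.

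For the computational part I would argue that $\gamma_\t$ is determined by a recursion on $n$: restricting from $\HK[n]$ to $\HK[n-1]$, the idempotent $F_\t$ refines $F_{\rest\t{n-1}}$, and comparing the coefficient of $m_{\t\t}$ on both sides of $f_{\t\t}=\gamma_\t F_\t$ gives a one-step formula for $\gamma_\t/\gamma_{\rest\t{n-1}}$ in terms of how $L_n$ and the braid generator $T_{n-1}$ act. Concretely this step-up factor is exactly $v^{\ell(d(\t))-\ell(d(\rest\t{n-1}))}$ times the ratio of $\prod_{\alpha\in\Add_\t(n)}(\cont_\t(n)-\cont(\alpha))$ to $\prod_{\rho\in\Rem_\t(n)}(\cont_\t(n)-\cont(\rho))$, which is the recursion one obtains from \cite[Lemma~6.10]{AMR} in the degenerate case and \cite[(2.8)]{M:gendeg} in the non-degenerate case. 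Unwinding the recursion from $n$ down to $0$ (where $\gamma$ is trivially $1$) and collecting the power of $v$ — which telescopes to $v^{\ell(d(\t))}$ up to the shape-dependent correction $\delta(\blam)$ coming from the $\sum_{w\in\Sym_\blam}T_w$ factor inside $m_\blam$ — yields precisely the formula for $\gamma_\t$ in Definition~\ref{gamma}.

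The main obstacle is this bookkeeping of the $v$-power: one must carefully track how $\delta(\blam)$ enters, which amounts to computing $f_{\tlam\tlam}$ directly (i.e.\ the $\gamma$-factor for the dominant tableau, where $d(\t)=1$ but $m_\blam$ contributes the Poincaré polynomial of the Young subgroup $\Sym_\blam$), and then checking that the inductive step-up factor for an arbitrary $\t$ supplies exactly $v^{\ell(d(\t))}$ relative to that base case with no hidden discrepancy. Everything else is a direct appeal to \cite{M:seminormal}.
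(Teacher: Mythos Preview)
Your proposal is correct and in essence the same approach as the paper, which gives no proof at all: the lemma is simply cited from \cite[Theorem~3.7]{M:seminormal}, and the explicit form of $\gamma_\t$ in Definition~\ref{gamma} is likewise attributed to \cite[(2.8)]{M:gendeg} and \cite[Lemma~6.10]{AMR} in the sentence preceding that definition. Your sketch of the recursion for $\gamma_\t$ and the bookkeeping of the $v$-power is more than the paper does, but it is exactly the content of those references, so there is nothing to compare beyond noting that you have unpacked the citations.
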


For any standard tableau~$\t$ and an integer $k$, with $1\le k\le n$,
define sets $\LAdd_\t(k)$ and $\LRem_\t(k)$ by
\begin{align*}
\LAdd_\t(k)&=\set{\alpha\in\Add_\t(k)|\res(\alpha)=\res_\t(k)}\\
\text{and}\quad
\LRem_\t(k)&=\set{\rho\in\Rem_\t(k)|\res(\rho)=\res_\t(k)}.
\end{align*}
Using this notation we can give a non-recursive definition of the
Brundan-Kleshchev-Wang degree function on standard tableaux.

\begin{Defn}
   [\protect{Brundan, Kleshchev and Wang~\cite[Defn.~3.5]{BKW:GradedSpecht}}]
   \label{tableau degree}
   Suppose that $\blam\in\Multiparts$ and that $\t$ is a standard
   $\blam$-tableau. Then
   $$
     \deg \t=\sum_{k=1}^n\Big(|\LAdd_\t(k)|-|\LRem_\t(k)|\Big),
   $$
 \end{Defn}

%

 The next result connects the graded representation theory of $\H$ with the
 seminormal basis.

\begin{Prop}\label{modular reduction}
  Suppose that $e(\bi)\ne0$, for some $\bi\in I^n$ and let
  $$e(\bi)^\O:=\sum_{\s\in\Std(\bi)}\frac1{\gamma_\s}f_{\s\s}\in\HK.$$
  Then $e(\bi)^\O\in\HO$ and $e(\bi)=e(\bi)^\O\otimes_\O1_K$.
\end{Prop}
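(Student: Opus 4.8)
The plan is to prove the two assertions of Proposition~\ref{modular reduction} together by exploiting the characterization of $e(\bi)$ from Lemma~\ref{idempotents}(a): the idempotent $e(\bi)\in\H$ is uniquely determined as the idempotent with $\HH_\bj e(\bi)=\delta_{\bi\bj}\HH_\bi$. So the strategy is to show that $e(\bi)^\O$ lies in $\HO$, that its reduction $\bar e(\bi):=e(\bi)^\O\otimes_\O 1_K$ is a (nonzero) idempotent in $\H$, and that $\bar e(\bi)$ has the block-decomposition property that characterizes $e(\bi)$. The element $e(\bi)^\O$ is visibly idempotent in $\HK$, since by Lemma~\ref{primitive} each $\tfrac1{\gamma_\s}f_{\s\s}=F_\s$ is a primitive idempotent, the $F_\s$ for distinct $\s$ are orthogonal (they are the seminormal idempotents), and $e(\bi)^\O=\sum_{\s\in\Std(\bi)}F_\s$ is a sum of orthogonal idempotents; so idempotency will descend to the reduction once we know $e(\bi)^\O\in\HO$.

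\emph{Integrality.} First I would show $e(\bi)^\O\in\HO$. Here is where the results of~\cite{M:seminormal} do the work: the sum $\sum_{\s\in\Std(\bi)}F_\s$ is exactly the component idempotent cutting out a sum of $\L$-weight spaces of $\HK$, and by the construction of the $F_\s$ (Definition~\ref{seminormal basis}) one has $\sum_{\s\in\Std(\bi)}F_\s=\prod_{k=1}^n\prod_{j}\frac{L_k-c_{kj}}{c_{k}-c_{kj}}$ for the appropriate contents, i.e.\ it is a polynomial in $L_1,\dots,L_n$; cf.\ the idempotent construction in~\cite[Lemma~4.2]{M:seminormal}. One then checks that this polynomial has coefficients in $\O$: the $\O$-contents $\cont_\t(k)$ are chosen precisely (via the multicharge condition $\kappa_r-\kappa_{r+1}\ge n$) so that $\cont_\s(k)\equiv\cont_\t(k)\pmod\m$ if and only if $\res_\s(k)=\res_\t(k)$, which guarantees that the denominators $\cont_\t(k)-\cont_\s(k)$ appearing in $\sum_{\s\in\Std(\bi)}F_\s$ are all units in $\O$. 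Since $L_1,\dots,L_n\in\HO$ and the coefficients are in $\O$, we conclude $e(\bi)^\O\in\HO$. This is essentially the statement of~\cite[Lemma~4.2]{M:seminormal}, and I would quote it, filling in only the observation about denominators being units.

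\emph{Reduction.} Next, since $\HO$ is $\O$-free and $e(\bi)^\O$ is idempotent in $\HO$ (being idempotent in $\HK\supseteq\HO$), its image $\bar e(\bi)$ in $\H=\HO\otimes_\O K$ is idempotent. To identify it with $e(\bi)$, observe that for each $k$ the element $e(\bi)^\O$ projects $\HO$ onto the generalized $\cont_\t(k)$-eigenspace of $L_k$ (summed over $\t\in\Std(\bi)$), and reducing mod $\m$, $\bar e(\bi)$ projects $\H$ onto the generalized $\res_\t(k)=q_{i_k}$-eigenspace of $L_k$, because $\cont_\t(k)\otimes_\O 1_K=\res_\t(k)$ and because $\bar e(\bi)\ne 0$ (its image cannot vanish: $e(\bi)^\O$ is a nonzero idempotent in an $\O$-free algebra, hence survives reduction). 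Thus $\HH_\bj\bar e(\bi)=0$ for $\bj\ne\bi$ and $\bar e(\bi)$ acts as the identity on $\HH_\bi$, so $\HH_\bj\bar e(\bi)=\delta_{\bi\bj}\HH_\bi$; by the uniqueness in Lemma~\ref{idempotents}(a), $\bar e(\bi)=e(\bi)$, which is the second assertion.

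\emph{Main obstacle.} The technical heart — and the step I would be most careful about — is the integrality argument: verifying that when one rewrites $\sum_{\s\in\Std(\bi)}F_\s$ as the $\L$-block idempotent, all the scalars that occur lie in $\O$. This hinges on the compatibility of the $\O$-valued contents with residues modulo $\m$, which is forced by the hypothesis $\kappa_r-\kappa_{r+1}\ge n$ (separating contents that reduce to the same residue from those that do not), and it is exactly the content of~\cite[Lemma~4.2]{M:seminormal}. Everything else — idempotency in $\HK$, orthogonality of the $F_\s$, nonvanishing under reduction, and the final uniqueness argument — is either immediate from the seminormal form theory already recalled or from Lemma~\ref{idempotents}. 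So modulo quoting~\cite{M:seminormal} for the integrality, the proof is short.
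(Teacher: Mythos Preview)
Your overall strategy matches the paper's: quote \cite[Lemma~4.2]{M:seminormal} for integrality, then invoke the uniqueness in Lemma~\ref{idempotents}(a) to identify the reduction with $e(\bi)$. The paper carries out the identification slightly differently, using the cellular basis $g_{\s\t}=\hat e(\bi^\s)m_{\s\t}\hat e(\bi^\t)$ from \cite{M:seminormal} to verify directly that $\H\hat e(\bi)=\HH_\bi$; your eigenspace argument (that $(L_k-q_{i_k})^N$ annihilates $\bar e(\bi)$, since $e(\bi)^\O\prod_{\s\in\Std(\bi)}(L_k-\cont_\s(k))=0$ in $\HO$ and all the $\cont_\s(k)$ reduce to $q_{i_k}$) is a legitimate alternative, though you should make explicit that the $\bar e(\bj)$ sum to $1$ in order to conclude that $\bar e(\bi)$ acts as the identity on $\HH_\bi$, not merely that it kills the other generalized eigenspaces.

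One genuine slip in your sketch: your informal justification of integrality is wrong. You claim that $\sum_{\s\in\Std(\bi)}F_\s$ can be written as a single product $\prod_k\prod_j(L_k-c_{kj})/(c_k-c_{kj})$ with all denominators $\cont_\t(k)-\cont_\s(k)$ units in $\O$. But for distinct $\s,\t\in\Std(\bi)$ the contents $\cont_\s(k)$ and $\cont_\t(k)$ typically differ yet have the \emph{same} residue, so $\cont_\t(k)-\cont_\s(k)\in\m$ is \emph{not} a unit; the individual $F_\s$ need not lie in $\HO$. The point of \cite[Lemma~4.2]{M:seminormal} is precisely that these non-integral terms cancel in the sum over a full residue class. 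Since you are quoting that lemma anyway your proof survives, but you should drop the incorrect heuristic.
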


\begin{proof} It is shown in \cite[Lemma~4.2]{M:seminormal} that
  $e(\bi)^\O$ is an element of $\HO$. Therefore, we can reduce $e(\bi)^\O$
  modulo the maximal ideal~$\m$ of~$\O$ to obtain an element of $\H$: let
  $\hat e(\bi)=e(\bi)^\O\otimes_\O1_K$. Then $\set{\hat e(\bj)|\bj\in I^n}$ is a
  family of pairwise orthogonal idempotents in $\H$ such that
  $1_{\H}=\sum_\bj \hat e(\bi)$ by \cite[Cor.~4.7]{M:seminormal}.

  As in \cite[Defn.~4.3]{M:seminormal}, for every pair $(\s,\t)$ of standard
  tableaux of the same shape define
  $g_{\s\t}=\hat e(\bi^\s)m_{\s\t}\hat e(\bi^\t)$. Then $\{g_{\s\t}\}$ is a
  (cellular) basis of $\H$ by \cite[Theorem~4.5]{M:seminormal}. Moreover,
  by \cite[Prop.~4.4]{M:seminormal}, if $1\le k\le n$ then in $\H$
  $$g_{\s\t}(L_k-\res_\t(k))
        =\sum_{\substack{(\u,\v)\gdom(\s,\t)\\\u\in\Std(\bi^\s)
               \text{ and }\v\in\Std(\bi^\t)}} r_{\u\v}g_{\u\v},$$
  for some $r_{\u\v}\in K$. It follows that $g_{\s\t}(L_k-\res_\t(k))^N=0$ 
  for $N\gg 0$. Therefore,
  $$\HH_\bi=\sum_{\substack{\u\text{ standard}\\\v\in\Std(\bi)}}Kg_{\u\v}
           =\H \hat e(\bi).$$
  Hence, $e(\bi)=\hat e(\bi)$ by Lemma~\ref{idempotents}(a) as required.
\end{proof}

\subsection{Positive tableaux}
The KLR idempotents $e(\bi)$ in the presentation of $\R\cong\H$ hide a
lot of important information about these algebras.
Proposition~\ref{modular reduction} gives us a way of accessing this
information.

If $\bi=(i_1,\dots,i_n)\in I^n$ then set $\bi_k=(i_1,\dots,i_k)$ so
that $\bi_k\in I^k$, for $1\le k\le n$.

\begin{Defn} \label{positive}
  Suppose that $\s\in\Std(\bi)$, for $\bi\in I^n$. Then
  $\s$ is \textbf{positive} if
  \begin{enumerate}
      \item $\LRem_\s(k)=\emptyset$, for $1\le k\le n$,  and
      \item if $\LAdd_\s(k)\ne\emptyset$, for some $k$, then $\alpha\in\LAdd_\s(k)$ whenever 
	  $\alpha$ is an $i_k$-node which is below $\s^{-1}(k)$ such that $\alpha$ is an addable 
	  node for some tableau $\t\in\Std(\bi_{k-1})$ with $\t\gedom\s_{k-1}$.
  \end{enumerate}
  If $\s$ is a positive tableau define
  $y_\s=\Prod_{k=1}^n y_k^{|\LAdd_\s(k)|}\in\H.$
\end{Defn}

Using the relations in $\R$ it is not clear that $y_\s$ is non-zero
whenever $\s$ is positive.  We show that this is always the case in
Theorem~\ref{positive expansion} below.

By definition, $\deg\s\ge0$ whenever $\s$ is positive. The converse is
false because there are many standard tableau $\t$ which are not
positive such that $\deg\t\ge0$.

\begin{Example}[s]\label{positive example}
    (a) Suppose that $e=3$, $\ell=1$ and $\bi=(0,1,2,2,0,1,1,2,0)$. Then 
    the positive tableaux in $\Std(\bi)$ are:
    $$\tab(123,456,789),\quad \tab(123568,49,7),\quad \tab(1235689,4,7).$$\\
    (b) Suppose that $e=3$, $\ell=1$ and let $\t=\tab(124567,3)$. Then
     $\deg\t=0$, however, the tableau~$\t$ is not positive.\\
    (c) Suppose that $e=2$, $\ell=2$, $\charge=(0,1)$ and that
    $$\s=\Big(\tab(14), \tab(2,3)\Big)\quad\text{and}\quad
      \t=\Big(\tab(1), \tab(234)\Big).$$
      Then $\s$ is not a positive tableau because $\t_3\gdom\s_3$ but 
      $\alpha=(1,2,3)=\t^{-1}(4)$ is not an addable node of $\s$.\\
    (d) Suppose that $e=2$, $\ell=2$, $\charge=(8,0)$ and that
    $$\t=\Bigg(\ \tab(1245,3,7),\tab(6)\Bigg)\quad\text{and}\quad
      \s=\Bigg(\ \tab(12,37,4,5),\tab(6)\Bigg).$$ 
    Then $\s$ and $\t$ both belong to $\Std(\bi)$ and
    $\LRem_\s(k)=\emptyset$, for $1\le k\le 7$. However, $\s$ is not a
    positive tableau because the node $(3,1,1)=\t^{-1}(7)$ is below
    $(2,2,1)=\s^{-1}(7)$ and $(3,1,1)$ is not an addable node of $\s_6$.
\end{Example}

Recall from section~3.2 that if $\blam\in\Multiparts$ then $\tlam$ is the
unique standard $\blam$-tableau such that $\tlam\gedom\t$, for all
$\t\in\Std(\blam$). The tableaux $\tlam$ are the most important
examples of positive tableaux.

\begin{Lemma}\label{positive tlam}
    Suppose that $\blam\in\Multiparts$. Then $\tlam$ is positive.
\end{Lemma}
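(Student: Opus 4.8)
The plan is to verify the two defining conditions of a positive tableau directly from the combinatorial description of $\tlam$. Recall that $\tlam$ is the tableau in which $1,2,\dots,n$ are entered in order along the rows of $[\blam]$, working left to right within each row, top to bottom within each component, and then moving to the next component. The key structural fact I would first record is that for each $k$, the subtableau $\rest{\tlam}k$ has shape obtained from $[\blam]$ by keeping the first few complete rows (in this reading order) together with an initial segment of the next row; in particular $\Shape(\rest{\tlam}k)$ is always a multipartition whose components are either empty, equal to the corresponding component of $\blam$, or a single ``almost full'' partition with its last row possibly incomplete.

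First I would dispose of condition~(a), that $\LRem_{\tlam}(k)=\emptyset$ for all $k$. Here I only need $\Rem_{\tlam}(k)$ restricted to nodes strictly below $\tlam^{-1}(k)$, so I examine the removable nodes of $\Shape(\rest{\tlam}k)$. By the structural fact above, $\tlam^{-1}(k)=(r,c,l)$ is the last node of the currently-being-filled row $r$ of component $l$, and every node of $\Shape(\rest{\tlam}k)$ that is \emph{below} $(r,c,l)$ lies either in a lower row of component $l$ or in a later component; but in $\rest{\tlam}k$ no such row has been started yet (all of component $l$ below row $r$ is empty, as are all later components). Hence there are simply no nodes of $\Shape(\rest{\tlam}k)$ below $\tlam^{-1}(k)$ at all, so a fortiori no removable ones, giving $\Rem_{\tlam}(k)=\emptyset$ and in particular $\LRem_{\tlam}(k)=\emptyset$.

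Next I would check condition~(b). Suppose $\LAdd_{\tlam}(k)\neq\emptyset$ and let $\alpha$ be any $i_k$-node below $\tlam^{-1}(k)=(r,c,l)$ that is addable to some $\t\in\Std(\bi_{k-1})$ with $\t\gedom\rest{\tlam}{k-1}$. Since $\rest{\tlam}{k-1}$ is the dominant tableau of its shape and $\t$ has the same residue sequence $\bi_{k-1}$, dominance forces $\Shape(\t)=\Shape(\rest{\tlam}{k-1})$, hence $\t=\rest{\tlam}{k-1}$; so ``addable to some such $\t$'' just means addable to $\rest{\tlam}{k-1}$, i.e. $\alpha\in\Add_{\tlam}(k)$. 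Thus the requirement reduces to: every $i_k$-node $\alpha\in\Add_{\tlam}(k)$ below $(r,c,l)$ lies in $\LAdd_{\tlam}(k)$ — but that is exactly the definition of $\LAdd_{\tlam}(k)$, so the condition holds automatically. I expect the main obstacle to be the first bullet, namely pinning down precisely which nodes are ``below'' $\tlam^{-1}(k)$ in $\Shape(\rest{\tlam}k)$ and confirming that this set is empty; once the reading-order description of $\tlam$ is made careful, this is a short case check on components before, equal to, and after $l$, and the rest of the argument is essentially a tautology flowing from the maximality of $\tlam$ in the dominance order.
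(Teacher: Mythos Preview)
Your argument for condition~(a) is fine: in $\Shape(\rest{\tlam}k)$ there are indeed no nodes strictly below $\tlam^{-1}(k)$, so $\Rem_{\tlam}(k)=\emptyset$ and hence $\LRem_{\tlam}(k)=\emptyset$.

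The gap is in condition~(b). Your key step is the claim that if $\t\in\Std(\bi_{k-1})$ and $\t\gedom\rest{\tlam}{k-1}$ then $\t=\rest{\tlam}{k-1}$, on the grounds that $\rest{\tlam}{k-1}$ is the dominant tableau of its shape. This is false: $\rest{\tlam}{k-1}=\t^\bnu$ is maximal only among tableaux \emph{of shape $\bnu$}, not among all tableaux with residue sequence $\bi_{k-1}$. A concrete counterexample: take $\ell=1$, $e=2$, $\blam=(3,2,1)$ and $k=5$. Then $\rest{\tlam}{4}=\t^{(3,1)}$ with residue sequence $(0,1,0,1)$, but $\t^{(4)}$ also has residue sequence $(0,1,0,1)$ and $\t^{(4)}\gdom\t^{(3,1)}$. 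So the quantifier over $\t$ in Definition~\ref{positive}(b) is genuinely non-trivial for $\tlam$, and your reduction to ``$\alpha\in\Add_{\tlam}(k)$'' collapses.

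The paper's proof confronts this directly: by induction on $n$ it suffices to handle $k=n$, and for any $\t\in\Std(\ilam_{n-1})$ with $\t\gedom\tlam_{n-1}$ one argues from $\Shape(\t)\gedom\Shape(\tlam_{n-1})$ that $\Shape(\t)$ has no boxes in components beyond the last component $l$ of $\blam$, and in component $l$ no rows beyond row $r$ (where $\tlam^{-1}(n)=(r,c,l)$). Hence any addable $i_n$-node of $\Shape(\t)$ below $(r,c,l)$ is either in a later component (automatically addable to $\tlam_{n-1}$ since those components are empty) or is the node $(r+1,1,l)$ (also addable to $\tlam_{n-1}$). That is the missing combinatorial check your proposal needs.
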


\begin{proof} By definition, $\LRem_{\tlam}(k)=\emptyset$ for $1\le k\le
  n$, so it remains to check condition~(b) in
  Definition~\ref{positive}. Let $\beta=(r,c,l)$ be the lowest removable node of
  $\blam$, so that $\tlam(\beta)=n$. By induction on $n$ it suffices to show 
  that $\alpha\in\LAdd_{\tlam}(n-1)$ whenever $\alpha=(r',c',l')$ is below
  $\beta$ and there exists a standard tableau $\t\in\Std(\ilam_{n-1})$ such
  that $\t\gdom\tlam_{n-1}$ and $\alpha\in\LAdd_\t(n-1)$.

  Let $\bmu=\Shape(\t)$. Since $\t\gdom\tlam_{n-1}$ we have that
  $\mu^{(k)}=(0)$ for $k>l$. Consequently, $\alpha\in\LAdd_{\tlam}(n-1)$ if
  $l'>l$. As $\alpha$ is below $\beta$ this leaves only the case when
  $l'=l$ in which case we have that $r'>r$. Since $\t\gdom\tlam_{n-1}$ this
  forces $\alpha=(r+1,1,l)$ to be the addable node of $\blam$ in first
  column of the row directly below $\beta$, so
  $\alpha\in\LAdd_{\tlam}(n-1)$ as required.
\end{proof}

Suppose that $\s$ is a positive tableau. To work with $e(\bi^{\s})y_\s$ we have
to choose the correct lift of it to $\HO$. Perhaps surprisingly, we
choose a lift which depends on the tableau $\s$ rather than choosing a
single lift for each of the homogeneous elements $y_1,\dots,y_n$.

\begin{Defn}\label{ys def}
    Suppose that $\bi\in I^n$ and $\s\in\Std(\bi)$ is a positive
    tableau. Define $y_\s^\O=y_{\s,1}^\O\dots y_{\s,n}^\O$, an element of $\HO$, where
    $$ y_{\s,k}^\O=\begin{cases} 
        \Prod_{\alpha\in\LAdd_{\s}(k)}\Big(1-\frac1{\cont(\alpha)}L_k\Big),&\text{if }q\ne1,
	\\[5mm]
        \Prod_{\alpha\in\LAdd_{\s}(k)}\Big(L_k-\cont(\alpha)\Big),&\text{if }q=1,
    \end{cases}
    $$
    for $k=0,\dots,n$ (by convention, empty products are~$1$).
\end{Defn}

By definition, $y_\s^\O\in\HO$. Moreover, 
$e(\bi^{\s})y_\s=e(\bi^{\s})^{\O}y_\s^\O\otimes_\O 1_K\in\H$.

The following Lemma in the case $\s=\tlam$ is the key to the main results in this paper.

\begin{Lemma}\label{ftt ys}
    Suppose that $\bi\in I^n$ and that $\s,\t\in\Std^+(\bi)$ and that $\s$ is positive. Then:
    \begin{enumerate}
      \item If $\t=\s$ then
        $f_{\s\s}y_\s^\O = u^\O_\s \gamma_{\s} f_{\s\s}$,
        for some unit $u_\s^\O\in\O$.
      \item If $\t\ne\s$ then there exists an element $u_\t\in\O$ such
        that
       $$f_{\t\t}y_\s^\O=\begin{cases}
           u_\t f_{\t\t},& \text{if $\t\gdom\s$},\\
           0,&\text{otherwise.}
     \end{cases}$$
\end{enumerate}
\end{Lemma}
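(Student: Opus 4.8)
The plan is to compute $f_{\t\t}\,y_\s^\O$ directly by repeatedly using the action of the Jucys--Murphy elements $L_k$ on the seminormal basis. Recall that in $\HK$ we have $f_{\t\t}L_k = \cont_\t(k)f_{\t\t}$, since $f_{\t\t}$ is (a scalar multiple of) the primitive idempotent $F_\t$ and the $L_k$ act diagonally on the seminormal basis by Definition~\ref{seminormal basis} and (\ref{Murphy action}). Hence each factor $y_{\s,k}^\O$ of $y_\s^\O$ acts on $f_{\t\t}$ as the scalar
$$
c_k(\t):=\prod_{\alpha\in\LAdd_\s(k)}\Bigl(1-\tfrac{\cont_\t(k)}{\cont(\alpha)}\Bigr)
\quad\text{(resp. }\prod_{\alpha\in\LAdd_\s(k)}(\cont_\t(k)-\cont(\alpha))\text{ if }q=1),
$$
so that $f_{\t\t}\,y_\s^\O = \bigl(\prod_{k=1}^n c_k(\t)\bigr) f_{\t\t}$. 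The whole Lemma therefore reduces to analysing the scalar $\prod_k c_k(\t)\in\K$: for part~(a) I must show it equals $u_\s^\O\gamma_\s$ for a unit $u_\s^\O\in\O$; for part~(b) I must show it lies in $\O$ always, is a unit multiple of $1$ when $\t\gdom\s$, and vanishes otherwise.

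First I would dispose of the vanishing in part~(b). A factor $c_k(\t)$ is zero exactly when $\cont_\t(k)=\cont(\alpha)$ for some $\alpha\in\LAdd_\s(k)$; since $\res(\alpha)=\res_\s(k)=i_k=\res_\t(k)=\res(\t^{-1}(k))$ for tableaux in $\Std(\bi)$, this is asking whether the node $\alpha$ (an addable $i_k$-node of $\Shape(\s_{k})$ lying below $\s^{-1}(k)$) coincides in content with the node $\t^{-1}(k)$. I would argue, using condition~(b) of Definition~\ref{positive} together with the hypothesis that $\s$ is positive, that if $\t\not\gedom\s$ then at some step $k$ the node $\t^{-1}(k)$ is forced to be one of the nodes $\alpha\in\LAdd_\s(k)$ (because $\t_{k-1}\gedom\s_{k-1}$ would then be violated at an earlier or equal step, while positivity says all such $i_k$-nodes below $\s^{-1}(k)$ that are addable for some dominating $\t$ actually lie in $\LAdd_\s(k)$), giving $c_k(\t)=0$. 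Conversely, when $\t\gdom\s$ I would show no such coincidence of contents occurs, so $\prod_k c_k(\t)$ is a nonzero element of $\K$.

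It then remains to show this nonzero scalar lies in $\O^\times$ when $\t\gdom\s$ (part~(b)) and equals $u_\s^\O\gamma_\s$ with $u_\s^\O\in\O^\times$ when $\t=\s$ (part~(a)). For the integrality and invertibility I would compare $c_k(\t)$ factor-by-factor with the $\O$-valued contents listed after Definition~3.7: each factor $1-\cont_\t(k)/\cont(\alpha)$ is, up to the unit $1/\cont(\alpha)\in\O^\times$, the difference $\cont(\alpha)-\cont_\t(k)$ of two contents of nodes with the \emph{same} residue $i_k$, and such a difference lies in $\pi\O$ but is forced to be a \emph{unit} times $\pi^{(\text{something})}$; a careful bookkeeping of the $\pi$-adic valuations — exactly the kind of computation carried out in \cite{M:gendeg, M:seminormal} for $\gamma_\t$ — shows the product over all $k$ has valuation $0$ in case~(b), and in case~(a) matches the valuation and leading term of $\gamma_\s$ as given in Definition~\ref{gamma} (note the factors $1-\cont_\s(k)/\cont(\alpha)$ for $\alpha\in\LAdd_\s(k)$ are precisely the numerator contributions $\cont_\s(k)-\cont(\alpha)$ appearing in $\gamma_\s$, while the $v^{\ell(d(\s))+\delta(\blam)}$ prefactor and the removable-node denominators of $\gamma_\s$ are units in $\O$).

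The main obstacle I anticipate is the combinatorial bookkeeping in part~(b): establishing precisely that $\t\not\gedom\s$ forces a content coincidence $\cont_\t(k)=\cont(\alpha)$ for some step $k$ and some $\alpha\in\LAdd_\s(k)$, and — just as importantly — that $\t\gdom\s$ forbids it. This is where the somewhat delicate condition~(b) in the definition of a positive tableau, together with the examples in~\ref{positive example}, must be used in an essential way; the $\pi$-adic valuation computation, while tedious, is routine once one knows which contents can and cannot agree.
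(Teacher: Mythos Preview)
Your overall approach is the same as the paper's: both observe that $f_{\t\t}L_k=\cont_\t(k)f_{\t\t}$, so $f_{\t\t}y_\s^\O=\bigl(\prod_k c_k(\t)\bigr)f_{\t\t}$, and then analyse this scalar. Your treatment of part~(a) matches the paper exactly: since $\s$ is positive, $\LRem_\s(k)=\emptyset$ for all $k$, and the factors of $\gamma_\s$ coming from $\Add_\s(k)\setminus\LAdd_\s(k)$ (and the $v$-power and denominators) are units in~$\O$, so $\prod_k c_k(\s)=u_\s^\O\gamma_\s$ with $u_\s^\O\in\O^\times$.

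For part~(b), however, you have misread the statement and are attempting to prove something both stronger and false. The Lemma only asserts $u_\t\in\O$ when $\t\gdom\s$, \emph{not} that $u_\t$ is a unit. Your own observation that each factor $1-\cont_\t(k)/\cont(\alpha)$ lies in $\pi\O$ (since $\res_\t(k)=i_k=\res(\alpha)$) already shows the product lies in~$\m$ whenever some $\LAdd_\s(k)\ne\emptyset$, so it cannot have valuation~$0$. No $\pi$-adic bookkeeping is required: since $\cont_\t(k)\in\O$ and (when $q\ne1$) $\cont(\alpha)\in\O^\times$, each $c_k(\t)$ visibly lies in~$\O$, and $u_\t\in\O$ is immediate. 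Likewise, you need not (and should not expect to) show that no content coincidence occurs when $\t\gdom\s$; the statement permits $u_\t=0$.

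The only substantive content of~(b) is the vanishing when $\t\ne\s$ and $\t\not\gdom\s$, and here your sketch is correct. The paper makes it precise by induction on~$k$: one shows $f_{\t\t}y_{\s,1}^\O\cdots y_{\s,k}^\O=u_{\t,k}f_{\t\t}$ with $u_{\t,k}\in\O$ if $\t_k\gedom\s_k$, and $=0$ otherwise. At the first step where $\t_k\gedom\s_k$ but $\t_{k+1}\not\gedom\s_{k+1}$, the node $\rho=\t^{-1}(k+1)$ must lie below $\beta=\s^{-1}(k+1)$; since $\rho$ is an addable $i_{k+1}$-node for $\t_k\in\Std(\bi_k)$ and $\t_k\gedom\s_k$, condition~(b) of Definition~\ref{positive} forces $\rho\in\LAdd_\s(k+1)$, whence $c_{k+1}(\t)=0$. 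This is exactly the mechanism you describe; the induction merely pins down which~$k$ to use.
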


\begin{proof}
    By (\ref{Murphy action}), if $1\le k\le n$ then
    $f_{\t\t}L_k=\cont_\t(k)f_{\t\t}$ in $\HK$, so $f_{\t\t}y_\s^\O$ is a
    scalar multiple of $f_{\t\t}$ and it remains to determine this
    multiple. 

    (a) Observe that $\Rem_{\s}^{\Lambda}(k)=\emptyset$, for $1\le k\le n$,
    because $\s$ is a positive tableau.  Further, if
    $\alpha\in\Add_{\s}(k)$ and
    $\alpha\notin\LAdd_\s(k)$ then the factor that $\alpha$
    contributes to $\gamma_{\s}$ is a unit in $\O$.  Therefore, if $q\ne1$
    then applying Definition~\ref{gamma} and Definition~\ref{ys def} shows that
    $$f_{\s\s}y_\s^\O=\prod_{k=1}^n\prod_{\alpha\in\LAdd_{\s}(k)}
    \Big(1-\frac{\cont_\s(k)}{\cont(\alpha)}\Big)\cdot f_{\s\s}
             =u_{\s}^\O \gamma_{\s}f_{\s\s},$$
    for some invertible element $u_{\s}^\O\in\O$, proving (a). If $q=1$
    then the proof is similar.

    (b) Suppose that $1\le k\le n$. Then we claim that
    $$ f_{\t\t}y_{\s,1}^\O\dots y_{\s,k}^\O
         =\begin{cases}
             u_{\t,k}f_{\t\t}, &\text{if }\t_k\gedom\s_k,\\
               0,&\text{otherwise},
          \end{cases}$$
    for some  $u_{\t,k}\in\O$. If $k=0$ then there is nothing to prove so
    we may assume by induction that the claim is true for
    $f_{\t\t}y_{\s,1}^\O\dots y_{\s,k}^\O$ and consider
    $f_{\t\t}y_{\s,1}^\O\dots y_{\s,k+1}^\O$.

    If $\t_k\not\gedom\s_k$ then, by induction, both sides of the claim
    are zero, so we may assume that $\t_k\gedom\s_k$ Let
    $\rho=\t^{-1}(k+1)$ be the node labeled by $k+1$ in $\t$ and $\beta$ be
    the node labeled by $k+1$ in $\s$.

    It remains to show that $f_{\t\t}y_{\s,1}^\O\dots y_{\s,k+1}^\O=0$ when
    $\rest\t{k+1}\not\gedom\rest\s{k+1}$. As $\rest\t k\gedom\rest\s k$ this 
    can happen only if~$\rho$ is below
    $\beta$. However, since $\s$ is positive and $\res(\s)=\res(\t)$, 
    every addable $i_{k+1}$-node of
    $\rest\t k$ below~$\beta$ is an addable node of~$\rest\s k$.
    Hence, $\rho\in\LAdd_\s(k+1)$ and, consequently,
    $\cont_\t(k+1)=\cont(\alpha)$, for some $\alpha\in\LAdd_\s(k+1)$.
    Therefore, the coefficient of $f_{\t\t}$ in $f_{\t\t}
    y_{\s,1}^\O\dots y_{\s,k+1}^\O$ is zero, as we needed to show. This completes the proof of the Lemma.
\end{proof}

Recall the definition of positive tableau from Definition~\ref{positive}.

\begin{Theorem}\label{positive expansion}
    Suppose that $\bi\in I^n$ and that $\s\in\Std(\bi)$ is a positive
    tableau. Then there
    exists a non-zero scalar $c\in K$ such that
    $$e(\bi) y_\s=cm_{\s\s}+\sum_{(\u,\v)\gdom(\s,\s)}r_{\u\v}m_{\u\v},$$
    some $r_{\u\v}\in K$. In particular, $y_\s$
    is a non-zero homogeneous element of $\H$ of degree $2\deg\s$.
\end{Theorem}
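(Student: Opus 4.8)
The plan is to pass to the semisimple algebra $\HK$, where $e(\bi)^\O$ decomposes as a sum of primitive idempotents $\frac1{\gamma_\s}f_{\s\s}$ over $\s\in\Std(\bi)$ by Proposition~\ref{modular reduction}, and compute the product $e(\bi)^\O y_\s^\O$ against this seminormal decomposition using Lemma~\ref{ftt ys}. First I would write
$$e(\bi)^\O y_\s^\O
   =\sum_{\t\in\Std(\bi)}\tfrac1{\gamma_\t}f_{\t\t}y_\s^\O
   =u_\s^\O f_{\s\s}
      +\sum_{\substack{\t\in\Std(\bi)\\\t\gdom\s}}\tfrac{u_\t}{\gamma_\t}f_{\t\t},$$
where the first displayed equality uses the definition of $e(\bi)^\O$, and the second uses Lemma~\ref{ftt ys}: part~(a) handles the term $\t=\s$ (giving the unit coefficient $u_\s^\O$ after cancelling $\gamma_\s$), while part~(b) kills every term with $\t\not\gedom\s$ and leaves a scalar multiple of $f_{\t\t}$ for each $\t\gdom\s$. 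Thus $e(\bi)^\O y_\s^\O$ lies in the $\O$-span of $\set{f_{\t\t}|\t\in\Std(\bi),\ \t\gedom\s}$ with a \emph{unit} coefficient on $f_{\s\s}$.

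Next I would convert this back into the standard basis. Since $f_{\t\t}=m_{\t\t}+\sum_{(\u,\v)\gdom(\t,\t)}r_{\u\v}m_{\u\v}$ over $\K$ (the unitriangular change of basis recorded after Definition~\ref{seminormal basis}), and since $\t\gdom\s$ implies $(\t,\t)\gdom(\s,\s)$ in the dominance order on pairs, substituting the expansion of each $f_{\t\t}$ gives
$$e(\bi)^\O y_\s^\O=u_\s^\O m_{\s\s}+\sum_{(\u,\v)\gdom(\s,\s)}r'_{\u\v}m_{\u\v}$$
for some $r'_{\u\v}\in\K$. A priori the coefficients $r'_{\u\v}$ lie in $\K$, but the left-hand side lies in $\HO$ (both $e(\bi)^\O$ and $y_\s^\O$ do, by Proposition~\ref{modular reduction} and Definition~\ref{ys def}), and $\set{m_{\u\v}}$ is an $\O$-basis of $\HO$, so in fact $r'_{\u\v}\in\O$ and $u_\s^\O\in\O^\times$. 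Reducing modulo~$\m$ via $-\otimes_\O 1_K$ and using $e(\bi)y_\s=e(\bi)^\O y_\s^\O\otimes_\O 1_K$ (noted after Definition~\ref{ys def}) yields the claimed expansion with $c=u_\s^\O\otimes_\O 1_K$, which is non-zero because $u_\s^\O$ is a unit in the discrete valuation ring~$\O$. In particular $e(\bi)y_\s\ne 0$; and since $e(\bi)y_\s=y_\s$ (as $y_\s$ already lies in the block $\HH_{\bi}$, the $i_r$-eigenspace decomposition forcing $e(\bi)y_\s=y_\s$ from $y_r e(\bj)=0$ unless the relevant residues match), $y_\s$ itself is non-zero. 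Finally, $y_\s=\prod_k y_k^{|\LAdd_\s(k)|}$ is homogeneous of degree $\sum_k 2|\LAdd_\s(k)|=2\deg\s$, using $\deg y_k=2$ and the fact that $\LRem_\s(k)=\emptyset$ for all $k$ (positivity), so $\deg\s=\sum_k|\LAdd_\s(k)|$.

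The main obstacle is the bookkeeping in Lemma~\ref{ftt ys}(b): one must be sure that for $\t\gdom\s$ the product $f_{\t\t}y_\s^\O$ really is a (possibly zero) scalar multiple of $f_{\t\t}$ with the scalar in~$\O$ — this is exactly what that Lemma provides, via the inductive argument comparing $\rest\t k$ and $\rest\s k$ and exploiting positivity of $\s$ to force $\rho=\t^{-1}(k+1)\in\LAdd_\s(k+1)$ whenever $\rho$ drops below $\beta=\s^{-1}(k+1)$, so the factor $\bigl(1-\tfrac1{\cont(\alpha)}L_{k+1}\bigr)$ (resp.\ $L_{k+1}-\cont(\alpha)$) annihilates $f_{\t\t}$. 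Granting Lemma~\ref{ftt ys}, the rest is the standard unitriangularity-plus-integrality argument and is routine.
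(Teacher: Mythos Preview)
Your proposal follows exactly the paper's own argument: decompose $e(\bi)^\O$ via Proposition~\ref{modular reduction}, apply Lemma~\ref{ftt ys} term by term, convert to the standard basis using the unitriangularity of $\{f_{\t\t}\}$ over $\{m_{\u\v}\}$, use that $e(\bi)^\O y_\s^\O\in\HO$ to force the coefficients into~$\O$, and reduce modulo~$\m$.

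There is one slip in your final paragraph. The claim that $e(\bi)y_\s=y_\s$ because ``$y_\s$ already lies in the block $\HH_\bi$'' is false: $y_\s=\prod_k y_k^{|\LAdd_\s(k)|}$ is a polynomial in the central (in $\L$) elements $y_1,\dots,y_n$ and has non-trivial components in \emph{every} non-zero $e(\bj)\H$; there is no relation ``$y_r e(\bj)=0$ unless the residues match''. Fortunately this claim is unnecessary: $e(\bi)y_\s\ne0$ already implies $y_\s\ne0$ trivially, and that is all the theorem asserts. With that sentence deleted (or replaced by the one-line implication), your proof is the same as the paper's.
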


\begin{proof}
    To prove the theorem we work in $\HO$ and in $\HK$. By
    Lemma~\ref{ftt ys}, inside $\HK$ we have
    $$e(\bi)^\O y_\s^\O
      =\sum_{\t\in\Std(\bi)}\frac1{\gamma_\t}f_{\t\t}y_\s^\O
      = u_{\s}^\O f_{\s\s}
       + \sum_{\substack{\t\in\Std(\bi)\\\t\gdom\s}}
       \tfrac{u_{\t,n}}{\gamma_\t} f_{\t\t},
    $$
    where $u_{\s}^{\O}$ is invertible in $\O$ and $u_{\t,n}\in\O$, for each
    $\t\gdom\s$.
    Rewriting this equation in terms of the standard basis we see that
    $$e(\bi)^\O y_\s^\O=u_{\s}^\O m_{\s\s}
          +\sum_{(\u,\v)\gdom(\s,\s)} r_{\u\v}m_{\u\v},$$
    for some $r_{\u\v}\in\K$. However,
    $e(\bi)^\O y_\s^\O\in\HO$, by Proposition~\ref{modular reduction}, and 
    $m_{\u\v}\in\HO$ for all $(\u,\v)$.
    So, in fact, $r_{\u\v}\in\O$ for all $(\u,\v)\gdom(\s,\s)$ and
    reducing this equation modulo the maximal ideal~$\m=\pi\O$ gives the first statement in the Theorem.

    Finally, since $y_\s\ne0$ we have that $\deg y_\s=2\deg\s$
    by Definition~\ref{tableau degree} ---~recall that~$\s$ is positive only if
    $\LRem_\s(k)=\emptyset$, for $1\le k\le n$.
\end{proof}

By Lemma~\ref{positive tlam}, the tableau $\tlam$ is positive for any
$\blam\in\Multiparts$. Therefore, we have the following important special
case of Definition~\ref{positive}.

\begin{Defn}\label{ylam defn}
  Suppose that $\blam\in\Multiparts$. Set $\eblam=e(\ilam)$ and 
  $y_\blam=y_{\tlam}$.
\end{Defn}

As in section~2, if $\blam\in\Multiparts$ let $\Hblam$ be the two-sided
ideal spanned by the $m_{\s\t}$, where $\s,\t\in\Std(\bmu)$ for some
$\bmu\in\Multiparts$ with $\bmu\gdom\blam$.

Then using Theorem~\ref{positive expansion} we obtain:

\begin{cor}\label{ylam}
  Suppose that $\blam\in\Multiparts$. Then $y_\blam$ is a non-zero
  homogeneous element of degree $2\deg\tlam$. Moreover, there exists a non-zero
  scalar $c_\blam\in K$ such that
  $\eblam y_\blam \equiv c_\blam m_\blam\pmod\Hblam$.
\end{cor}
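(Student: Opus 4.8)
The plan is to derive this corollary as a direct specialization of Theorem~\ref{positive expansion} to the positive tableau $\s=\tlam$. First I would invoke Lemma~\ref{positive tlam} to know that $\tlam$ is positive, so that all the machinery of \Sect4 applies with $\s=\tlam$; in particular $y_\blam=y_{\tlam}$ is defined and $e(\ilam)=\eblam$ makes sense since $\ilam=\res(\tlam)\in I^n$ is a genuine residue sequence. Applying Theorem~\ref{positive expansion} with $\s=\tlam$ then gives a non-zero scalar $c\in K$ with
$$\eblam y_\blam=c\,m_{\tlam\tlam}+\sum_{(\u,\v)\gdom(\tlam,\tlam)}r_{\u\v}m_{\u\v},$$
and simultaneously tells us that $y_\blam$ is a non-zero homogeneous element of degree $2\deg\tlam$, which is the first assertion of the corollary.

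For the congruence statement, the point is to identify the leading term $m_{\tlam\tlam}$ with $m_\blam$ modulo $\Hblam$. By Definition~\ref{mst}, $m_{\tlam\tlam}=T_{d(\tlam)^{-1}}m_\blam T_{d(\tlam)}$; but $\tlam$ is the dominant tableau, so $d(\tlam)=1$ and hence $m_{\tlam\tlam}=m_\blam$ on the nose. It then remains to check that each error term $m_{\u\v}$ with $(\u,\v)\gdom(\tlam,\tlam)$ lies in $\Hblam$. By the definition of the dominance order on pairs of tableaux (recalled in \Sect3.3), $(\u,\v)\gdom(\tlam,\tlam)$ forces either $\Shape(\u)\gdom\blam$, in which case $m_{\u\v}\in\Hblam$ by definition of $\Hblam$, or else $\Shape(\u)=\blam$ with $\u\gedom\tlam$ and $\v\gedom\tlam$; but $\tlam$ is the unique maximal element of $\Std(\blam)$, so $\u=\v=\tlam$, contradicting $(\u,\v)\ne(\tlam,\tlam)$. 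Hence every such $m_{\u\v}$ lies in $\Hblam$, and setting $c_\blam=c$ yields $\eblam y_\blam\equiv c_\blam m_\blam\pmod{\Hblam}$.

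I do not expect any genuine obstacle here: the corollary is essentially bookkeeping on top of Theorem~\ref{positive expansion}, whose proof (reduction to the semisimple case via the seminormal basis, Lemma~\ref{ftt ys}, and modular reduction through $\HO$) is where all the real work sits. The only mild subtlety is making sure the error terms are genuinely controlled by the two-sided ideal $\Hblam$ rather than just by the weaker combinatorial order on tableau pairs, but this is exactly the observation above that maximality of $\tlam$ in $\Std(\blam)$ collapses the "same shape" case. I would therefore present the proof as: cite Lemma~\ref{positive tlam}, apply Theorem~\ref{positive expansion}, use $d(\tlam)=1$ to rewrite $m_{\tlam\tlam}=m_\blam$, and dispatch the error terms into $\Hblam$ using the description of $\gdom$ on pairs together with the maximality of $\tlam$.
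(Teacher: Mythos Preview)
Your proposal is correct and follows exactly the approach the paper intends: the paper simply states the corollary as an immediate consequence of Theorem~\ref{positive expansion} (combined with Lemma~\ref{positive tlam}), and you have filled in precisely the details that make this immediate, namely that $d(\tlam)=1$ gives $m_{\tlam\tlam}=m_\blam$ and that the maximality of $\tlam$ forces every error term into~$\Hblam$.
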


Equivalently, $\eblam y_\blam \equiv c_\blam \eblam m_\blam\eblam
\pmod\Hblam$. From small examples it is plausible that $\eblam m_\blam
\eblam\in\L$, for all $\blam\in\Multiparts$.  This would give a partial
explanation for the last result.

\section{A graded cellular basis of $\H$}
In this section we build on Theorem~\ref{positive expansion} to prove our
Main Theorem which shows that $\H$ is a graded cellular algebra.  Brundan,
Kleshchev and Wang~\cite{BKW:GradedSpecht} have already constructed a
graded Specht module for $\H$. The main result of this section essentially
`lifts' the Brundan, Kleshchev and Wang's construction of the graded
Specht modules to a graded cellular basis of~$\H$.

\subsection{Lifting the graded Specht modules to $\H$} As Brundan and
Kleshchev note~\cite[\S4.5]{BK:GradedDecomp}, it follows directly from
Definition~\ref{relations} that $\H$ has a unique $K$-linear
anti-automorphism $*$ which fixes each of the graded generators. We
warn the reader that, in general, $*$ is different from the
anti-automorphism of~$\H$ determined by the (ungraded) cellular basis
$\{m_{\s\t}\}$.

Inspired partly by Brundan, Kleshchev and
Wang's~\cite[\S4.2]{BKW:GradedSpecht} construction of the graded Specht
modules in the non-degenerate case we make the following definition.

\begin{Defn}\label{psi defn}
    Suppose that $\blam\in\Multiparts$ and $\s,\t\in\Std(\blam)$ and
    \textit{fix} reduced expressions $d(\s)=s_{i_1}\dots s_{i_k}$ and
    $d(\t)=s_{j_1}\dots s_{j_m}$ for $d(\s)$ and $d(\t)$,
    respectively. Define
    $$\psi_{\s\t}=\psi_{d(\s)}^*\eblam y_\blam\psi_{d(\t)},$$ 
    where $\psi_{d(\s)}=\psi_{i_1}\dots\psi_{i_k}$ and
    $\psi_{d(\t)}=\psi_{j_1}\dots\psi_{j_m}$.
\end{Defn}

An immediate and very useful consequence of this definition and the
homogeneous relations of $\H$ is the following.

\begin{Lemma}\label{psi weights}
  Suppose that $\s,\t\in\Std(\blam)$, for $\blam\in\Multiparts$, and that
  $\bi,\bj\in I^n$.
  Then
  $$e(\bi)\psi_{\s\t}e(\bj)
  =\begin{cases}
    \psi_{\s\t},&\text{if $\res(\s)=\bi$ and }\res(\t)=\bj,\\
    0,&\text{otherwise.}
  \end{cases}$$
\end{Lemma}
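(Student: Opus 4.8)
The claim is essentially a bookkeeping statement about how the idempotents $e(\bi)$ interact with the element $\psi_{\s\t}=\psi_{d(\s)}^*\eblam y_\blam\psi_{d(\t)}$. The plan is to track residue sequences through each of the three factors $\psi_{d(\s)}^*$, $\eblam y_\blam$, and $\psi_{d(\t)}$ using only the homogeneous relations of $\H$ from Definition~\ref{relations}. The key relations are $\psi_r e(\bi)=e(s_r\cdot\bi)\psi_r$, $y_r e(\bi)=e(\bi)y_r$, and $e(\bi)e(\bj)=\delta_{\bi\bj}e(\bi)$, together with $\sum_{\bi\in I^n}e(\bi)=1$.

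First I would record the effect of a single $\psi$-string on the idempotents. Writing $d(\t)=s_{j_1}\cdots s_{j_m}$, one shows by induction on $m$ using $\psi_r e(\bi)=e(s_r\cdot\bi)\psi_r$ that $e(\bk)\psi_{d(\t)}=\psi_{d(\t)}e(\bk\cdot d(\t))$ for any $\bk\in I^n$, where $d(\t)$ acts on sequences by place permutation. Equivalently $\psi_{d(\t)}e(\bj)=e(\bj\cdot d(\t)^{-1})\psi_{d(\t)}$. Now the point is to identify $\ilam\cdot d(\t)^{-1}$: since $\t=\tlam d(\t)$, the residue node carrying entry $k$ in $\t$ is the node carrying entry $d(\t)^{-1}(k)$... — more directly, $\res(\t)_k=\res_\t(k)$ and chasing the definition of $d(\t)$ gives $\res(\t)=\res(\tlam)\cdot d(\t)=\ilam\cdot d(\t)$, hence $\ilam\cdot d(\t)^{-1}=\res(\t)$ once one is careful about left/right conventions; in any case $\psi_{d(\t)}$ intertwines $e(\ilam)$ on the left with $e(\res(\t))$ on the right. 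Combining, $\eblam y_\blam\psi_{d(\t)}=e(\ilam)y_\blam\psi_{d(\t)}=e(\ilam)\psi_{d(\t)}e(\res(\t))=\eblam y_\blam\psi_{d(\t)}e(\res(\t))$, using $y_\blam e(\ilam)=e(\ilam)y_\blam$ since $y_\blam\in K[y_1,\dots,y_n]$ and each $y_r$ commutes with every $e(\bi)$. Thus $\psi_{\s\t}=\psi_{\s\t}e(\res(\t))$, which by orthogonality of the $e(\bi)$ gives $\psi_{\s\t}e(\bj)=\delta_{\bj,\res(\t)}\psi_{\s\t}$.

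The symmetric computation on the left handles the $e(\bi)$ factor: one needs $e(\bi)\psi_{d(\s)}^*\eblam$. Since $*$ is the $K$-linear anti-automorphism fixing all graded generators, $\psi_{d(\s)}^*=\psi_{i_k}\cdots\psi_{i_1}$ is again a product of $\psi$-generators (a $\psi$-string for $d(\s)^{-1}$), so the same single-string computation applies: $e(\bi)\psi_{d(\s)}^*=\psi_{d(\s)}^*e(\bi\cdot d(\s)^{-1})$ — wait, more carefully $\psi_{d(\s)}^*$ realizes the permutation $d(\s)^{-1}$, so $e(\bi)\psi_{d(\s)}^*e(\bk)\ne0$ forces $\bk=\bi\cdot d(\s)$, and taking $\bk=\ilam$ forces $\bi=\ilam\cdot d(\s)^{-1}=\res(\s)$. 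Hence $e(\bi)\psi_{\s\t}=e(\bi)\psi_{d(\s)}^*\eblam y_\blam\psi_{d(\t)}=\delta_{\bi,\res(\s)}\psi_{\s\t}$. Putting the two sides together and using once more $e(\bi)e(\bi)=e(\bi)$ yields $e(\bi)\psi_{\s\t}e(\bj)=\psi_{\s\t}$ if $\bi=\res(\s)$ and $\bj=\res(\t)$, and $0$ otherwise.

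The only genuinely delicate point is getting the left/right action conventions for $\Sym_n$ on $I^n$ consistent so that $\ilam$ transported through $\psi_{d(\t)}$ really does become $\res(\t)=\bi^\t$; this is where I expect to have to be most careful, though it is purely a matter of matching the convention $\t=\tlam d(\t)$ against $\psi_r e(\bi)=e(s_r\cdot\bi)\psi_r$ and the definition $\bi^\t=\res(\t)=(\res_\t(1),\dots,\res_\t(n))$. Everything else is a straightforward induction on the length of the reduced expressions, and it does not matter which reduced expressions were fixed in Definition~\ref{psi defn} since the commutation relations used hold for all $\psi$-generators.
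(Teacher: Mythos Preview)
Your proposal is correct and is precisely the argument the paper has in mind: the paper does not write out a proof at all, stating only that the lemma is ``an immediate and very useful consequence of this definition and the homogeneous relations of $\H$.'' You have simply unpacked that sentence, using $\psi_r e(\bi)=e(s_r\cdot\bi)\psi_r$, $y_re(\bi)=e(\bi)y_r$, and orthogonality of the $e(\bi)$ to push $e(\ilam)$ through $\psi_{d(\t)}$ and $\psi_{d(\s)}^*$, which is exactly the intended computation; the convention check you flag is the only thing requiring care and it resolves as you suggest.
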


The next two results combine Corollary~\ref{ylam} with Brundan, Kleshchev
and Wang's results for the graded Specht modules to describe the
homogeneous elements $\psi_{\s\t}$. 

\begin{Lemma}[\protect{cf.~\cite[Cor.~3.14]{BKW:GradedSpecht}}]
  \label{psi degrees}
  Suppose that $\blam\in\Multiparts$ and $\s,\t\in\Std(\blam)$.
  Then $$\deg\psi_{\s\t}=\deg\s+\deg\t.$$
\end{Lemma}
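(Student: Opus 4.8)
The claim is that $\deg\psi_{\s\t} = \deg\s + \deg\t$ for $\s,\t\in\Std(\blam)$. By Definition~\ref{psi defn} we have $\psi_{\s\t} = \psi_{d(\s)}^*\,\eblam y_\blam\,\psi_{d(\t)}$, so the degree, when $\psi_{\s\t}\ne0$, is additive:
$$\deg\psi_{\s\t} = \deg\psi_{d(\s)}^* + \deg(\eblam y_\blam) + \deg\psi_{d(\t)}.$$
First I would record that $\eblam y_\blam$ is homogeneous of degree $2\deg\tlam$ by Corollary~\ref{ylam}, and that, since $*$ fixes each graded generator, $\deg\psi_{d(\s)}^* = \deg\psi_{d(\s)}$. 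So the whole computation reduces to identifying $\deg\psi_{d(\t)}$ (relative to the idempotent $\eblam$) for an arbitrary $\t\in\Std(\blam)$. The degree function on the generators gives $\deg\psi_r e(\bi) = -a_{i_r,i_{r+1}}$, so if $d(\t) = s_{j_1}\cdots s_{j_m}$ is the fixed reduced expression, then $\deg\bigl(\eblam y_\blam\,\psi_{d(\t)}\bigr) = 2\deg\tlam + \sum_{p=1}^m \bigl(-a_{\,\cdot\,,\,\cdot\,}\bigr)$, where the $p$-th term is $-a_{i_{j_p},i_{j_p+1}}$ computed in the residue sequence of the tableau $\tlam s_{j_1}\cdots s_{j_{p-1}}$, that is, at the stage reached after applying the first $p-1$ transpositions.

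Next I would invoke Brundan--Kleshchev--Wang's combinatorial result, cited in the statement as \cite[Cor.~3.14]{BKW:GradedSpecht}, which shows precisely that this telescoping sum of $-a_{i,j}$'s along a path from $\tlam$ to $\t$ in the Bruhat/tableau order equals $\deg\t - \deg\tlam$, where $\deg$ is the Brundan--Kleshchev--Wang tableau degree of Definition~\ref{tableau degree}. Concretely, adding a box via an elementary transposition changes the degree by exactly the relevant $-a_{i,j}$, and summing over the reduced word for $d(\t)$ gives $\deg\t - \deg\tlam$ independently of the choice of reduced expression (this independence is also where one uses that the degree is well-defined on tableaux, not just on words). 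Combining this with $2\deg\tlam = \deg\tlam + \deg\tlam$, the right-hand factor contributes $\deg\tlam + \deg\t$ and, symmetrically, the left-hand factor $\psi_{d(\s)}^*$ contributes $\deg\s - \deg\tlam$, so the total is $(\deg\s - \deg\tlam) + (\deg\tlam + \deg\t) = \deg\s + \deg\t$.

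Finally, I must handle the degenerate possibility that $\psi_{\s\t}=0$, in which case the statement $\deg\psi_{\s\t} = \deg\s+\deg\t$ is interpreted as saying that $\psi_{\s\t}$ lies in the (homogeneous) degree-$(\deg\s+\deg\t)$ component, which is vacuously true since $0$ lies in every graded component; alternatively one simply notes that the product of homogeneous elements is homogeneous of the sum of their degrees regardless of whether it vanishes. So no special argument is needed there.

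The main obstacle is the second step: verifying that the telescoping sum of $-a_{i_{j_p},i_{j_p+1}}$ over the reduced word for $d(\t)$ genuinely equals $\deg\t - \deg\tlam$. This is not a one-line computation — it requires tracking how the residue sequence changes as one passes from $\tlam$ to $\t$ through a chain of standard subtableaux, and matching the Cartan-matrix contributions against the $|\LAdd| - |\LRem|$ count in Definition~\ref{tableau degree}. However, this is exactly the content of \cite[Cor.~3.14]{BKW:GradedSpecht} (stated in the non-degenerate case there and extending verbatim to the degenerate case, since the degree function and the underlying combinatorics are identical), so in the write-up I would cite that result rather than reprove it, and the proof of the Lemma collapses to the bookkeeping of the three degree contributions described above.
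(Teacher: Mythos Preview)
Your proposal is correct and follows essentially the same route as the paper: decompose $\deg\psi_{\s\t}$ additively as $\deg(\eblam\psi_{d(\s)}) + \deg y_\blam + \deg(\eblam\psi_{d(\t)})$, use Corollary~\ref{ylam} for $\deg y_\blam = 2\deg\tlam$, and invoke \cite[Cor.~3.14]{BKW:GradedSpecht} for $\deg(\eblam\psi_{d(\t)}) = \deg\t - \deg\tlam$. Your extra discussion of the telescoping sum and the vanishing case is not needed (Lemma~\ref{psi triangular} shows $\psi_{\s\t}\ne0$), but does no harm.
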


\begin{proof}By \cite[Cor.~3.14]{BKW:GradedSpecht}, if
  $d(\s)=s_{i_1}\dots s_{i_k}$ is a reduced expression for $d(\s)$ then
  $\deg\s-\deg\tlam=\deg(\eblam \psi_\s)$. Therefore,
  $$\deg\psi_{\s\t}=\deg(\psi_\s^* \eblam y_\blam\psi_\t)
         =\deg(\eblam\psi_\s)+\deg y_\blam+\deg(\eblam\psi_\t)
         =\deg\s+\deg\t,$$
   where the last equality follows because
   $\deg y_\blam=2\deg\tlam$ by Corollary~\ref{ylam}.
\end{proof}

We note that it is possible to prove Lemma~\ref{psi degrees} directly by
induction on the dominance ordering on standard tableaux.  We now show that
$\psi_{\s\t}$ is non-zero.

\begin{Lemma}[\protect{cf.~\cite[Prop.~4.5]{BKW:GradedSpecht}}]
  \label{psi triangular}
  Suppose that $\blam\in\Multiparts$ and that $\s,\t\in\Std(\blam)$. Then
  there exists a non-zero scalar $c\in K$, which does not depend upon the
  choice of reduced expressions for $d(\s)$ and $d(\t)$, such that
  $$\psi_{\s\t}=cm_{\s\t} +\sum_{(\u,\v)\gdom(\s,\t)}r_{\u\v}m_{\u\v},$$
  for some $r_{\u\v}\in K$.
\end{Lemma}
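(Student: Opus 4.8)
The plan is to prove the triangularity of $\psi_{\s\t}$ with respect to the standard basis by reducing, via the multiplicative structure, to the already-established case $\s=\t=\tlam$ (Corollary~\ref{ylam}) and then controlling how the $\psi$-generators act on standard basis elements. The starting point is Corollary~\ref{ylam}, which gives $\eblam y_\blam\equiv c_\blam m_\blam\pmod{\Hblam}$ for a non-zero scalar $c_\blam$. Writing $\psi_{\s\t}=\psi_{d(\s)}^*\,\eblam y_\blam\,\psi_{d(\t)}$, I would first understand $\eblam y_\blam\,\psi_{d(\t)}$: the claim is that this equals (up to a non-zero scalar) $m_\blam T_{d(\t)}$ plus a linear combination of more dominant $m_{\u\v}$'s, i.e.\ that right-multiplication by the chosen reduced word $\psi_{j_1}\cdots\psi_{j_m}$ agrees, modulo more dominant terms, with right-multiplication by $T_{d(\t)}=T_{j_1}\cdots T_{j_m}$. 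Applying $*$ and the symmetric argument on the left then yields $\psi_{\s\t}\equiv c\,T_{d(\s)^{-1}}m_\blam T_{d(\t)}=c\,m_{\s\t}\pmod{\Hblam}$, and since $\Hblam$ itself is spanned by $m_{\u\v}$ with $(\u,\v)\gdom(\s,\t)$, this is exactly the assertion.

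The technical heart is therefore a lemma of the following shape: if $\b$ is a standard $\blam$-tableau and $s_r$ is such that $\b s_r$ is again standard with $\ell(d(\b s_r))=\ell(d(\b))+1$, then $e(\bi^\b)m_{\tlam\b}\psi_r\equiv (\text{unit})\,m_{\tlam,\b s_r}\pmod{\text{more dominant}}$, where the unit depends on $\bi^\b$ and $r$. To prove this I would use the Brundan--Kleshchev definition $\psi_r=\sum_{\bi}(T_r+P_r(\bi))Q_r(\bi)^{-1}e(\bi)$ from \S3.2 together with the action \eqref{Murphy action} of the $L_k$'s on the standard basis. The key points are: (i) the power series $P_r(\bi)$ and $Q_r(\bi)^{-1}$ lie in $K[y_r,y_{r+1}]$, and by Theorem~\ref{BK main} the $y_k$'s are polynomials in the $L_k$'s, so by \eqref{Murphy action} multiplication by $P_r(\bi)$ or $Q_r(\bi)^{-1}$ on $e(\bi)m_{\s\t}$ just scales $m_{\s\t}$ by the value of the series at the residues of $\t$, modulo more dominant terms; (ii) the constant term $Q_r(\bi^\b)(0)$ of $Q_r(\bi^\b)$ evaluated at the relevant residues is invertible precisely when $\b s_r$ is standard and dominates appropriately (this is the classical computation behind the Brundan--Kleshchev isomorphism, cf.\ \cite{BK:GradedKL}); and (iii) $m_{\tlam\b}T_r\equiv m_{\tlam,\b s_r}\pmod{\text{more dominant}}$ is the standard Murphy-type combinatorics for the cellular basis $\{m_{\s\t}\}$ (it follows from the defining properties in Theorem~\ref{standard basis} and the length-additivity $\ell(d(\b s_r))=\ell(d(\b))+1$).

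I would then run an induction on $\ell(d(\t))$ along a reduced word, peeling off one $\psi_{j_a}$ at a time, at each stage staying in the span of $m_{\u\v}$ with $(\u,\v)\gedom(\tlam,\t_a)$ where $\t_a$ is the partial product. The non-zero scalar $c$ accumulates as a product of the units appearing at each step, together with $c_\blam$; crucially each such unit is determined by residue data and the combinatorial positions, not by the particular reduced word, so $c$ is independent of the choice of reduced expressions for $d(\s)$ and $d(\t)$ (this independence is also forced \emph{a posteriori} by the fact that $\psi_{\s\t}$ is itself well-defined up to the choices, as will be pinned down once the cellular basis is established; but it is cleaner to see it directly from the scalars being residue-determined). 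The main obstacle I anticipate is \textbf{item (ii)}: showing that the relevant leading coefficient of $Q_r(\bi)^{-1}$ acting on $e(\bi^\b)m_{\tlam\b}$ is a \emph{non-zero} scalar exactly in the length-additive standard case, and that no unexpected vanishing occurs when $\b s_r$ fails to be standard (where one instead needs the term to land in $\Hblam$ or in more-dominant $m_{\u\v}$'s). Handling the degenerate case $q=1$ in parallel, and the exceptional $e=2$ relations, adds bookkeeping but no new idea, since \eqref{Murphy action} and the $P_r,Q_r$ formalism are uniform.
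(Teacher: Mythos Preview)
Your strategy is essentially the paper's: start from Corollary~\ref{ylam} to get $\eblam y_\blam\equiv c_\blam m_\blam\pmod{\Hblam}$, push $\psi_{d(\t)}$ through on the right to reach $c' m_{\tlam\t}+\text{(more dominant)}$, and then symmetrically on the left. The difference is in execution. The paper does not peel off one $\psi_r$ at a time; instead it writes, in one stroke,
\[
  \eblam\psi_{d(\t)}=\sum_{w\le d(\t)}\eblam f_w(y)T_w,
  \qquad f_{d(\t)}(y)\ \text{invertible},
\]
(which follows from Theorem~\ref{BK main} and the commutation relations), and then uses the single observation $m_\blam y_r\equiv 0\pmod{\Hblam}$ to replace every $f_w(y)$ by its constant term. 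After that, $m_\blam T_w$ is a combination of $m_{\tlam\v}$ with $d(\v)\le w\le d(\t)$, hence $\v\gedom\t$, and the leading term $w=d(\t)$ gives $m_{\tlam\t}$ with invertible coefficient.

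Your inductive version works in principle but has two loose ends. First, in your technical lemma you place $e(\bi^\b)$ on the \emph{left} of $m_{\tlam\b}$; by~(\ref{Murphy action}) left multiplication by $e(\bi)$ detects $\bi^{\tlam}$, not $\bi^\b$, so that formulation kills the leading term unless $\b=\tlam$. The idempotent must sit on the right (or be pushed through $\psi_r$), which is easily fixed. Second, the claim that after each step the error terms remain ``more dominant than $(\tlam,\b_a)$'' is not automatic: when a term $m_{\tlam\v}$ with $\v\gdom\b_{a-1}$ is hit by $\psi_{j_a}$, you must show the resulting $m_{\tlam\w}$ all satisfy $\w\gdom\b_a$. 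This is true, but it requires the Bruhat lifting property (if $d(\v)<d(\b_{a-1})$ and $d(\b_{a-1})s_{j_a}>d(\b_{a-1})$ then $d(\v)s_{j_a}\le d(\b_{a-1})s_{j_a}$) together with a case analysis of $m_{\tlam\v}T_r$, none of which you mention. The paper's global expansion sidesteps this bookkeeping entirely.

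Finally, your argument that $c$ is independent of the reduced expressions because ``each unit is residue-determined'' is not quite right: different reduced words visit different intermediate tableaux, so the individual units differ and one must show the \emph{products} agree. The paper appeals to \cite[Prop.~2.5(i)]{BKW:GradedSpecht} for this; your ``a posteriori'' remark is also not available at this point, since Lemma~\ref{psi error terms} is proved \emph{after} Lemma~\ref{psi triangular}.
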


\begin{proof}
    This is a consequence of Corollary~\ref{ylam} and
    \cite[Theorem~4.10a]{BKW:GradedSpecht} when $q\ne1$. We sketch in
    general because this result is central to this paper.

    Let $d(\s)=s_{i_1}\dots s_{i_k}$ and $d(\t)=s_{j_1}\dots s_{j_m}$
    be the reduced expressions for~$d(\s)$ and~$d(\t)$,
    respectively, that we fixed in Definition~\ref{psi defn}.

  By Corollary~\ref{ylam}, $\eblam y_\blam$ is a homogeneous element
  of $\H$ and 
  \begin{align*}  
      \eblam y_\blam \psi_{d(\t)}
      &\equiv c_\blam m_\blam\pmod\Hblam.\\
  \intertext{Using Theorem~\ref{BK main}
  and the homogeneous relations of~$\H$ it is easy to prove that
  $\eblam\psi_{d(\t)}$ is equal to a linear combination of
  terms of the form $\eblam f_w(y)T_{w}$, where 
  $f_w(y)\in K[y_1,\dots,y_n]$ for some $w\in\BS_n$ with $w\leq d(\t)$, 
  and where $f_{d(\t)}(y)$ is invertible. By (\ref{Murphy action}), 
  $m_\blam y_r\equiv m_\blam \eblam y_r\equiv0\pmod\Hblam$, for 
  $1\le r\leq n$. Now if $w\in\Sym_n$ then, modulo $\HH_n^{\rhd\lam}$, $m_{\lam}T_w$ 
  can be written as a linear combination of elements of the form 
  $m_{\tlam\v}$, where $\v\in\Std(\lam)$ and $d(\v)\leq w$, by
  Theorem~\ref{standard basis}. Therefore, 
  just as in~\cite[Prop.~4.5]{BKW:GradedSpecht}, we obtain} 
  \eblam y_\blam \psi_{d(\t)}
     &\equiv c'm_{\tlam\t}+\sum_{\substack{\v\in\Std(\blam)\\\v\gdom\t}}
     r_\v m_{\tlam\v}\\
  \intertext{for some $c',r_\v\in K$ with $c'\ne0$. The scalar $c'$ depends
  only on $\t$ and $\blam$, and not on the choice of reduced expression
  for $d(\t)$, by \cite[Prop.~2.5(i)]{BKW:GradedSpecht}.  Similarly,
  multiplying the last equation on the left with $\psi_{d(\s)}^*\eblam$,
  and again using (\ref{Murphy action}) and the fact that $\{m_{\u\v}\}$ is
  a cellular basis, we obtain}
   \psi_{\s\t}&\equiv c m_{\s\t}
         +\sum_{\substack{\u,\v\in\Std(\blam)\\(\u,\v)\gdom(\s,\t)}}
         r_{\u\v}m_{\u\v}\pmod\Hblam
         \end{align*}
    for some $r_{\u\v}\in K$ and some non-zero scalar $c\in K$ which
    depends only on $d(\s)$, $d(\t)$ and $\blam$. This completes the proof. 
\end{proof}

Recall from section~4.3 that $\Hblam$ is the two-sided ideal of $\H$
with basis the of standard basis elements $\{m_{\u\v}\}$, where
$\u,\v\in\Std(\bmu)$ and $\bmu\gdom\blam$.

\begin{cor}\label{graded ideals}
  Suppose that $\blam\in\Multiparts$. Then $\Hblam$ is a homogeneous
  two-sided ideal of $\H$ with basis
  $\set{\psi_{\u\v}|\u,\v\in\Std(\bmu), 
           \text{ for }\bmu\in\Multiparts \text{ with }\bmu\gdom\blam}$.
\end{cor}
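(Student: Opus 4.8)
The plan is to deduce Corollary~\ref{graded ideals} directly from Lemma~\ref{psi triangular} together with the Standard Basis Theorem (Theorem~\ref{standard basis}). First I would recall that, by definition, $\Hblam$ is the $K$-span of the standard basis elements $m_{\u\v}$ with $\u,\v\in\Std(\bmu)$ and $\bmu\gdom\blam$, and that this is a two-sided ideal of $\H$ by the cellularity axiom (GC$_2$) applied to the cellular basis $\{m_{\s\t}\}$. Since Lemma~\ref{psi triangular} tells us that for each $\bmu\gdom\blam$ and each $\u,\v\in\Std(\bmu)$ we have $\psi_{\u\v}=c_{\u\v}m_{\u\v}+\sum_{(\a,\b)\gdom(\u,\v)}r_{\a\b}m_{\a\b}$ with $c_{\u\v}\ne0$, and since $(\a,\b)\gdom(\u,\v)$ with $\u,\v\in\Std(\bmu)$ forces $\a,\b\in\Std(\bnu)$ for some $\bnu\gedom\bmu\gdom\blam$, every such $\psi_{\u\v}$ lies in $\Hblam$. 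Thus the proposed set is contained in $\Hblam$.

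Next I would show this set is a basis of $\Hblam$. The transition matrix between $\set{\psi_{\u\v}}$ and $\set{m_{\u\v}}$ (both indexed by pairs $(\u,\v)$ with $\u,\v\in\Std(\bmu)$, $\bmu\gdom\blam$) is unitriangular up to the non-zero scalars $c_{\u\v}$ with respect to the dominance order on pairs $(\u,\v)$ — more dominant pairs appearing as ``error terms.'' Since the dominance order on such pairs is a partial order on a finite set, this matrix is invertible over~$K$, so $\set{\psi_{\u\v}}$ spans the same $K$-subspace as $\set{m_{\u\v}}$, namely $\Hblam$, and has the right cardinality, hence is a basis. Finally, each $\psi_{\u\v}$ is homogeneous by Lemma~\ref{psi degrees} (of degree $\deg\u+\deg\v$), so $\Hblam$ is spanned by homogeneous elements and is therefore a homogeneous (i.e. graded) subspace; being also a two-sided ideal, it is a homogeneous two-sided ideal.

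I do not anticipate a serious obstacle here: the result is essentially a change-of-basis argument packaging Lemma~\ref{psi triangular} and Lemma~\ref{psi degrees}. The one point requiring a little care is the bookkeeping on the index set — one must check that the ``straightening'' terms $m_{\a\b}$ appearing when we expand $\psi_{\u\v}$ never escape the span of $\{m_{\u\v}:\bmu\gdom\blam\}$, which is exactly the observation that $(\a,\b)\gdom(\u,\v)$ together with $\Shape(\u)=\Shape(\v)=\bmu\gdom\blam$ implies $\Shape(\a)=\Shape(\b)\gedom\bmu$, hence $\gdom\blam$. This is immediate from the definition of the extended dominance order on pairs of tableaux given in Section~3.3. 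Everything else — invertibility of a triangular matrix, preservation of the homogeneous property under $K$-linear combinations of homogeneous elements, and the ideal property inherited from cellularity — is routine.
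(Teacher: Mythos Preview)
Your proposal is correct and is exactly the argument the paper has in mind: the corollary is stated without proof, as an immediate consequence of Lemma~\ref{psi triangular} (triangularity of the $\psi$-basis against the standard basis) together with Lemma~\ref{psi degrees} (homogeneity). Your write-up simply makes explicit the change-of-basis and bookkeeping that the paper leaves to the reader.
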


As the next example shows,  in general, the elements $\psi_{\s\t}$ 
depend upon the choice of the reduced expressions for $d(\s)$ and $d(\t)$.

\begin{Example}\label{dependence}
  Suppose that $e=3$, $\Lambda=\Lambda_0$ and $n=9$ so that we are
  considering the Iwahori-Hecke algebra of $\Sym_9$ at a third root of
  unity (for any suitable field). Take $\lambda=(4,3,1^2)$ and set
  $$\t=\tab(1239,468,5,7)\qquad\text{and}\qquad
    \u=\tab(1237,468,5,9).$$
  Then $d(\t)=s_4s_5s_7s_6s_5s_7s_8s_7=s_4s_5s_7s_6s_5s_8s_7s_8$.
  Now, $\res_\t(7)=\res_\t(9)$ so applying the last
  relation in Definition~\ref{relations} (the graded analogue of the braid
  relation),
  $$e_\lambda y_\lambda \psi_4\psi_5\psi_7\psi_6\psi_5\psi_7\psi_8\psi_7
        =e_\lambda y_\lambda\(\psi_4\psi_5\psi_7\psi_6\psi_5\psi_8\psi_7\psi_8
        +\psi_4\psi_5\psi_7\psi_6\psi_5\).$$
  Consequently, if $\s\in\Std(\lambda)$ and we define $\psi_{\s\t}$ using
  the first reduced expression for~$d(\t)$ above and $\hat\psi_{\s\t}$ using
  the second reduced expression then
  $\psi_{\s\t}=\hat\psi_{\s\t}+\psi_{\s\u}$.
  Therefore, different choices of reduced expression for $d(\t)$ can give
  different elements~$\psi_{\s\t}$, for any $\s\in\Std(\lambda)$.
\end{Example}

We do not actually need the next result, but given
Example~\ref{dependence} it is reassuring. Brundan, Kleshchev and Wang
prove an analogue of this result as part of their
construction of the graded Specht modules
\cite[Theorem~4.10]{BKW:GradedSpecht}. They have to work much harder,
however, as they have to simultaneously prove that the grading on
their modules is well-defined.

\begin{Lemma}[\protect{cf.~\cite[Theorem.~4.10a]{BKW:GradedSpecht}}]
  \label{psi error terms}
  Suppose that $\psi_{\s\t}$ and $\hat\psi_{\s\t}$ are defined using
  different reduced expressions for $d(\s)$ and $d(\t)$, where
  $\s,\t\in\Std(\blam)$ for some $\blam\in\P$. Then
  $$\psi_{\s\t}-\hat\psi_{\s\t}
      =\sum_{(\u,\v)\gdom(\s,\t)} s_{\u\v}\psi_{\u\v},$$
  where $s_{\u\v}\ne0$ only if $\res(\u)=\res(\s)$,
  $\res(\v)=\res(\t)$ and $\deg\u+\deg\v=\deg\s+\deg\t$.
\end{Lemma}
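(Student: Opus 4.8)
The plan is to combine the triangularity of the $\psi_{\s\t}$ with respect to the standard basis (Lemma~\ref{psi triangular}) together with the fact that $\Hblam$ has \emph{both} the standard basis $\{m_{\u\v}\}$ and the $\psi$-basis $\{\psi_{\u\v}\}$ (Corollary~\ref{graded ideals}). First I would observe that, by Lemma~\ref{psi triangular} applied to both reduced expressions, there is a single non-zero scalar $c\in K$ (depending only on $d(\s),d(\t),\blam$, hence the \emph{same} for $\psi_{\s\t}$ and $\hat\psi_{\s\t}$, since the scalar in that lemma is independent of the reduced expression) such that both $\psi_{\s\t}-cm_{\s\t}$ and $\hat\psi_{\s\t}-cm_{\s\t}$ lie in the span of $\{m_{\u\v}\mid(\u,\v)\gdom(\s,\t)\}$. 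Subtracting, $\psi_{\s\t}-\hat\psi_{\s\t}$ lies in this span, which in turn is contained in $\Hblam$. Now re-expand using the $\psi$-basis of $\Hblam$ from Corollary~\ref{graded ideals}: since $\{m_{\u\v}\}$ and $\{\psi_{\u\v}\}$ are both bases of $\Hblam$ and are unitriangularly related (Lemma~\ref{psi triangular}), the span of $\{m_{\u\v}\mid(\u,\v)\gdom(\s,\t)\}$ coincides with the span of $\{\psi_{\u\v}\mid(\u,\v)\gdom(\s,\t)\}$. Hence $\psi_{\s\t}-\hat\psi_{\s\t}=\sum_{(\u,\v)\gdom(\s,\t)}s_{\u\v}\psi_{\u\v}$ for some $s_{\u\v}\in K$.

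It remains to pin down the two constraints on which $s_{\u\v}$ can be non-zero. For the residue constraint I would multiply the identity on the left by $e(\res(\s))$ and on the right by $e(\res(\t))$, and use Lemma~\ref{psi weights}: the left side is unchanged (since $\psi_{\s\t}$ and $\hat\psi_{\s\t}$ are both fixed by this idempotent sandwich), while on the right side $e(\res(\s))\psi_{\u\v}e(\res(\t))=\psi_{\u\v}$ if $\res(\u)=\res(\s)$ and $\res(\v)=\res(\t)$, and $0$ otherwise. Since the $\psi_{\u\v}$ with $(\u,\v)\gdom(\s,\t)$ are linearly independent, this forces $s_{\u\v}=0$ unless $\res(\u)=\res(\s)$ and $\res(\v)=\res(\t)$.

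For the degree constraint I would use that $\psi_{\s\t}$ and $\hat\psi_{\s\t}$ are both homogeneous of degree $\deg\s+\deg\t$ by Lemma~\ref{psi degrees} (that lemma's proof only uses the homogeneous relations, not the choice of reduced expression, so it applies verbatim to $\hat\psi_{\s\t}$). Hence their difference is homogeneous of degree $\deg\s+\deg\t$. Projecting onto the homogeneous component of that degree, and again invoking linear independence of the $\psi_{\u\v}$ together with $\deg\psi_{\u\v}=\deg\u+\deg\v$ (Lemma~\ref{psi degrees}), we conclude $s_{\u\v}=0$ unless $\deg\u+\deg\v=\deg\s+\deg\t$. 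This gives all three asserted conditions.

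\textbf{Main obstacle.} The only genuinely delicate point is the claim that the span of $\{m_{\u\v}\mid(\u,\v)\gdom(\s,\t)\}$ equals the span of $\{\psi_{\u\v}\mid(\u,\v)\gdom(\s,\t)\}$; this is where the unitriangularity of Lemma~\ref{psi triangular} with respect to the dominance order on pairs is used, and one must be slightly careful that the "more dominant" terms appearing in $\psi_{\u\v}=c_{\u\v}m_{\u\v}+\sum_{(\a,\b)\gdom(\u,\v)}(\cdots)m_{\a\b}$ remain inside the set indexed by pairs dominating $(\s,\t)$ — which follows since $\gdom$ is transitive. Everything else is a routine bookkeeping argument with idempotents and gradings.
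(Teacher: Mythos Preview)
Your proposal is correct and follows essentially the same approach as the paper: apply Lemma~\ref{psi triangular} to both $\psi_{\s\t}$ and $\hat\psi_{\s\t}$ to see that their difference lies in the span of $\{m_{\u\v}\mid(\u,\v)\gdom(\s,\t)\}$, rewrite this in the $\psi$-basis via the triangular change of basis, then use Lemma~\ref{psi weights} for the residue constraint and Lemma~\ref{psi degrees} for the degree constraint. The only nitpick is terminological: the transition matrix in Lemma~\ref{psi triangular} is triangular with non-zero (not necessarily unit) diagonal entries, but this does not affect the span equality you need.
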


\begin{proof}Using two applications of (\ref{psi triangular}), we can write
  $$
  \psi_{\s\t}-\hat\psi_{\s\t}=\sum_{(\u,\v)\gdom(\s,\t)} r_{\u\v}m_{\u\v}
                             =\sum_{(\u,\v)\gdom(\s,\t)} s_{\u\v}\psi_{\u\v},
  $$
  for some $r_{\u\v},s_{\u\v}\in K$. Multiplying on the left and right by
  $e(\bi^\s)$ and $e(\bi^\t)$, respectively, and using
  Lemma~\ref{psi weights}, shows that $s_{\u\v}\ne0$ only if
  $\res(\u)=\res(\s)$ and $\res(\v)=\res(\t)$. Finally, by
  Lemma~\ref{psi triangular}, the $\psi_{\u\v}$ appearing on the right
  hand are all linearly independent and $\psi_{\s\t}$ and
  $\hat\psi_{\s\t}$ are non-zero homogeneous elements of the same
  degree by Lemma~\ref{psi degrees}.  Therefore, so if $s_{\u\v}\ne0$
  then $\deg\u+\deg\v=\deg\psi_{\u\v}=\deg\psi_{\s\t}=\deg\s+\deg\t$,
  as required.
\end{proof}

We can now prove the main result of this paper. The existence of a graded
cellular basis for $\H$ was conjectured by Brundan, Kleshchev and
Wang~\cite[Remark~4.12]{BKW:GradedSpecht}.  See Definition~\ref{psi defn}
for the definition of the elements $\psi_{\s\t}$, for $\s,\t\in\Std(\blam)$.

\begin{Theorem}[Graded cellular basis]\label{psi basis}
  The algebra $\H$ is a graded cellular algebra with weight poset
  $(\Multiparts,\gedom)$ and graded cellular basis
  $\set{\psi_{\s\t}|\s,\t\in\Std(\blam) \text{ for }\blam\in\Multiparts}$.
  In particular, $\deg\psi_{\s\t}=\deg\s+\deg\t$, for all
  $\s,\t\in\Std(\blam)$, $\blam\in\Multiparts$.
\end{Theorem}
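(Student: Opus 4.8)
The plan is to check, for the quadruple given by the weight poset $(\Multiparts,\gedom)$, the sets $T(\blam)=\Std(\blam)$, the assignment $(\s,\t)\mapsto\psi_{\s\t}$ and the degree function of Definition~\ref{tableau degree}, each of the four conditions of Definition~\ref{graded cellular def}. Here $\H$ is free of finite rank over $K$ by Theorem~\ref{standard basis}, and the submodule $\H^{\gdom\blam}$ appearing in (GC$_2$) is, by Corollary~\ref{graded ideals}, exactly the ideal $\Hblam$.

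Two of the conditions are immediate. Condition (GC$_d$) is precisely Lemma~\ref{psi degrees}. For (GC$_3$), since the anti-automorphism $*$ of $\H$ fixes each generator $\psi_r,y_r,e(\bi)$ it fixes $\eblam y_\blam=y_\blam\eblam$, and writing $\psi_{d(\s)}=\psi_{i_1}\cdots\psi_{i_k}$ for the chosen reduced expression one gets $\psi_{\s\t}^*=\psi_{d(\t)}^*\,\eblam y_\blam\,\psi_{d(\s)}=\psi_{\t\s}$, so $*$ is the required anti-isomorphism. For (GC$_1$), Lemma~\ref{psi triangular} shows $\psi_{\s\t}=c_{\s\t}m_{\s\t}+\sum_{(\u,\v)\gdom(\s,\t)}r_{\u\v}m_{\u\v}$ with $c_{\s\t}\ne0$; after refining $\gdom$ to a total order on pairs of standard tableaux, the transition matrix from $\set{\psi_{\s\t}}$ to the standard basis of Theorem~\ref{standard basis} is triangular with nonzero diagonal entries, hence invertible over the field $K$, so $\set{\psi_{\s\t}}$ is a basis of $\H$.

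The heart of the matter is (GC$_2$), and the one delicate point is that the scalars $r_{\t\v}(a)$ must not depend on $\s$. First I would reduce to the case $\s=\tlam$. Since $d(\tlam)=1$, we have $\psi_{\tlam\t}=\eblam y_\blam\psi_{d(\t)}$ and $\psi_{\s\t}=\psi_{d(\s)}^*\psi_{\tlam\t}$; as $\Hblam$ is a two-sided ideal (Corollary~\ref{graded ideals}), it suffices to establish
$$\psi_{\tlam\t}\,a\equiv\sum_{\v\in\Std(\blam)}r_{\t\v}(a)\,\psi_{\tlam\v}\pmod{\Hblam},$$
since left-multiplying by $\psi_{d(\s)}^*$ and using $\psi_{d(\s)}^*\psi_{\tlam\v}=\psi_{\s\v}$ then gives (GC$_2$) with the very same scalars. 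To prove this congruence, put $V^\blam=\Hblam+\sum_{\v\in\Std(\blam)}K\,m_{\tlam\v}$. Cellularity of the standard basis (Theorem~\ref{standard basis}) shows that $V^\blam$ is a right ideal of $\H$ containing $\Hblam$ and that $\set{m_{\tlam\v}+\Hblam|\v\in\Std(\blam)}$ is a basis of the right $\H$-module $V^\blam/\Hblam$. By Lemma~\ref{psi triangular}, together with the maximality of $\tlam$ in $\Std(\blam)$, each $\psi_{\tlam\v}$ is congruent modulo $\Hblam$ to $c_\v m_{\tlam\v}$ plus a combination of $m_{\tlam\w}$ with $\w\gdom\v$, so $\set{\psi_{\tlam\v}+\Hblam|\v\in\Std(\blam)}$ is also a basis of $V^\blam/\Hblam$. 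Since $\psi_{\tlam\t}+\Hblam$ lies in this module, expanding $\psi_{\tlam\t}\,a+\Hblam$ in that basis produces scalars $r_{\t\v}(a)$ depending only on $\t$, $a$ and $\blam$, as required. The final claim $\deg\psi_{\s\t}=\deg\s+\deg\t$ is then just a restatement of (GC$_d$).

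I do not expect a genuine obstacle, since the real work is already in hand: Lemma~\ref{psi triangular} gives the triangular comparison of $\set{\psi_{\s\t}}$ with the standard cellular basis, Corollary~\ref{graded ideals} identifies $\Hblam$ with the span of the more dominant $\psi_{\u\v}$, and Lemma~\ref{psi degrees} computes degrees. What remains is to transport the known cellular structure of $\set{m_{\s\t}}$ along this change of basis, and the only step that needs attention is the reduction to $\s=\tlam$, which is exactly the device that makes $r_{\t\v}(a)$ independent of $\s$ and works because $d(\tlam)=1$.
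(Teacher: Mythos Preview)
Your proposal is correct and follows essentially the same route as the paper: you verify (GC$_d$) and (GC$_3$) directly, obtain (GC$_1$) from the triangular change of basis in Lemma~\ref{psi triangular}, and for (GC$_2$) you use the factorisation $\psi_{\s\t}=\psi_{d(\s)}^*\psi_{\tlam\t}$ to reduce to $\s=\tlam$, where the cellularity of $\{m_{\s\t}\}$ and another application of Lemma~\ref{psi triangular} give the result. The paper carries out exactly this computation, only writing out the chain of congruences explicitly rather than packaging it via the right ideal $V^\blam/\Hblam$.
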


\begin{proof}By (\ref{psi triangular}), the transition matrix between
    the set $\{\psi_{\s\t}\}$ and the standard basis $\{m_{\s\t}\}$ is
    an invertible triangular matrix (when suitably ordered!).
    Therefore, $\{\psi_{\s\t}\}$ is a basis of $\H$ giving (GC$_1$) from
    Definition~\ref{graded cellular def}. By definition $\psi_{\s\t}$
    is homogeneous and $\deg\psi_{\s\t}=\deg\s+\deg\t$, by
    Lemma~\ref{psi degrees}, establishing (GC$_d$). 
  
  To prove (GC$_3$), recall that $*$ is the unique anti-isomorphism of
  $\H$ which fixes each of the graded generators. By definition,
  $(\eblam y_\blam)^*=\eblam y_\blam$ since $\eblam$ and $y_\blam$
  commute. Therefore, $\psi_{\s\t}^*=\psi_{\t\s}$, for all $\s$ and
  $\t$. Consequently, the anti-automorphism of~$\H$ induced by the basis
  $\{\psi_{\s\t}\}$, as in (GC$_3$), coincides with the
  anti-isomorphism~$*$. In particular, (GC$_3$) holds.

  It remains then to check that the basis $\{\psi_{\s\t}\}$ satisfies
  (GC$_2$), for $\s,\t\in\Std(\blam)$ and $\blam\in\Multiparts$. By
  definition, $\psi_{\s\t}=\psi^*_{d(\s)}\psi_{\tlam\t}$.
  Suppose that $h\in\H$.  Using Lemma~\ref{psi triangular} twice, together
  with Corollary~\ref{graded ideals} and the fact that $\{m_{\u\v}\}$ is 
  a cellular basis of~$\H$, we find
  \begin{align*}
      \psi_{\s\t}h&=\psi_{d(\s)}^*\psi_{\tlam\t}h
      \equiv\psi_{d(\s)}^*\sum_{\v\gedom\t}r_\v m_{\tlam\v}h\pmod{\Hblam}\\
      &\equiv\psi_{d(\s)}^*\sum_{\v\in\Std(\blam)}s_\v m_{\tlam\v}\pmod{\Hblam}\\
      &\equiv\psi_{d(\s)}^*\sum_{\v\in\Std(\blam)}t_\v \psi_{\tlam\v}\pmod{\Hblam}\\
      &\equiv\sum_{\v\in\Std(\blam)}t_\v \psi_{\s\v}\pmod{\Hblam}\\
  \end{align*}
  for some scalars $r_\v, s_\v, t_\v\in K$. Hence,
  $\{\psi_{\s\t}\}$ is a graded cellular basis and~$\H$ is a
  graded cellular algebra, as required.
\end{proof}

Applying Corollary~\ref{graded dimension}, we obtain the graded
dimension of~$\H$
$$\Dim\H=\sum_{\blam\in\Multiparts}\sum_{\s,\t\in\Std(\blam)}
           t^{\deg\s+\deg\t}.$$
This result is due to Brundan and
Kleshchev~\cite[Theorem~4.20]{BK:GradedDecomp}. See also
\cite[Remark~4.12]{BKW:GradedSpecht}. This can be further refined to compute
$\Dim e(\bi)\H e(\bj)$, for $\bi,\bj\in I^n$, using Lemma~\ref{psi weights}.

\subsection{The graded Specht modules}
Now that $\{\psi_{\s\t}\}$ is known to be a graded cellular basis we
can define the graded cell modules $S^\blam$ of~$\H$, for~$\blam\in\Multiparts$. 
\begin{Defn}[Graded Specht modules] Suppose that $\blam\in\Multiparts$. The
  \textbf{graded Specht module} $S^\blam$ is the graded cell module
  associated with $\blam$ as in Definition~\ref{graded cells}.
\end{Defn}

Thus, $S^\blam$ has basis $\set{\psi_\t|\t\in\Std(\blam)}$ and the
action of $\H$ on $S^\blam$ comes from its action on
$\HH^{\gedom\blam}_n/\Hblam$.

In the absence of a graded cellular basis, Brundan, Kleshchev and
Wang~\cite{BKW:GradedSpecht} have already defined a graded Specht module
$S^\blam_{BKW}$, for $\blam\in\Multiparts$ (when $q\ne1)$. The two notions
of graded Specht modules coincide.

\begin{cor}
  Suppose that $\blam\in\Multiparts$. Then $S^\blam\cong S^\blam_{BKW}$ as
  $\Z$-graded $\H$-modules.
\end{cor}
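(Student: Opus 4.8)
The plan is to produce an explicit degree-preserving isomorphism $S^\blam_{BKW}\xrightarrow{\ \sim\ }S^\blam$ by tracking the cyclic generators of the two modules; beyond identifying Brundan--Kleshchev--Wang's normalisations, no new computation should be needed.

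First I would record the structure of $S^\blam$. By Definition~\ref{psi defn} we have $\psi_{\tlam\t}=\eblam y_\blam\psi_{d(\t)}=\psi_{\tlam\tlam}\psi_{d(\t)}$, so in $S^\blam$ (Definition~\ref{graded cells}) the homogeneous vector $z:=\psi_{\tlam}$ of degree $\deg\tlam$ satisfies $\psi_\t=z\,\psi_{d(\t)}$ for all $\t\in\Std(\blam)$; hence $S^\blam$ is generated by $z$ as a graded $\H$-module. By Lemma~\ref{psi triangular} the elements $m_\t:=m_{\tlam\t}+\Hblam$, for $\t\in\Std(\blam)$, form a second basis of $\underline{S}^\blam$, related to $\{\psi_\t\}$ by a dominance-unitriangular matrix with non-zero diagonal entries; this is exactly the standard basis of the classical Dipper--James--Mathas cell module, so $\underline{S}^\blam$ may be identified with the (ungraded) classical Specht module. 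Finally, since $\tlam$ is the maximal standard $\blam$-tableau and $\eblam y_\blam\equiv c_\blam m_\blam\pmod{\Hblam}$ by Corollary~\ref{ylam}, the generator $z$ equals $c_\blam m_\tlam$ in $\underline{S}^\blam$, i.e.\ it is a non-zero scalar multiple of the classical Specht generator.

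Next I would invoke the corresponding facts for $S^\blam_{BKW}$ from Brundan--Kleshchev--Wang~\cite[\S4, Theorem~4.10]{BKW:GradedSpecht}: $S^\blam_{BKW}$ is a cyclic graded $\H$-module generated by a homogeneous vector $z_\blam$ of degree $\deg\tlam$, it has a homogeneous basis $\{v_\t:\t\in\Std(\blam)\}$ with $\deg v_\t=\deg\t$ and $v_\tlam=z_\blam$, and the rule $v_\t\mapsto m_\t+(\text{strictly more dominant terms})$ defines an isomorphism $\theta\colon\underline{S}^\blam_{BKW}\xrightarrow{\ \sim\ }\underline{S}^\blam$ of ungraded $\H$-modules. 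In particular $\theta(z_\blam)=\theta(v_\tlam)$ is a non-zero scalar multiple of $m_\tlam$, hence of $z$; rescaling $\theta$ yields an isomorphism of ungraded $\H$-modules $\phi\colon\underline{S}^\blam_{BKW}\xrightarrow{\ \sim\ }\underline{S}^\blam$ with $\phi(z_\blam)=z=\psi_\tlam$. It then remains only to check that $\phi$ is homogeneous of degree $0$, and this is forced by cyclicity: if $x\in(S^\blam_{BKW})_d$, write $x=z_\blam h$ with $h=\sum_e h_e$, each $h_e\in\H$ homogeneous of degree $e$; then $z_\blam h_e$ is homogeneous of degree $\deg\tlam+e$, so $x=z_\blam h_{d-\deg\tlam}$ and therefore $\phi(x)=z\,h_{d-\deg\tlam}=\psi_\tlam h_{d-\deg\tlam}\in(S^\blam)_{d}$. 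Thus $\phi$ preserves the grading, so $S^\blam\cong S^\blam_{BKW}$ as $\Z$-graded $\H$-modules.

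The only genuinely delicate point, and where I expect the real work to lie, is aligning conventions: one must check that Brundan--Kleshchev--Wang's graded Specht module and its distinguished generator are normalised exactly as above (generator in degree $\deg\tlam$, equivalently $\Dim{S^\blam_{BKW}}=\sum_{\t\in\Std(\blam)}t^{\deg\t}$, which matches $\Dim{S^\blam}$ by Corollary~\ref{graded dimension}), that their module is a \emph{right} $\H$-module (otherwise one inserts the twist by the anti-automorphism~$*$), and that the comparison map $\theta$ really is dominance-triangular in the sense recorded in~\cite[Theorem~4.10]{BKW:GradedSpecht}, so that $\theta(z_\blam)$ lands on a multiple of $m_\tlam$. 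Once these normalisations are pinned down, the generator-tracking argument above closes the proof with no further calculation.
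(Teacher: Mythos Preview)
Your proposal is correct and follows essentially the same strategy as the paper: both proofs identify the cyclic generator $\psi_{\tlam}$ (equivalently $\dot v_\blam=\eblam y_\blam+\Hblam$) and match it with Brundan--Kleshchev--Wang's generator $v_{\tlam}$, using that $\psi_\t=\psi_{\tlam}\psi_{d(\t)}$ and $v_\t=v_{\tlam}\psi_{d(\t)}$ so the modules agree once the generators are aligned. The paper is slightly more direct---it simply exhibits the chain $S^\blam_{BKW}\cong\dot v_\blam\H\langle-\deg\tlam\rangle\cong S^\blam$ (the shift arising because $\deg\dot v_\blam=2\deg\tlam$) rather than passing through an ungraded isomorphism and then checking homogeneity via cyclicity---but the content is the same, and your caveat about left/right conventions is exactly the one the paper also flags.
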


\begin{proof}
  Brundan, Kleshchev and Wang~\cite{BKW:GradedSpecht} actually define the
  graded left module $S^{*\blam}_{BKW}$, however, it is an easy exercise to
  switch their notation to the right. Mirroring the notation of
  \cite[\S4.2]{BKW:GradedSpecht}, set
  $\dot v_\blam= \eblam y_\blam+\Hblam=\psi_{\tlam\tlam}+\Hblam$. By
  Theorem~\ref{psi basis} the graded right module $\dot v_\blam\H$ has basis
  $\set{\dot v_\blam\psi_{d(\t)}|\t\in\Std(\blam)}$. Comparing this construction
  with~\cite[\S4.2]{BKW:GradedSpecht} and Definition~\ref{graded cells}
  it is immediate that
  $$S^\blam_{BKW}\cong \dot v_\blam\H\<-\deg\tlam\>\cong S^\blam.$$
  In the notation of \cite{BKW:GradedSpecht}, the first isomorphism is
  given by $v_\t\mapsto \dot v_\blam\psi_{d(\t)}$, for $\t\in\Std(\blam)$.
  There is a degree shift for the middle term because
  $\deg \dot v_\blam=2\deg\tlam$ by Corollary~\ref{ylam}.
\end{proof}

By Lemma~\ref{psi triangular} and Corollary~\ref{graded ideals}, the
ungraded module $\underline{S}^\blam$ coincides with the ungraded Specht
module determined by the standard basis (Theorem~\ref{standard basis}),
because the transition matrix between the graded cellular basis and the
standard basis is unitriangular.

Let $\dot D^\bmu$ be the ungraded simple $\H$-module which is defined using
the standard basis of~$\H$, for $\bmu\in\Multiparts$. Define a
multipartition $\bmu$ to be \textbf{$\Lambda$-Kleshchev} if $\dot D^\bmu\ne0$.
Although we will not need it, there is an explicit combinatorial
characterization of the $\Lambda$-Kleshchev multipartitions; see
\cite{Ariki:class} or \cite[(3.27)]{BK:GradedDecomp} (where they are called
\textit{restricted multipartitions}).

By Theorem~\ref{graded simples}, and the remarks of the last paragraph, the
graded irreducible $\H$-modules are labeled by the
$\Lambda$-Kleshchev multipartitions of~$n$. Notice, however, that this does
not immediately imply that $D^\bmu$ is non-zero if and only if~$\bmu$ is a
$\Lambda$-Kleshchev multipartition: the problem is that the homogeneous
bilinear form on the graded Specht module, which is induced by the graded
basis (see Lemma~\ref{symmetric form}), could be different to the bilinear
form on the ungraded Specht module, which is induced by the standard basis.
Our next result shows, however, that these two forms are essentially
equivalent because their radicals coincide.

The following result is almost the same as
\cite[Theorem~5.10]{BK:GradedDecomp}.

\begin{cor}
  Suppose that $\bmu\in\Multiparts$. Then $\dot D^\bmu=\underline{D}^\bmu$,
  for all $\bmu\in\Multiparts$. Consequently, $D^\bmu\ne0$ if and only if
  $\bmu$ is a~$\Lambda$-Kleshchev multipartition.
\end{cor}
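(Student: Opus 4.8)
The claim to prove is that $\dot D^\bmu = \underline{D}^\bmu$ for all $\bmu \in \Multiparts$, and consequently that $D^\bmu \neq 0$ if and only if $\bmu$ is $\Lambda$-Kleshchev.

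The plan is to show that the two radicals coincide. We have the graded Specht module $S^\bmu$ with basis $\{\psi_\t \mid \t \in \Std(\bmu)\}$ and its homogeneous bilinear form $\langle\ ,\ \rangle_\bmu$ defined (via Lemma~\ref{symmetric form}) from the graded cellular basis $\{\psi_{\s\t}\}$, so that $D^\bmu = S^\bmu / \rad S^\bmu$. On the other hand, forgetting the grading, $\underline{S}^\bmu$ is the ungraded Specht module coming from the standard cellular basis $\{m_{\s\t}\}$ — this identification was noted in the excerpt just before the corollary, using that the transition matrix between $\{\psi_{\s\t}\}$ and $\{m_{\s\t}\}$ is unitriangular. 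The ungraded simple is $\dot D^\bmu = \underline{S}^\bmu / \rad_m \underline{S}^\bmu$, where $\rad_m$ is the radical of the bilinear form attached to $\{m_{\s\t}\}$.

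First I would observe that, abstractly, both $\rad S^\bmu$ and $\rad_m \underline S^\bmu$ are the Jacobson radical of the corresponding cell module: this is the standard fact for cellular algebras (it appears as Lemma~\ref{absolutely irreducible}(b) in the graded case, and in \cite{GL} in the ungraded case) — the cell-form radical equals the unique maximal submodule whenever the quotient is nonzero, and equals the whole module otherwise. Since the Jacobson radical of a module is an intrinsic, grading-independent notion, $\rad S^\bmu$ (which is a graded submodule by Lemma~\ref{graded radical}) and $\rad_m \underline S^\bmu$ must be the \emph{same} $K$-subspace of the common underlying space $\underline{S}^\bmu = \underline{\,\underline S^\bmu}$. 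Hence $\underline{D}^\bmu = \underline{S}^\bmu/\rad S^\bmu = \underline S^\bmu / \rad_m \underline S^\bmu = \dot D^\bmu$. This is essentially the same mechanism already used in the proof of Corollary~\ref{ungraded simples}. From this the second assertion follows at once: $D^\bmu \neq 0 \iff \underline D^\bmu \neq 0 \iff \dot D^\bmu \neq 0$, and $\dot D^\bmu \neq 0$ precisely when $\bmu$ is $\Lambda$-Kleshchev, by the definition of $\Lambda$-Kleshchev given just above the corollary.

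The one point requiring a little care — and the place I would expect the only genuine subtlety — is justifying that $\rad S^\bmu = \Rad S^\bmu$, the honest Jacobson radical, rather than just a submodule contained in it; but this is exactly the content of Lemma~\ref{absolutely irreducible}(b), valid whenever $D^\bmu \neq 0$, and when $D^\bmu = 0$ one has $\rad S^\bmu = S^\bmu$ and likewise $\rad_m \underline S^\bmu = \underline S^\bmu$ (since $\dot D^\bmu = 0 \iff \underline S^\bmu = \rad_m \underline S^\bmu$ by the ungraded theory), so the two radicals still agree. So the argument splits into the two cases according to whether $\dot D^\bmu$ vanishes, and in both cases the radicals coincide as subspaces, giving the result. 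No heavy machinery beyond the cellular-algebra generalities of Section~2 and \cite{GL} is needed; the work is entirely in matching the graded and ungraded pictures, which the unitriangularity of the change-of-basis matrix makes transparent.
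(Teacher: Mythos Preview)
Your argument has a genuine gap at exactly the point you flag as ``the only genuine subtlety''. You split into cases according to whether $D^\bmu$ vanishes, and in each case you assert that $\dot D^\bmu$ has the same vanishing behaviour --- but you never prove this. Concretely: when $D^\bmu\ne0$ you know $\rad S^\bmu=\Rad S^\bmu$, but to conclude $\rad_m\underline S^\bmu=\Rad\underline S^\bmu$ you need $\dot D^\bmu\ne0$, which you have not established; and when $D^\bmu=0$ you write ``likewise $\rad_m\underline S^\bmu=\underline S^\bmu$'', which is precisely the statement $\dot D^\bmu=0$, again unproved. The parenthetical ``since $\dot D^\bmu=0\iff\underline S^\bmu=\rad_m\underline S^\bmu$'' only restates a definition; it does not supply the missing implication.

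This is not a trivial omission: the whole content of the corollary is that the two labelings of the simples agree, i.e.\ that $\{\bmu:D^\bmu\ne0\}=\{\bmu:\dot D^\bmu\ne0\}$ as \emph{subsets} of $\Multiparts$. Both sets have the same cardinality (each indexes a complete set of simples), but a priori they could differ, with $\underline D^\bmu\cong\dot D^\bnu$ for some $\bnu\ne\bmu$. The paper closes this gap by induction on dominance: for $\bmu$ minimal one has $S^\bmu=D^\bmu$ and $\underline S^\bmu=\dot D^\bmu$; for general $\bmu$, the composition factors of $\underline S^\bmu$ coming from strictly less dominant $\bnu$ are already matched by induction, so $D^\bmu=0$ iff all composition factors of $\underline S^\bmu$ lie among these, iff $\dot D^\bmu=0$. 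Once you know $D^\bmu=0\Leftrightarrow\dot D^\bmu=0$, your Jacobson-radical argument then finishes the identification $\underline D^\bmu=\dot D^\bmu$ when both are nonzero.
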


\begin{proof}
  We argue by induction on dominance. If $\bmu$ is minimal in the dominance
  order then $D^\mu=S^\bmu$ and $\dot D^\bmu=\underline{S}^\bmu$ by
  Lemma~\ref{graded decomp}(c). Hence, $\dot D^\bmu=\underline{D}^\bmu$ in this case. 
  Now suppose that $\bmu$ is not minimal with respect to dominance.
  Using Lemma~\ref{graded decomp}(c) again, $D^\bmu=0$
  if and only if every composition factor of $S^\bmu$ is isomorphic to
  $D^\bnu$ for some multipartition $\bnu$ with $\bmu\gdom\bnu$.
  Similarly, $\dot D^\bmu=0$ if and only if every composition factor of
  $\underline{S}^\bmu$ is isomorphic to $\dot D^\bnu$, where
  $\bmu\gdom\bnu$. By induction, $\dot D^\bnu=\underline{D}^\bnu$ so the
  result follows.
\end{proof}

\subsection{The blocks of $\H$}
We now show how Theorem~\ref{psi basis} restricts to give a basis for
the blocks, or the indecomposable two-sided ideals, of $\H$. Recall
that $Q_+=\bigoplus_{i\in I}\N\alpha_i$ is the positive root
lattice. Fix $\beta\in Q_+$ with $\sum_{i\in I}(\Lambda_i,\beta)=n$ and
let
$$I^\beta=\set{\bi\in I^n|\alpha_{i_1}+\dots+\alpha_{i_n}=\beta}.$$
Then $I^\beta$ is an $\Sym_n$-orbit of $I^n$ and it is not hard to check
that every $\Sym_n$-orbit can be written uniquely in this way for some 
$\beta\in Q_+$. Define
$$\Hbeta=e_\beta\H,\qquad\text{where }e_\beta=\sum_{\bi\in I^\beta}e(\bi).$$
Then by \cite[Theorem~2.11]{LM:AKblocks} and \cite[Theorem~1]{Brundan:degenCentre}, 
$\Hbeta$ is a block of $\H$.  That is,
$$\H=\bigoplus_{\beta\in Q_+,\,I^\beta\ne\emptyset}\Hbeta.$$
is the decomposition of $\H$ into a direct sum of indecomposable two-sided
ideals. Let
$\Multiparts[\beta]=\set{\blam\in\Multiparts|\ilam\in I^\beta}$. It follows
from the combinatorial classification of the blocks of $\H$ that
$\coprod_{\bi\in
I^\beta}\Std(\bi)=\coprod_{\blam\in\Multiparts[\beta]}\Std(\blam)$. Hence,
by Lemma~\ref{psi weights} and Theorem~\ref{psi basis} we obtain the
following.

\begin{cor}
  Suppose that $\beta\in Q_+$. Then
  $$\set{\psi_{\s\t}|\s,\t\in\Std(\blam)
                           \text{ for }\blam\in\Multiparts[\beta]}$$
  is a graded cellular basis of $\Hbeta$. In particular, $\Hbeta$ is a
  graded cellular algebra.
\end{cor}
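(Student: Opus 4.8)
The plan is to obtain the block version by restricting the graded cell datum of $\H$ supplied by Theorem~\ref{psi basis} to the summand $\Hbeta$, exploiting the fact that $e_\beta$ is a \emph{central} idempotent of $\H$ and that the graded cellular basis $\set{\psi_{\s\t}|\s,\t\in\Std(\blam),\ \blam\in\Multiparts}$ is compatible with the block decomposition $\H=\bigoplus_{\gamma\in Q_+}\H[\gamma]$. In other words, the only thing to do is descend everything along $e_\beta$.

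First I would record, as noted just before the statement, that $\coprod_{\bi\in I^\beta}\Std(\bi)=\coprod_{\blam\in\Multiparts[\beta]}\Std(\blam)$; equivalently, for $\t\in\Std(\blam)$ the positive root $\sum_{k=1}^n\alpha_{\res_\t(k)}=\sum_{\gamma\in[\blam]}\alpha_{\res(\gamma)}$ depends only on $\blam$, and this root equals $\beta$ precisely when $\blam\in\Multiparts[\beta]$. Combining this with Lemma~\ref{psi weights} yields $e_\beta\psi_{\s\t}=\psi_{\s\t}=\psi_{\s\t}e_\beta$ when $\blam\in\Multiparts[\beta]$, and $e_\beta\psi_{\s\t}=0=\psi_{\s\t}e_\beta$ otherwise. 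Since each $\psi_{\s\t}$ therefore lies in a unique block and $\H$ is the direct sum of its blocks, the graded cellular basis of $\H$ splits into homogeneous $K$-bases of the individual blocks; the subset indexed by $\Multiparts[\beta]$ is a homogeneous $K$-basis of $\Hbeta$, which is precisely the set in the statement. This verifies (GC$_1$) and (GC$_d$) for $\Hbeta$ with weight poset $(\Multiparts[\beta],\gedom)$ and with the evident restrictions of $C$ and $\deg$.

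It remains to check (GC$_2$) and (GC$_3$), both of which transport from $\H$. For (GC$_3$): the anti-automorphism $*$ of $\H$ fixes every $e(\bi)$, hence fixes $e_\beta$, so it restricts to an anti-automorphism of $\Hbeta=e_\beta\H$, and $\psi_{\s\t}^*=\psi_{\t\s}$ already holds in $\H$. For (GC$_2$): given $\blam\in\Multiparts[\beta]$, $\s,\t\in\Std(\blam)$ and $a\in\Hbeta$, use the cellularity of $\H$ to write $\psi_{\s\t}a\equiv\sum_{\v\in\Std(\blam)}r_{\t\v}(a)\psi_{\s\v}\pmod{\Hblam}$, and then left-multiply by the central idempotent $e_\beta$. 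The left-hand side and each $\psi_{\s\v}$ are unchanged since they lie in $\Hbeta$, while $e_\beta\Hblam$ is spanned by those $\psi_{\a\b}$ with $\Shape(\a)=\bmu\gdom\blam$ and $\bmu\in\Multiparts[\beta]$ (using Corollary~\ref{graded ideals} to describe $\Hblam$), which is exactly the ideal playing the role of ``$A^{\gdom\blam}$'' inside the algebra $\Hbeta$. The one point needing attention is that $\bmu\gdom\blam$ does not by itself force $\bmu\in\Multiparts[\beta]$, so a priori the ``more dominant'' error terms of (GC$_2$) are only confined to $\Hblam$; but after hitting the relation with the central idempotent $e_\beta$ they land in $e_\beta\Hblam\subseteq\Hbeta$, which resolves this. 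With (GC$_1$)--(GC$_3$) and (GC$_d$) in hand, Definition~\ref{graded cellular def} says $\Hbeta$ is a graded cellular algebra, completing the proof. I do not anticipate any genuine obstacle here; the argument is a routine descent along $e_\beta$.
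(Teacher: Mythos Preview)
Your proposal is correct and is essentially the same approach as the paper's: the corollary is stated there as an immediate consequence of Lemma~\ref{psi weights} and Theorem~\ref{psi basis}, and you have simply written out the routine verification that the graded cell datum of $\H$ restricts along the central idempotent $e_\beta$ to give a graded cell datum for $\Hbeta$.
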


\subsection{Integral Khovanov-Lauda--Rouquier algebras} 
The Khovanov-Lauda--Rouquier algebras $\R$ are defined over an arbitrary
commutative integral domain~$R$. So far we have produced a cellular basis
for $\R$ only when $R=K$ is a field of characteristic $p\ge0$ such that
either $e=0$ or $e>0$ and $\gcd(e,p)=1$ or $e=p$. By 
Theorem~\ref{BK main} this corresponds to the cases where $\R$ is
isomorphic to a degenerate or non-degenerate Hecke algebra.  In this
section we extend Theorem~\ref{psi basis} to a more general class of rings. 

Throughout this section, let $\R(\Z)$ be the Khovanov-Lauda--Rouquier
algebra of type $\Gamma=\Gamma_e$ defined over $\Z$, where
$e\in\{0,2,3,4,\dots\}$. Let $\bR(\Z)$ be the
torsion free part of $\R(\Z)$. If $\O$ is any
commutative integral domain let $\R(\O)$ be the
Khovanov-Lauda--Rouquier algebra over~$\O$.

The following result is implicit in \cite[Theorem~6.1]{BK:GradedKL}. It
arose out of discussions with Alexander Kleshchev.

\begin{Lemma}\label{integral}
  \begin{enumerate}
    \item Suppose that $e=0$ or that $e$ is prime. Then $\R(\Z)=\bR(\Z)$ is a 
      free $\Z$-module of rank~$\ell^nn!$.
      \item Suppose that $e>0$ is not prime. Then $\R(\Z)$ has $p$-torsion,
        for a prime~$p$, only if $p$ divides $e$.
  \end{enumerate}
\end{Lemma}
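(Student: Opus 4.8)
The plan is to deduce everything from the Brundan--Kleshchev graded isomorphism theorem (Theorem~\ref{BK main}) together with the graded cellular basis constructed in Theorem~\ref{psi basis}, exploiting the fact that the structure constants of the $\psi$-basis, and indeed the change-of-basis matrix between $\{\psi_{\s\t}\}$ and the standard basis $\{m_{\s\t}\}$, are defined over $\Z$ (or at worst over $\Z[\tfrac1e]$). First I would observe that $\R(\O)\cong\R(\Z)\otimes_\Z\O$ for any commutative integral domain $\O$, directly from the presentation in Definition~\ref{relations}, since all the defining relations have integer coefficients. For part~(a), suppose $e=0$ or $e$ is prime. Then for \emph{every} field $K$ (of every characteristic, when $e=0$, and of characteristic $0$ or not dividing $e$ plus the case $\ch K=e$, when $e$ is prime) the algebra $\R(K)$ is isomorphic to a degenerate or non-degenerate cyclotomic Hecke algebra over $K$ by Theorem~\ref{BK main}, and hence, by Theorem~\ref{psi basis}, has a graded cellular basis $\{\psi_{\s\t}\}$ indexed by pairs of standard tableaux of the same shape. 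In particular $\dim_K\R(K)=\#\{(\s,\t)\mid\s,\t\in\Std(\blam),\ \blam\in\Multiparts\}=\ell^n n!$, independently of $K$.

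The next step is to promote this constancy of dimension over all fields to a freeness statement over $\Z$. The module $\R(\Z)$ is finitely generated over $\Z$ (the monomials $\psi_{d(w)}y_1^{a_1}\cdots y_n^{a_n}e(\bi)$ span it, using the relations to push $y$'s and $e(\bi)$'s around and to bound the exponents via the cyclotomic relation $y_1^{(\Lambda,\alpha_{i_1})}e(\bi)=0$), so by the structure theorem $\R(\Z)\cong\Z^{r}\oplus(\text{torsion})$ for some $r$. Tensoring with $\Q$ gives $\dim_\Q\R(\Q)=r$, and tensoring with $\mathbb F_p$ gives $\dim_{\mathbb F_p}\R(\mathbb F_p)=r+(\text{$p$-torsion rank})$. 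Since the two dimensions agree ($=\ell^n n!$) for every prime $p$ allowed in part~(a) --- and when $e$ is prime the only constraint is that we may use all primes $p$, because $\R(\mathbb F_p)$ is still a (possibly non-semisimple) cyclotomic Hecke algebra for $p=e$ and for $p\nmid e$ alike --- there is no $p$-torsion for any $p$, so $\R(\Z)$ is free of rank $\ell^n n!$, i.e. $\R(\Z)=\bR(\Z)$. The one genuinely delicate point is making sure Theorem~\ref{psi basis} really does cover \emph{all} residue characteristics when $e$ is prime: for $e=0$ every field works, and for $e$ prime the trichotomy listed before Theorem~\ref{BK main} shows that $q=1,\ e=p$ is allowed (giving the degenerate algebra over $\mathbb F_p$) and $p\nmid e$ with $q$ a primitive $e$-th root of unity is allowed (giving the non-degenerate algebra); together these exhaust all prime characteristics, which is exactly why primality of $e$ is needed here.

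For part~(b), suppose $e>0$ is not prime and let $p$ be a prime with $p\nmid e$. I would show $\R(\mathbb F_p)$ has the expected dimension $\ell^n n!$, whence (by the same rank count as above, comparing with $\R(\Q)$, which has dimension $\ell^n n!$ since $\R(\Q)$ is a non-degenerate cyclotomic Hecke algebra over $\Q$) there is no $p$-torsion. The point is that when $p\nmid e$ one can choose a field $K$ of characteristic $p$ in which there is a primitive $e$-th root of unity $q$ (e.g. a suitable finite extension of $\mathbb F_p$), and then $\R(K)\cong\R(\mathbb F_p)\otimes_{\mathbb F_p}K\cong\H^\Lambda_n$ over $K$ by Theorem~\ref{BK main} with quantum characteristic exactly $e$; Theorem~\ref{psi basis} then gives $\dim_K\R(K)=\ell^n n!$, and dimension is unchanged by the field extension, so $\dim_{\mathbb F_p}\R(\mathbb F_p)=\ell^n n!$ as well. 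Combining the two parts of the rank comparison, $\R(\Z)$ has no $p$-torsion for any $p\nmid e$; equivalently, $\R(\Z)$ can only have $p$-torsion when $p\mid e$. I expect the main obstacle to be purely bookkeeping: carefully justifying the finite generation and the base-change isomorphism $\R(\Z)\otimes_\Z K\cong\R(K)$ so that the rank counts are legitimate, and --- in part~(a) --- checking that the case division before Theorem~\ref{BK main} genuinely leaves no prime characteristic uncovered when $e$ is $0$ or prime.
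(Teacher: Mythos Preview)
Your proposal is correct and follows essentially the same route as the paper: compute the rank of the free part via $\R(\Q)$, then for each relevant prime $p$ exhibit a field $K$ of characteristic $p$ for which $\R(K)\cong\H$ has dimension $\ell^n n!$ (using the trichotomy before Theorem~\ref{BK main}), and conclude there is no $p$-torsion by comparing dimensions after base change. Two small points: you do not need Theorem~\ref{psi basis} for the dimension count, since $\dim_K\H=\ell^n n!$ already follows from the $L_1^{a_1}\cdots L_n^{a_n}T_w$ basis in \S3.2; and in the case $e=0$ with $\ch K=p>0$ you should be explicit (as the paper is) that one must take $q$ transcendental over $\mathbb F_p$, since no element of $\overline{\mathbb F_p}$ has quantum characteristic~$0$.
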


\begin{proof}
  First, observe that by Theorem~\ref{BK main} 
  $$\rank\bR(\Z)=\dim_{\Q}(\R(\Z)\otimes_{\Z}\Q)=\dim_{\Q}\R(\Q)=\ell^nn!,$$
  where we take $q$ to be a primitive $e^{\text{th}}$ root of unity in 
  $\mathbb C$ if $e\ne0$ and not a root of unity if $e=0$. 

  Next suppose that $e=0$ and $p$ is any prime. Let $K$ be an infinite
  field of characteristic~$p$ and let $q\in K$ be a transcendental element
  of $K$. Then $\H\cong\R(K)\cong\R(\Z)\otimes_\Z K$ by Theorem~\ref{BK
  main}, so that $\R(\Z)$ has no $p$-torsion.

  Now suppose that $e>0$ and that $p$ is prime not dividing $e$. Let $K$ be
  a field of characteristic~$p$ which contains a primitive $e^{\text{th}}$
  root of unity~$q$ and let $\H$ be the non-degenerate cyclotomic Hecke
  algebra with parameters~$q$ and $\bQ_{\Lambda}$. Then
  $\H\cong\R(K)\cong\R(\Z)\otimes_\Z K$ by Brundan and Kleshchev's
  isomorphism Theorem~\ref{BK main}. Hence,
  $\R(\Z)$ has no $p$-torsion. 
  
  Finally, consider the case when $e=p$ is prime and let $K$ be a field of
  characteristic~$p$. Let $\H$ be the degenerate cyclotomic Hecke
  algebra over $K$ with parameters $\bQ_{\Lambda}$. Then 
  $\H\cong\R(K)\cong\R(\Z)\otimes_\Z K$, so
  once again $\R(\Z)$ has no $p$-torsion. Hence, $\R(\Z)$ can have
  $p$-torsion only if $e>0$ is not prime and $p$~divides~$e$.
\end{proof}

The graded cellular basis $\{\psi_{\s\t}\}$ is defined in terms of the
generators of $\R(\Z)$. Moreover, if $e=0$ and $K$ is any field, or if
$e>0$ and $K$ is a field containing a primitive $e^{\text{th}}$ root
of~$1$, then $\{\psi_{\s\t}\otimes 1_K\}$ is a graded cellular basis of the
algebra $\R(K)\cong\H$. Further, if $e=p$ is prime then
$\{\psi_{\s\t}\otimes_\Z1_K\}$ is a graded cellular basis of
$\R(K)\cong\H$ whenever $K$ is a field of characteristic $p$. Hence,
applying Lemma~\ref{integral} and Theorem~\ref{psi basis}, we obtain
our Main Theorem from the introduction.

\begin{Theorem}
  Let $\O$ be a commutative integral domain and suppose that either $e=0$, 
  $e$ is non-zero prime, or that $e\cdot1_\O$ is invertible in $\O$. Then
  $\R(\O)\cong\R(\Z)\otimes_\Z\O$ is a graded
  cellular algebra with graded cellular basis
  $$\set{\psi_{\s\t}\otimes1_\O|\s,\t\in\Std(\blam)
                   \text{ and }\blam\in\Multiparts}.$$
\end{Theorem}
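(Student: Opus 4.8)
The plan is to bootstrap from the field case (Theorem~\ref{psi basis}) to an arbitrary integral domain~$\O$ of the permitted type, using Lemma~\ref{integral} to control the integral structure of $\R(\Z)$ and the elementary fact that a spanning set of the right size over a flat base is a basis. The three admissible hypotheses on~$e$ --- namely $e=0$, $e$ a non-zero prime, or $e\cdot1_\O$ invertible in~$\O$ --- are precisely the conditions under which Lemma~\ref{integral} guarantees that $\R(\Z)\otimes_\Z\O$ is free over~$\O$ of rank $\ell^nn!$.

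First I would establish the base change isomorphism $\R(\O)\cong\R(\Z)\otimes_\Z\O$. This is immediate from the presentation in Definition~\ref{relations}: the defining relations have integer (in fact $0,\pm1,\pm2$) coefficients, so $\R(\O)$ is obtained from $\R(\Z)$ by extension of scalars. Next I would check that $\{\psi_{\s\t}\otimes1_\O\}$ spans $\R(\O)$: the elements $\psi_{\s\t}$ are, by Definition~\ref{psi defn}, explicit words in the generators $e(\bi),y_r,\psi_r$ of $\R(\Z)$ (with $\Z$-coefficients, since $\eblam$ and $y_\blam$ and the $\psi_{d(\s)}$ all live in $\R(\Z)$), hence $\{\psi_{\s\t}\otimes 1_\O\}$ is a subset of $\R(\Z)\otimes_\Z\O$ of cardinality $\sum_\blam|\Std(\blam)|^2=\ell^nn!$. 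The real content is that this set is \emph{linearly independent} over~$\O$, equivalently that it is an $\O$-basis.

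Here is the heart of the argument. By Lemma~\ref{integral}, under each of our three hypotheses on~$e$ the module $M:=\R(\Z)\otimes_\Z\O$ is free over~$\O$ of rank~$\ell^nn!$: when $e=0$ or $e$ is prime this holds because $\R(\Z)=\bR(\Z)$ is already free over~$\Z$ of that rank; when $e\cdot1_\O$ is invertible we only need $\O$ to kill the torsion of $\R(\Z)$, which by Lemma~\ref{integral}(b) is $p$-torsion only for primes $p\mid e$, all of which become invertible in~$\O$, so again $\bR(\Z)\otimes_\Z\O\cong\R(\Z)\otimes_\Z\O$ is free of rank~$\ell^nn!$. Now fix an embedding of~$\O$ into a field~$F$ (its field of fractions, say, or an algebraic closure thereof in the case $e\cdot1_\O$ invertible, chosen so that $F$ contains a primitive $e^{\text{th}}$ root of unity when $e>0$). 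Then $\R(F)\cong\R(\O)\otimes_\O F$ and, by Theorem~\ref{psi basis} (applied over~$F$, noting $F$ is of one of the three types $e$ invertible / $e=0$ / $e=p$ required there), $\{\psi_{\s\t}\otimes1_F\}$ is an $F$-basis of $\R(F)$. Suppose $\sum r_{\s\t}\,\psi_{\s\t}\otimes1_\O=0$ in $\R(\O)$ with $r_{\s\t}\in\O$; applying $-\otimes_\O F$ gives $\sum r_{\s\t}\,\psi_{\s\t}\otimes1_F=0$, whence every $r_{\s\t}=0$ since $\O\hookrightarrow F$ is injective. So $\{\psi_{\s\t}\otimes1_\O\}$ is a linearly independent spanning set of the free $\O$-module $M$ of rank $\ell^nn!$ --- hence an $\O$-basis.

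Finally I would verify that the graded cellular axioms (GC$_d$), (GC$_1$)--(GC$_3$) of Definition~\ref{graded cellular def} descend to~$\O$. Axiom (GC$_1$) is the basis statement just proved. For (GC$_d$): each generator $e(\bi),y_r,\psi_r$ is homogeneous of the degree prescribed after Definition~\ref{relations}, independently of the base ring, so $\psi_{\s\t}\otimes1_\O$ is homogeneous of degree $\deg\s+\deg\t$ by the computation in Lemma~\ref{psi degrees} (which is purely a count of degrees of generators). For (GC$_3$): the anti-automorphism~$*$ fixing the generators is defined by the presentation over any ring, and $(\psi_{\s\t})^*=\psi_{\t\s}$ already over~$\Z$, as in the proof of Theorem~\ref{psi basis}; this relation base-changes to~$\O$. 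For (GC$_2$): the structure constants $r_{\t\v}(h)$ expressing $\psi_{\s\t}h$ modulo more dominant terms can, by the triangularity in Lemma~\ref{psi triangular} and Corollary~\ref{graded ideals}, be taken to lie in~$\Z$ (for $h$ a product of generators), hence map into~$\O$; the cellular multiplication rule is a polynomial identity in the structure constants, so it holds after $-\otimes_\Z\O$. I expect the only point requiring genuine care --- and thus the main obstacle --- to be the bookkeeping in the case $e\cdot1_\O$ invertible: one must check that inverting the primes dividing~$e$ really does kill \emph{all} the torsion in $\R(\Z)$ (this is exactly Lemma~\ref{integral}(b)) and that the chosen overfield~$F$ can be arranged to satisfy the hypothesis of Theorem~\ref{psi basis}, i.e. still has quantum characteristic compatible with~$e$; once these are in place the descent is routine.
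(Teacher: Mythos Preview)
Your overall strategy matches the paper's, and most of the individual steps are right, but there is a genuine gap in the passage from linear independence to an $\O$-basis. You write that linear independence of $\{\psi_{\s\t}\otimes1_\O\}$ over~$\O$ is ``equivalently'' the statement that this set is an $\O$-basis of the free module $\R(\Z)\otimes_\Z\O$ of rank $\ell^nn!$. That equivalence is false over a non-field integral domain: the element $2$ is linearly independent in the free $\Z$-module~$\Z$ of rank~$1$ without being a basis. Passing to a \emph{single} overfield~$F$ (the fraction field of~$\O$, or an extension) establishes linear independence only; it tells you nothing about spanning. Your earlier ``check'' of spanning merely records the cardinality of the set, which is not a proof.

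What the paper actually uses --- implicit in the paragraph preceding the theorem --- is that Theorem~\ref{psi basis} applies over fields of \emph{every} relevant characteristic, not just one. When $e=0$ one has $\R(K)\cong\H$ for any field $K$; when $e>0$ one can take any $K$ containing a primitive $e$th root of unity, hence any characteristic $p\nmid e$; and when $e=p$ is prime the degenerate Hecke algebra covers characteristic~$p$ as well. Thus for $e=0$ or $e$ prime the images of $\{\psi_{\s\t}\}$ form a basis of $\R(K)$ for fields $K$ of characteristic~$p$ for \emph{every} prime~$p$ and for $K=\Q$. Since $\R(\Z)$ is free of rank $\ell^nn!$ by Lemma~\ref{integral}(a), the standard index argument (the sublattice spanned by $\{\psi_{\s\t}\}$ has finite index prime to every~$p$, hence index~$1$) forces $\{\psi_{\s\t}\}$ to be a $\Z$-basis of $\R(\Z)$; then spanning over any~$\O$ follows by base change. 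When $e>0$ is composite but $e\cdot1_\O\in\O^\times$, run the same argument over $\Z[1/e]$: by Lemma~\ref{integral}(b) the algebra $\R(\Z[1/e])$ is free of the right rank, and the basis property over all its residue fields yields a $\Z[1/e]$-basis; since $\O$ is then a $\Z[1/e]$-algebra, base change finishes. Once you have an honest basis over $\Z$ (or $\Z[1/e]$), your verification of (GC$_d$), (GC$_2$), (GC$_3$) goes through: the structure constants are then genuinely integers and the cellular identities, holding in every residue field, hold integrally. Note, incidentally, that your appeal to Lemma~\ref{psi triangular} for integrality of structure constants is circular as stated --- that lemma is proved over a field --- but becomes valid once the $\Z$-basis is in hand.
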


It seems likely to us that the $\psi$-basis is a graded cellular basis
of~$\R(\Z)$.

\section{A dual graded cellular basis and a homogeneous trace form}
In this section we construct a second graded cellular basis
$\{\psi_{\s\t}'\}$ for the algebras~$\H$ and $\Hbeta$. Using both the
$\psi$-basis and the $\psi'$-basis we then show that $\Hbeta$ is a
graded symmetric algebra, proving another conjecture of Brundan and
Kleshchev~\cite[Remark~4.7]{BK:GradedDecomp}.

\subsection{The dual Murphy basis} The main idea is that the $\psi$-basis
is, via the standard basis $\{m_{\s\t}\}$, built from the trivial
representation of $\H$. The new basis that we will construct is, via the
$\{n_{\s\t}\}$ basis defined below, modeled on the sign representation
of~$\H$.  

\begin{Defn}[\protect{Du and Rui~\cite[(2.7)]{DuRui:branching}}]
  Suppose that $\blam\in\Multiparts$ and $\s,\t\in\Std(\blam)$.
  Define 
  $n_{\s\t}=(-q)^{-\ell(d(\s))-\ell(d(\t))}T_{d(\s)^{-1}}n_\blam T_{d(\t)}$,
  where
    $$n_\blam=\prod_{s=1}^{\ell-1}
           \prod_{k=1}^{|\lambda^{(1)}|+\dots+|\lambda^{(\ell-s)}|}
        (L_k-q^{\kappa_s})\cdot\sum_{w\in\Sym_\blam}(-q)^{-\ell(w)}T_w.$$
   (The normalization of $n_{\s\t}$ by a power of $-q^{-1}$ is for
   compatibility with the results from \cite{M:gendeg} that we use below.
   The asymmetry in the definitions of the basis elements $m_{\s\t}$ and
   $n_{\s\t}$ arises because the relations $(T_r-q)(T_r+1)=0$, for 
   $1\le r<n$ are asymmetric. Renormalizing these relations to
   $(\hat T_r-v)(\hat T_r+v^{-1})=0$, where $q=v^2$, makes the definition
   of these elements symmetric; see, for example, \cite[\Sect3]{M:tilting}.)
\end{Defn}

It follows from Theorem~\ref{standard basis} that $\{n_{\s\t}\}$ is a
cellular basis of $\H$; see \cite[(3.1)]{M:gendeg}.  We now recall how
$L_1,\dots,L_n$ acts on this basis. To describe this requires some more
notation.

If $\lambda=(\lambda_1,\lambda_2,\dots)$ is a partition then its
\textbf{conjugate} is the partition
$\lambda'=(\lambda'_1,\lambda'_2,\dots)$, where
$\lambda'_i=\#\set{j\ge1|\lambda_j\ge i}$. If $\t$ is a standard
$\lambda$-tableau let $\t'$ be the standard $\lambda'$-tableau given by
$\t'(r,c)=\t(c,r)$. Pictorially, $\lambda'$ and $\t'$ are obtained by
interchanging the rows and the columns of $\lambda$ and $\t$,
respectively.

Similarly, if $\blam=(\lambda^{(1)},\dots,\lambda^{(\ell)})$ is a
multipartition then the \textbf{conjugate multipartition} is the
multipartition $\blam'=(\lambda^{(\ell)'},\dots,\lambda^{(1)'})$. If $\t$
is a standard $\blam$-tableau then the \textbf{conjugate tableau} $\t'$ is
the standard $\blam'$-tableau given by $\t'(r,c,l)=\t(c,r,\ell-l+1)$.

By the argument of~\cite[Prop.~3.3]{M:gendeg}, if
$\s,\t\in\Std(\blam)$ and $1\le k\le n$ then
there exist scalars $r_{\u\v}\in K$ such that
\begin{equation}\label{n-Murphy action}
  n_{\s\t}L_k=\res_{\t'}(k) n_{\s\t}+
                \sum_{(\u,\v)\gdom(\s,\t)}r_{\u\v}n_{\u\v}.
\end{equation}

As in section~4.2, fix a modular system $(\K,\O,K)$ for $\H$. Until noted
otherwise we will work in $\HK$.  Following
Definition~\ref{seminormal basis}, define
$f_{\s\t}'=F_{\s'}n_{\s\t}F_{\t'}$,  for $\s,\t\in\Std(\blam)$,
$\blam\in\Multiparts$. Moreover, by (\ref{n-Murphy action}), if
$\s,\t\in\Std(\blam)$, for $\blam\in\Multiparts$, then
$$f'_{\s\t}=n_{\s\t}+\sum_{(\u,\v)\gdom(\s,\t)}r_{\u\v}n_{\u\v},$$
for some $r_{\u\v}\in K$. Therefore, $\{f_{\s\t}'\}$
is a basis of $\HK$, as was noted in \cite[\Sect3]{M:gendeg}.  

We now retrace our steps from section~4.2 replacing the $f_{\s\t}$ basis with
the $f'_{\s\t}$ basis.

Recall from section~4.2 that if $\alpha=(r,c,l)$ and $\beta=(s,d,m)$ are
two nodes then $\alpha$ is below $\beta$ if either $l>m$, or $l=m$ and
$r>s$.  Dually,  we say that $\beta$ is \textbf{above} $\alpha$. With
this notation we can define a `dual' version of the scalars
$\gamma_\t\in\K$.

\begin{Defn}[cf. Definition~\ref{gamma}]\label{gamma'}
  Suppose that $\blam\in\Multiparts$ and $\t\in\Std(\blam)$.  For
  $k=1,\dots,n$ let $\Add_\t(k)'$ be the set of addable nodes of the
  multipartition $\Shape(\rest\t k)$ which are \textit{above} $\t^{-1}(k)$.
  Similarly, let $\Rem_\t(k)'$ be the set of removable nodes of
  $\Shape(\rest\t k)$ which are \textit{above} $\t^{-1}(k)$. Now define
  $$\gamma'_\t=v^{-\ell(d(\t))-\delta(\blam)}\prod_{k=1}^n
            \dfrac{\prod_{\alpha\in\Add_{\t'}(k)'}
                      \(\cont_{\t'}(k)-\cont(\alpha)\)}%
                  {\prod_{\rho\in\Rem_{\t'}(k)'}
                   \(\cont_{\t'}(k)-\cont(\rho)\)}\quad\in\K.
  $$
\end{Defn}

Suppose that $\bi\in I^n$ and that $\Std(\bi)\ne\emptyset$. Define
$\bi'=\res(\s')$, where $\s$ is any element of $\Std(\bi)$. Then
$\bi'\in I^n$ and $\bi'$ is independent of the choice of~$\s$.

Recall that Proposition~\ref{modular reduction} defines the idempotent
$e(\bi)^\O\in\HO$, for $\bi\in I^n$.

\begin{Lemma}\label{mod II}
  Suppose that $\bi\in I^n$ with $e(\bi)\neq 0$. Then, in $\HO$,
  $$e(\bi')^\O=\sum_{\s\in\Std(\bi)}\frac1{\gamma'_{\s}} f'_{\s\s}.$$
\end{Lemma}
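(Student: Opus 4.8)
The plan is to match the two sides term by term using the conjugation bijection on standard tableaux, reducing everything to the statement that, in the split semisimple algebra $\HK$, both $\frac1{\gamma_{\s'}}f_{\s'\s'}$ and $\frac1{\gamma'_\s}f'_{\s\s}$ are equal to the primitive idempotent $F_{\s'}$. First I would note that, since $e(\bi)\neq0$, Lemma~\ref{idempotents} gives $\bi=\res(\s)$ for some standard tableau~$\s$; hence $\bi'=\res(\s')$ and $e(\bi')\neq0$, and $\s\mapsto\s'$ is a bijection from $\Std(\bi)$ onto $\Std(\bi')$. It therefore suffices to prove that
$$\tfrac1{\gamma'_\s}f'_{\s\s}=F_{\s'}=\tfrac1{\gamma_{\s'}}f_{\s'\s'},
  \qquad\text{for all }\s\in\Std(\bi),$$
for then, reindexing by $\t=\s'$ and using Proposition~\ref{modular reduction},
$$\sum_{\s\in\Std(\bi)}\tfrac1{\gamma'_\s}f'_{\s\s}
   =\sum_{\s\in\Std(\bi)}F_{\s'}
   =\sum_{\t\in\Std(\bi')}F_\t
   =\sum_{\t\in\Std(\bi')}\tfrac1{\gamma_\t}f_{\t\t}
   =e(\bi')^\O.$$
In particular the left-hand side then lies in $\HO$, because $e(\bi')^\O$ does by Proposition~\ref{modular reduction}.

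The equality $\frac1{\gamma_{\s'}}f_{\s'\s'}=F_{\s'}$ is simply Lemma~\ref{primitive} applied to~$\s'$. For the equality $\frac1{\gamma'_\s}f'_{\s\s}=F_{\s'}$ I would run the seminormal form machinery of~\cite{M:seminormal} a second time, now applied to the cellular basis $\{n_{\u\v}\}$ of~$\H$. By (\ref{n-Murphy action}), $L_k$ acts on $n_{\s\s}$ modulo more dominant terms as the scalar $\res_{\s'}(k)$, so in the semisimple algebra $\HK$ the elements $L_1,\dots,L_n$ form a family of JM elements for $\{n_{\u\v}\}$ whose content function is $\u\mapsto(\cont_{\u'}(1),\dots,\cont_{\u'}(n))$; this function is separating since conjugation is a bijection on $\Std(\Multiparts)$ and the $m$-content function is separating. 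Moreover the set of contents occurring in this second set-up coincides with the one for $\{m_{\u\v}\}$, so the seminormal idempotent attached to~$\s$ is exactly $F_{\s'}$, and, directly from the definition of the $f'$-basis, $f'_{\s\s}=F_{\s'}n_{\s\s}F_{\s'}$ is a non-zero scalar multiple of~$F_{\s'}$. The dual of Lemma~\ref{primitive}, which is established in~\cite{M:gendeg}, identifies that scalar as the $\gamma'_\s$ of Definition~\ref{gamma'}, that is $f'_{\s\s}=\gamma'_\s F_{\s'}$, as required.

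The only step needing genuine work is this last identification: that the unit of~$\K$ produced by the general seminormal theory for the $\{n_{\u\v}\}$-basis is precisely the $\gamma'_\s$ of Definition~\ref{gamma'}, and not some other unit. This is the dual of the computation that identifies the normalising scalar in Lemma~\ref{primitive} with the $\gamma_\t$ of Definition~\ref{gamma}, and it is carried out in~\cite{M:gendeg} in the non-degenerate case, the degenerate case following by a similar induction using~\cite[Lemma~6.10]{AMR}; it rests on the fact that conjugating tableaux interchanges the notions of \emph{below} and \emph{above}, and of addable and removable node, appearing in Definitions~\ref{gamma} and~\ref{gamma'}, together with the behaviour of the residues $\cont_\t(k)$ under conjugation. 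Everything else in the argument is purely formal.
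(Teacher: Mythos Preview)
Your proposal is correct and follows essentially the same route as the paper. The paper's proof is even terser: it cites \cite[Remark~3.6]{M:gendeg} for the identity $\frac1{\gamma'_\s}f'_{\s\s}=\frac1{\gamma_{\s'}}f_{\s'\s'}$ in~$\HK$ (which you obtain by showing both sides equal~$F_{\s'}$) and then observes that the lemma is an immediate rephrasing of Proposition~\ref{modular reduction} under the bijection $\s\mapsto\s'$ from $\Std(\bi)$ to $\Std(\bi')$.
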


\begin{proof}By the argument of~\cite[Remark~3.6]{M:gendeg}, if $\s\in\Std(\bi)$ then 
  $\frac1{\gamma'_{\s}}f'_{\s\s}=\frac1{\gamma_{\s'}}f_{\s'\s'}$ in~$\HK$. So, 
  the result is just a rephrasing of Proposition~\ref{modular reduction}.
  (Note that $\gamma'_\t$, as defined in Definition~\ref{gamma'}, is
  the specialization at the parameters of~$\HK$ of the element $\gamma'_\t$
  defined in \cite[\Sect3]{M:gendeg}; see the remarks before
  \cite[Prop.~3.4]{M:gendeg}.)
\end{proof}

Definition~\ref{positive} defines a homogeneous element $y_\s\in\H$
for each positive tableau $\s\in\Std(\bi)$, $\bi\in I^n$. To
construct the dual basis we lift $e(\bi')y_\s$ to $\HO$.

\begin{Defn}
    Suppose that $\s\in\Std(\bi)$ is a positive tableau. Let
    $$\LAdd_{\s'}(k)'=\set{\alpha\in\Add_{\s'}(k)'|\res(\alpha)=\res_{\s'}(k)}$$
    and define $(y'_\s)^\O=(y'_{\s,1})^\O\dots(y'_{\s,n})^\O$, where
$$(y'_{\s,k})^\O =\begin{cases}\Prod_{\alpha\in\LAdd_{\s'}(k)'}
                         \Big(1-\frac1{\cont(\alpha)}L_k),&\text{if }q\ne1,\\
           \Prod_{\alpha\in\LAdd_{\s'}(k)'}\Big(L_k-\cont(\alpha)\Big),
           &\text{if }q=1,
\end{cases}
$$
    for $k=1,\dots,n$.
\end{Defn}

Observe that if $\s\in\Std(\bi)$ is a positive tableau then
$e(\bi')y_\s=e(\bi')^\O(y'_\s)^\O\otimes_\O1_K$ because
$|\LAdd_\s(k)|=|\LAdd_{\s'}(k)'|$, for $1\le k\le n$.  Note, however,
that $(y'_\s)^\O\ne y_\s^\O$ in general.

The following two results are analogues of Lemma~\ref{ftt ys} and
Theorem~\ref{positive expansion}, respectively. We leave the details to
the reader because they can be proved by repeating the arguments from
section~4, the only real difference being that Lemma~\ref{mod II} is
used instead of Proposition~\ref{modular reduction}.

\begin{Lemma}\label{dual ftt ys}
    Suppose that $\s,\t\in\Std(\bi)$, where $\bi\in I^n$, and that $\s$
    is a positive tableau. Then:
    \begin{enumerate}
      \item If $\t=\s$ then
        $f'_{\t\t}(y'_\s)^\O = u^\O_\s \gamma'_{\s} f'_{\s\s}$,
        for some unit $u_\s^\O\in\O$.
      \item If $\t\ne\s$ then there exists an element $u_\t\in\O$ such
        that
       $$f'_{\t\t}(y'_\s)^\O=\begin{cases}
           u_\t f'_{\t\t},& \text{if $\t\gdom\s$},\\
           0,&\text{otherwise,}
     \end{cases}$$
\end{enumerate}
\end{Lemma}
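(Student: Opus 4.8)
The plan is to transcribe the proof of Lemma~\ref{ftt ys} almost verbatim under the following dictionary of ``dual'' objects: the seminormal idempotent data $f_{\s\t}$, the scalars $\gamma_\t$ and the element $y_\s^\O$ of Definitions~\ref{seminormal basis}, \ref{gamma} and~\ref{ys def} are replaced by $f'_{\s\t}$, $\gamma'_\t$ and $(y'_\s)^\O$; the triangular action~(\ref{Murphy action}) of $L_k$ on the $m$-basis is replaced by the action~(\ref{n-Murphy action}) on the $n$-basis; every occurrence of the word \textbf{below} is replaced by \textbf{above}, so that $\Add_\t(k)$, $\Rem_\t(k)$, $\LAdd_\t(k)$, $\LRem_\t(k)$ become $\Add_{\t'}(k)'$, $\Rem_{\t'}(k)'$, $\LAdd_{\t'}(k)'$, $\LRem_{\t'}(k)'$; and Proposition~\ref{modular reduction} is replaced by Lemma~\ref{mod II} wherever such an input is needed downstream (it is not needed in this lemma itself).

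First I would record the single structural fact that drives everything: by~(\ref{n-Murphy action}) we have $f'_{\t\t}L_k=\cont_{\t'}(k)\,f'_{\t\t}$ in $\HK$ for $1\le k\le n$, so $f'_{\t\t}(y'_\s)^\O$ is automatically a scalar multiple of $f'_{\t\t}$ and the lemma reduces to identifying that scalar. For part~(a), with $\t=\s$: positivity of $\s$ gives, after conjugating, that $\LRem_{\s'}(k)'=\emptyset$ for $1\le k\le n$, while every $\alpha\in\Add_{\s'}(k)'\setminus\LAdd_{\s'}(k)'$ contributes to $\gamma'_\s$ a factor $\cont_{\s'}(k)-\cont(\alpha)$ which is a unit in $\O$. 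Comparing Definition~\ref{gamma'} term by term with the definition of $(y'_\s)^\O$ then yields, when $q\ne1$,
\[
  f'_{\s\s}(y'_\s)^\O
   =\prod_{k=1}^n\prod_{\alpha\in\LAdd_{\s'}(k)'}
      \Big(1-\tfrac{\cont_{\s'}(k)}{\cont(\alpha)}\Big)\cdot f'_{\s\s}
   =u_\s^\O\,\gamma'_\s\, f'_{\s\s}
\]
for some unit $u_\s^\O\in\O$, with the $q=1$ case identical after evaluating $L_k-\cont(\alpha)$ at $\cont_{\s'}(k)$.

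For part~(b) I would run the induction on $k$ exactly as in Lemma~\ref{ftt ys}, proving that $f'_{\t\t}(y'_{\s,1})^\O\cdots(y'_{\s,k})^\O$ equals $u_{\t,k}f'_{\t\t}$ when $\rest\t k\gedom\rest\s k$ and $0$ otherwise. In the inductive step, if $\rest\t k\gedom\rest\s k$ but $\rest\t{k+1}\not\gedom\rest\s{k+1}$ then $\rho=\t^{-1}(k+1)$ is misplaced relative to $\beta=\s^{-1}(k+1)$; positivity of $\s$ together with $\res(\s)=\res(\t)$, which forces $\res(\s')=\res(\t')$, gives $\rho'\in\LAdd_{\s'}(k+1)'$, hence $\cont_{\t'}(k+1)=\cont(\alpha)$ for some $\alpha\in\LAdd_{\s'}(k+1)'$, so the corresponding factor of $(y'_{\s,k+1})^\O$ annihilates $f'_{\t\t}$, completing the induction and the proof.

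The only genuinely new point — and the step I would take care over — is the combinatorial bookkeeping that transports the positivity hypothesis on $\s$ through conjugation: that $\LRem_{\s'}(k)'=\emptyset$, and that condition~(b) of Definition~\ref{positive} for $\s$ becomes the matching ``above'' statement for $\s'$ used in the induction for part~(b). This rests only on conjugation being an order-reversing bijection of Young diagrams that interchanges ``below'' with ``above'' (component $l$ of $\blam$ passing to component $\ell-l+1$ of $\blam'$) and that preserves the relation ``has the same residue as $\s^{-1}(k)$'', together with the fact that $\res(\s')$ depends only on $\res(\s)$. Once this is checked there is no new analytic input, so in the write-up I would simply state the two claims and refer the reader to the proof of Lemma~\ref{ftt ys} for the remaining details.
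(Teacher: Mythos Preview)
Your proposal is correct and matches the paper's approach exactly: the paper gives no proof of this lemma, stating only that it ``can be proved by repeating the arguments from section~4, the only real difference being that Lemma~\ref{mod II} is used instead of Proposition~\ref{modular reduction}.'' Your dictionary of dual objects is precisely the intended translation, and you are right that Lemma~\ref{mod II} versus Proposition~\ref{modular reduction} is not used in the proof of this lemma itself but only downstream in Proposition~\ref{copositive expansion}; your care in flagging the combinatorial bookkeeping under conjugation (that $\LRem_{\s'}(k)'=\emptyset$ and that condition~(b) of Definition~\ref{positive} transports appropriately) goes beyond what the paper writes and is exactly the point one should check.
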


As a consequence, we can repeat the proof of Theorem~\ref{positive expansion} to
deduce the following.

\begin{Prop}\label{copositive expansion}
    Suppose that $\s\in\Std(\bi)$ is a positive tableau, for $\bi\in I^n$. 
    Then there exists a non-zero $c\in K$ such that 
    $$ e(\bi')y_\s=c n_{\s\s} +\sum_{(\u,\v)\gdom(\s,\s)}r_{\u\v}n_{\u\v},$$
    for some $r_{\u\v}\in K$.
\end{Prop}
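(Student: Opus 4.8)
The plan is to mirror, essentially line for line, the proof of Theorem~\ref{positive expansion}, replacing the seminormal basis elements $f_{\s\t}$, the scalars $\gamma_\t$, the sets $\LAdd_\s(k)$ and the standard basis $\{m_{\s\t}\}$ throughout by their primed/conjugate counterparts $f'_{\s\t}$, $\gamma'_\t$, $\LAdd_{\s'}(k)'$ and the basis $\{n_{\s\t}\}$, and using Lemma~\ref{mod II} in place of Proposition~\ref{modular reduction}. Two preliminary observations make this go through. First, $e(\bi')\neq0$: since $\s\in\Std(\bi)$ the conjugate tableau $\s'$ lies in $\Std(\bi')$, so $\bi'=\res(\s')$ is the residue sequence of a standard tableau and hence $e(\bi')\neq0$ (cf. Lemma~\ref{idempotents}), which is what is needed to apply Lemma~\ref{mod II}. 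Second, as already recorded using (\ref{n-Murphy action}), $f'_{\s\t}=n_{\s\t}+\sum_{(\u,\v)\gdom(\s,\t)}r_{\u\v}n_{\u\v}$ for $r_{\u\v}\in\K$, so the transition matrix between $\{f'_{\s\t}\}$ and $\{n_{\s\t}\}$ is unitriangular with respect to the dominance order on pairs.

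The heart of the argument is a computation inside the semisimple algebra $\HK$. Writing $e(\bi')^\O=\sum_{\t\in\Std(\bi)}\tfrac1{\gamma'_\t}f'_{\t\t}$ by Lemma~\ref{mod II} and evaluating each $f'_{\t\t}(y'_\s)^\O$ by Lemma~\ref{dual ftt ys}, I obtain
$$e(\bi')^\O(y'_\s)^\O
   = u_\s^\O f'_{\s\s}
     + \sum_{\substack{\t\in\Std(\bi)\\ \t\gdom\s}}\frac{u_\t}{\gamma'_\t}\,f'_{\t\t},$$
where $u_\s^\O$ is a unit of $\O$ and $u_\t\in\O$ for each $\t\gdom\s$. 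Expanding every $f'_{\t\t}$ appearing here in terms of the $n$-basis, and noting that $\t\gdom\s$ forces $(\t,\t)\gdom(\s,\s)$, the unitriangularity above yields
$$e(\bi')^\O(y'_\s)^\O = u_\s^\O\, n_{\s\s} + \sum_{(\u,\v)\gdom(\s,\s)} r_{\u\v}\,n_{\u\v}$$
for some $r_{\u\v}\in\K$.

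To descend to $\H$ I would first argue that the scalars $r_{\u\v}$ in fact lie in $\O$: the left-hand side $e(\bi')^\O(y'_\s)^\O$ lies in $\HO$ (by Lemma~\ref{mod II} and the definition of $(y'_\s)^\O$), and each $n_{\u\v}$ lies in $\HO$, so since $\{n_{\u\v}\}$ is an $\O$-basis of $\HO$ we get $r_{\u\v}\in\O$. Reducing the displayed identity modulo the maximal ideal $\m=\pi\O$, and using the observation preceding Lemma~\ref{dual ftt ys} that $e(\bi')y_\s=e(\bi')^\O(y'_\s)^\O\otimes_\O1_K$ (which holds because $|\LAdd_\s(k)|=|\LAdd_{\s'}(k)'|$ for all $k$), I obtain $e(\bi')y_\s = c\,n_{\s\s}+\sum_{(\u,\v)\gdom(\s,\s)}\bar r_{\u\v}\,n_{\u\v}$ with $c=u_\s^\O\otimes_\O1_K$, and $c\neq0$ since $u_\s^\O$ is a unit of $\O$. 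This is exactly the claimed expansion.

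The one place where genuine work is hidden is Lemma~\ref{dual ftt ys}, which the excerpt leaves to the reader. Proving it means rerunning the proof of Lemma~\ref{ftt ys} with (\ref{n-Murphy action}) used in place of (\ref{Murphy action}), so that $f'_{\t\t}L_k=\cont_{\t'}(k)f'_{\t\t}$ in $\HK$, and with the $\Add$/$\Rem$ sets, the scalars $\gamma_\t$ and the contents $\cont_\t$ replaced by their primed/conjugated versions. The only real subtlety is the inductive step of part~(b): one must translate the positivity hypothesis on $\s$ into the statement that whenever the nodes of $\s$ and $\t$ labelled $k+1$ fail the appropriate dominance comparison, the content $\cont_{\t'}(k+1)$ coincides with $\cont(\alpha)$ for some $\alpha\in\LAdd_{\s'}(k+1)'$, forcing the relevant coefficient of $f'_{\t\t}$ to vanish. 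This is exactly where the bookkeeping with conjugate tableaux has to be done carefully; apart from that everything is formal, which is why the details can safely be left to the reader.
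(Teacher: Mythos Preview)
Your proposal is correct and follows exactly the approach the paper intends: the paper's proof of Proposition~\ref{copositive expansion} consists of the single sentence ``repeat the proof of Theorem~\ref{positive expansion}'', and you have faithfully carried out that repetition, substituting Lemma~\ref{mod II} and Lemma~\ref{dual ftt ys} for Proposition~\ref{modular reduction} and Lemma~\ref{ftt ys}, and the $\{n_{\s\t}\}$ basis for the $\{m_{\s\t}\}$ basis. Your additional remarks on why $e(\bi')\neq0$ and on what is involved in proving Lemma~\ref{dual ftt ys} are accurate and helpful elaborations of details the paper leaves implicit.
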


\subsection{The dual graded basis}
If $\blam\in\Multiparts$ then $\tlam$ is a positive tableau by
Lemma~\ref{positive tlam}. Recall that $\eblam=e(\ilam)$. Define
$\ebllam=e(\bi')$, where $\bi=\ilam$. Then as a special case of
Proposition~\ref{copositive expansion}, there is a non-zero $c\in K$ such
that
\begin{equation}\label{dualep}\ebllam y_\blam 
        = cn_\blam+\sum_{(\u,\v)\gdom(\tlam,\tlam)}r_{\u\v}n_{\u\v},
\end{equation}
for some $r_{\u\v}\in K$. This is what we need to define the
dual graded basis of $\H$.

\begin{Defn}\label{psi' defn}
    Suppose that $\blam\in\Multiparts$ and $\s,\t\in\Std(\blam)$ and
    recall that we have fixed reduced expressions
    $d(\s)=s_{i_1}\dots s_{i_k}$ and $d(\t)=s_{j_1}\dots s_{j_m}$ for $d(\s)$
    and $d(\t)$, respectively. Define
    $\psi'_{\s\t}=\psi_{i_k}\dots\psi_{i_1}\ebllam y_\blam
                \psi_{j_1}\dots\psi_{j_m}.$
\end{Defn}

By definition, $\psi'_{\s\t}$ is a homogeneous element of $\H$. Just as
with $\psi_{\s\t}$, the element $\psi'_{\s\t}$ will, in general, depend
upon the choice of reduced expressions for $d(\s)$ and $d(\t)$. Arguing
just as in section~5.1 we obtain the following facts. We leave the details to
the reader.

{\samepage
\begin{Prop}\label{psi' properties}
  Suppose that $\s,\t\in\Std(\blam)$, for some $\lam\in\Multiparts$. Then
  \begin{enumerate}
    \item If $\bi,\bj\in I^n$ then
      $$e(\bi')\psi'_{\s\t}e(\bj')=\begin{cases}
        \psi'_{\s\t},&\text{if $\res(\s)=\bi$ and $\res(\t)=\bj$},\\
        0,&\text{otherwise}.
      \end{cases}$$
    \item $\deg\psi'_{\s\t}=\deg\s+\deg\t$.
    \item
      $\psi'_{\s\t}=cn_{\s\t}+\Sum_{(\u,\v)\gdom(\s,\t)}r_{\u\v}n_{\u\v}$,
      for some $r_{\u\v}\in K$ and $0\neq c\in K$.
    \item If $\hat\psi_{\s\t}'$ is defined using a different choice of
      reduced expressions for $d(\s)$ and $d(\t)$ then
      $$\psi'_{\s\t}-\hat\psi'_{\s\t}
                =\sum_{(\u,\v)\gdom(\s,\t)}r_{\u\v}\psi'_{\u\v},$$
      where $r_{\u\v}\in K$ is non-zero only if $\res(\u)=\res(\s)$,
      $\res(\v)=\res(\t)$ and $\deg\u+\deg\v=\deg\s+\deg\t$.
  \end{enumerate}
\end{Prop}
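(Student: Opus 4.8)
The plan is to run the whole of \S5.1 again, parallel to Lemma~\ref{psi weights}, Lemma~\ref{psi degrees}, Lemma~\ref{psi triangular} and Lemma~\ref{psi error terms}, under the uniform substitution $m_{\s\t}\rightsquigarrow n_{\s\t}$, $\eblam\rightsquigarrow\ebllam$, with the action formula $(\ref{Murphy action})$ replaced by $(\ref{n-Murphy action})$, Corollary~\ref{ylam} replaced by equation~$(\ref{dualep})$, and Theorem~\ref{positive expansion} replaced by Proposition~\ref{copositive expansion}; parts (a)--(d) are then the exact dual analogues of those four statements. What makes the substitution legitimate is that $\{n_{\s\t}\}$ is a cellular basis of $\H$ for the \emph{same} weight poset $(\Multiparts,\gedom)$, so that $\Hblam$ coincides with the $K$-span of the $n_{\u\v}$ with $\u,\v\in\Std(\bmu)$ for some $\bmu\gdom\blam$, and the whole filtration bookkeeping carries over verbatim. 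Part~(a) is then formal, just as with Lemma~\ref{psi weights}: writing $\psi'_{\s\t}=\psi_{d(\s)}^*\,\ebllam y_\blam\,\psi_{d(\t)}$ with $\ebllam=e((\ilam)')$, sliding $e(\bi')$ through $\psi_{d(\s)}^*$ via $\psi_r e(\bi)=e(s_r\cdot\bi)\psi_r$ forces $\bi'=d(\s)^{-1}\cdot(\ilam)'$; since conjugation of tableaux commutes with the right $\Sym_n$-action and residue sequences transform contravariantly under it, $\res(\s')=\res((\tlam)'d(\s))=d(\s)^{-1}\cdot(\ilam)'$, and as $\bi\mapsto\bi'$ is a bijection of $I^n$ this says precisely $\bi=\res(\s)$; symmetrically $e(\bj')$ on the right forces $\bj=\res(\t)$.

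Part~(b) is the dual of Lemma~\ref{psi degrees}. In the factorisation $\psi'_{\s\t}=(\psi_{d(\s)}^*\ebllam)\,y_\blam\,(\ebllam\psi_{d(\t)})$ the middle factor is homogeneous of degree $2\deg\tlam$ by Corollary~\ref{ylam}, and the two outer factors are handled as in the proof of Lemma~\ref{psi degrees} via the sign-representation (``codegree'') analogue of \cite[Cor.~3.14]{BKW:GradedSpecht}, the codegree contributions recombining into degree contributions; alternatively, as remarked after Lemma~\ref{psi degrees} for the original basis, one proves it directly by induction on the dominance order, now tracking the degrees $\deg\psi_r e(\bi)=-a_{i_r,i_{r+1}}$ against the conjugate residue sequences. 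Part~(c) is the heart of the matter and the dual of Lemma~\ref{psi triangular}; I would copy that proof line by line. The ingredients are: (i) $\ebllam y_\blam\equiv c\,n_\blam\pmod{\Hblam}$ with $c\ne0$, which is $(\ref{dualep})$; (ii) $n_\blam y_r\equiv0\pmod{\Hblam}$ for $1\le r\le n$, which follows from $(\ref{n-Murphy action})$ exactly as $m_\blam y_r\equiv0$ followed from $(\ref{Murphy action})$ --- modulo $\Hblam$ the element $n_\blam$ spans the generalised $\res((\tlam)')$-eigenspace for $L_1,\dots,L_n$, and that eigenvalue annihilates the relevant linear factor of $y_r$; (iii) by Theorem~\ref{BK main} and the homogeneous relations, $\ebllam\psi_{d(\t)}$ is a linear combination of terms $\ebllam f_w(y)T_w$ with $w\le d(\t)$, $f_w(y)\in K[y_1,\dots,y_n]$, and $f_{d(\t)}(y)$ invertible --- precisely the identity used in the proof of Lemma~\ref{psi triangular}, left-multiplied by $\ebllam$; and (iv) modulo $\Hblam$, $n_\blam T_w$ is a linear combination of the $n_{\tlam\v}$ with $d(\v)\le w$, by Theorem~\ref{standard basis}. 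Combining (i)--(iv) gives $\ebllam y_\blam\psi_{d(\t)}\equiv c' n_{\tlam\t}+\sum_{\v\gdom\t}r_\v n_{\tlam\v}\pmod{\Hblam}$ with $c'\ne0$; left-multiplying by $\psi_{d(\s)}^*\ebllam$ and repeating the same reductions on the left, via the $*$-twisted cellular axiom for $\{n_{\s\t}\}$, yields $\psi'_{\s\t}=c\,n_{\s\t}+\sum_{(\u,\v)\gdom(\s,\t)}r_{\u\v}n_{\u\v}$ with $c\ne0$; independence of $c$ from the chosen reduced expressions follows from \cite[Prop.~2.5(i)]{BKW:GradedSpecht} applied on each side, exactly as in Lemma~\ref{psi triangular}.

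Part~(d) then follows from (a), (b) and (c) just as Lemma~\ref{psi error terms} follows from Lemmas~\ref{psi weights}--\ref{psi triangular}: using (c) twice, $\psi'_{\s\t}-\hat\psi'_{\s\t}=\sum_{(\u,\v)\gdom(\s,\t)}r_{\u\v}n_{\u\v}=\sum_{(\u,\v)\gdom(\s,\t)}s_{\u\v}\psi'_{\u\v}$ for scalars $r_{\u\v},s_{\u\v}\in K$; multiplying on the left and right by $e(\res(\s)')$ and $e(\res(\t)')$ and applying (a) kills every $s_{\u\v}$ with $\res(\u)\ne\res(\s)$ or $\res(\v)\ne\res(\t)$; and since $\psi'_{\s\t}$ and $\hat\psi'_{\s\t}$ are homogeneous of degree $\deg\s+\deg\t$ while each surviving $\psi'_{\u\v}$ is homogeneous of degree $\deg\u+\deg\v$ (both by (b)), and the $\psi'_{\u\v}$ are linearly independent by (c), $s_{\u\v}\ne0$ forces $\deg\u+\deg\v=\deg\s+\deg\t$.

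The step I expect to be the main obstacle is part~(c), and within it the verification that the mechanical substitution ``$m\rightsquigarrow n$'' is genuinely legitimate: one must know, from the cellularity of $\{n_{\s\t}\}$ for $(\Multiparts,\gedom)$ (Theorem~\ref{standard basis} and \cite{M:gendeg}), that the ideals $\Hblam$ are unchanged, so that every ``$\pmod{\Hblam}$'' step transports; and one must establish the $n$-analogues $n_\blam y_r\equiv0$, the expansion of $n_\blam T_w$ in the $n_{\tlam\v}$ with $d(\v)\le w$, and the $*$-twisted cellular identity used for the left multiplication. None of this is deep, but it is the one place where the argument uses the internal structure of the dual basis rather than being a verbatim transcription of \S5.1; a secondary point requiring care is the conjugation bookkeeping in part~(a) and the codegree recombination in part~(b).
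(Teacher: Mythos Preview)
Your approach is exactly what the paper intends: it states only ``Arguing just as in section~5.1 we obtain the following facts. We leave the details to the reader,'' and you have supplied those details. The parallel structure you outline---(a) mirroring Lemma~\ref{psi weights}, (b) mirroring Lemma~\ref{psi degrees}, (c) mirroring Lemma~\ref{psi triangular}, (d) mirroring Lemma~\ref{psi error terms}---is correct, as are the ingredient substitutions (equation~(\ref{dualep}) for Corollary~\ref{ylam}, (\ref{n-Murphy action}) for (\ref{Murphy action}), Proposition~\ref{copositive expansion} for Theorem~\ref{positive expansion}).

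There is one slip, however, which you flag as a point to verify but then assert incorrectly: the cell ideals for the $m$-basis and the $n$-basis do \emph{not} coincide. For instance, with $\ell=1$, $n=2$ and $e>2$, the $m$-basis ideal $\HH^{\gdom(1,1)}$ is $K(1+T_1)$ while the $n$-basis ideal is $K(1-q^{-1}T_1)$, and these are distinct one-dimensional subspaces. The paper acknowledges this by introducing a separate symbol $\Hpblam$ for the $n$-basis (equivalently $\psi'$-basis) cell ideals immediately after Proposition~\ref{psi' properties}. Fortunately your argument never actually uses the coincidence: every ``$\pmod{\Hblam}$'' step in the proof of~(c) should simply be read as ``modulo the $n$-basis cell ideal,'' and the filtration bookkeeping then carries over verbatim within the $n$-basis framework alone. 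So replace the assertion ``$\Hblam$ coincides with the $K$-span of the $n_{\u\v}$\dots'' with ``write $\Hpblam$ for the $K$-span of the $n_{\u\v}$ with $\u,\v\in\Std(\bmu)$, $\bmu\gdom\blam$, and work modulo $\Hpblam$ throughout''; then everything you wrote goes through.
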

}

Using Proposition~\ref{psi' properties}, and arguing exactly as in the
proof of Theorem~\ref{psi basis} we obtain the graded dual basis of $\H$.

\begin{Theorem}\label{psi' basis}
  The basis 
  $\set{\psi'_{\s\t}|\s,\t\in\Std(\blam) \text{ for }\blam\in\Multiparts}$
  is a graded cellular basis of $\H$.
\end{Theorem}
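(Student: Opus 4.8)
The plan is to transcribe the proof of Theorem~\ref{psi basis}, systematically replacing the standard basis $\{m_{\s\t}\}$ by the dual standard basis $\{n_{\s\t}\}$, the element $\eblam y_\blam$ by $\ebllam y_\blam$, and the cellular filtration $\{\Hblam\}$ by the cellular filtration of the $n$-basis. Write $\Hpblam$ for the $K$-span of the $n_{\u\v}$ with $\u,\v\in\Std(\bmu)$ for some $\bmu\gdom\blam$. Since $\{n_{\s\t}\}$ is a cellular basis of $\H$ with respect to the dominance order (Theorem~\ref{standard basis}; see \cite[(3.1)]{M:gendeg}), each $\Hpblam$ is a two-sided ideal of $\H$. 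I would emphasise at the outset that, in general, $\Hpblam\ne\Hblam$, so the argument really does have to run through the $n$-basis filtration at every step rather than recycling $\Hblam$; this is the one place in the transcription that needs care.

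First I would record the dual of Corollary~\ref{graded ideals}. By Proposition~\ref{psi' properties}(c) the change of basis between $\{\psi'_{\s\t}\}$ and $\{n_{\s\t}\}$ is an invertible triangular matrix, for a suitable ordering of the pairs of tableaux compatible with dominance; hence $\{\psi'_{\s\t}\}$ is a $K$-basis of $\H$, which is (GC$_1$), and restricting the same observation to more dominant shapes shows that $\Hpblam$ has basis $\set{\psi'_{\u\v}|\u,\v\in\Std(\bmu)\text{ for }\bmu\in\Multiparts\text{ with }\bmu\gdom\blam}$. As each $\psi'_{\u\v}$ is homogeneous, $\Hpblam$ is a homogeneous two-sided ideal. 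Property (GC$_d$) is then immediate from Proposition~\ref{psi' properties}(b), which gives $\deg\psi'_{\s\t}=\deg\s+\deg\t$.

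For (GC$_3$) the key observation is that $\psi'_{\s\t}=\psi_{d(\s)}^*\,\ebllam y_\blam\,\psi_{d(\t)}$: since the anti-automorphism $*$ fixes every generator $\psi_i$, the left-hand factor $\psi_{i_k}\cdots\psi_{i_1}$ of Definition~\ref{psi' defn} equals $(\psi_{i_1}\cdots\psi_{i_k})^*=\psi_{d(\s)}^*$. The idempotent $\ebllam=e((\ilam)')$ is one of the graded generators, $y_\blam\in K[y_1,\dots,y_n]$ is a product of the $y_r$, and these commute, so $(\ebllam y_\blam)^*=\ebllam y_\blam$ and therefore $(\psi'_{\s\t})^*=\psi_{d(\t)}^*\,\ebllam y_\blam\,\psi_{d(\s)}=\psi'_{\t\s}$. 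Thus the anti-automorphism of $\H$ determined by the basis $\{\psi'_{\s\t}\}$ coincides with $*$, which gives (GC$_3$); as with the $\psi$-basis in section~5.1, one should note that this differs from the cellular anti-automorphism attached to $\{n_{\s\t}\}$.

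Finally, for (GC$_2$) I would write $\psi'_{\s\t}=\psi_{d(\s)}^*\psi'_{\tlam\t}$ and, for $h\in\H$, compute modulo $\Hpblam$ exactly as in the proof of Theorem~\ref{psi basis}: use Proposition~\ref{psi' properties}(c) to replace $\psi'_{\tlam\t}$ by a combination of the $n_{\tlam\v}$ plus a term in $\Hpblam$; apply the cellularity of $\{n_{\s\t}\}$ to rewrite $n_{\tlam\v}h$ as a combination of the $n_{\tlam\w}$ modulo $\Hpblam$, with coefficients independent of the first index; use Proposition~\ref{psi' properties}(c) once more, via the triangularity above, to pass back to the $\psi'_{\tlam\w}$; and then left-multiply by $\psi_{d(\s)}^*$, noting that $\psi_{d(\s)}^*\Hpblam\subseteq\Hpblam$ and $\psi_{d(\s)}^*\psi'_{\tlam\w}=\psi'_{\s\w}$. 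The resulting scalars depend only on $\t$ and $h$, so (GC$_2$) holds with weight poset $(\Multiparts,\gedom)$, and the proof is complete. As indicated, the only real obstacle is bookkeeping: keeping the filtrations $\{\Hblam\}$ and $\{\Hpblam\}$ apart and using the latter consistently throughout.
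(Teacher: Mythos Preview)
Your proposal is correct and is precisely the approach the paper takes: the paper's proof consists of the single sentence ``Using Proposition~\ref{psi' properties}, and arguing exactly as in the proof of Theorem~\ref{psi basis} we obtain the graded dual basis of~$\H$,'' and you have faithfully supplied the details of that transcription, correctly working throughout with the $n$-basis filtration $\{\Hpblam\}$ in place of $\{\Hblam\}$.
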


The basis $\{\psi'_{\s\t}\}$ is the \textbf{dual graded basis}
of~$\H$. We note that the unique anti-isomorphism of $\H$ which fixes
the homogeneous generators of $\H$ coincides with the graded
anti-isomorphisms coming from both the graded cellular basis and the
dual graded cellular basis, via (GC$_3$) of 
Definition~\ref{graded cellular def}.

As with the graded basis, the dual graded basis
restricts to give a graded cellular basis for the blocks of~$\H$.

\begin{cor}
  Suppose that $\beta\in Q_+$. Then
  $$\set{\psi'_{\s\t}|\s,\t\in\Std(\blam)
                           \text{ for }\blam'\in\Multiparts[\beta]}$$
  is a graded cellular basis of $\Hbeta$.
\end{cor}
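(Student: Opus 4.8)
The plan is to obtain this corollary in exactly the way the preceding corollary was obtained for the $\psi$-basis, namely by restricting the graded cellular basis of Theorem~\ref{psi' basis} to the block $\Hbeta=e_\beta\H$, where $e_\beta=\sum_{\bi\in I^\beta}e(\bi)$ is a central idempotent and $\H=\bigoplus_\beta\Hbeta$ is the decomposition of $\H$ into blocks. Recall also that for $\bi\in I^n$ with $\Std(\bi)\neq\emptyset$ we have set $\bi'=\res(\s')$ for any $\s\in\Std(\bi)$. The one feature that is genuinely new compared with the $\psi$-basis is that the idempotents cutting out $\psi'_{\s\t}$ are the \emph{conjugate} residue idempotents, so the index set of $\Hbeta$ will come out in terms of $\blam'$ rather than of $\blam$.

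First I would pin down which basis elements $\psi'_{\s\t}$ lie in $\Hbeta$. Fix $\blam\in\Multiparts$ and $\s,\t\in\Std(\blam)$. Since $e_\beta$ is central and idempotent, $\psi'_{\s\t}\in\Hbeta$ if and only if $e_\beta\psi'_{\s\t}=\psi'_{\s\t}$, and by Proposition~\ref{psi' properties}(a) this holds precisely when $\res(\s)'\in I^\beta$. Now $\res(\s)'=\res(\s')$ with $\s'\in\Std(\blam')$, and the multiset of residues occurring in a standard tableau depends only on its shape; hence whether $\res(\s')$ lies in the $\Sym_n$-orbit $I^\beta$ depends only on $\blam'$, and in particular this is independent of the choice of $\s$ and is equivalent to the condition $\res(\t')\in I^\beta$. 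Using the combinatorial identity $\coprod_{\bi\in I^\beta}\Std(\bi)=\coprod_{\bmu\in\Multiparts[\beta]}\Std(\bmu)$ recalled above, it follows that $\res(\s')\in I^\beta$ if and only if $\blam'\in\Multiparts[\beta]$. Together with Theorem~\ref{psi' basis} and the direct sum $\H=\bigoplus_\beta\Hbeta$, this shows that $\Hbeta$ is spanned by a subset of the $\psi'$-basis, namely $\set{\psi'_{\s\t}|\s,\t\in\Std(\blam)\text{ and }\blam'\in\Multiparts[\beta]}$.

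It then remains to check that this basis, together with the sub-poset $\set{\blam\in\Multiparts|\blam'\in\Multiparts[\beta]}$ of the weight poset of Theorem~\ref{psi' basis}, the tableau sets $\Std(\blam)$, the assignment $(\s,\t)\mapsto\psi'_{\s\t}$, and the same degree function, is a graded cell datum for $\Hbeta$ in the sense of Definition~\ref{graded cellular def}. Axioms (GC$_d$), (GC$_1$) and (GC$_3$) are inherited immediately from the corresponding axioms for $\H$ in Theorem~\ref{psi' basis}: $e_\beta$ is fixed by the anti-isomorphism $*$, so $*$ restricts to $\Hbeta$ and still sends $\psi'_{\s\t}$ to $\psi'_{\t\s}$, and degrees are unchanged. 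For (GC$_2$), I would use that $\Hbeta$ is spanned by a subset of the $\psi'$-basis to see that it meets each two-sided ideal of the $\psi'$-cell filtration of $\H$ in the span of those $\psi'_{\u\v}$ it contains; in particular $\Hbeta$ meets the span of $\set{\psi'_{\u\v}|\bmu\gdom\blam}$ in the span of $\set{\psi'_{\u\v}|\bmu\gdom\blam\text{ and }\bmu'\in\Multiparts[\beta]}$. Granting this, (GC$_2$) for $\Hbeta$ follows by applying (GC$_2$) for $\H$ to $\psi'_{\s\t}a$ with $a\in\Hbeta$ and noting that each $\psi'_{\s\v}$ with $\v\in\Std(\blam)$ again lies in $\Hbeta$, since the condition $\blam'\in\Multiparts[\beta]$ does not involve $\t$.

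I do not anticipate a genuine obstacle: every ingredient is already in place and the shape of the argument is identical to the $\psi$-basis case. The only thing to be careful about is the conjugation bookkeeping --- making sure the index set of $\Hbeta$ emerges as $\set{\blam|\blam'\in\Multiparts[\beta]}$ and not as $\Multiparts[\beta]$ --- together with the routine verification that the graded cell datum of $\H$ restricts cleanly to the direct summand $\Hbeta$.
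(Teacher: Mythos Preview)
Your proposal is correct and follows exactly the approach the paper intends: the corollary is stated without proof, parallel to the analogous block corollary for the $\psi$-basis, and is meant to follow immediately from Proposition~\ref{psi' properties}(a) and Theorem~\ref{psi' basis} just as the earlier corollary followed from Lemma~\ref{psi weights} and Theorem~\ref{psi basis}. You have also correctly handled the one new wrinkle, namely the conjugation bookkeeping that forces the index set to be $\set{\blam|\blam'\in\Multiparts[\beta]}$.
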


\subsection{Graded symmetric algebras}
Recall that a \textbf{trace form} on a $K$-algebra $A$ is a
$K$-linear map $\tau\map AK$ such that $\tau(ab)=\tau(ba)$, for all
$a,b\in A$. The algebra~$A$ is \textbf{symmetric} if $A$ is
equipped with a non-degenerate symmetric bilinear form $\theta:
A\times A\rightarrow K$ which is associative in the following sense:
$$
\theta(xy,z)=\theta(x,yz),\quad\text{for all }x,y,z\in A.
$$
Define a trace form $\tau: A\rightarrow K$ on $A$ by
setting $\tau(a)=\theta(a,1)$ for any $a\in A$. Note that 
$\ker\tau$ cannot contain any non-zero left or right ideals because
$\theta$ is  non-degenerate. We leave the next result for the reader.

\begin{Lemma} \label{eqa} Suppose that $A$ is a finite dimensional $K$-algebra
  which is equipped with an anti-automorphism $\sigma$ of order~$2$. Then $A$
  is symmetric if and only if there is a non-degenerate symmetric bilinear
  form $\<\ ,\ \>\map{A\times A}K$ which is associative in
  the sense $\<ab,c\>=\<a,cb^\sigma\>$ for any $a,b,c\in A$.
\end{Lemma}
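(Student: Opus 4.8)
The plan is to convert the $\sigma$-twisted associativity of $\langle\ ,\ \rangle$ into ordinary associativity by composing one argument with $\sigma$, and then to concentrate all the remaining content in a single linear functional. Throughout I write $\theta(a,b):=\langle a,b^\sigma\rangle$ when a form $\langle\ ,\ \rangle$ is given, $\lambda(a):=\langle 1,a\rangle$, and, going the other way, $\langle a,b\rangle:=\theta(a,b^\sigma)$ when an associative form $\theta$ is given.

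First I would dispose of the purely formal part. Suppose $\langle\ ,\ \rangle$ is nondegenerate and $\sigma$-associative. Then $\theta(a,b)=\langle a,b^\sigma\rangle$ is bilinear, is nondegenerate because $\sigma$ is a bijection, and is associative in the usual sense: since $\sigma$ is an anti-homomorphism of order two, $\theta(ab,c)=\langle ab,c^\sigma\rangle=\langle a,c^\sigma b^\sigma\rangle=\langle a,(bc)^\sigma\rangle=\theta(a,bc)$. Read in reverse, the same line shows that if $\theta$ is nondegenerate and associative then $\langle a,b\rangle:=\theta(a,b^\sigma)$ is nondegenerate and $\sigma$-associative. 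So in both directions everything transfers for free except the symmetry of the relevant form.

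To handle the symmetry I would reduce it to a property of $\lambda$. Setting the first argument equal to $1$ in $\langle ab,c\rangle=\langle a,cb^\sigma\rangle$ gives $\langle a,b\rangle=\langle 1,ba^\sigma\rangle=\lambda(ba^\sigma)$; testing symmetry of $\langle\ ,\ \rangle$ on pairs involving $1$ then forces $\lambda\circ\sigma=\lambda$, and hence $\theta(a,b)=\langle a,b^\sigma\rangle=\lambda\big((ab)^\sigma\big)=\lambda(ab)$. Granting in addition that $\lambda$ is a trace form, $\lambda(ab)=\lambda(ba)$, we get $\theta(a,b)=\lambda(ab)=\lambda(ba)=\theta(b,a)$, so $\theta$ is a nondegenerate symmetric associative form and $A$ is symmetric; this proves $(\Leftarrow)$. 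For $(\Rightarrow)$ I would run this backwards: starting from a symmetrizing trace $\tau$ of $A$ (a nondegenerate trace form), observe that $\tau\circ\sigma$ is again one because $\sigma$ is an order-two anti-automorphism, and then produce a trace form $\lambda$ that is also $\sigma$-invariant; the form $\langle a,b\rangle:=\lambda(ba^\sigma)$ is then nondegenerate, $\sigma$-associative and symmetric by the identities just derived.

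The step I expect to be the real obstacle is exactly the interplay between ``trace form'' and ``$\sigma$-invariant'': establishing that the symmetrizing functional can be taken $\sigma$-invariant in $(\Rightarrow)$, and that the $\sigma$-invariant functional produced in $(\Leftarrow)$ is automatically central. This is the only point that is not a formal rewriting of the axioms; it is where the order-two hypothesis on $\sigma$ is used (and, in the situation of interest, the fact that $\sigma$ fixes the homogeneous generators of $\H$), and I would handle it by replacing $\tau$ with the trace form $a\mapsto\tau(ua)$ for an appropriate central unit $u$ rather than by manipulating $\langle\ ,\ \rangle$ directly.
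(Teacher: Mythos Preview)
The paper offers no proof of this lemma (``We leave the next result for the reader''), so there is nothing to compare against; I will simply assess your argument.

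Your formal reductions are correct and well organised. With $\theta(a,b)=\langle a,b^\sigma\rangle$ and $\lambda(a)=\langle 1,a\rangle$ one indeed has $\langle a,b\rangle=\lambda(ba^\sigma)$ and $\theta(a,b)=\lambda(ab)$; symmetry of $\langle\ ,\ \rangle$ is equivalent to $\lambda\circ\sigma=\lambda$, while symmetry of $\theta$ (hence ``$A$ symmetric'') is equivalent to $\lambda$ being a trace. You are also right that the whole content of the lemma sits in passing between ``$\sigma$-invariant'' and ``trace'', and you honestly flag this as the obstacle rather than sweeping it under the rug.

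The difficulty is that you do not actually overcome that obstacle, and it appears to be genuine. For $(\Leftarrow)$ you give no argument that a $\sigma$-invariant nondegenerate $\lambda$ must be a trace, and in fact this can fail: take $A=K\langle x,y\rangle/(x^2,y^2,xy-qyx)$ with $q\neq0,1$, the anti-involution $\sigma$ swapping $x$ and $y$, and $\lambda$ the functional reading off the coefficient of $xy$. Then $\lambda\circ\sigma=\lambda$, the form $\langle a,b\rangle=\lambda(ba^\sigma)$ is nondegenerate, symmetric and $\sigma$-associative, yet $A$ is not a symmetric algebra (any trace kills $xy$, forcing the associated form to be degenerate). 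For $(\Rightarrow)$, your central-unit idea leads to the equation $v=u\sigma(v)$ with $u\sigma(u)=1$; the candidate $v=1+u$ works only when it happens to be a unit, so this is a Hilbert-90-type problem you have not resolved. None of this harms the paper's application: in Corollary~\ref{gradsy} the functional $\tau_\beta$ is already a trace form, so once Theorem~\ref{pairing} gives nondegeneracy of $\langle a,b\rangle_\beta=\tau_\beta(ab^*)$, the form $\theta(a,b)=\tau_\beta(ab)$ is directly a nondegenerate symmetric associative form, bypassing the delicate step entirely.
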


A graded algebra $A$ is a \textbf{graded symmetric} algebra if there exists
a homogeneous non-degenerate trace form $\tau\map AK$.  Apart from
providing a second graded cellular basis of $\H$, the dual graded basis
of~$\H$ is useful because we can use it to show that the algebras
$\Hbeta$, for $\beta\in Q_+$, are \textbf{graded symmetric algebras}. 

Following Brundan and Kleshchev~\cite[(3.4)]{BKW:GradedSpecht}, if
$\beta\in Q_+$ then the \textbf{defect} of $\beta$ is
$$\defect\beta=(\Lambda,\beta)-\frac12(\beta,\beta),$$
where $(\ ,\ )$ is the non-degenerate pairing on the root lattice
introduced in section~3.1. If $\ell=1$ then $\defect\beta$ is the
\textit{$e$-weight} of the block $\Hbeta$. If $\ell>1$ then
$\defect\beta$ coincides with Fayers~\cite{Fayers:AKweight} 
definition of weight for the algebras $\Hbeta$.

In what follows, the following result of Brundan, Kleshchev and Wang's will
be very important. (In~\cite[\Sect3]{BKW:GradedSpecht}, $\deg\s'$ is called
the \textit{codegree} of~$\s$.)

\begin{Lemma}[\protect{Brundan, Kleshchev and Wang~\cite[Lemma~3.12]{BKW:GradedSpecht}}]
    \label{defect}
  Suppose that $\bmu\in\Multiparts[\beta]$ and that $\s\in\Std(\bmu)$.
  Then $\deg\s+\deg\s'=\defect\beta$.
\end{Lemma}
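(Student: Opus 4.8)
The plan is to prove this by induction on $n=|\bmu|$, peeling off the largest entry of the tableau. Fix $\bmu\in\Multiparts[\beta]$ and $\s\in\Std(\bmu)$. Let $k=n$, let $\rho=\s^{-1}(n)$ be the node carrying~$n$, and let $\bnu=\Shape(\rest\s{n-1})$, so that $\bnu$ is obtained from $\bmu$ by deleting the removable node~$\rho$. Write $\beta'\in Q_+$ for the content of $\bnu$, so $\beta=\beta'+\alpha_{i}$ where $i=\res(\rho)$ is the residue of the deleted node. Since conjugation sends $\rest\s{n-1}$ to $\rest{\s'}{n-1}$ and sends the node~$\rho$ to a removable node~$\rho'$ of $\bmu'$ with the same residue~$i$ (as $\res(\rho')=\res_{\s'}(n)=i$), the inductive hypothesis applied to $\rest\s{n-1}\in\Std(\bnu)$ gives $\deg\rest\s{n-1}+\deg\rest{\s'}{n-1}=\defect\beta'$. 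By Definition~\ref{tableau degree}, $\deg\s=\deg\rest\s{n-1}+\bigl(|\LAdd_\s(n)|-|\LRem_\s(n)|\bigr)$ and similarly $\deg\s'=\deg\rest{\s'}{n-1}+\bigl(|\LAdd_{\s'}(n)'|-|\LRem_{\s'}(n)'|\bigr)$, where I use that the addable/removable $i$-nodes of $\bnu$ lying \emph{below} the deleted node~$\rho$ correspond under conjugation exactly to those lying \emph{above} the conjugate node~$\rho'$ in $\bnu'$. Hence it remains to prove the numerical identity
$$
|\LAdd_\s(n)|+|\LAdd_{\s'}(n)'|-|\LRem_\s(n)|-|\LRem_{\s'}(n)'|
   =\defect\beta-\defect\beta'.
$$

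The key computation is to evaluate the right-hand side using the definition $\defect\beta=(\Lambda,\beta)-\tfrac12(\beta,\beta)$ from the paragraph preceding the Lemma. Since $\beta=\beta'+\alpha_i$,
$$
\defect\beta-\defect\beta'=(\Lambda,\alpha_i)-\tfrac12(\beta',\alpha_i)-\tfrac12(\alpha_i,\beta')-\tfrac12(\alpha_i,\alpha_i)
   =(\Lambda,\alpha_i)-(\beta',\alpha_i)-1,
$$
using symmetry of the form and $(\alpha_i,\alpha_i)=2$. Now I interpret each term on the right combinatorially for the multipartition $\bnu$ (of content $\beta'$): the quantity $(\Lambda,\alpha_i)-(\beta',\alpha_i)$ equals the total number of addable $i$-nodes of $\bnu$ minus the total number of removable $i$-nodes of $\bnu$. (This is the standard fact that $(\Lambda_j,\alpha_i)$ counts components whose first column addable node has residue~$i$, while $(\beta',\alpha_i)=\sum_m (\beta')_m a_{m,i}$ records the residue-$i$ "boundary" of~$\bnu$; expanding it node by node gives $\#\{\text{addable }i\text{-nodes}\}-\#\{\text{removable }i\text{-nodes}\}$ — for $e=2$ one checks the $a_{i,i\pm1}=-2$ case separately but the count still works out.) Every addable $i$-node of $\bnu$ is either above or below~$\rho$, and likewise every removable $i$-node $\ne\rho$ is either above or below~$\rho$; the node~$\rho$ itself is a removable $i$-node of~$\bmu$ but here we are counting nodes of~$\bnu$, so $\rho$ is \emph{not} a node of~$\bnu$ and instead "$-1$" on the right will account for the node $\rho$ being added. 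Splitting the addable $i$-nodes of~$\bnu$ into those below~$\rho$ (which is $\LAdd_\s(n)$) and those above~$\rho$ (which, after conjugating, is $\LAdd_{\s'}(n)'$), and similarly for removable nodes where now the removable $i$-nodes strictly below~$\rho$ give $\LRem_\s(n)$ and those strictly above give $\LRem_{\s'}(n)'$, while $\rho$ contributes the extra~$-1$, yields exactly the numerical identity above.

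The base case $n=0$ is trivial since both sides are~$0$ (the empty multipartition has defect~$0$ for $\beta=0$). The main obstacle I expect is bookkeeping: being careful that the node~$\rho$ sits on the boundary between "above" and "below", so that a removable $i$-node of $\bnu$ adjacent to~$\rho$ is not double-counted, and handling the $e=2$ (respectively small-$e$) cases of the Cartan matrix where $a_{i,i+1}=-2$ and an $i$-node can be simultaneously addable in two places; the cleanest way around this is to verify the identity $(\Lambda,\alpha_i)-(\beta',\alpha_i)=\#\{\text{addable }i\}-\#\{\text{removable }i\}$ directly by induction on $\bnu$ (adding one node at a time and checking the change in each side), which is a short self-contained lemma. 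Everything else is the routine translation between the residue/content conventions of section~3 and the degree function of Definition~\ref{tableau degree}, together with the conjugation symmetry $\res(\alpha)\leftrightarrow$ "above/below $\leftrightarrow$ below/above" already recorded in the discussion after Definition~\ref{gamma'}.
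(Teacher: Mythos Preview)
The paper does not give its own proof of this lemma; it is quoted verbatim from Brundan--Kleshchev--Wang \cite[Lemma~3.12]{BKW:GradedSpecht}, with the remark that in that paper $\deg\s'$ is called the \emph{codegree} of~$\s$. So there is no argument here to compare against.

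Your inductive argument is the standard one and is correct in substance: peel off the node containing~$n$, compute $\defect\beta-\defect\beta'=(\Lambda,\alpha_i)-(\beta',\alpha_i)-1$, and identify this with the total count of addable minus removable $i$-nodes of~$\bnu$ (minus~$1$ for the node~$\rho$ itself), which then splits into the ``below~$\rho$'' and ``above~$\rho$'' contributions giving the degree and codegree increments. This is exactly how the result is proved in \cite{BKW:GradedSpecht}.

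One point to be careful about: you pass freely between ``addable $i$-nodes of~$\bnu$ above~$\rho$'' and ``addable $\res_{\s'}(n)$-nodes of~$\bnu'$ below~$\rho'$'' via conjugation. This requires that conjugation carries the condition $\res(\alpha)=\res(\rho)$ to the condition $\res(\alpha')=\res(\rho')$. With the multicharge conventions in this paper that is not automatic for an arbitrary~$\charge$; it is cleanest (and is what \cite{BKW:GradedSpecht} effectively does) to work entirely inside~$\bnu$ and define the codegree directly as the ``above'' count, proving $\deg\s+\operatorname{codeg}\s=\defect\beta$, and then separately check (or take as a convention) that $\operatorname{codeg}\s=\deg\s'$. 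Your proof already does the first of these; just be explicit that the second identification is either a definition or a one-line check in the relevant residue convention.
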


To define the homogeneous trace form $\tau_\beta$ on $\Hbeta$ recall that,
by \cite{MM:trace} and \cite[Theorem~A2]{BK:HigherSchurWeyl}, $\H$ is a
symmetric algebra with induced trace form $\tau\map\H K$, where~$\tau$ is
the $K$-linear map determined by
$$\tau(L_1^{a_1}\dots L_n^{a_n}T_w)
              = \begin{cases}
                1,&\text{if }a_1=\dots=a_n=0, w=1 \text{ and }q\ne1,\\
                1,&\text{if }a_1=\dots=a_n=\ell-1, w=1 \text{ and }q=1,\\
                0,&\text{otherwise},
                \end{cases}
$$
where $0\le a_1,\dots,a_n<\ell$ and $w\in\Sym_n$.  In general, the
map $\tau$ is not homogeneous, however, we can use $\tau$ to define
a homogeneous trace form on $\Hbeta$ since $\Hbeta$ is a
subalgebra of $\H$.

\begin{Defn}[Homogeneous trace]
  Suppose that $\beta\in Q_+$. Then $\tau_\beta\map{\Hbeta}K$ is the
  map which on a homogeneous element $a\in\Hbeta$ is given by
  $$\tau_\beta(a) = \begin{cases}
    \tau(a),&\text{if }\deg(a)=2\defect\beta,\\
    0,&\text{otherwise.}
  \end{cases}$$
\end{Defn}

It is an easy exercise to verify that $\tau_\beta$ is a trace form on
$\Hbeta$. By definition, $\tau$ is homogeneous of degree $-2\defect\beta$.
To show that $\tau_\beta$ is induced from a non-degenerate symmetric
bilinear form on $\Hbeta$ we need the following fact.

\begin{Lemma}    \label{facts}
    Suppose that $\a,\b\in\Std(\bmu)$ and $\c,\d\in\Std(\bnu)$, for
    $\bmu,\bnu\in\Multiparts[\beta]$. Then $m_{\a\b}n_{\c\d}\ne0$ only if
    $\c'\gedom\b$. Further, there exists a non-zero scalar $c_\blam\in
    K$, which depends only on $\blam$, such that
    $$\tau(m_{\a\b}n_{\d\c})=\begin{cases}
    c_\blam,&\text{if }(\c',\d')=(\a,\b),\\
    0,&\text{if }(\c',\d')\not\gedom(\a,\b).
    \end{cases}$$
\end{Lemma}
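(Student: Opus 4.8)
The plan is to prove both statements by passing to the split semisimple algebra $\HK$ of the modular system $(\O,\K,K)$ fixed in \S4.2, expanding Murphy basis elements in the seminormal bases $\{f_{\s\t}\}$ and $\{f'_{\s\t}\}$, and reducing modulo the maximal ideal $\m$ of $\O$ where necessary. Recall that $m_{\a\b},n_{\c\d}\in\HO\subseteq\HK$ and that the symmetrising form $\tau^\O$ on $\HO$ specialises to $\tau$ on $\H$; so a vanishing statement about a product $m_{\a\b}n_{\c\d}$ may be checked in $\HK$, and a statement about $\tau$ may be checked over $K$ directly or via $\tau^\O$ and then reduced. I would use three standard facts about $\HK$: (i) the Murphy bases are triangular against the seminormal bases, $m_{\a\b}=\sum_{(\u,\v)\gedom(\a,\b)}a_{\u\v}f_{\u\v}$ with $a_{\a\b}=1$, and likewise $n_{\c\d}=\sum_{(\s,\t)\gedom(\c,\d)}b_{\s\t}f'_{\s\t}$ with $b_{\c\d}=1$ \cite[Theorem~3.7]{M:seminormal}; (ii) for standard tableaux of a common shape the space $F_{\s'}\HK F_{\t'}$ is one dimensional, so $f'_{\s\t}$ is a nonzero scalar multiple $c_{\s\t}f_{\s'\t'}$ of $f_{\s'\t'}$, a fact contained in \cite[\S3]{M:gendeg} (and, on the diagonal, in Lemma~\ref{mod II}); and (iii) $f_{\u\v}f_{\s\t}=\delta_{\v\s}\gamma_\v f_{\u\t}$ when the shapes are matched, while $\tau^\K(F_\u)=s_{\blam}^{-1}$ for a nonzero scalar $s_\blam\in\K$ depending only on $\blam$, since every trace on a matrix block is a scalar multiple of the matrix trace.

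For the first statement I would substitute the expansions from (i) into $m_{\a\b}n_{\c\d}$ and use (ii) to write it as a $\K$-combination of products $f_{\u\v}f_{\s'\t'}$; by (iii) the only surviving summands have $\v=\s'$ (so $\s=\v'$), and then $\v'\gedom\c$, hence $\c'\gedom\v$, which together with $\v\gedom\b$ gives $\c'\gedom\b$. The point needing care is that a seminormal term $f_{\u\v}$ of $m_{\a\b}$ whose shape strictly dominates $\bmu$ is not constrained tableau-by-tableau by~(i); controlling the contribution of these terms is exactly the analysis of how the $m$- and $n$-bases interact carried out in \cite[\S3]{M:gendeg}, of which the first statement is essentially a consequence. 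Either way, if $\c'\not\gedom\b$ the product vanishes in $\HK$, hence in $\HO$ and in $\H$.

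For the second statement, the vanishing when $(\c',\d')\not\gedom(\a,\b)$ follows from the first statement together with $\tau(m_{\a\b}n_{\d\c})=\tau(n_{\d\c}m_{\a\b})$ and the anti-automorphism $*$ of $\H$ determined by the cellular basis $\{m_{\s\t}\}$, under which $m_{\s\t}\mapsto m_{\t\s}$, $n_{\s\t}\mapsto n_{\t\s}$ and $\tau\circ*=\tau$: applying the first statement to $m_{\a\b}n_{\d\c}$ forces $\d'\gedom\b$ if this product is non-zero, while $(n_{\d\c}m_{\a\b})^*=m_{\b\a}n_{\c\d}$, whose non-vanishing forces $\c'\gedom\a$; and $\c'\gedom\a$ together with $\d'\gedom\b$ is precisely $(\c',\d')\gedom(\a,\b)$ (if the first inequality is strict on shapes the pair comparison holds automatically, and otherwise it is the definition). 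For the non-vanishing, take $(\c',\d')=(\a,\b)$, i.e.\ $(\c,\d)=(\a',\b')$: the matrix of $\tau$ between the $m$-basis and the $n$-basis of the block $\Hbeta$ is invertible, because $\tau$ restricts to a non-degenerate form on $\Hbeta$ (distinct blocks being $\tau$-orthogonal); relabelling the column indexed by $(\d,\c)$ by $(\c',\d')$ and refining $\gedom$ to a total order, the vanishing just proved makes this matrix triangular, and an invertible triangular matrix has all diagonal entries non-zero, so $\tau(m_{\a\b}n_{\b'\a'})\ne0$ for all $\a,\b\in\Std(\blam)$.

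It remains to see that $c_\blam:=\tau(m_{\a\b}n_{\b'\a'})$ depends only on $\blam=\Shape(\a)=\Shape(\b)$, and this is the step I expect to be the real work. Expanding $\tau^\K(m_{\a\b}n_{\b'\a'})$ in seminormal bases as above, the conditions $\v=\s'$ and (after applying $\tau^\K$, using (iii)) $\u=\t'$ combine with $\u\gedom\a$, $\a\gedom\u$ and $\v\gedom\b$, $\b\gedom\v$ to force $\u=\a$, $\v=\b$, leaving the single term $c_{\b'\a'}\,\tau^\K(f_{\a\b}f_{\b\a})=c_{\b'\a'}\gamma_\a\gamma_\b\,s_\blam^{-1}$. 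Since $s_\blam$ already depends only on $\blam$, everything reduces to showing that $c_{\b'\a'}\gamma_\a\gamma_\b$ is independent of the choice of $\a,\b$; I would deduce this from the explicit relations between the $\gamma$- and $\gamma'$-scalars and between the $m$- and $n$-seminormal bases established in \cite[\S3]{M:gendeg} (of which $c_{\s\s}=\gamma'_\s/\gamma_{\s'}$ is the diagonal shadow). Finally, since $m_{\a\b}n_{\b'\a'}\in\HO$ this common value lies in $\O$, and its reduction modulo $\m$ is the scalar $c_\blam\in K$, non-zero by the previous paragraph. The two places where the argument leans hardest on outside input---controlling the strictly-more-dominant seminormal terms in the first statement, and pinning down $c_{\b'\a'}\gamma_\a\gamma_\b$---are both handled by the matrix-units analysis of \cite{M:gendeg}.
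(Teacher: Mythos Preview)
Your approach is essentially correct and, in spirit, the same as the paper's: both arguments ultimately rest on the matrix-unit and trace computations in \cite{M:gendeg} (the paper invokes them through \cite[Lemma~5.4, Theorem~5.5]{M:tilting}, which in turn reduce to \cite[Theorem~5.9]{M:gendeg}). What you add is a more explicit seminormal-basis scaffolding and, for the second statement, two pleasant shortcuts that the paper does not spell out: deducing the vanishing when $(\c',\d')\not\gedom(\a,\b)$ from the first statement via $\tau(ab)=\tau(ba)$ and the cellular anti-involution, and deducing the non-vanishing when $(\c',\d')=(\a,\b)$ from the non-degeneracy of $\tau$ on the block together with the triangularity you have just established. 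These are genuine simplifications over a direct computation.

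Two places deserve tightening. First, for the vanishing $m_{\a\b}n_{\c\d}=0$ unless $\c'\gedom\b$, your seminormal argument only directly controls the terms of shape $\bmu$ and $\bnu$; when both seminormal shapes move strictly, the constraint you extract is shape dominance $\bnu'\gedom\bmu$, not the finer tableau dominance $\c'\gedom\b$ required by the statement. The correct outside reference here is \cite[Lemma~5.4]{M:tilting} (not \cite[\S3]{M:gendeg}), whose proof is an ideal-theoretic argument on $m_\bmu T_w n_\bnu$ rather than a seminormal expansion. Second, for ``depends only on $\blam$'', your reduction to showing that $c_{\b'\a'}\gamma_\a\gamma_\b$ is independent of $\a,\b$ is correct, but the off-diagonal scalars $c_{\s\t}$ with $f'_{\s\t}=c_{\s\t}f_{\s'\t'}$ are not obtained from the diagonal identity $c_{\s\s}=\gamma'_\s/\gamma_{\s'}$ alone; the cleanest route is exactly the one the paper cites, namely \cite[Theorem~5.9]{M:gendeg}, which evaluates $\tau^\K(m_{\a\b}n_{\b'\a'})$ directly as (a normalisation of) the Schur element of $S^\blam$, manifestly independent of $(\a,\b)$. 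With these two citations adjusted your outline goes through.
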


\begin{proof}
  In the non-degenerate case this is a restatement of \cite[Lemma 5.4 and
  Theorem 5.5]{M:tilting}, which reduces the calculation of this trace to
  \cite[Theorem~5.9]{M:gendeg} which gives the trace of a certain generator
  of the Specht module. 
  
  We sketch the proof in the degenerate case. The arguments of
  \cite{M:tilting} can be repeated word for word using the cellular basis
  framework for the degenerate cyclotomic Hecke algebras given in
  \cite[\Sect6]{AMR}. The main difference in the degenerate case is that
  the arguments from~\cite{M:gendeg} simplify. In particular, using the
  notation of \cite{M:gendeg}, in the degenerate case we can replace the
  complicated \cite[Lemma~5.8]{M:gendeg} with the simpler statement that 
  $$
  T_{w_{\lam}}u_{\lam'}^{-} = L_{a_2+1,n}(Q_1)\cdots L_{a_r+1,n}(Q_{r-1})
  T_{w_{\lam}} + \epsilon,
  $$
  where $\epsilon$ is a linear combination of some elements of the form
  $L_1^{c_1}L_2^{c_2}\cdots L_n^{c_n}T_w$ such that $0\leq c_i<\ell, w\in
  S_n$ and at least one of these $c_i$ is strictly less than $\ell-1$. This
  is easily proved using the relation $T_iL_i-L_{i+1}T_i=-1$, for $1\le
  i<n$. Once this change is made the analogue of
  \cite[Theorem~5.9]{M:gendeg} in the degenerate case can be proved
  following the arguments of \cite{M:gendeg}.
\end{proof}

Define a homogeneous bilinear form $\<\ ,\ \>_\beta$ on $\Hbeta$ of degree
$-2\defect\beta$ by 
$$\<a,b\>_\beta=\tau_\beta(ab^*).$$
By definition, $\<\ ,\ \>_\beta$ is symmetric and associative in the sense that
$\<a,bc\>_\beta=\<ac^*,b\>_\beta$ for any $a,b,c\in\Hbeta$.

\begin{Theorem}\label{pairing}
    Suppose that $\beta\in Q_+$ and that
    $\blam,\bmu\in\Multiparts[\beta]$. If $\s,\t\in\Std(\blam)$ and
    $\u,\v\in\Std(\bmu)$ then
    $$\<\psi_{\s\t},\psi'_{\u\v}\>_\beta
                  = \begin{cases}
                      u,&\text{if }(\u',\v')=(\s,\t),\\
              0,&\text{if }(\u',\v')\not\gedom(\s,\t),
                    \end{cases}$$
    for some non-zero scalar $u\in K$ which depends on $\s$ and $\t$.
\end{Theorem}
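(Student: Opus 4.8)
The plan is to compute the pairing $\<\psi_{\s\t},\psi'_{\u\v}\>_\beta=\tau_\beta(\psi_{\s\t}(\psi'_{\u\v})^*)$ by moving everything into the Murphy and dual-Murphy bases, where the trace is controlled by Lemma~\ref{facts}, and then tracking degrees to pin down exactly when the form is non-zero. First I would observe that $(\psi'_{\u\v})^*=\psi'_{\v\u}$ by the remark following Theorem~\ref{psi' basis} (the $*$ anti-isomorphism fixes the homogeneous generators and is compatible with both graded cellular structures). Next, I would expand $\psi_{\s\t}$ in the $m$-basis using Lemma~\ref{psi triangular}, writing $\psi_{\s\t}=cm_{\s\t}+\sum_{(\a,\b)\gdom(\s,\t)}r_{\a\b}m_{\a\b}$ with $c\ne0$, and expand $\psi'_{\v\u}$ in the $n$-basis using Proposition~\ref{psi' properties}(c), writing $\psi'_{\v\u}=c'n_{\v\u}+\sum_{(\c,\d)\gdom(\v,\u)}r'_{\c\d}n_{\c\d}$ with $c'\ne0$. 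Since $\psi_{\s\t}$ lies in the $\Hbeta$ component (by Lemma~\ref{psi weights}, as $\blam\in\Multiparts[\beta]$), and similarly $\psi'_{\v\u}$ does, the product lands in $\Hbeta$, and all the $m_{\a\b}$, $n_{\c\d}$ occurring have shapes in $\Multiparts[\beta]$.

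The core of the argument is then to apply Lemma~\ref{facts} term by term to $\tau\bigl(m_{\a\b}n_{\c\d}\bigr)$. That lemma says $\tau(m_{\a\b}n_{\d\c})\ne0$ only if $(\c',\d')\gedom(\a,\b)$, and equals a non-zero $c_\blam$ precisely when $(\c',\d')=(\a,\b)$; rephrasing to match our indexing, $\tau(m_{\a\b}n_{\c\d})\ne0$ forces $(\d',\c')\gedom(\a,\b)$. Combining with the dominance conditions from the two expansions ($(\a,\b)\gedom(\s,\t)$ and $(\c,\d)\gedom(\v,\u)$, hence $(\d',\c')\gedom(\v',\u')$ reversed appropriately after conjugation—note conjugation reverses dominance), I would deduce that a non-zero contribution to $\tau(\psi_{\s\t}\psi'_{\v\u})$ is possible only when $(\u',\v')\gedom(\s,\t)$; and when $(\u',\v')=(\s,\t)$ the only surviving term is the leading one $cc'\tau(m_{\s\t}n_{\v\u})=cc'\tau(m_{\s\t}n_{\u'\v'})$ — here I would need to check the index placement carefully against Lemma~\ref{facts}, using $(\u',\v')=(\s,\t)$ so that $(\c,\d)=(\v,\u)$ satisfies $(\d',\c')=(\u',\v')=(\s,\t)=(\a,\b)$ — which equals $cc'c_\blam\ne0$.

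Finally, I would feed in the degree bookkeeping to convert the raw trace $\tau$ into the homogeneous $\tau_\beta$. Since $\psi_{\s\t}$ is homogeneous of degree $\deg\s+\deg\t$ (Theorem~\ref{psi basis}) and $\psi'_{\v\u}$ is homogeneous of degree $\deg\v+\deg\u$ (Proposition~\ref{psi' properties}(b)), the product $\psi_{\s\t}(\psi'_{\u\v})^*$ is homogeneous of degree $\deg\s+\deg\t+\deg\u+\deg\v$. When $(\u',\v')=(\s,\t)$, Lemma~\ref{defect} gives $\deg\u+\deg\u'=\defect\beta$ and $\deg\v+\deg\v'=\defect\beta$, so $\deg\u+\deg\v=\deg\u+\deg\v+\deg\s+\deg\t-(\deg\u'+\deg\v')\ldots$ — more directly, $\deg\s=\deg\u'$ and $\deg\t=\deg\v'$, so the total degree is $(\deg\u'+\deg\u)+(\deg\v'+\deg\v)=2\defect\beta$, exactly the degree in which $\tau_\beta$ agrees with $\tau$. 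Hence $\<\psi_{\s\t},\psi'_{\u\v}\>_\beta=\tau_\beta(\psi_{\s\t}\psi'_{\v\u})=cc'c_\blam\ne0$ in that case, and in the case $(\u',\v')\not\gedom(\s,\t)$ the raw trace already vanishes so $\tau_\beta$ does too. The main obstacle I anticipate is getting all the conjugation-and-dominance reversals consistent: conjugation $\blam\mapsto\blam'$, $\t\mapsto\t'$ reverses the dominance order, and the indices in Lemma~\ref{facts} are themselves asymmetric ($m_{\a\b}n_{\d\c}$ with a swap), so I would need to state precisely how $(\u',\v')\gedom(\s,\t)$ as pairs of $\blam$-tableaux corresponds to the hypothesis $(\d',\c')\not\gedom(\a,\b)$ in Lemma~\ref{facts}, and verify that the unique non-vanishing term genuinely survives. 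Everything else — the bilinearity, the associativity of $\<\ ,\ \>_\beta$, and the triangular expansions — is routine given the results already established.
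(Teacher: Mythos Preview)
Your plan is correct and follows essentially the same route as the paper: expand $\psi_{\s\t}$ and $\psi'_{\v\u}$ in the $m$- and $n$-bases, apply Lemma~\ref{facts}, and use Lemma~\ref{defect} to match the degree of $\tau_\beta$. The one difference is tactical: the paper avoids the full pair-dominance chase through conjugation by first extracting only the single condition $\v'\gedom\t$ from the product-vanishing half of Lemma~\ref{facts}, and then invoking the symmetry $\tau_\beta(h)=\tau_\beta(h^*)$ (so that $\<\psi_{\s\t},\psi'_{\u\v}\>_\beta=\<\psi_{\t\s},\psi'_{\v\u}\>_\beta$) to obtain the companion condition $\u'\gedom\s$. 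Your direct approach---chaining $(\u',\v')\gedom(\d',\c')\gedom(\a,\b)\gedom(\s,\t)$ using the trace half of Lemma~\ref{facts} and the fact that conjugation reverses dominance---also works, and the bookkeeping you flag as the main obstacle does go through cleanly once you separate the two cases $\Shape(\c)\gdom\bmu$ and $\Shape(\c)=\bmu$; the paper's symmetry trick simply buys a shorter argument.
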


\begin{proof}
   By Lemma~\ref{psi triangular} and Proposition~\ref{psi' properties}(c),
   there exist non-zero scalars $c,c'\in K$ and $r_{\a\b},r'_{\d\c}\in K$
   such that
   $$ \psi_{\s\t}{\psi'_{\v\u}}
       =\Big(cm_{\s\t}+\sum_{(\a,\b)\gdom(\s,\t)}r_{\a\b}m_{\a\b}\Big)
         \Big(c'n_{\v\u}+\sum_{(\d,\c)\gdom(\v,\u)}r'_{\d\c}n_{\d\c}\Big).
     \leqno(\dag)
   $$
   Therefore, $\<\psi_{\s\t},\psi'_{\u\v}\>_\beta=0$ unless
   $\v'\gedom\t$ by Lemma~\ref{facts}. Now,
   $$\<\psi_{\s\t},\psi'_{\u\v}\>_\beta
          =\tau_\beta(\psi_{\s\t}\psi'_{\v\u}) 
          =\tau_\beta(\psi'_{\v\u}\psi_{\s\t}) 
          =\tau_\beta(\psi_{\t\s}\psi'_{\u\v}) 
          =\<\psi_{\t\s},\psi'_{\v\u}\>_\beta,$$
    where we have used the easily checked fact that
    $\tau_\beta(h)=\tau_\beta(h^*)$ for the third equality. Combined with
    $(\dag)$, this shows that $\<\psi_{\s\t},\psi'_{\u\v}\>_\beta=0$ unless
    $(\u',\v')\gedom(\s,\t)$.

   To complete the proof it remains to consider the case when
   $(\u',\v')=(\s,\t)$. By Lemma~\ref{facts}, (\dag) now reduces to the
   equation $\psi_{\s\t}{\psi'_{\t'\s'}}=cc'm_{\s\t}n'_{\t'\s'}$. By
   Lemma~\ref{psi degrees}, Proposition~\ref{psi' properties}(b) and
   Lemma~\ref{defect}, we have 
   $$\deg(\psi_{\s\t}\psi'_{\t'\s'})
           =\deg\s+\deg\t+\deg\s'+\deg\t'=2\defect\beta,$$
   Therefore, we can replace $\tau_\beta$ with $\tau$ and use Lemma~\ref{facts} 
   to obtain
   $$\tau_\beta(\psi_{\s\t}\psi'_{\t'\s'})
           =\tau(\psi_{\s\t}\psi'_{\t'\s'})
	   =cc'\tau(m_{\s\t}n_{\t'\s'})
           =cc'c_\blam.$$
   As $cc'c_\blam\ne0$ this completes the proof.
\end{proof}

Applying Lemma \ref{eqa}, we deduce that $\Hbeta$
is a graded symmetric algebra. This was conjectured by Brundan and
Kleshchev~\cite[Remark~4.7]{BK:GradedDecomp},

\begin{cor} \label{gradsy}
    Suppose that $\beta\in Q_+$. Then $\Hbeta$ is a graded symmetric
    algebra with  homogeneous trace form $\tau_\beta$ of
    degree $-2\defect\beta$.
\end{cor}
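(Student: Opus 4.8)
The plan is to deduce the Corollary from Lemma~\ref{eqa} applied with $\sigma=*$, the unique anti-automorphism of $\H$ fixing the homogeneous generators, which restricts to the subalgebra $\Hbeta$ and satisfies $\sigma^2=\mathrm{id}$. Just before Theorem~\ref{pairing} we already observed that the homogeneous bilinear form $\<a,b\>_\beta=\tau_\beta(ab^*)$ on $\Hbeta$ is symmetric and satisfies $\<ab,c\>_\beta=\<a,cb^*\>_\beta$. So, by Lemma~\ref{eqa}, the only thing left to establish is that $\<\ ,\ \>_\beta$ is non-degenerate; granting this, $\Hbeta$ is symmetric, and since $\tau_\beta$ is homogeneous of degree $-2\defect\beta$ by construction and equals the associated trace form $a\mapsto\<a,1\>_\beta$, it is a homogeneous non-degenerate trace form, so $\Hbeta$ is a graded symmetric algebra of exactly the asserted form.

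First I would fix the two bases of $\Hbeta$ coming from Theorem~\ref{psi basis} and Theorem~\ref{psi' basis}, in their block refinements: $\{\psi_{\s\t}\}$, indexed by pairs $(\s,\t)$ with $\s,\t\in\Std(\blam)$ and $\blam\in\Multiparts[\beta]$, and $\{\psi'_{\u\v}\}$, indexed by pairs $(\u,\v)$ with $\u,\v\in\Std(\bmu)$ and $\bmu'\in\Multiparts[\beta]$. Theorem~\ref{pairing} computes the Gram matrix of $\<\ ,\ \>_\beta$ between these bases: $\<\psi_{\s\t},\psi'_{\u\v}\>_\beta$ is a non-zero scalar when $(\u',\v')=(\s,\t)$ and vanishes unless $(\u',\v')\gedom(\s,\t)$. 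Since conjugation reverses the dominance order on multipartitions and on standard tableaux, re-indexing the $\psi'$-basis by the conjugate pairs $(\u',\v')$ converts this into a genuine triangularity statement: choosing a linear order on pairs of standard tableaux that refines dominance and ordering both bases accordingly, the Gram matrix becomes triangular with non-zero diagonal entries. Hence it is invertible, so $\<\ ,\ \>_\beta$ is non-degenerate, and the argument is complete.

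The hard work has all been done in Theorem~\ref{pairing}, whose proof rests on Lemma~\ref{facts} together with the triangularity results Lemma~\ref{psi triangular} and Proposition~\ref{psi' properties}(c); the remaining points needed here --- that $*$ restricts to an order-two anti-automorphism of $\Hbeta$, that $\tau_\beta$ is a trace form homogeneous of degree $-2\defect\beta$, and that conjugation of tableaux and multipartitions is order-reversing for dominance --- are all routine. The only genuinely load-bearing fact is that the $\psi$-basis and the $\psi'$-basis of $\Hbeta$ are dual to one another modulo more dominant terms, which is precisely what Theorem~\ref{pairing} provides; so I anticipate no real obstacle in writing this out.
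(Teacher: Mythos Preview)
Your proposal is correct and follows exactly the route the paper takes: the paper's proof is the single clause ``Applying Lemma~\ref{eqa}, we deduce that $\Hbeta$ is a graded symmetric algebra,'' with the non-degeneracy of $\<\ ,\ \>_\beta$ implicitly supplied by the triangularity in Theorem~\ref{pairing}, and you have simply written out those details. Your observation that conjugation gives a bijection between the index sets of the $\psi$- and $\psi'$-bases of $\Hbeta$, after which the Gram matrix is triangular with non-zero diagonal, is precisely the intended argument.
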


We remark that the two graded bases $\{\psi_{\s\t}\}$ and
$\{\psi'_{\u\v}\}$ are almost certainly not dual with respect to $\<\
,\ \>_\beta$. We call $\{\psi'_{\u\v}\}$ the \textit{dual} graded
basis because  Theorem~\ref{pairing} shows that these two bases are
dual modulo more dominant terms. As far as we are aware, if $\ell>2$
then there are no known pairs of dual bases for~$\H$, even
in the ungraded case.

\subsection{Dual graded Specht modules}
Using the graded cellular basis $\{\psi_{\s\t}\}$ we defined the graded
Specht module~$S^\blam$. Similarly, if $\blam\in\Multiparts$ then the
\textbf{dual graded Specht module} $S_\blam$ is the graded cell module
associated with $\blam$, via Definition~\ref{graded cells}, using the dual
graded basis $\{\psi_{\s\t}'\}$.  Thus, $S_\blam$ has a homogeneous basis
$\set{\psi'_\s|\s\in\Std(\blam)}$, with the action of $\H$ being induced by
its action on the dual graded basis.

By \cite[Cor.~5.7]{M:tilting}, it was shown that $\underline{S}^\blam$ and
$\underline{S}_\blamp$ are dual to each other with respect to the
contragredient duality induced on $\H\Mod$ by the cellular algebra
anti-isomorphism defined by the standard cellular basis $\{m_{\s\t}\}$. We
generalize this result to the graded setting.

Let $\Hpblam
   =\<\psi_{\u\v}\mid\u,\v\in\Std(\bmu)\text{ where }\bmu\gdom\blam\>_K$ be the graded two-sided ideal of $\H$ spanned by the
elements of the cellular basis $\{\psi'_{\u\v}\}$ of more dominant shape.
Then $\Hpblam$ is also spanned by the elements $\{n_{\u\v}\}$, where
$\u,\v\in\Std(\bmu)$ and $\bmu\gdom\blam$ by 
Proposition~\ref{psi' properties}(c).

\begin{Prop}
    Suppose that $\blam\in\Multiparts[\beta]$. Then
    $S^\blam\cong S_\blamp^\circledast\<\defect\beta\>$
    as graded $\Hbeta$-modules.
\end{Prop}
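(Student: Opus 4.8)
The plan is to construct an explicit, homogeneous, $\Hbeta$-balanced, perfect bilinear pairing $S^\blam\times S_{\blamp}\to K$ and read the isomorphism off from it. Since $\blam\in\Multiparts[\beta]$, both $S^\blam$ and $S_{\blamp}$ are $\Hbeta$-modules (by the two corollaries describing the bases of $\Hbeta$), so the homogeneous trace form $\tau_\beta$ of Corollary~\ref{gradsy}, which has degree $-2\defect\beta$ and satisfies $\tau_\beta(h)=\tau_\beta(h^*)$, is available. By~(\ref{two cells}) applied to the cellular basis $\{\psi_{\s\t}\}$ (Theorem~\ref{psi basis}), the module $S^\blam$ is, up to the shift $\<\deg\tlam\>$, the ``$\tlam$-row'' of $\H^{\gedom\blam}/\Hblam$: the span of the images of $\psi_{\tlam\t}=\eblam y_\blam\psi_{d(\t)}$, $\t\in\Std(\blam)$. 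The key choice is how to realise $S_{\blamp}$: using the dual cellular basis $\{\psi'_{\s\t}\}$ (Theorem~\ref{psi' basis}) and~(\ref{two cells}), I realise $S_{\blamp}$, up to the shift $\<\deg(\tlam)'\>$, as the \emph{$(\tlam)'$-row} of $\H^{\prime\gedom\blamp}/\Hpblam[\blamp]$, i.e. the span of the images of $\psi'_{(\tlam)'\t}$, $\t\in\Std(\blamp)$, where $(\tlam)'$ is the conjugate of $\tlam$ --- the \emph{least} dominant $\blamp$-tableau --- and not the dominant $\blamp$-tableau. Recall that $\t\mapsto\t'$ is an order-reversing bijection $\Std(\blam)\to\Std(\blamp)$ and that $\deg\t+\deg\t'=\defect\beta$ for all $\t\in\Std(\blam)$ by Lemma~\ref{defect}; in particular $\deg\tlam+\deg(\tlam)'=\defect\beta$.

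For $\s\in\Std(\blam)$ and $\t\in\Std(\blamp)$ set
$$\langle\psi_\s,\psi'_\t\rangle_\blam:=\tau_\beta\big(\psi_{\tlam\s}\,(\psi'_{(\tlam)'\t})^*\big)=\tau_\beta\big(\psi_{\tlam\s}\,\psi'_{\t(\tlam)'}\big),$$
which is just the restriction of the form $\langle\,,\,\rangle_\beta$ of Theorem~\ref{pairing} to the two rows above. First I would check that this depends only on the images of $\psi_{\tlam\s}$ and $\psi'_{(\tlam)'\t}$ in the respective cell modules: replacing $\psi_{\tlam\s}$ by an element of $\Hblam$, or $\psi'_{(\tlam)'\t}$ by an element of $\Hpblam[\blamp]$, changes the value by $\tau$ of a product $m_{\a\b}n_{\c\d}$ with $\Shape(\a)\gdom\blam$ and $\Shape(\c)\gedom\blamp$ (or $\Shape(\a)\gedom\blam$ and $\Shape(\c)\gdom\blamp$), using Theorem~\ref{positive expansion}, Corollary~\ref{ylam}, (\ref{dualep}) and Proposition~\ref{psi' properties}(c) to expand everything in the $m$- and $n$-bases; Lemma~\ref{facts} forces $\Shape(\c)'\gedom\Shape(\a)$, which is incompatible with these constraints because conjugation reverses dominance, so the term vanishes. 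The form is $\Hbeta$-balanced, $\langle\psi_\s\cdot h,\psi'_\t\rangle_\blam=\langle\psi_\s,\psi'_\t\cdot h^*\rangle_\blam$, directly from $\tau_\beta(xhy^*)=\tau_\beta\big(x(yh^*)^*\big)$; and it is homogeneous: by the degree of $\tau_\beta$ together with Lemma~\ref{psi degrees} and Proposition~\ref{psi' properties}(b), $\langle\psi_\s,\psi'_\t\rangle_\blam\ne0$ forces $(\deg\tlam+\deg\s)+(\deg(\tlam)'+\deg\t)=2\defect\beta$, i.e. $\deg\s+\deg\t=\defect\beta$ after cancelling $\deg\tlam+\deg(\tlam)'=\defect\beta$.

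The remaining point, non-degeneracy, is where Theorem~\ref{pairing} is used and where the choice of the $(\tlam)'$-row is essential. Since $\langle\psi_\s,\psi'_\t\rangle_\blam=\langle\psi_{\tlam\s},\psi'_{(\tlam)'\t}\rangle_\beta$, Theorem~\ref{pairing} with $(\s_1,\t_1)=(\tlam,\s)$ and $(\u,\v)=((\tlam)',\t)$ --- noting $((\tlam)')'=\tlam$ --- yields a non-zero scalar when $\t=\s'$ and yields $0$ unless $\t'\gedom\s$. Hence, ordering $\Std(\blam)$ by dominance, the Gram matrix $\big(\langle\psi_\s,\psi'_{\t'}\rangle_\blam\big)_{\s,\t\in\Std(\blam)}$ is triangular with non-zero diagonal, so it is invertible and the pairing is perfect. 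A perfect, homogeneous (degree $\defect\beta$), $\Hbeta$-balanced pairing $S^\blam\times S_{\blamp}\to K$ is exactly the data of a graded $\Hbeta$-module isomorphism $S^\blam\xrightarrow{\ \sim\ }S_{\blamp}^\circledast\<\defect\beta\>$, $v\mapsto\langle v,-\rangle_\blam$, the shift $\<\defect\beta\>$ being the one forced by the degree of the pairing (and consistent with the graded dimensions from Corollary~\ref{graded dimension}). The main obstacle is precisely the bookkeeping of the second and third paragraphs: one must realise $S_{\blamp}$ along the conjugate row $(\tlam)'$ rather than the dominant $\blamp$-row, as this is what makes Theorem~\ref{pairing} output a unitriangular Gram matrix instead of an identically zero one, and one must then check that the degree shift that emerges equals $\defect\beta$, which relies on Lemma~\ref{defect}.
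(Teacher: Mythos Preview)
Your argument is correct and follows essentially the same route as the paper's own proof: realise $S^\blam$ and $S_{\blamp}$ as the $\tlam$-row and the $(\tlam)'$-row of the respective cellular quotients, restrict the bilinear form $\<\ ,\ \>_\beta$ to these rows, use Theorem~\ref{pairing} to obtain a triangular (hence invertible) Gram matrix, and read off the degree shift via Lemma~\ref{defect}. The only cosmetic differences are that the paper packages the well-definedness step as ``$\HH^{\gdom\blam}_\beta$ and $\HH^{\prime\gdom\blamp}_\beta$ are orthogonal'' (a direct consequence of Theorem~\ref{pairing}) rather than re-deriving it from Lemma~\ref{facts}, and it tracks the grading through the shifted modules $S^\blam\<\deg\tlam\>$ and $S_{\blamp}\<\deg(\tlam)'\>$ before cancelling, whereas you cancel the $\deg\tlam+\deg(\tlam)'$ term immediately.
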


\begin{proof}By Theorem~\ref{pairing} the graded two-sided ideals 
  $\HH^{\gdom\blam}_\beta$ and $\HH^{\prime\gdom\blamp}_\beta$
  of~$\Hbeta$ are orthogonal with respect to the trace form 
  $\<\ ,\ \>_\beta$. By construction 
  $S^\blam\<\deg\tlam\>\cong(\psi_{\tlam\tlam}+\Hblam)\H$ and
  $S_\blamp\<\deg\tllamp\>\cong(\psi'_{\tllamp\tlamp}+\Hpblam[\blam])\H$,
  where $\tllamp=(\tlam)'$. Therefore, $\<\ ,\ \>_\beta$ induces a
  homogeneous associative bilinear form 
  $$\<\ ,\ \>_{\beta,\blam}
      \map{S^\blam\<\deg\tlam\>\times S_\blamp\<\deg\tllamp\>}K;
       \<a+\Hblam,b+\Hpblam[\blamp]\>_{\beta,\blam}=\<a,b\>_\beta.$$
In particular, if $\s,\t'\in\Std(\blam)$ then, by Theorem~\ref{pairing},
$$\<\psi_{\tlam\s}+\Hblam,\psi'_{\tllamp\t}+\Hpblam[\blamp]\>_{\beta,\blam}
           =\begin{cases} u, &\text{if } \s=\t',\\
                          0,&\text{unless}\,\,\t'\gedom\s,
            \end{cases}$$
for some $0\ne u\in K$. Therefore,  $\<\ ,\ \>_{\beta,\blam}$ is a
homogeneous non--degenerate pairing of degree $-2\defect\beta$ and, since taking duals reverses the grading,
$$S^\blam\cong S_\blamp^\circledast\<2\defect\beta-\deg\tllamp-\deg\tlam\>
             =S_\blamp^\circledast\<\defect\beta\>,$$
since $\defect\beta=\deg\tlam+\deg\tllamp$
by Lemma~\ref{defect}.
\end{proof}

During the proof of Theorem~\ref{pairing} we showed that
$m_{\s\t}n_{\t'\s'}=c\psi_{\s\t}\psi'_{\t'\s'}$, for some non-zero
constant $c\in K$. Hence, we have the following interesting fact.

\begin{cor}[of Theorem~\ref{pairing}]
  Suppose that $\blam\in\Multiparts[\beta]$ and that
  $\s,\t\in\Std(\blam)$. Then
  $m_{\s\t}n_{\t'\s'}$ 
  is a homogeneous element of $\H$ of degree $2\defect\beta$.
\end{cor}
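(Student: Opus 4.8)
The plan is to reuse the identity that was extracted during the proof of Theorem~\ref{pairing}, namely that
$m_{\s\t}n_{\t'\s'}=c\,\psi_{\s\t}\psi'_{\t'\s'}$
for some non-zero scalar $c\in K$. Granting this, the corollary becomes a degree bookkeeping exercise: it suffices to check that the right-hand side is homogeneous and to compute its degree.

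First I would recall that $\psi_{\s\t}$ is a homogeneous element of $\H$ of degree $\deg\s+\deg\t$ by Theorem~\ref{psi basis} (equivalently Lemma~\ref{psi degrees}), and that $\psi'_{\t'\s'}$ is a homogeneous element of degree $\deg\t'+\deg\s'$ by Proposition~\ref{psi' properties}(b). Since $\H$ is a graded algebra, the product $\psi_{\s\t}\psi'_{\t'\s'}$ is homogeneous of degree
$(\deg\s+\deg\t)+(\deg\t'+\deg\s')=(\deg\s+\deg\s')+(\deg\t+\deg\t')$.
Now $\s,\t\in\Std(\blam)$ with $\blam\in\Multiparts[\beta]$, so Lemma~\ref{defect} gives $\deg\s+\deg\s'=\defect\beta$ and $\deg\t+\deg\t'=\defect\beta$, whence the degree equals $2\defect\beta$. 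Multiplying by the non-zero scalar $c$ does not change homogeneity or degree, so $m_{\s\t}n_{\t'\s'}$ is homogeneous of degree $2\defect\beta$, as claimed.

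The only point that requires care, and the single place where one must look back into the proof of Theorem~\ref{pairing} rather than merely quote statements, is the factorisation $m_{\s\t}n_{\t'\s'}=c\,\psi_{\s\t}\psi'_{\t'\s'}$. There the expansion $(\dag)$ of $\psi_{\s\t}\psi'_{\t'\s'}$ in terms of the bases $\{m_{\a\b}\}$ and $\{n_{\d\c}\}$, combined with the vanishing in Lemma~\ref{facts} (the term $m_{\a\b}n_{\d\c}$ survives only when the conjugate tableaux dominate appropriately), collapses the double sum to the single term $cc'\,m_{\s\t}n_{\t'\s'}$, and one reads off $c$ from the non-zero leading coefficients in Lemma~\ref{psi triangular} and Proposition~\ref{psi' properties}(c). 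I would simply cite this line of the Theorem~\ref{pairing} argument. This is really the ``hard part'', but since it has already been done inside the proof of Theorem~\ref{pairing}, the corollary itself is immediate once that identity is invoked.
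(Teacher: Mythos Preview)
Your proposal is correct and follows exactly the paper's own approach: the paper simply recalls that the proof of Theorem~\ref{pairing} established $m_{\s\t}n_{\t'\s'}=c\,\psi_{\s\t}\psi'_{\t'\s'}$ for some non-zero $c\in K$, and the degree computation $\deg\s+\deg\t+\deg\s'+\deg\t'=2\defect\beta$ via Lemma~\ref{defect} was in fact already carried out there.
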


Let $\blam\in\Multiparts[\beta]$. Recall that by definition, 
$e_{\blam}=e(\bi^{\tlam})$ and $e'_{\blam'}=e(\bi^{\t_{\blam}})$,
where $\tllam=(\t^{\blamp})'$. Let $w_\blam=d(\tllam)$ and define
$z_\blam = m_\blam T_{w_\blam}n_\blamp$.

\begin{cor}\label{zlam} Suppose that $\blam\in\Multiparts[\beta]$. Then
$$
z_{\blam}=e_{\blam}z_{\blam}e'_{\blam'}
         =ce_{\blam}y_{\blam}\psi_{w_{\blam}}y_{\blam'}
	 =cy_{\blam}\psi_{w_{\blam}}y_{\blam'}e'_{\blam'},
$$
for some $0\neq c\in K$. In particular, $z_{\blam}$ is a homogeneous
element of $\H$ of degree $\defect\beta+\deg(\t^{\blam})+\deg(\t^{\blam'})$. 
\end{cor}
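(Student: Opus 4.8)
The plan is to reduce $z_\blam$ step by step inside the two standard cellular bases $\{m_{\s\t}\}$ and $\{n_{\s\t}\}$, discarding each ``more dominant'' error term by showing that it is annihilated on one side or the other. First I would rewrite $z_\blam$ combinatorially: since $m_{\s\t}=T_{d(\s)^{-1}}m_\blam T_{d(\t)}$ and $d(\tlam)=1$, we have $m_\blam T_{w_\blam}=m_{\tlam\tllam}$, so $z_\blam=m_{\tlam\tllam}\,n_{\tlamp\tlamp}$, where by definition $(\tllam)'=\tlamp$ and, as noted already for the proposition above, $\tllam=(\t^{\blamp})'=\t_\blam$. The annihilation facts all come from Lemma~\ref{facts}, which says $m_{\u\v}n_{\c\d}\ne0$ only if $\c'\gedom\v$: a one-line dominance check then yields $\Hblam\,n_{\tlamp\tlamp}=0$, $m_{\tlam\tllam}\,\Hpblam[\blamp]=\Hblam\,\Hpblam[\blamp]=0$, and $m_{\tlam\v}n_{\tlamp\tlamp}=0$ for every $\v\in\Std(\blam)$ with $\v\gdom\tllam$ (for the last two one uses $(\tlamp)'=\tllam$ and $\v\rhd\tllam$).

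Now for the reduction. By Corollary~\ref{ylam}, $\eblam y_\blam=c_\blam m_\blam+h$ with $h\in\Hblam$, so $\eblam y_\blam T_{w_\blam}=c_\blam m_{\tlam\tllam}+hT_{w_\blam}$; expanding $m_{\tlam\tllam}$ in the $\psi$-basis via Lemma~\ref{psi triangular} gives $m_{\tlam\tllam}=\tfrac1c\psi_{\tlam\tllam}+\sum_{\v\gdom\tllam}r_\v m_{\tlam\v}+(\Hblam)$. Multiplying on the right by $n_{\tlamp\tlamp}$ and using $\Hblam\,n_{\tlamp\tlamp}=0$ and $m_{\tlam\v}n_{\tlamp\tlamp}=0$, every error term dies, so $z_\blam=\tfrac1{c_\blam}\eblam y_\blam T_{w_\blam}n_{\tlamp\tlamp}=\tfrac1c\psi_{\tlam\tllam}n_{\tlamp\tlamp}$. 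Next, by (\ref{dualep}) applied with $\blam'$ in place of $\blam$ (the idempotent appearing there is then exactly $e'_{\blam'}=e(\bi^{\t_\blam})$, since $\res(\tllam)=\bi^{\t_\blam}$), $e'_{\blam'}y_{\blam'}=c'n_{\tlamp\tlamp}+h'$ with $h'\in\Hpblam[\blamp]$; because $\psi_{\tlam\tllam}=c\,m_{\tlam\tllam}+(\Hblam)$ and $m_{\tlam\tllam}\,\Hpblam[\blamp]=\Hblam\,\Hpblam[\blamp]=0$, we get $\psi_{\tlam\tllam}h'=0$, hence $z_\blam=\tfrac1{cc'}\psi_{\tlam\tllam}e'_{\blam'}y_{\blam'}$. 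Finally $\psi_{\tlam\tllam}e'_{\blam'}=\psi_{\tlam\tllam}$ by Lemma~\ref{psi weights} and $\psi_{\tlam\tllam}=\eblam y_\blam\psi_{w_\blam}$ by Definition~\ref{psi defn}, so $z_\blam=c\,\eblam y_\blam\psi_{w_\blam}y_{\blam'}$ with $c=(cc')^{-1}\ne0$. The two remaining expressions for $z_\blam$ follow by pushing idempotents around — $\eblam y_\blam=y_\blam\eblam$, $\eblam\psi_{w_\blam}=\psi_{w_\blam}e'_{\blam'}$ (iterate $\psi_re(\bi)=e(s_r\cdot\bi)\psi_r$), $e'_{\blam'}y_{\blam'}=y_{\blam'}e'_{\blam'}$ — and $z_\blam=\eblam z_\blam e'_{\blam'}$ is then immediate from $\eblam^2=\eblam$ and $(e'_{\blam'})^2=e'_{\blam'}$.

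For the degree, note that $z_\blam=c\,\psi_{\tlam\tllam}y_{\blam'}$ is now manifestly homogeneous, with $\deg z_\blam=\deg\psi_{\tlam\tllam}+\deg y_{\blam'}=(\deg\tlam+\deg\tllam)+2\deg\tlamp$ by Lemma~\ref{psi degrees} and Corollary~\ref{ylam}. Since $\tllam\in\Std(\blam)$, $\blam\in\Multiparts[\beta]$ and $(\tllam)'=\tlamp$, Lemma~\ref{defect} gives $\deg\tllam+\deg\tlamp=\defect\beta$, and therefore $\deg z_\blam=\defect\beta+\deg(\t^\blam)+\deg(\t^{\blam'})$, as asserted.

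The main obstacle is precisely making all of this work as \emph{exact} identities rather than as congruences modulo more dominant terms: one has to verify, via the product formula of Lemma~\ref{facts}, that the ``mismatched'' products of $m$- and $n$-basis elements vanish outright in $\H$, i.e. that $\Hblam$ annihilates $n_{\tlamp\tlamp}$, that $m_{\tlam\tllam}$ annihilates $\Hpblam[\blamp]$, and that $\Hblam\,\Hpblam[\blamp]=0$. Keeping the conjugation bookkeeping straight — in particular $\tllam=\t_\blam$, $(\tllam)'=\tlamp$, and the identification of the residue sequence labelling $e'_{\blam'}$ after substituting $\blam'$ for $\blam$ in (\ref{dualep}) — is what ultimately forces the degree to come out to $\defect\beta+\deg(\t^\blam)+\deg(\t^{\blam'})$.
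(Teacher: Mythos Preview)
Your proof is correct and follows essentially the same approach as the paper: both arguments expand the $m$-side via Corollary~\ref{ylam}/Lemma~\ref{psi triangular} and the $n$-side via~(\ref{dualep}), then invoke Lemma~\ref{facts} to annihilate every cross-term because the dominance constraints are incompatible (the paper uses $\t\gedom\tllam$ for $\t\in\Std(\blam)$ exactly as you do). The paper's version is simply terser---it writes the expansion of $e_\blam y_\blam\psi_{w_\blam}$ modulo $\Hblam$, the expansion of $e'_{\blam'}y_{\blam'}$ modulo $\HH_n^{\prime\gdom\blam'}$, notes the vanishing via Lemma~\ref{facts}, and says ``multiplying these two equations together gives the Corollary''---whereas you have spelled out each vanishing step and the degree computation via Lemma~\ref{defect} explicitly.
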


\begin{proof} By Corollary \ref{ylam} and (\ref{dualep}) there exist 
    $0\ne c\in K$ such that
$$
e_{\blam}y_{\blam}\psi_{w_{\blam}}\equiv ce_{\blam}m_{\tlam\tllam}
       +\sum_{\substack{\t\in\Std(\blam)\\\ell(d(\t))<\ell(w_{\blam})}}
                 a_{\t}e_{\blam}m_{\tlam\t}\pmod{\Hblam},
$$
for some $a_{\t}\in K$. Further, $e'_{\blam'}y_{\blam'}\equiv
c'e'_{\blam'}n_{\blam'}\pmod{{\HH'_n}^{\rhd\blam'}}$, for some non-zero
$c'\in K$,  by Proposition~\ref{copositive expansion}. By definition $\t\gedom\tllam$ for all $\t\in\Std(\blam)$, so if $\t\ne\tllam$ then
$m_{\tlam\t}n_{\blam'}=0$ by Lemma~\ref{facts} since
$(\t^\blamp)'=\tllam\not\gedom\t$.  Hence, multiplying these two equations
together gives the Corollary.
\end{proof}

There may well be a more direct proof of the last two results because
these elements are already well-known in the representation theory
of~$\H$. Note that
$$m_{\s\t}n_{\t'\s'}
          =T_{d(\s)^{-1}}m_\blam T_{d(\t)}T_{d(\t')^{-1}} n_\blamp T_{d(\s')}
          =T_{d(\s)^{-1}} z_\blam T_{d(\s')},$$
because $d(\t)d(\t')^{-1}=w_\blam$, with the lengths adding; see, for
example, \cite[Lemma~5.1]{M:tilting}. It follows from
\cite[Prop.~4.4]{M:gendeg} that 
$(T_{d(\s)^{-1}} z_\blam T_{d(\s)})^2=rT_{d(\s)^{-1}} z_\blam T_{d(\s)}$, 
for some $r\in K$, such that $r\ne0$ if and only if the Specht module
$S^\blam$ is projective. If $r=0$ then these elements are
nilpotent and they belong the radical of~$\H$.  We invite the reader
to check that the map
$$S_\blamp\<\defect\beta+\deg\tlam\>
       \overset{\sim}{\longrightarrow}z_\blam\H; 
               \psi'_\t\mapsto z_\blam\psi'_{d(\t)},$$ 
for $\t\in\Std(\blamp)$, is a isomorphism of graded $\H$-modules.
Similarly, there is a graded isomorphism
$S^\blam\<\defect\beta+\deg\t^{\blam'}\>
\overset{\sim}{\longrightarrow}n_{\blam'}T_{w_{\blam'}}m_{\blam}\H$. 
By Corollary~\ref{zlam}, $z_\blam^*=ce_\blamp\psi_{w_\blamp}\eblam$ is
homogeneous of degree $\defect\beta+\deg(\t^{\blam})+\deg(\t^{\blam'})$,
for some non-zero $c\in K$. Arguing as in Corollary~\ref{zlam} shows that
$z_\blam^*=n_\blamp T_{w_\blamp} m_\blam$. Consequently, on the elements
$z_\blam$, for $\blam\in\Multiparts$, the graded cellular 
anti-automorphism~$*$ of $\H$ coincides with the ungraded cellular algebra
anti-isomorphism which is induced by the standard basis  $\{m_{\u\v}\}$
of~$\H$.

\appendix
\def\theequation{\Alph{section}\arabic{equation}}
\section{One dimensional homogeneous representations}
Using Theorem~\ref{psi basis} it is straightforward to give an
explicit homogeneous basis for the one dimensional two-sided ideals
of~$\H$. In this appendix, which may be of independent interest, we
give a proof of this result without appealing to 
Theorem~\ref{psi basis}. We consider only the non-degenerate case here
and leave the easy modifications required for the degenerate case to
the reader.

We remark that it is possible to prove an analogue of
Theorem~\ref{psi basis} using the ideas in this appendix. However,
using these techniques we were only able to show that the basis
$\{\psi_{\s\t}\}$ was a graded cellular basis with respect to the
\textit{lexicographic} order on~$\Multiparts$.

\begin{Defn}
   Suppose that $1\le s\le e$ and $(\Lambda,\alpha_s)>0$ and set
   \begin{align*}
     u_{n,s}=\prod_{i\in I}
   \((L_1-q^i)\dots(L_n-q^i)\)^{(\Lambda,\alpha_i)-\delta_{is}},
          \\
     x_{(n)}=\sum_{w\in\Sym_n}T_w
          \quad\text{and}\quad
          x'_{(n)}=\sum_{w\in\Sym_n}(-q)^{-\ell(w)}T_w.
   \end{align*}
   Finally, define $z_n^{+,s}=u_{n,s}x_{(n)}$ and
   $z_n^{-,s}=u_{n,s}x'_{(n)}$, for
   $1\le s\le e$.
\end{Defn}

The following result is well-known and easily verified.

\begin{Lemma}\label{one dimensional}
  Suppose that $1\le s\le e$ and that $\eps\in\{+,-\}$. Then
  \begin{align*}
     T_w\zns&=\zns T_w
           =(-1)^{\frac12(1-\eps1)\ell(w)}q^{\frac12(1+\eps1)\ell(w)} \zns,\\
    L_k\zns&=\zns L_k=q^{s+\eps(k-1)}\zns,
  \end{align*}
   for all $w\in\Sym_n$ and $1\le k\le n$.
   In particular, $Kz_n^{\pm,s}$ is a one dimensional two-sided ideal
   of $\H$. Moreover, every one dimensional two-sided ideal is isomorphic to
   $K\zns$, for some~$s$, and 
   $$K\zns=\Biggl\{h\in\H\Biggm|\begin{matrix}
          T_0h=q^{s}h=hT_0\text{ and}\\
          T_ih=hT_i=(-1)^{\frac12(1-\eps1)}q^{\frac12(1+\eps1)}h
            \text{ for }1\le i<n\end{matrix}\Biggr\}.$$
\end{Lemma}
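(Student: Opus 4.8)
The plan is to prove the two displayed eigenvalue identities first and then to read off from them both the concrete description of $K\zns$ and the classification of the one-dimensional two-sided ideals. Set $\lambda_\eps=(-1)^{\frac12(1-\eps1)}q^{\frac12(1+\eps1)}$, so that $\lambda_+=q$ and $\lambda_-=-1$, and write $x^{(+)}_{(n)}=x_{(n)}$, $x^{(-)}_{(n)}=x'_{(n)}$, so $\zns=u_{n,s}x^{(\eps)}_{(n)}$; the generator called $T_0$ in the statement is $L_1$. As we are in the non-degenerate case, $q\ne1$, $\delta_{q1}=0$, so $L_{k+1}=q^{-1}T_kL_kT_k$ for $1\le k<n$, and $\bQ_\Lambda=(q^{\kappa_1},\dots,q^{\kappa_\ell})$ with $\prod_{r=1}^\ell(L_1-q^{\kappa_r})=\prod_{i\in I}(L_1-q^i)^{(\Lambda,\alpha_i)}=0$ in $\H$. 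First I would observe that $\zns$ is fixed by the standard anti-automorphism (the one with $T_i^*=T_i$ and $L_k^*=L_k$): the elements $x_{(n)}$, $x'_{(n)}$ and the polynomial $u_{n,s}$ are each $*$-fixed, and $u_{n,s}$ is central, being a product of symmetric polynomials in $L_1,\dots,L_n$, so $\zns^*=x^{(\eps)}_{(n)}u_{n,s}=u_{n,s}x^{(\eps)}_{(n)}=\zns$. Consequently every right-hand identity $\zns g=c\zns$ follows by applying $*$ from the matching left-hand identity $g\zns=c\zns$, so it is enough to compute left multiplications by the generators.

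For the $T_w$ the input is the classical computation for the finite Hecke algebra: a short induction on $\ell(w)$, pairing $\{w,s_iw\}$ and using $T_i^2=(q-1)T_i+q$, gives $T_ix^{(\eps)}_{(n)}=\lambda_\eps x^{(\eps)}_{(n)}$ and hence $T_wx^{(\eps)}_{(n)}=\lambda_\eps^{\ell(w)}x^{(\eps)}_{(n)}$; multiplying by the central element $u_{n,s}$ gives $T_w\zns=\lambda_\eps^{\ell(w)}\zns$, which with $*$ is the first displayed identity. For $L_1$, since $(\Lambda,\alpha_s)\ge1$ we may put the missing factor back and compute, in $\H$,
$$(L_1-q^s)\,u_{n,s}=\Bigl(\prod_{i\in I}(L_1-q^i)^{(\Lambda,\alpha_i)}\Bigr)\cdot\prod_{i\in I}\Bigl(\prod_{k=2}^n(L_k-q^i)\Bigr)^{(\Lambda,\alpha_i)-\delta_{is}}=0,$$
whence $L_1\zns=q^s\zns$. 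For $k\ge2$ I would induct, using $L_k=q^{-1}T_{k-1}L_{k-1}T_{k-1}$, the already-proved $T_{k-1}\zns=\lambda_\eps\zns$, and the inductive value $L_{k-1}\zns=q^{s+\eps(k-2)}\zns$:
$$L_k\zns=q^{-1}T_{k-1}L_{k-1}T_{k-1}\zns=q^{-1}\lambda_\eps^2\,q^{s+\eps(k-2)}\zns=q^{s+\eps(k-1)}\zns,$$
using $q^{-1}\lambda_\eps^2=q^{\eps1}$; applying $*$ gives the right-hand versions. This finishes the second displayed identity.

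Granting these, every generator of $\H$ multiplies $\zns$ by a scalar on each side, so $\H\zns=\zns\H=K\zns$; and $\zns\ne0$ because, for a suitable choice of multicharge (namely $\kappa_1\equiv s\pmod e$ when $\eps=+$, and $\kappa_\ell\equiv s\pmod e$ when $\eps=-$), the element $\zns$ coincides with the cellular basis element $m_\blam$, resp.\ $n_\blam$, of Theorem~\ref{standard basis} for $\blam$ the multipartition whose single non-empty component is the one-row partition $(n)$. Hence $K\zns$ is a one-dimensional two-sided ideal and the scalars above define an algebra homomorphism $\chi_{\eps,s}\map\H K$, $T_i\mapsto\lambda_\eps$, $L_k\mapsto q^{s+\eps(k-1)}$. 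Conversely I would classify all characters $\chi$ of $\H$: $(T_i+1)(T_i-q)=0$ forces $\chi(T_i)\in\{-1,q\}$; for $n\ge3$ the braid relations $T_iT_{i+1}T_i=T_{i+1}T_iT_{i+1}$ force all $\chi(T_i)$ equal, say to $\lambda_\eps$; $\prod_r(L_1-q^{\kappa_r})=0$ forces $\chi(L_1)=q^s$ for some $s$ with $(\Lambda,\alpha_s)>0$; and $T_rL_r=L_{r+1}(T_r-q+1)$ then forces $\chi(L_{k+1})=q^{\eps1}\chi(L_k)$, so $\chi=\chi_{\eps,s}$. So the characters of $\H$ are exactly the $\chi_{\eps,s}$ (for $n\le2$ there are fewer constraints and the argument is the same, or vacuous).

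Finally, the inclusion $K\zns\subseteq\{h\in\H\mid L_1h=q^sh=hL_1,\ T_ih=hT_i=\lambda_\eps h\ (1\le i<n)\}$ is immediate from the displayed identities. For the reverse inclusion, if $h$ satisfies those conditions then, propagating through $L_k=q^{-1}T_{k-1}L_{k-1}T_{k-1}$ exactly as before, $gh=\chi_{\eps,s}(g)h$ for every generator $g$, hence for all $g\in\H$; thus $h$ lies in the right annihilator of the codimension-one (two-sided) ideal $\ker\chi_{\eps,s}$. Since $\H$ is a symmetric algebra (\cite{MM:trace} and \cite[Theorem~A2]{BK:HigherSchurWeyl}), the right annihilator of a left ideal coincides with its orthogonal complement under the trace form, and so has dimension $\dim\H-\dim\ker\chi_{\eps,s}=1$; as it contains $\zns\ne0$, it equals $K\zns$, so $h\in K\zns$. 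For the last assertion, any one-dimensional two-sided ideal $Kv$ has $gv=\chi(g)v$ and $vg=\chi'(g)v$ for characters $\chi,\chi'$; since $\ker\tau$ contains no non-zero ideal, $\tau(v)\ne0$, so $\tau(gv)=\tau(vg)$ forces $\chi=\chi'$, hence $\chi=\chi'=\chi_{\eps,s}$ for some $\eps$ and some $s$ with $(\Lambda,\alpha_s)>0$; then $v$ satisfies the defining conditions of $K\zns$, so $Kv=K\zns$. I do not anticipate a genuine obstacle here --- the lemma is essentially bookkeeping --- the only points deserving a little care being the centrality of $u_{n,s}$, the non-vanishing of $\zns$, and the one-dimensionality just invoked.
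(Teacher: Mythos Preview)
Your proof is correct. The paper does not prove this lemma at all --- it simply says the result is ``well-known and easily verified'' --- so there is nothing to compare against; your argument supplies exactly the routine verification the authors had in mind. The only point worth a remark is your non-vanishing argument: although the paper fixes a multicharge $\charge$ once and for all, both the algebra $\H$ and the element $\zns$ depend only on $\Lambda$ and not on that choice, so passing to a different multicharge to identify $\zns$ with a standard basis element $m_\blam$ (respectively $n_\blam$) is legitimate. Your use of the symmetric-algebra structure to show that the right annihilator of $\ker\chi_{\eps,s}$ is one-dimensional is a clean way to obtain the reverse inclusion in the set-theoretic description of $K\zns$.
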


The following result contains the simple idea which drives this
appendix.

\begin{Prop}\label{homogeneous}
  Suppose that $Kz$ is a two sided ideal $\R$, for some
  non-zero element $z\in\H$. Then $z$ is homogeneous.
\end{Prop}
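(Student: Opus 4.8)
The plan is to exploit the $\Z$-grading on $\R\cong\H$ directly, together with the fact that $z$ spans a two-sided ideal. Write $z=\sum_{d\in\Z} z_d$ as its decomposition into homogeneous components, where $z_d\in\H_d$, and let $D=\set{d\in\Z|z_d\ne0}$ be its (finite) support. The key observation is that for any homogeneous element $h\in\H_e$, the product $hz$ is again a scalar multiple of $z$ (since $Kz$ is a two-sided ideal), say $hz=\lambda_h z$; but on the other hand $hz=\sum_{d} hz_d$ with $hz_d\in\H_{d+e}$, so comparing homogeneous components in degree $d+e$ for each $d\in D$ gives $hz_d=\lambda_h z_d$ for every $d$. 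The same holds on the right. In other words, each homogeneous component $z_d$ spans a two-sided ideal of $\H$ on which $\H$ acts by exactly the same characters as on $z$.

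First I would make this precise: since $Kz$ is a two-sided ideal, $a z b\in Kz$ for all $a,b\in\H$, and because $\H$ is generated by homogeneous elements it suffices to test $a,b$ homogeneous. So for homogeneous $a,b$ there is a scalar $c_{a,b}$ with $azb=c_{a,b}z=\sum_{d\in D} c_{a,b}z_d$; comparing the component of $azb=\sum_d az_db$ in each degree $d+\deg a+\deg b$ yields $az_db=c_{a,b}z_d$ for all $d\in D$. In particular, each $z_d$ (for $d\in D$) spans a nonzero two-sided ideal $Kz_d$ of $\H$, and $a z_d b = c_{a,b}z_d$ with the \emph{same} structure constants $c_{a,b}$ for all $d\in D$. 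Now I would invoke the classification of one-dimensional two-sided ideals from Lemma~\ref{one dimensional}: every one-dimensional two-sided ideal of $\H$ is of the form $Kz_n^{\eps,s}$ for a unique sign $\eps$ and a unique $s$, and these are pairwise non-isomorphic (they are distinguished by the characters $T_i\mapsto c_i$, $L_k\mapsto q^{s+\eps(k-1)}$ by which $\H$ acts). Since all the $z_d$, $d\in D$, afford the same character, they must all be proportional to the \emph{same} $z_n^{\eps,s}$.

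Hence there are scalars $\mu_d\in K$, for $d\in D$, with $z_d=\mu_d z_n^{\eps,s}$. But $z_n^{\eps,s}$ is homogeneous: indeed $z_n^{\eps,s}$ spans a one-dimensional two-sided ideal, so applying the argument of the previous paragraph to $z=z_n^{\eps,s}$ would be circular; instead I would argue that $z_n^{\eps,s}$ is a \emph{nonzero} element whose span is a two-sided ideal, so all its homogeneous components are scalar multiples of it as well, forcing $z_n^{\eps,s}$ to be concentrated in a single degree (any nonzero element equal to a $K$-linear combination of scalar multiples of itself in distinct degrees must have only one nonzero component). Therefore each $z_d$ with $d\in D$ lies in that one fixed degree, say $d_0$; since $z_d\in\H_d$ this forces $D=\{d_0\}$, i.e.\ $z=z_{d_0}$ is homogeneous.

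The main obstacle is the circularity just flagged: I need to know a priori that $z_n^{\eps,s}$ is homogeneous, or at least concentrated in one degree, before I can conclude. The clean way around it is to observe that the property ``every homogeneous component of $z$ is a scalar multiple of $z$'' (which follows for \emph{any} $z$ spanning a two-sided ideal, by the comparison-of-degrees argument above) already forces $z$ to be homogeneous: if $z=\sum_{d\in D}z_d$ with $z_d=\mu_d z$ for scalars $\mu_d$ and $|D|\ge2$, then taking any $d_1\ne d_2$ in $D$ we get $z_{d_1}=\mu_{d_1}z$ has a nonzero component in degree $d_2$ (namely $\mu_{d_1}z_{d_2}$), contradicting $z_{d_1}\in\H_{d_1}$. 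So in fact no appeal to Lemma~\ref{one dimensional} is needed at all: the bare fact that $Kz$ is a two-sided ideal, applied with $a=b=1$ replaced by the grading decomposition of $1=\sum e(\bi)$ and the homogeneity of the generators, does the whole job. I would present the short self-contained version, and mention Lemma~\ref{one dimensional} only as the source of the explicit list $z_n^{\pm,s}$ of such $z$'s.
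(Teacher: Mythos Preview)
Your degree comparison is wrong, and this breaks the whole argument. If $h\in\H_e$ with $e\ne0$ and $hz=\lambda_h z$, then the degree-$(d+e)$ component of the left side is $hz_d$, but the degree-$(d+e)$ component of the right side is $\lambda_h z_{d+e}$, \emph{not} $\lambda_h z_d$. So what you actually obtain is
\[
  hz_d=\lambda_h\,z_{d+e}\qquad\text{for all }d,
\]
a shift relation, not $hz_d=\lambda_h z_d$. The same error recurs in the two-sided version with $a,b$: you get $az_db=c_{a,b}\,z_{d+\deg a+\deg b}$. Consequently your claim that each $Kz_d$ is itself a two-sided ideal does not follow, and neither does the claim that ``every homogeneous component of $z$ is a scalar multiple of $z$'', on which your short self-contained version rests. (That last implication, \emph{if} the premise held, would indeed force $|D|=1$; but you have not established the premise.)

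What is missing is exactly the step the paper supplies: one must first show that $hz=0$ whenever $\deg h\ne0$. This follows from the shift relation together with the finiteness of $D$: if $\lambda_h\ne0$ and $e=\deg h>0$, then $hz_d=\lambda_h z_{d+e}$ implies $D+e\supseteq D$ (look at the maximal element of $D$ and iterate downwards), contradicting $|D|<\infty$. Once $hz=0$ for $\deg h\ne0$ is known, the comparison for $\deg h=0$ \emph{does} give $hz_d=\lambda_h z_d$, and then each $Kz_d$ is a two-sided ideal; the paper finishes by invoking Lemma~\ref{one dimensional} to see that all nonzero $z_d$ lie in the same one-dimensional ideal, hence $|D|=1$. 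Your final paragraph could be salvaged along these lines, but only after inserting the ``$hz=0$ for $\deg h\ne0$'' step.
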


\begin{proof}
  Write $z=\sum_{i\in\Z} z_i$, where $z_i$ is a homogeneous element of
  degree $i$, for each $i\in\Z$, with only finitely many $z_i$ being
  non-zero. Let $h\in\H$ be any homogeneous element. Then
  $hz=fz$, for some $f\in K$, so that
  $$\sum_{i\in\Z}fz_i=hz=\sum_{i\in\Z}hz_i.$$
  By assumption, either $hz_i=0$ or $\deg(hz_i)=\deg h+\deg z_i$, for each
  $i$.  Therefore, if $\deg h>0$ and $hz\ne0$ then $hz_i=fz_j$ for some
  $j>i$, which is a contradiction since this forces $hz=fz$ to have fewer
  homogeneous summands than~$z$.  Therefore, $hz=0$ if $\deg h>0$.
  Similarly, $hz=0$ if $\deg h<0$.  Therefore, for any $h\in\H$ we have
  that $hz_i=fz_i$, for all $i\in\Z$, so that $z_i=z_n^{\pm,s}$, for some
  $s$ by Lemma~\ref{one dimensional}. Since the non-zero $z_i$ have
  different degrees they must be linearly independent, so it follows from
  Lemma~\ref{one dimensional} that $z=z_i$ for a unique~$i$.
  In particular, $z$ is homogeneous as claimed.
\end{proof}

The following definition will be used to give the degree of the elements
$z^\eps_{n,s}$ and to explicitly describe them as a product of the
homogeneous generators of $\H$.

We extend our use of the Kronecker delta by writing, for any statement $S$,
$\delta_{S}=1$ if $S$ is true and $\delta_S=0$ otherwise.

\begin{Defn}[cf. Definition~\ref{positive}]
  Suppose that $1\le s\le e$ and let $\eps\in\{+,-\}$. Let
  $\ins=(i^{\eps,s}_1,\dots,i^{\eps,s}_n)\in I^n$, where
  $i^{\eps,s}_k=s+\eps (k-1)\pmod e$. For $1\le k\le n$ set
  $$d^{\eps,s}_k=\set{1\le t\le\ell|i^{\eps,s}_k=t
           \text{ and }(\Lambda,\alpha_t)>\delta_{st}}+\delta_{e|k}.$$
  Finally, define $y^{\eps,s}_n=\prod_{k=1}^n y_k^{d^{\eps,s}_k}$.
\end{Defn}

Brundan, Kleshchev and Wang~\cite[(4.5)]{BKW:GradedSpecht}
note that the natural embedding $\H\hookrightarrow\H[n+1]$ is an
embedding of graded algebras. Explicitly, the  graded embedding is
determined by
\begin{equation}\label{embedding}
  \psi_s\mapsto\psi_s,\qquad y_r\mapsto y_r,\qquad \text{and}\qquad
  e(\bi)\mapsto\sum_{j\in I}e(\bi\vee j),
\end{equation}
where $1\le r\le n$, $1\le s<n$, $\bi\in I^n$ and 
$\bi\vee i=(i_1,\dots,i_n,i)$. 

In what follows we need an explicit formula for the elements
$P_r(\bi)$, where $1\le r<n$ and $\bi\in I^n$,  which were discussed briefly just before Theorem~\ref{BK
main}. To define these, for $\bi\in I^n$ set
$$
y_r(\bi) := q ^{i_r}(1-y_r) \in K\llbracket y_1,\dots,y_n\rrbracket,
$$
and, recalling that $q\ne1$, define formal power series
$P_r(\bi) \in K\llbracket y_r,y_{r+1}\rrbracket$ by setting
$$
P_r(\bi)=\begin{cases}
1 & \text{if $i_r=i_{r+1}$},\\
(1-q )\left(1-y_r(\bi) y_{r+1}(\bi)^{-1}\right)^{-1} & \text{if $i_r\neq i_{r+1}$}.
\end{cases}
$$
By a small generating function exercise, if $i_r\ne i_{r+1}$ then
\begin{equation}\label{P expansion}
  P_r(\bi)=\frac{1-q}{1-q^{i_r-i_{r+1}}}
  \Bigg\{1+\sum_{k\ge1}\frac{q^{i_r-i_{r+1}}
  (y_{r+1}-y_r)(y_{r+1}-q^{i_r-i_{r+1}}y_r)^{k-1}}{(1-q^{i_r-i_{r+1}})^k}
         \Bigg\}.
\end{equation}

We can now explicitly describe $\zns$ as a product of homogeneous
elements and hence determine its degree.

\begin{Theorem}\label{zns}
  Suppose that $1\le s\le e$, $(\Lambda,\alpha_s)>0$ and that $\eps\in\{+,-\}$. Then
  $$\zns=Ce(\ins)y^{\eps,s}_n,
  $$
  for some non-zero constant $C\in K$. In particular,
  $\deg\zns=2(\dns[1]+\dots+\dns)$.
\end{Theorem}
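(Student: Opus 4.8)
The plan is to establish the identity $\zns=C\,e(\ins)\,y^{\eps,s}_n$ for some non-zero $C\in K$; the degree statement is then immediate, since $\deg e(\ins)=0$ and $\deg y_k=2$, so that $\deg\bigl(e(\ins)y^{\eps,s}_n\bigr)=2\bigl(\dns[1]+\dots+\dns\bigr)$.

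First I would record the structural facts I need. By Lemma~\ref{one dimensional} the set $K\zns$ is a two-sided ideal of $\H$, so Proposition~\ref{homogeneous} shows $\zns$ is homogeneous; and by Lemma~\ref{one dimensional} the generator $L_k$ acts on $\zns$, on either side, by the \emph{exact} scalar $q^{s+\eps(k-1)}=q_{i^{\eps,s}_k}$, so $\zns e(\bj)=\delta_{\bj\ins}\zns=e(\bj)\zns$; hence $\zns=e(\ins)\zns e(\ins)$ and $e(\ins)\neq 0$ (because $\zns\neq 0$). Now $u_{n,s}$ lies in $\L$ and so commutes with every $e(\bi)$, while $L_ke(\ins)=q^{i^{\eps,s}_k}(1-y_k)e(\ins)$ by Theorem~\ref{BK main}; thus each factor $(L_k-q^i)e(\ins)$ equals $-q^{i^{\eps,s}_k}y_ke(\ins)$ when $i\equiv i^{\eps,s}_k$, and otherwise a non-zero scalar times a unit of $K[y_k]$ times $e(\ins)$. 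Multiplying these out,
$$u_{n,s}\,e(\ins)=v\cdot y_1^{m_1}\cdots y_n^{m_n}\,e(\ins),\qquad m_k=(\Lambda,\alpha_{i^{\eps,s}_k})-\delta_{i^{\eps,s}_k\equiv s},$$
for some unit $v$ of the commutative ring $e(\ins)K[y_1,\dots,y_n]e(\ins)$; combining this with $\zns=e(\ins)\zns e(\ins)$ gives $\zns=v\,y_1^{m_1}\cdots y_n^{m_n}\cdot Z$, where $Z=e(\ins)x_{(n)}e(\ins)$ if $\eps=+$ and $Z=e(\ins)x'_{(n)}e(\ins)$ if $\eps=-$.

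The crux, and the step I expect to be the main obstacle, is the identity $Z=v'\prod_{1\le k\le n,\ e\mid k}y_k\cdot e(\ins)$ for some unit $v'$ of $e(\ins)K[y_1,\dots,y_n]e(\ins)$. I would prove this by induction on $n$, using the coset factorisation $x_{(n)}=x_{(n-1)}\bigl(1+T_{n-1}+T_{n-1}T_{n-2}+\dots+T_{n-1}T_{n-2}\cdots T_1\bigr)$ (and its signed analogue for $x'_{(n)}$), the graded embedding $\H[n-1]\hookrightarrow\H[n]$ of~(\ref{embedding}), the formula $T_r=\sum_{\bi}\bigl(\psi_rQ_r(\bi)-P_r(\bi)\bigr)e(\bi)$ from Theorem~\ref{BK main} together with the explicit $P_r(\bi)$ of~(\ref{P expansion}), and the relations of~$\R$ — in particular $\psi_ry_{r+1}e(\bi)=y_r\psi_re(\bi)$ when $i_r\neq i_{r+1}$, the quadratic relation for $\psi_r^2e(\bi)$, and the cyclotomic relation $y_1^{(\Lambda,\alpha_{i_1})}e(\bi)=0$. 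The mechanism is that, as the last coset sum is peeled off, almost all of the resulting terms carry an idempotent $e(s_{n-1}s_{n-2}\cdots s_k\cdot\ins)$ which is zero, since $s_{n-1}\cdots s_k\cdot\ins$ is not the residue sequence of a standard tableau (Lemma~\ref{idempotents}(c)); the surviving terms collapse to scalar multiples of $e(\ins)$ because the entries of $\ins$ run through the consecutive residues $s,s+\eps,s+2\eps,\dots$, and a single extra factor of $y_k$ is produced exactly when $1+q+\dots+q^{k-1}=0$, i.e.\ when $e\mid k$. (Alternatively, this identity follows from Corollary~\ref{ylam}, since $\zns$ coincides up to a non-zero scalar with $\eblam y_\blam$ for an extremal $\blam\in\Multiparts$; but the point of the appendix is to proceed without that machinery.)

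Granting the crux, $\zns=vv'\,y_1^{m_1+\delta_{e\mid1}}\cdots y_n^{m_n+\delta_{e\mid n}}\,e(\ins)$, and a direct check from the combinatorial definition shows $m_k+\delta_{e\mid k}=\dns[k]$ — equivalently $\dns[k]=|\LAdd_{\tlam}(k)|$ for the extremal $\blam$, which is exactly why $\dns[k]$ is defined ``\textup{cf.\ Definition~\ref{positive}}''. Hence $\zns=vv'\,e(\ins)y^{\eps,s}_n$. Finally $vv'$ is a unit of $e(\ins)K[y_1,\dots,y_n]e(\ins)$, so $vv'=C+w$ with $C\in K$ non-zero and $w$ in the ideal generated by $y_1,\dots,y_n$; thus $w\,e(\ins)y^{\eps,s}_n$ is a sum of homogeneous elements of degree strictly larger than $\deg\bigl(e(\ins)y^{\eps,s}_n\bigr)$. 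Since $\zns$ is homogeneous and $e(\ins)y^{\eps,s}_n\neq 0$ (as $\zns\neq 0$), comparing lowest-degree components forces $w\,e(\ins)y^{\eps,s}_n=0$, so $\zns=C\,e(\ins)y^{\eps,s}_n$ with $C\neq 0$ and $\deg\zns=2\bigl(\dns[1]+\dots+\dns\bigr)$, as claimed.
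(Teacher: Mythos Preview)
Your overall plan is close to the paper's: both use the coset factorisation of $x_{(n)}$, induction on $n$, and the homogeneity of $\zns$ (Proposition~\ref{homogeneous}) to pin down the exact monomial in the $y_k$'s. But your reorganisation introduces a genuine gap.

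The paper does \emph{not} separate $u_{n,s}$ from $x_{(n)}$. It inducts on the full element $z_n$, so that at the inductive step the factor already present is $z_{n-1}$. The key fact it exploits is $z_{n-1}\,y_i=0$ for $1\le i<n$, which follows from the proof of Proposition~\ref{homogeneous}: $Kz_{n-1}$ is a two–sided ideal and $y_i$ has positive degree. This kills, in one stroke, every contribution of $y_1,\dots,y_{n-1}$ coming from the power series $P_r(\bi)$, leaving only a polynomial in $y_n$. Homogeneity of $z_n$ then forces a single power $y_n^{d_n}$ to survive, and an explicit computation of its coefficient (via the constant and linear terms of the $P_r$) shows it is non–zero, with the extra $+1$ in the exponent appearing precisely when $e\mid n$.

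By contrast, you try to prove the intermediate identity $Z:=e(\ins)x_{(n)}e(\ins)=v'\prod_{e\mid k}y_k\,e(\ins)$ with $v'$ a unit in $e(\ins)K[y_1,\dots,y_n]e(\ins)$. You have no mechanism for disposing of the $y_i$ with $i<n$ that arise from the $P_r(\bi)$ in~(\ref{P expansion}); the element $e(\ins)x_{(n-1)}e(\ins)$ is not a two–sided ideal generator, so there is no reason for it to annihilate $y_i$. Concretely, already for $n=2$ and $e=2$ one finds $Z_2=\tfrac12(y_2-y_1)e(\bi_2)+(\text{higher terms})$, and the $y_1$–part does not obviously lie in $y_2\,K[y_1,y_2]e(\bi_2)$ when $(\Lambda,\alpha_s)>1$. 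Your later multiplication by $y_1^{m_1}\cdots y_n^{m_n}$ may well rescue the final statement, but as written the ``crux'' claim about $Z$ is neither proved nor true in the generality stated.

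A second, smaller point: your reason for the vanishing of the $\psi$–terms is wrong. It is not that $e(s_{n-1}\cdots s_k\cdot\ins)=0$ --- for $\ell>1$ this permuted sequence may well be the residue sequence of a standard tableau --- but rather that $e(\ins)\psi_{k_p}\cdots\psi_{k_1}e(\ins)=\psi_{k_p}\cdots\psi_{k_1}e(s_{k_1}\cdots s_{k_p}\cdot\ins)e(\ins)=0$ by \emph{orthogonality} of the idempotents, since consecutive entries of $\ins$ are always distinct. This is how the paper argues.

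The fix is simple: do not split off $Z$. Induct on $z_n$ itself, use $z_{n-1}y_i=0$ for $i<n$, and then your homogeneity argument at the end (extracting the constant $C$ from the unit) is exactly what the paper does.
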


\begin{proof} As $K\zns$ is a two-sided ideal we have that $e(\ins)\zns
  e(\ins)\in K\zns$. Further, it is well-known and easy to check
  (\textit{cf.} \cite[\Sect4]{M:tilting}), that
  $K\zns\cong S(\blam)$, where
  $\blam=(\lambda^{(1)},\dots,\lambda^{(\ell)})$ and
  $$\lambda^{(t)}=\begin{cases}
    (n),&\text{if $t=s$ and $\eps=+$},\\
    (1^n),&\text{if $t=s$ and $\eps=-$},\\
     (0),&\text{otherwise}.
   \end{cases}$$
  Therefore, as $\ins=\ilam$ it follows from the construction of the graded
  Specht modules in section~5.2
  (or~\cite[Theorem~4.10]{BKW:GradedSpecht}), that $\zns e(\ins)\ne0$, so
  we see that $\zns=e(\ins)\zns=\zns e(\ins)=e(\ins)\zns e(\ins)$ as
  claimed.

  It remains to write $\zns$ as a product of homogeneous elements. To
  ease the notation we treat only the case when $\eps=+$ and we write
  $z_n=z_n^{\eps,s}$, $\bi_n=\ins$ and $d_n=\dns$. The case when $\eps=-$
  follows by exactly the same argument (and, in fact, the same
  constants appear below), the only difference is that the products
  $T_{n-1}\dots T_j$ must be replaced by
  $(-q)^{j-n}T_{n-1}\dots T_j$ below.

  Suppose, first, that $n=1$. By definition,
  $d_1=(\Lambda,\alpha_s)-1$. Recall that
  $L_1=\sum_\bi q^{i_1}(1-y_1)e(\bi)$ by Theorem~\ref{BK main}. Therefore, we have
  \begin{align*}
  z_1e(\bi_n)&=\prod_{t\in I}
  (L_1-q^{t})^{(\Lambda,\alpha_t)-\delta_{st}}e(\bi_n)
  = \prod_{t\in I}(q^{s}-q^{t}-q^{s}y_1)^{(\Lambda,\alpha_t)-\delta_{st}}e(\bi_n)\\
       &=\prod_{t\ne s} (q^{s}-q^{t}-q^{s}y_1)^{(\Lambda,\alpha_t)} e(\bi_n)
             \cdot(-q^{s}y_1)^{(\Lambda,\alpha_{s})-1}e(\bi_n)\\
       &=\prod_{t\ne s} (q^{s}-q^t)^{(\Lambda,\alpha_t)}
             \cdot(-q^{s}y_1)^{(\Lambda,\alpha_{s})-1}e(\bi_n)
  \end{align*}
  where the last equality follows because the `cyclotomic relation'
  $y_1^{(\Lambda,\alpha_{s})}e(\bi_n)=0$, holds in $\R$.
  Thus, the Theorem holds when $n=1$.

Now suppose that $n>1$ and that the Theorem holds for smaller $n$.
Then, using the definitions,
\begin{align*}
  z_n&=e(\bi_n)\prod_{t\in I}(L_n-q^{t})^{(\Lambda,\alpha_t)-\delta_{st}}\cdot z_{n-1}\cdot
  \Big(1+\sum_{j=1}^{n-1}T_{n-1}\dots T_j\Big)
  e(\bi_n)\\
     &=\prod_{t\in I}(L_n-q^t)^{(\Lambda,\alpha_t)-\delta_{st}}\cdot e(\bi_n)z_{n-1}\cdot
  \Big(1+\sum_{j=1}^{n-1}T_{n-1}\dots T_j\Big)
  e(\bi_n).
\end{align*}

By induction and (\ref{embedding}), there exists a scalar non-zero $C\in K$ such that
\begin{align*}
e(\bi_n)z_{n-1}=z_{n-1} e(\bi_{n})
&=Cy^{\eps,s}_{n-1}\prod_{i\in I}e(\bi_{n-1}\vee i)\cdot
e(\bi_n)\\
&=Cy^{\eps,s}_{n-1}e(\bi_n)
\end{align*}
Let $d_n'=d_n-\delta_{e|n}$. Then there exist constants $C_a'\in K$,
for $a\ge d_n'$, such that
\begin{align*}
\prod_{t\in I}(L_n-&q^{t})^{(\Lambda,\alpha_t)-\delta_{st}}\cdot e(\bi_n)z_{n-1}\\
&=C\prod_{t\in I}\(q^{s+(n-1)}(1-y_n)-q^{t}\)^{(\Lambda,\alpha_t)-\delta_{st}}
          \cdot y^{\eps,s}_{n-1} e(\bi_n)\\
&=e(\bi_n)y^{\eps,s}_{n-1}\sum_{a\ge d_n'}C_ay_n^a,
\end{align*}
with $C_{d_n'}=C(-q)^{(s+(n-1))d_n'}
\prod_{t}(q^{s+(n-1)}-q^{t})^{(\Lambda,\alpha_t)-\delta_{st}}$, where the product
is over those $t\in I$ with $t\not\equiv s+(n-1)\pmod{e\Z}$.
In particular, $C_{d_n'}\ne0$.
Next, recall from Theorem~\ref{BK main} that
$$T_ke(\bi_n)=\(\psi_kQ_k(\bi_n)-P_k(\bi_n)\)e(\bi_n),$$
for $1\le k\le n$. Applying
the relations in~(\ref{relations}), if $1\le k_1<\dots<k_p<n$ then
$$e(\bi_n)\psi_{k_p}\dots\psi_{k_1}e(\bi_n)
=\psi_{k_p}\dots\psi_{k_1}e(s_{k_1}\dots s_{k_p}\cdot\bi_n) e(\bi_n)
=0.$$
Moreover, by the proof of  Proposition~\ref{homogeneous} we know that
$z_{n-1}y_i=0$, for $1\le i<n$. Therefore, when we expand $P_j(\bi_n)$ as a
power series in $K\llbracket y_1,\dots,y_n\rrbracket$ only those terms in
$K\llbracket y_n\rrbracket$ contribute to $z_n$. Putting all of this
together we find that
$$
z_n=e(\bi_n)y^{\eps,s}_{n-1}\sum_{a\ge d_n'}C_a'y_n^a
$$
for some $C_a'\in K$. Notice that only one of these terms can survive since $z_n$ is
homogeneous by Proposition~\ref{homogeneous}. By (\ref{P expansion})
the constant term of $P_j(\bi_n)$ is
$-(1-q)/(1-q^{-1})=q$, so
$$ \frac{C_{d_n'}'}{C_{d_n'}}
       =1+\sum_{j=1}^{n-1}q^t
        =1+q+\dots+q^{n-1}.
$$
Therefore, $C_{d_n'}'\ne0$ if and only if $e\nmid n$, which is exactly the
case when $d_n'=d_n$ so the Theorem holds when $e\nmid n$.

Finally, suppose that $e|n$. Then $C_{d_n'}'=0$, by what we have just
shown, and $d_n=d_n'+1$, so we need to show that $C_{d_n'+1}'\ne0$.
This time the degree one term of $P_n(\bi_n)$ and the degree zero
terms of $P_j(\bi_n)$, for $1\le j<n$, contribute to~$C_{d_n'+1}'$.
Using (\ref{P expansion}) again, we find that
$$\frac{C_{d_n'+1}'}{C_{d_n'}} =\frac
q{q-1}\(q+q^2+\dots+q^{n-1})=\frac q{1-q}\ne0.$$ This completes the
proof of the Theorem.
\end{proof}

We remark that we do not know how to prove Theorem~\ref{zns} using the
relations directly. One problem, for example, is that it is not clear
from the proof of Theorem~\ref{zns} that $C_{d_n'+1}=0$ when $e\nmid
n$ -- note that if $C_{d_n'+1}\ne0$ then $z_n$ would not be homogeneous
since $C_{d_n'}\ne0$ when $e\nmid n$. We are able to prove
Theorem~\ref{zns} only because we already know that $z_n$ is
homogeneous by Proposition~\ref{homogeneous}.

\section*{Acknowledgments}
This research was supported by the Australian Research Council. The
authors thank Jonathan Brundan, Alexander Kleshchev and Sin\'ead Lyle for
discussions.


\end{document}